\newcommand{\sep}{\vspace{.3cm}}
\newcommand{\nice}{well projecting\xspace}
\newcommand{\vphi}{\varphi}
\newcommand{\lt}{\left}
\newcommand{\rt}{\right}
\newcommand{\ra}{\rightarrow}
\newcommand{\eps}{\ensuremath{\varepsilon}}
\newcommand{\union}{\bigcup}
\newcommand{\point}[2]{\left(#1,#2\right)}
\newcommand{\set}[1]{\left\{#1\right\}}
\newcommand{\Set}[2]{\left\{#1\,\middle|\,#2\right\}}
\newcommand{\ID}[1]{\ensuremath{\operatorname{id}|_{#1}}}
\newcommand{\id}{\ensuremath{\operatorname{id}}}
\newcommand{\R}{\ensuremath{\mathbb{R}}}
\newcommand{\N}{\ensuremath{\mathbb{N}}}
\newcommand{\C}{\ensuremath{\mathbb{C}}}
\newcommand{\Quaternions}{\ensuremath{\mathbb{H}}}
\newcommand{\Qt}{\Quaternions}
\newcommand{\Isom}[1]{\operatorname{Isom}(#1)}
\newcommand{\IsomNull}[1]{\operatorname{Isom_0}(#1)}
\newcommand{\SO}[1]{\ensuremath{\operatorname{SO}(#1)}}
\renewcommand{\O}[1]{\ensuremath{\operatorname{O}(#1)}}
\newcommand{\U}[1]{\ensuremath{\operatorname{U}(#1)}}
\newcommand{\SU}[1]{\ensuremath{\operatorname{SU}(#1)}}
\newcommand{\Sp}[1]{\ensuremath{\operatorname{Sp}(#1)}}
\newcommand{\Spin}[1]{\ensuremath{\operatorname{Spin}(#1)}}
\renewcommand{\S}[1]{\ensuremath{\operatorname{S}^{#1}}}
\newcommand{\rk}[1]{\operatorname{rk}\left(#1\right)}
\newcommand{\Lie}[1]{\mathfrak{#1}}
\newcommand{\Centr}[1]{\operatorname{Centr}(#1)}    
\newcommand{\Norm}[1]{\operatorname{Norm}(#1)}      
\newcommand{\NormNull}[1]{\operatorname{Norm_0}(#1)}      
\newcommand{\rad}{\operatorname{rad}}
\newcommand{\Abl}[3][]{\ensuremath{\frac{d^{#1} {#2}}{d {#3}^{#1}}}}
\newcommand{\abl}[3][]{\ensuremath{\frac{\partial^{#1} {#2}}{\partial {#3}^{#1}}}}
\newcommand{\Ddt}[1][t]{\left.\Abl{}{#1}\right|_{#1=0}}
\newcommand{\ddt}[1][t]{\left.\abl{}{#1}\right|_{#1=0}}
\newcommand{\ip}[2]{\left\langle #1, #2 \right\rangle}
\newcommand{\norm}[1]{\left\|#1\right\|}
\newcommand{\image}[1]{\operatorname{im}\left(#1\right)}
\newcommand{\spann}[1]{\operatorname{span}\left[#1\right]}
\newcommand{\normal}[1][]{\nu^{#1}}
\newcommand{\osc}[1]{O^{#1}}
\newcommand{\minnormal}[1][]{\bar{\nu}^{#1}}
\newcommand{\schnitt}[1]{\Gamma\left(#1\right)} 
\newcommand{\kov}[2]{\nabla_{#1}{#2}}
\newcommand{\Deriv}[2]{{#1}_{\ast_#2}}
\newcommand{\lie}[2]{\left[#1,#2\right]}
\newcommand{\shape}[2]{S_{#1}{#2}}
\newcommand{\sff}{\alpha}   
\newcommand{\Sff}[2]{\sff\lt(#1,#2\rt)}
\newcommand{\diff}[2][]{#2_{\ast_{#1}}}
\newcommand{\groupelement}[3]{\lt(\lt(\begin{array}{c|c}#1\\\hline&#2\end{array}\rt),
                                    \begin{pmatrix}#3\\0\end{pmatrix}
                                    \rt)}
\newcommand{\closure}[1]{\overline{#1}}
\newcommand{\maxgroup}[1]{#1^{\operatorname{max}}}
\newcommand{\layergroup}[1]{G_{#1}}
\newcommand{\Hor}[1][]{\mathcal{H}_{#1}}
\newcommand{\Ver}[1][]{\mathcal{V}_{#1}}
\newcommand{\hor}[1]{#1^h}
\newcommand{\ver}[1]{#1^v}
\newcommand{\pDist}[2][]{\mathcal{E}_{#2}^{#1}}
\newcommand{\pDistN}[2][]{\mathcal{D}_{#2}^{#1}}
\newcommand{\pDistLift}[2][]{\bar{\mathcal{E}}_{#2}^{#1}}
\newcommand{\VecF}[1][]{\mathfrak{X}}
\newcommand{\HorF}[1][]{\mathfrak{H}}
\newcommand{\VerF}[1][]{\mathfrak{V}}
\newcommand{\BottF}[1][]{\mathfrak{B}}
\newcommand{\pos}{\ensuremath{\Xi}}
\newcommand{\kovH}[2]{\overset{h}{\nabla}_{#1}{#2}}
\newcommand{\kovV}[2]{\overset{v}{\nabla}_{#1}{#2}}
\newcommand{\diam}[1]{\operatorname{diam}\left(#1\right)}
\newcommand{\dist}[2]{\operatorname{dist}\lt( #1 , #2 \rt)}
\newcommand{\Dist}[2]{\left|{#1}{#2}\right|}
\newcommand{\dst}[2]{d\lt(#1,#2\rt)}
\newcommand{\dHaus}[2]{d_H\lt(#1,#2\rt)}
\newcommand{\HausConv}{\overset{d_H}{\longrightarrow}}
\newcommand{\lift}{\mathcal{L}}
\newcommand{\Lift}[2][]{\lift_{#1}\lt(#2\rt)}
\newcommand{\cone}[1]{\ensuremath{\operatorname{C}#1}}
\newcommand{\rand}[1][]{\partial_{#1}}
\newcommand{\inn}[2][]{\operatorname{int}_{#1}\lt(#2\rt)}
\newcommand{\oneill}[2]{\mathcal{O}_{#1}{#2}}
\newcommand{\oneillAd}[2]{\mathcal{O}^\ast_{#1}{#2}}
\newcommand{\onormal}[2]{N_{#1}{#2}}
\newcommand{\proj}{\mathbb{P}}
\newcommand{\projV}{\ver{\proj}}
\newcommand{\projH}{\hor{\proj}}
\newcommand{\vproj}[1][]{\tilde{\proj}_{#1}}
\newcommand{\vprojH}[1][]{\hor{\vproj[#1]}}
\newcommand{\affinefibre}{\ensuremath{F_0}}
\newcommand{\affineleaf}{\affinefibre}
\newcommand{\base}{\ensuremath{\mathbb{B}}}
\newcommand{\basedir}{\ensuremath{\Sigma_0}}
\newcommand{\intermediatespace}{\ensuremath{\mathbb{A}}}
\newcommand{\interspace}{\intermediatespace}
\newcommand{\ball}[3][]{B^{#1}_{#2}\lt(#3\rt)}
\newcommand{\plane}[3][]{E_{#1}\lt(#2,#3\rt)}
\newcommand{\tube}[2]{\tau_{#1}\lt(#2\rt)}
\newcommand{\fol}[1]{\mathcal{#1}}
\newcommand{\Fol}{\fol{F}}
\newcommand{\FolS}{\Fol^1}
\newcommand{\layer}[1]{\ensuremath{L_{#1}}}
\newcommand{\IndFol}[1][]{\ensuremath{\mathcal{\tilde{F}}_{#1}}}
\newcommand{\IndFolS}[1][]{\ensuremath{\mathcal{\tilde{F}}^1_{#1}}}
\newcommand{\IndLeaf}[2]{\ensuremath{\tilde{#1}_{#2}}}
\newcommand{\InterFol}{\fol{A}}
\newcommand{\length}[1]{L\lt(#1\rt)}
\newcommand{\hausdorffspace}[1]{\mathfrak{M}(#1)}
\newcommand{\subgrp}[1]{\mathcal{S}(#1)}
\newtheoremstyle{bfplain}
  {.7em}
  {.7em}
  {\itshape}
  {\parindent}
  {\bfseries}
  {.}
  {.5em}
  {}
\newtheoremstyle{bfdefinition}
  {.7em}
  {.7em}
  {}
  {\parindent}
  {\bfseries}
  {.}
  {.5em}
  {}
\newtheoremstyle{itremark}
  {.3em}
  {.3em}
  {}
  {\parindent}
  {\itshape}
  {.}
  {.5em}
  {}
\theoremstyle{bfplain}
\newtheorem{Thm}{Theorem}[chapter]
\newtheorem{Prop}[Thm]{Proposition}
\newtheorem{Lem}[Thm]{Lemma}
\newtheorem{Cor}[Thm]{Corollary}
\theoremstyle{bfdefinition}
\newtheorem{Def}[Thm]{Definition}
\newtheorem{Rem}[Thm]{Remark}
\theoremstyle{itremark}
\newtheorem*{Rem*}{Remark}
\numberwithin{equation}{chapter}
\numberwithin{section}{chapter}
\numberwithin{figure}{chapter}
\begin{document}

\frontmatter
\begin{titlepage}
\begin{center}
\rm \Large
{
    \
    \vspace{1cm}
    \sc\Huge
    \hrule\vspace{0.6em}
       On the Structure of Equidistant Foliations of Euclidean Space
    \vspace{0.4em}
    \hrule
}
\vspace{4cm}
        Dissertation\\
        zur Erlangung des Doktorgrades an der\\
        Mathematisch-Naturwissenschaftlichen Fakul{\"a}t\\
        der Universit{\"a}t Augsburg\\
\vspace{3cm}
vorgelegt von\\
\ \\
{Christian Boltner}\\
\vspace{4cm}
        Augsburg, Juni 2007
\end{center}
\end{titlepage}

\setcounter{page}{2}
\vspace*{\stretch{3}}
\begin{tabular}{ll}
Erstgutachter:&Prof.\ Dr.\ Ernst Heintze\\
Zweitgutachter:&Prof.\ Dr.\ Jost-Hinrich Eschenburg\\
\\
Tag der m{\"u}ndlichen Pr{\"u}fung:& 04.\ September 2007
\end{tabular}
%
%

\tableofcontents

\mainmatter

\chapter*{Introduction}

The aim of this thesis is the study of equidistant foliations of Euclidean space,
in particular answering the question whether they are homogeneous.

An \emph{equidistant foliation} of $\R^n$ is a partition $\Fol$ into
\emph{complete, smooth, connected, properly embedded} submanifolds of $\R^n$ such that for any two
leaves~$F,G\in\Fol$ and $p\in F$ the distance $d_G(p)$ does not depend on the
choice of~$p\in F$.
Such a foliation may be \emph{singular}, i.e.\ the leaves of $\Fol$ may have
different dimensions.

We point out that this is a more restrictive version of the definition of
\emph{singular Riemannian foliations} as given by \cite{Mol}.
Their leaves only need to be immersed and equidistance is therefore
only demanded locally.

The advantage of our more restrictive definition is that the space of leaves
$\base:=\R^n/\Fol$ bears a natural metric --- it is even a nonnegatively
curved Alexandrov space (cf.~\cite{BBI}) --- and the canonical projection is a
submetry.
Indeed we make heavy usage throughout this work of the Alexandrov space structure
of $\base$ and rely on the rich theory of submetries as found in \cite{Lyt}.

The most prominent examples of equidistant foliations are the orbit foliations
of isometric Lie group actions. So the natural question is whether all equidistant
foliations of $\R^n$ are homogeneous or at least which conditions imply homogeneity.

A huge and well studied class of equidistant foliations are those given by
isoparametric submanifolds and their parallel manifolds.
Homogeneity of these foliations was shown by Thorbergsson in \cite{Thor}
if the isoparametric submanifold has codimension $\geq3$.
However, there are \emph{inhomogeneous} examples --- found by Ferus, Karcher and M\"unzner
and presented in
\cite{FKM} --- if the isoparamtric submanifold has codimension 2, i.e.\ if it is
a hypersurface in a sphere.

To our knowledge these and the Hopf fibration of $\S{15}$ (with totally
geodesic fibres, isometric to $\S{7}$) are the only inhomogeneous examples of
equidistant foliations known today.
We point out that all of these inhomogeneous foliations are compact, i.e.\ they
have compact leaves.

On the other hand Gromoll and Walschap examine \emph{regular} equidistant
foliations --- which are necessarily noncompact --- in \cite{GW:1} and \cite{GW:2}.
They show that such a foliation always has an affine leaf, which they use to prove
that the foliation is homogeneous; in fact it is
given by a generalized screw motion around the affine leaf.

As all inhomogeneous examples are compact it seems reasonable to concentrate on
noncompact foliations.
Generalizing Gromoll and Walschap's result we show in this thesis that an
equidistant foliation of $\R^n$ always has an affine leaf and may be described
by a compact equidistant foliation in one normal
space of the affine leaf together with a (not necessarily homogeneous)
screw motion around that leaf.
We give conditions for homogeneity and also construct new (noncompact)
inhomogeneous examples.

A more detailed summery of this work follows:

\sep


In \textsc{Chapter~\ref{Chap:Preliminaries}} we introduce the concepts of
Alexandrov spaces, submetries and their derivatives and we
define equidistant foliations.
We present several basic results concerning these concepts ---
among others we show that the regular leaves of equidistant foliations are equifocal.

\sep

In analogy to Gromoll and Walschap's result we show in \textsc{Chapter~\ref{Chap:Existence}}
that equidistant foliations always have an affine leaf $\affineleaf$.
Using essentially Cheeger-Gromoll's soul construction
(cf.~\cite{CE})
we prove that even in the singular case $\base$ has a soul and its preimage is
an affine space.
Then an affine leaf exists if this soul is a single point.
To show this we cannot follow \cite[Sect.~2]{GW:1}
as the topological results used there rely on $\Fol$ being a fibration.
Instead we give a geometrical proof (which also gives a new proof for the regular
setting).

\sep

For any $p\in\affineleaf$ the intersection of the leaves of $\Fol$ with the
\emph{horizontal layers} $\layer{p}:=p+\normal_p\affineleaf$ yields a partition
of~$\layer{p}$ which we call $\IndFol[p]$ and all of the $\IndFol[p]$ together
give us a partition $\IndFol$ of $\R^n$.
\textsc{Chapter~\ref{Chap:Horizontal}} is dedicated to studying this induced foliation,
in par\-ti\-cu\-lar we show that each $\IndFol[p]$ is an equidistant foliation of~$\layer{p}$.

We prove that in the homogeneous case $\Fol$ is given by the orbits of
$G\times\R^k$ with $G$ a compact Lie group and $\R^k$ acting on $\R^{k+n}$ by
generalized screw motions around the axis $\affineleaf$ and
we conclude that the induced foliation $\IndFol$ is equidistant.

In the remainder of this chapter we give a characterization of when $\IndFol$
is equidistant and we show that --- provided each $\IndFol[p]$ is homogeneous ---
the $\IndFol[p]$ are isometric to each other and $\Fol$ can be described by two
data: any one of the $\IndFol[p]$ and a generalized (possibly inhomogeneous)
screw motion around~$\affineleaf$.

\sep

\textsc{Chapter~\ref{Chap:Reducibility}} deals with questions of reducibility.
We show that --- as in the case of homogeneous representation --- existence of a
non-full regular leaf implies that the minimal affine subspace containing it
is invariant under $\Fol$.
Moreover, we examine under which conditions $\Fol$ splits off a Euclidean factor.

\sep

Finally, in \textsc{Chapter~\ref{Chap:Homogeneity}} we address homogeneity of $\Fol$.
First, we consider the quotient $\intermediatespace=\R^{k+n}/\IndFol$ and show
that --- provided $\IndFol$ is equidistant --- the image of $\Fol$ under the
natural projection is an equidistant foliation of $\intermediatespace$ and is
described by the same screw motion map as $\Fol$.
Reversing this construction we give new \emph{inhomogeneous} equidistant
foliations of $\R^n$.

We close with a homogeneity result for $\Fol$ if $\IndFol[p]$
(for one and hence all $p\in\affineleaf$) is homogeneous
and if its isometry group fulfills certain conditions, e.g.\ if it is
sufficiently small.
In particular $\Fol$ is homogeneous if $\IndFol[p]$ is given by either
\begin{itemize}
\item the orbits of an irreducible representation of real or complex type,
\item the orbits of an irreducible polar action,
\item the Hopf fibration of $\S{3}$ or $\S{7}$.

\end{itemize}

\subsection*{Acknowledgements}

This work could not have been accomplished without the help of several people.

First and foremost I would like to thank my advisor Prof.\ Dr.\ Ernst Heintze
for his constant encouragement and many fruitful discussions during the last years.
I would also like to thank Prof.~Dr.~Carlos Olmos for his hospitality and
friendly support during my stay in C\'ordoba in 2004 and many useful discussions.
To Dr.~Alexander Lytchak I am indebted for many helpful suggestions on the topic of
submetries and I would like to thank Prof.~Dr.~Burkhard Wilking for his
suggestions concerning the existence of an affine leaf.
Further thanks go to Prof.~Dr.~Jost-Hinrich Eschenburg, Dr.~habil.~Andreas Kollross
and Dr.~Kerstin Weinl for
many helpful discussions and to Dipl.~Math.~Walter Freyn for proofreading this
thesis --- any remaining errors are, of course, my own.


\chapter{Preliminaries}\label{Chap:Preliminaries}

In this chapter we introduce the concepts of \emph{Alexandrov spaces}, \emph{submetries}
and \emph{equidistant foliations}, that form the basis this thesis is built on.
We present several results arising from these concepts that will be used throughout
this work. Many of these are citations from literature, sometimes equipped with
a more accessible proof, but original work is included as well.

\section{Alexandrov Spaces}

The concept of Alexandrov spaces is a generalization of Riemannian manifolds.

We only give a brief outline of what an Alexandrov space is and present some
properties relevant to this work.
For a more detailed discussion of Alexandrov spaces we refer the reader
to~\cite{BBI}.

A metric space $X$ is called a \emph{length space} if the distance between any
two points is given by the infimum of the length of curves connecting these two
points. Consequently a curve whose length equals the distance between its
endpoints is called a \emph{shortest curve} and a locally shortest curve is
called a \emph{geodesic}. If we do explicitely say anything else we always assume
a geodesic to be parametrized by arc length.

An \emph{Alexandrov space} is a length space with a lower curvature bound~$\kappa$.
This means that small geodesic triangles are always thicker (i.e.\ points on any
side are at a greater or equal distance from the opposite vertex) than a comparison
triangle with the same side lengths in the model space $M_\kappa$, which is the
2-dimensional space form of constant curvature $\kappa$.

This implies an abundance of properties (some immediate from the definition,
others requiring rather sophisticated theory) showing that Alexandrov spaces are
indeed very similar to Riemannian manifolds.

\subsubsection*{Some useful results about Alexandrov spaces}

We present a short list of results about the geometry of Alexeandrov spaces,
which will be used throughout this thesis.

\begin{itemize}
\item Geodesics in Alexandrov spaces do not branch (otherwise this would result in
``thin'' triangles, cf. \cite[Chap.~4]{BBI}).
\item The Hausdorff dimension of an Alexandrov space is either an integer or infinity
(cf. \cite[Chap.~10]{BBI}).
\item Finite dimensional complete Alexandrov spaces are \emph{proper}
(i.e.\ closed boun\-ded subsets are compact) and \emph{geodesic}
(i.e.\ any two points can be connected by a shortest curve).

Moreover, an analogue of the Hopf-Rinow theorem holds (cf. \cite[Thm.~2.5.28]{BBI}).
\item Any $n$-dimensional Alexandrov space contains an open dense subset which
is an $n$-dimensional manifold (cf. \cite[Chap.~10]{BBI}).
\end{itemize}

\begin{Rem*}
Henceforth, if we talk about an Alexandrov space we will \emph{always} assume it
to be \emph{complete and finite dimensional}.
\end{Rem*}

In geodesic spaces we commonly use the notation $\Dist{x}{y}$ for the distance
between two points instead of $\dst{x}{y}$.

For a subset $A$ of a metric space $X$ we denote by $d_A\colon X\to\R_0^+$ the
distance function $d_A(p)=\dist{A}{p}$ relative to $A$.

\subsection*{Tangent Cones}

Let $X$ be an Alexandrov space and consider two geodesics $\alpha$ and $\beta$
emanating at some point~$p\in X$.
An immediate consequence of the lower curvature bound is that the angle formed by
$\alpha$ and $\beta$ at $p$ is well defined.

We consider the set $\tilde\Sigma_p$ of equivalence classes of geodesics emanating
from $p$ where two geodesics are identified if they form a zero angle.

\begin{Def}
The \emph{space of directions $\Sigma_p$ at $p$} is the completion of $\tilde\Sigma_p$
with respect to the angle metric.

The \emph{tangent cone} $T_pX$ of $X$ at $p$ is the metric cone~$\cone{\Sigma_p}$
over $\Sigma_p$.
\end{Def}

\begin{Rem*}
The space of directions of an $n$-dimensional Alexandrov space is a compact
$(n-1)$-dimensional Alexandrov space of curvature $\geq1$.
Consequently $T_pX$ is an $n$-dimensional Alexandrov space of nonnegative curvature.

Note that in general there may be directions at $p$ not represented by any geodesic.
\end{Rem*}

\begin{Def}
We call a point $x$ in an $n$-dimensional Alexandrov space $X$ \emph{regular}
if the space of directions~$\Sigma_x$ at $x$ is isometric to the Euclidean standard
sphere $\S{n-1}$, or equivalently if $T_xX$ is isometric to $\R^n$.
\end{Def}

\begin{Rem*}
Geodesics ending at a regular point $x$ can be extended beyond $x$ and for
any~$\xi\in\Sigma_x$ there is a geodesic starting at $x$ with direction $\xi$.

Thus at regular points $x$ we can define the exponential
map~$\exp_x\colon U\subset T_xX\to X$ in the same way as for Riemannian manifolds.

We point out that the set of regular points of $X$ contains a set which is open
and dense in $X$ (cf. \cite[Chap.~10]{BBI}).
\end{Rem*}

Remember that the metric cone over $\Sigma_p$ is the topological cone over
$\Sigma_p$, i.e.\ the set $[0,\infty)\times\Sigma_p/_\sim$ where we have identified
all points of the form~$(0,\xi)$, $\xi\in\Sigma_p$, equipped with the metric
$$
    \Dist{(t,\xi)}{(s,\eta)}=t^2+s^2-2\ip{\xi}{\eta}
$$
where $\ip{\xi}{\eta}=\cos\measuredangle(\xi,\eta)$.
This places an isometric copy of $\Sigma_p$ at distance~1 from the apex~0.

We present some further notation:

For $v=(t,\xi)\in T_pX$ and $s\geq0$ we denote by $sv$ the vector $(st,\xi)\in T_pX$.

We usually write $|v|$ as a shorthand for the distance $\Dist{v\,}{0}$ between
$v$ and the apex $0$ of the cone.

Let $\xi,\eta\in\Sigma_p$ be directions which enclose an angle $<\pi$ and let
$\gamma$ be a shortest curve in $\Sigma_p$ connecting them.
Then the cone over $\gamma$ can be embedded isometrically into $\R^2$, via $\phi$,
say.
Thus for $v=t\xi$ and $w=s\eta$ we define
$$
    v+w:=\phi^{-1}(\phi(v)+\phi(w)).
$$
Of course this depends on the choice of $\gamma$ and is only useful if $\gamma$
is unique.
Note, however, that we get the usual relation
$$
    |v+w|^2=|v|^2+|w|^2+2\ip{v}{w}
$$
where $\ip{v}{w}:=ts\ip{\xi}{\eta}$.

Finally if $A$ is a subset of $\Sigma_p$ we call
$\Set{\xi\in\Sigma_p}{\dist{\xi}{A}\geq\frac{\pi}{2}}$ the \emph{polar set}
of~$A$.

\section{Submetries}

Submetries are a generalization of the notion of linear projections and
Riemannian submersions to metric spaces.

\begin{Def}
Let $f\colon X\to Y$ be a mapping between metric spaces. Then $f$ is called a
\emph{submetry} if it maps metric balls in $X$ to metric balls of the same radius
in~$Y$.
\end{Def}

This simple property turns out to be rather rigid at least for submetries between
Alexandrov spaces. And we present in the following some interesting results about
submetries relevant to this thesis.
We refer the reader to \cite{Lyt} for a detailed discussion.

First note that we can characterize submetries by looking at the distance function
of fibres (cf.\ \cite[Lem.~4.3]{Lyt}):

\begin{Lem}
A mapping $f\colon X\to Y$ between metric spaces is a submetry if and only if
for any subset $A$ (possibly a single point) of $Y$ the equality
$$
    d_{f^{-1}(A)}=d_A\circ f
$$
holds.
\end{Lem}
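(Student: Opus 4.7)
My plan is to prove both implications by combining the $1$-Lipschitz property inherent to any submetry with a direct application of the ball-preservation property (or its proposed fibre-distance characterization) to well-chosen singletons.

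For the forward direction, assume $f$ is a submetry and fix $A\subset Y$ and $p\in X$. The inequality $d_A(f(p))\leq d_{f^{-1}(A)}(p)$ follows at once from $f$ being $1$-Lipschitz: for every $q\in f^{-1}(A)$ and every $\varepsilon>0$ the point $q$ lies in $B_{d(p,q)+\varepsilon}(p)$, so the submetry property forces $f(q)\in B_{d(p,q)+\varepsilon}(f(p))$, hence $d(f(p),f(q))\leq d(p,q)$; the infimum over $q\in f^{-1}(A)$ then dominates $d_A(f(p))$. For the reverse inequality, set $r:=d_A(f(p))$ and pick, for arbitrary $\varepsilon>0$, a point $a\in A$ with $d(f(p),a)<r+\varepsilon$. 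Then $a\in B_{r+\varepsilon}(f(p))=f(B_{r+\varepsilon}(p))$, so some $q\in B_{r+\varepsilon}(p)$ satisfies $f(q)=a\in A$, which gives $d_{f^{-1}(A)}(p)<r+\varepsilon$; letting $\varepsilon\to 0$ finishes this direction.

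For the backward direction, assume $d_{f^{-1}(A)}=d_A\circ f$ for every $A\subset Y$; it suffices to verify $f(B_r(p))=B_r(f(p))$ for all $p\in X$, $r>0$. For the inclusion $\subset$, apply the hypothesis to the singleton $A=\{f(p)\}$: for any $x\in B_r(p)$ we get $d(f(p),f(x))=d_{f^{-1}(f(p))}(x)\leq d(p,x)<r$. For $\supset$, take $y\in B_r(f(p))$ and apply the hypothesis to $A=\{y\}$ at the point $p$, obtaining $d_{f^{-1}(y)}(p)=d(f(p),y)<r$; hence there exists $q\in f^{-1}(y)$ with $d(p,q)<r$, and $y=f(q)\in f(B_r(p))$.

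The only real subtlety I anticipate is the standard ambiguity between open and closed balls in the definition of ``submetry''. Working with open balls as above makes every existential step follow from a strict inequality, so no compactness or completeness is needed. With closed balls the $\supset$ step in the backward direction only yields, for each $\varepsilon>0$, a preimage $q\in \overline{B}_{r+\varepsilon}(p)$, and recovering $q\in\overline{B}_r(p)$ requires properness of $X$; this is the one place I would pause to align conventions before writing out the argument.
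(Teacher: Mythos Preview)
Your proof is correct; both directions are handled cleanly, and your remark about open versus closed balls is apt (with open balls, as in your argument, no properness is needed). The paper itself does not prove this lemma but merely cites it from \cite[Lem.~4.3]{Lyt}, so there is no in-paper proof to compare against.
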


We call a point $p\in X$ \emph{near} to~$x\in X$ (with respect to $f$) if
$\Dist{x}{p}=\dist{F_x}{p}$ where $F_x$ is the fibre of $f$ passing through~$x$.
We denote the set of points near to~$x$ by $N_x$.

A geodesic $\gamma$ emanating at $x$ will be called \emph{horizontal} if its
image under $f$ is a geodesic of the same length. Thus a shortest curve is horizontal
if and only if its start and endpoint are near to each other.

It should be noticed that many topological and geometric properties are inherited
by the base space of a submetry (cf. \cite[Prop.~4.4]{Lyt}).
We present only a few:

\begin{Prop}
Let $f\colon X\to Y$ be a submetry between metric spaces.
Then $Y$ is complete or connected or is a length space or has
curvature bounded below by $\kappa$ or has dimension $\leq n$ if $X$ has the
respective property.
\end{Prop}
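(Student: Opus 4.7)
The strategy is to combine two elementary properties of any submetry $f\colon X\to Y$: it is $1$-Lipschitz (the defining ball-onto-ball condition forces $\Dist{f(x)}{f(x')}\leq\Dist{x}{x'}$), and it admits the following \emph{lifting property}: for any $x\in X$ and any $y'\in Y$ there exists $x'\in f^{-1}(y')$ with $\Dist{x}{x'}=\Dist{f(x)}{y'}$, simply because $y'$ lies in the $f$-image of the closed ball of radius $\Dist{f(x)}{y'}$ around $x$. In particular $f$ is continuous and surjective onto $Y$.

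The first four assertions then follow quickly. For \emph{completeness}, given a Cauchy sequence in $Y$ I pass to a subsequence $(y_n)$ with $\Dist{y_n}{y_{n+1}}<2^{-n}$, choose $x_1\in f^{-1}(y_1)$, and inductively apply the lifting property to build $(x_n)$ with $\Dist{x_n}{x_{n+1}}=\Dist{y_n}{y_{n+1}}$; this sequence is Cauchy in the complete space $X$, and its limit projects to a limit of $(y_n)$. For \emph{connectedness}, $Y=f(X)$ is the continuous image of a connected space. For the \emph{length space} property, lifting $y_1,y_2\in Y$ to points $x_1,x_2\in X$ with $\Dist{x_1}{x_2}=\Dist{y_1}{y_2}$ and projecting an almost-shortest curve between them yields curves in $Y$ of length arbitrarily close to $\Dist{y_1}{y_2}$. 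For the \emph{dimension bound}, a surjective $1$-Lipschitz map cannot increase Hausdorff dimension.

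The nontrivial case is the \emph{lower curvature bound}. To verify Toponogov's condition in $Y$, I fix a shortest curve $\gamma\colon[0,\ell]\to Y$ from $y_1$ to $y_2$, a point $y=\gamma(t)$, and a fourth point $y_0\in Y$; the goal is to show that $\Dist{y_0}{y}$ is at least the corresponding distance in the $M_\kappa$-comparison triangle for $(\Dist{y_0}{y_1},\Dist{y_0}{y_2},\ell)$ evaluated at parameter $t$. The plan is: lift $y_1$ arbitrarily to $x_1$, then horizontally lift $\gamma$ to a geodesic $\tilde\gamma$ in $X$ starting at $x_1$ (the iterated lifting property produces a curve whose length cannot exceed $\ell$ and, by $1$-Lipschitzness, cannot be less either, so internal distances are preserved), producing $x_2=\tilde\gamma(\ell)$ and $x=\tilde\gamma(t)$ with $\Dist{x_1}{x_2}=\ell$ and $\Dist{x_1}{x}=t$. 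Finally I lift $y_0$ to a point $x_0$ with $\Dist{x_0}{x_1}=\Dist{y_0}{y_1}$. The $1$-Lipschitz property gives $\Dist{x_0}{x_2}\geq\Dist{y_0}{y_2}$ and $\Dist{x_0}{x}\geq\Dist{y_0}{y}$, while $\Dist{x_0}{x_1}=\Dist{y_0}{y_1}$. Applying the curvature $\geq\kappa$ condition in $X$ to the configuration $(x_0,x_1,x_2,x)$ yields the model inequality upstairs, and monotonicity of the $M_\kappa$-comparison in the triangle's side lengths transports the inequality down to the configuration $(y_0,y_1,y_2,y)$.

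The step I expect to be the main obstacle is precisely this final passage, since the upstairs side lengths satisfy only $\geq$ relations with their downstairs counterparts rather than equalities. Here I would invoke the standard monotonicity of Alexandrov comparison in the triangle's side lengths and the position of the interior point (equivalently, Alexandrov's lemma on glued triangles): enlarging a side of a comparison triangle while keeping the others fixed can only decrease the distance from the opposite vertex to a fixed interior point on the long side. This monotonicity is the only place where the underlying $M_\kappa$-geometry (as opposed to mere continuity and surjectivity) is actually used, and all the other properties in the proposition follow formally from the lifting lemma.
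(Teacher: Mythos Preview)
The paper does not prove this proposition; it is quoted from \cite[Prop.~4.4]{Lyt} without argument. Your treatment of completeness, connectedness, the length-space property, and the dimension bound is correct and standard.

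There is, however, a genuine gap in your curvature argument. You lift $y_0$ to a point $x_0$ with $\Dist{x_0}{x_1}=\Dist{y_0}{y_1}$, obtaining $\Dist{x_0}{x_2}\geq\Dist{y_0}{y_2}$ and $\Dist{x_0}{x}\geq\Dist{y_0}{y}$. The Alexandrov condition in $X$ gives $\Dist{x_0}{x}\geq\bar{d}_X(t)$, and monotonicity of the comparison function (Stewart's relation in the Euclidean model, the law of cosines in $M_\kappa$ generally) gives $\bar{d}_X(t)\geq\bar{d}_Y(t)$ since the $X$-comparison triangle has the longer side $\Dist{x_0}{x_2}$. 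But these chain only to $\Dist{x_0}{x}\geq\bar{d}_Y(t)$, and since you merely know $\Dist{y_0}{y}\leq\Dist{x_0}{x}$, you cannot conclude $\Dist{y_0}{y}\geq\bar{d}_Y(t)$. (Incidentally, your stated monotonicity direction --- that enlarging a side \emph{decreases} the distance from the opposite vertex to a point on the base --- is backwards; it increases it.)

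The fix is simple but essential: lift $y_0$ near the interior point $x$ rather than near the endpoint $x_1$. Choose $x_0\in f^{-1}(y_0)$ with $\Dist{x_0}{x}=\Dist{y_0}{y}$. Then the $1$-Lipschitz property gives $\Dist{x_0}{x_i}\geq\Dist{y_0}{y_i}$ for $i=1,2$, so the $X$-comparison triangle dominates the $Y$-comparison triangle on both sides through the apex, and now the inequalities chain correctly:
\[
\Dist{y_0}{y}=\Dist{x_0}{x}\geq\bar{d}_X(t)\geq\bar{d}_Y(t).
\]
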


Finally, we mention the following factorization property of submetries, which is
an immediate consequence of the definition (cf.\ \cite[Lem.~4.1]{Lyt}):

\begin{Lem}\label{Lem:factorizing submetries}
    Let $X,Y,Z$ be metric spaces and $f: X \ra Y$, $g: Y \ra Z$ be maps between them.
    Suppose that $f$ and $h:= g \circ f$ are submetries then so is $g$.
\end{Lem}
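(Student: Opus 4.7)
The plan is to verify the submetry condition $g(B_r(y)) = B_r(g(y))$ for every $y \in Y$ and $r > 0$, using both that $f$ is a submetry and that $h = g \circ f$ is a submetry. A preliminary observation is that since $f$ is a submetry, it is automatically surjective (if $Y \neq \emptyset$, pick any $x \in X$; then $Y = \bigcup_{r > 0} B_r(f(x)) = \bigcup_{r>0} f(B_r(x)) \subseteq f(X)$). Also recall that any submetry is $1$-Lipschitz, since $f(B_r(x)) = B_r(f(x))$ implies images of points at distance $< r$ lie in a ball of radius $r$.

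Fix $y \in Y$ and choose $x \in f^{-1}(y)$, so that $h(x) = g(f(x)) = g(y)$. For the inclusion $g(B_r(y)) \subseteq B_r(g(y))$, take $y' \in B_r(y)$. Since $f$ is a submetry, $y' \in B_r(y) = f(B_r(x))$, so there exists $x' \in B_r(x)$ with $f(x') = y'$. Then $g(y') = h(x')$, and because $h$ is a submetry (hence $1$-Lipschitz), $h(x') \in B_r(h(x)) = B_r(g(y))$, as desired.

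For the reverse inclusion $B_r(g(y)) \subseteq g(B_r(y))$, take $z \in B_r(g(y)) = B_r(h(x))$. Since $h$ is a submetry, $z = h(x')$ for some $x' \in B_r(x)$. Setting $y' := f(x')$, we get $g(y') = g(f(x')) = h(x') = z$; moreover $f$ being $1$-Lipschitz gives $|y'y| = |f(x')f(x)| \le |x'x| < r$, so $y' \in B_r(y)$ and $z \in g(B_r(y))$.

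There is no genuine obstacle here: the argument is essentially a diagram chase, with each inclusion using one of the two submetry hypotheses to lift a point from $Y$ or $Z$ back up to $X$, and the $1$-Lipschitz property (which is a free consequence of being a submetry) to push the result down again. The only thing to watch is the choice of a basepoint $x \in f^{-1}(y)$, which is available because the submetry $f$ is surjective.
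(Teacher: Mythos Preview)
Your proof is correct and follows essentially the same idea as the paper's: choose $x\in f^{-1}(y)$ using surjectivity of the submetry $f$, and then exploit $B_r(y)=f(B_r(x))$ together with $h(B_r(x))=B_r(h(x))$. The paper simply writes this as the one-line chain $g(B_r(y))=h(B_r(x))=B_r(h(x))$, whereas you split it into two inclusions; the content is the same.
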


\begin{proof}
Let $\ball{r}{y}$ be some metric ball in $Y$, which is the image under $f$ of
some ball $\ball{r}{x}$ in $X$ since $f$ is a submetry.
Then $g(\ball{r}{y})=h(\ball{r}{x})=\ball{r}{h(x)}$.
\end{proof}

\subsection{Lifting}

With Riemannian submersions $p\colon M\to N$ it is possible to lift geodesics in the
base~$N$ to horizontal geodesics in $M$. This follows easily from the conditions
posed on the differential of the submersion.

However, this can be shown in a purely geometrical way as is done e.g.\ in \cite{GB}.
Using essentially the same arguments we see that these lifts exist in the case of
submetries as well:

\begin{Lem}\label{Lem:Horizontal lift of shortest paths}
Let $f\colon X\to \bar{X}$ be a submetry between Alexandrov spaces and let
$\bar\gamma\colon[0,l]\to \bar{X}$ be a shortest path of length $l$ between
two points $\bar{p}$ and $\bar{q}$.
\begin{enumerate}[(a)]
\item Let $p\in f^{-1}(\bar{p})$ then there exists a \emph{horizontal lift}~$\gamma$
of $\bar{\gamma}$ to $p$, i.e.\ a shortest path $\gamma\colon[0,l]\to X$ of the
same length such that $\gamma(0)=p$ and $f\circ\gamma=\bar{\gamma}$.
\item If $\bar\gamma$ can be extended beyond $\bar{p}$ as a shortest path then
the horizontal lift is unique.
\end{enumerate}
\end{Lem}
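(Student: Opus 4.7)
For part (a), my plan is to construct a horizontal lift by discrete approximation and a compactness argument. Fix $N\in\N$, set $t_k:=kl/N$ for $k=0,\dots,N$, and inductively choose points $p_k\in f^{-1}\lt(\bar\gamma(t_k)\rt)$ starting from $p_0:=p$: the characterization of submetries via fibre distance gives $\dist{p_k}{f^{-1}(\bar\gamma(t_{k+1}))}=\Dist{\bar\gamma(t_k)}{\bar\gamma(t_{k+1})}=l/N$, and since finite-dimensional complete Alexandrov spaces are proper and fibres of $f$ are closed, this distance is attained by some $p_{k+1}$. Joining consecutive $p_k$ by shortest curves in $X$ (Hopf--Rinow analogue) yields a $1$-Lipschitz curve $\gamma^{(N)}\colon[0,l]\ra X$ of length $l$ starting at $p$.

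Because $f$ is $1$-Lipschitz, $f\circ\gamma^{(N)}$ is a $1$-Lipschitz curve from $\bar p$ to $f(p_N)$ passing through $\bar\gamma(t_0),\dots,\bar\gamma(t_N)$ in order; its total length is at most $l$, while the pieces between consecutive $\bar\gamma(t_k)$ contribute at least $l/N$ each since $\bar\gamma$ is a shortest path, so its length equals $l$. All curves $\gamma^{(N)}$ take values in the compact ball $\closure{\ball{l}{p}}$, hence along $N=2^j$ a subsequence converges uniformly (by Arzel\`a--Ascoli) to a $1$-Lipschitz curve $\gamma\colon[0,l]\ra X$. For every dyadic parameter $t=ml/2^{j_0}\in[0,l]$ and every $N=2^j$ with $j\geq j_0$, one has $f\lt(\gamma^{(N)}(t)\rt)=\bar\gamma(t)$; passing to the limit and using continuity of $f$ yields $f\circ\gamma=\bar\gamma$ on a dense subset, hence on all of $[0,l]$. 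Since the projection $f\circ\gamma=\bar\gamma$ has length $l$ and $\gamma$ is $1$-Lipschitz, $\gamma$ is itself a shortest path lifting $\bar\gamma$.

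For part (b), assume $\bar\gamma$ can be extended beyond $\bar p$ as a shortest path $\bar\gamma\colon[-\veps,l]\ra\bar X$. Applying part (a) to the reversed initial segment $s\mapsto\bar\gamma(-s)$ on $[0,\veps]$ starting at $p$ yields a horizontal lift $\delta\colon[0,\veps]\ra X$ with $\delta(0)=p$. Given two horizontal lifts $\gamma_1,\gamma_2$ of $\bar\gamma$ through $p$, define $\alpha_i\colon[-\veps,l]\ra X$ by $\alpha_i(s):=\delta(-s)$ on $[-\veps,0]$ and $\alpha_i(s):=\gamma_i(s)$ on $[0,l]$. Each $\alpha_i$ has length $\veps+l=\Dist{\bar\gamma(-\veps)}{\bar\gamma(l)}\leq\Dist{\alpha_i(-\veps)}{\alpha_i(l)}$, so $\alpha_i$ is a shortest path, hence a geodesic. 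The two $\alpha_i$ agree on $[-\veps,0]$, so non-branching of geodesics in Alexandrov spaces forces $\alpha_1=\alpha_2$ on $[-\veps,l]$, whence $\gamma_1=\gamma_2$.

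The main obstacle is ensuring in (a) that the lift projects onto the given $\bar\gamma$ rather than merely onto some shortest path from $\bar p$ to $\bar q$: simply picking a near point $q\in f^{-1}(\bar q)$ to $p$ and joining by a shortest curve in $X$ would only produce a shortest path projecting to some shortest $\bar p\bar q$-path, which need not coincide with $\bar\gamma$ when shortest paths in $\bar X$ are non-unique. The dense partition forces the approximations — and therefore the limit — to meet $\bar\gamma$ at arbitrarily many intermediate parameters, eliminating this ambiguity.
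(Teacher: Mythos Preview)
Your proof is correct. Part (b) is essentially identical to the paper's argument: lift the extension beyond $\bar p$ to $p$, concatenate with each candidate lift, observe the concatenations are shortest paths agreeing on a nontrivial initial segment, and invoke non-branching.

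Part (a), however, takes a genuinely different route. The paper first treats the special case where $\bar\gamma$ is extendible beyond $\bar p$: then $\bar\gamma$ is the \emph{unique} shortest path from $\bar p$ to $\bar q$, so one simply picks $q\in f^{-1}(\bar q)$ near to $p$ and any shortest curve $\gamma$ from $p$ to $q$ must project to $\bar\gamma$. For the general case the paper chooses an interior point $\bar x=\bar\gamma(t_0)$, picks $x\in f^{-1}(\bar x)$ near to $p$, and lifts the two halves of $\bar\gamma$ from $x$ (each half now being extendible beyond $\bar x$); the half towards $\bar p$ automatically ends at $p$ since $|xp|=|\bar x\bar p|$. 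Your approach instead forces the projection to hit $\bar\gamma$ at a dense set of parameters via the dyadic partition and recovers the exact lift by an Arzel\`a--Ascoli limit. Both methods resolve the obstacle you correctly identify in your final paragraph --- that a naive ``near point plus shortest curve'' argument only yields \emph{some} shortest path from $\bar p$ to $\bar q$ downstairs --- but the paper's trick is shorter and avoids the compactness argument, while your construction is more self-contained and does not require the auxiliary extendibility reduction.
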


\begin{proof}
Assume for now that $\bar\gamma$ can be extended beyond $\bar{p}$.

\begin{enumerate}[(a)]
\item Since $f$ is a submetry
$\dist{f^{-1}(\bar{p})}{f^{-1}(\bar{q})} = \Dist{\bar{p}}{\bar{q}}$ and since
$f^{-1}(\bar{p})$ and $f^{-1}(\bar{q})$ are closed there is a
point~$q\in f^{-1}(\bar{q})$ such that $\Dist{p}{q}=\Dist{\bar{p}}{\bar{q}}$,
i.e.\ $q$ is near to $p$.

Let $\gamma\colon[0,l']\to X$ be a shortest path connecting $p$ and $q$.
Then $\length{\gamma}=l'=\Dist{p}{q}=\Dist{\bar{p}}{\bar{q}}=l$ and consequently
$f\circ\gamma$ is a curve of length at most $l$ connecting $\bar{p}$ and $\bar{q}$.
Hence it is a shortest curve.
Remember that since $\bar{\gamma}$ is extendible it is the unique shortest path
connecting those two points and so has to agree with $f\circ\gamma$.

\item Suppose there are two different lifts $\gamma_1$ and $\gamma_2$ to $p$.

Let $\bar\alpha\colon[-\eps,l]\to\bar{X}$ be an extension as a shortest path
of~$\bar{\gamma}$ and let $\bar{r}$ be the point~$\bar\alpha(-\eps)$.

We can now lift $\bar\alpha|_{[-\eps,0]}$ to $p$. Let us call this lift $\beta$
and its starting point~$r$. Then $r$ is near to $p$ so
$$
    \Dist{\bar{r}}{\bar{q}}=
    \Dist{\bar{r}}{\bar{p}}+\Dist{\bar{p}}{\bar{q}}=
    \Dist{r}{p}+\Dist{p}{q} \geq
    \Dist{r}{q}\geq
    \Dist{\bar{r}}{\bar{q}}
$$
where the last inequality holds because $f$ does not increase distances.

So, continuing $\beta$ by either $\gamma_1$ or $\gamma_2$ yields a shortest path
between $r$ and $q$ which agrees with the other at least up to $p$. But then
the $\gamma_i$ have to agree as well since in Alexandrov spaces geodesics do not
branch.
\end{enumerate}
To show (a) in general just choose some point $\bar{x}$ in the interior
of~$\bar\gamma$, take $x\in f^{-1}(\bar{x})$ near to $p$ and lift~$\bar\gamma$
to~$x$. This lift then has $p$ as one endpoint.
\end{proof}

\begin{Rem}\label{Rem:Horizontal lift of geodesic}
Of course Lemma~\ref{Lem:Horizontal lift of shortest paths} also holds for
geodesics instead of shortest paths. Since geodesics are locally shortest we can
lift these shortest paths and use the fact that the lifts at interior points of the
geodesic are unique.

Note that there is an even stronger lifting property
(Proposition~\ref{Prop:Horizontal lift of quasigeodesics}) if $X$ is a manifold.
\end{Rem}

\subsection{Differentials}\label{Sec:Differentials}

Several results in this work are based on examining the differential of a submetry.
So let us explain what we mean by differentiability and the differential
of a map between Alexandrov spaces.

\begin{Rem*}
The material presented in this section is mostly due to \cite{Lyt}. But since it
is nonstandard material we include it here and present it in a way more
suitable for the needs of this thesis.
\end{Rem*}

In \cite[p.44]{BGP} a Lipschitz function $f\colon X\to\R$ on a finite dimensional
Alexandrov space is said to be differentiable if its restriction to any geodesic
is differentiable (with respect to arc length) from the right.

This is generalized in \cite[Sect.~3]{Lyt} to Lipschitz maps $f\colon X\to Y$
between finite dimensional Alexandrov spaces.

\begin{Rem*}
In the following we will be using ultralimits. We refer the reader
to~\cite[Sect.~2.4]{KL} for a concise definition of ultralimits.
In short this concept allows us to coherently choose for any sequence $(x_j)$
in a compact space one of its limit points. This limit point is called the
ultralimit $\lim_\omega x_j$ of $(x_j)$ and depends on the particular choice
of the  nonprincipal ultrafilter $\omega$ on the integers.
\end{Rem*}

Using this \cite{KL} considers sequences of pointed metric spaces $(X_j,x_j)$ and defines
their ultralimits $\lim_\omega(X_j,x_j)$ as the set $X_\infty$ consisting of all
sequences $(y_j)$ with $y_j\in X_j$ such that $d_j(y_j,x_j)$ is uniformly bounded.
Then $x\in X_\infty$ is defined as $(x_j)$ and we get a pseudometric $d((y_j),(z_j))$
which is defined as the ultralimit $\lim_\omega d_j(y_j,z_j)$. After identifying
points $y,z\in X_\infty$ for which $d(y,z)=0$ this turns $(X:=X_\infty/_{(d=0)},x)$
into a pointed metric space.

\begin{Rem*}
If $(X_j,x_j)$ is a sequence of proper spaces converging in the pointed
Gromov-Hausdorff topology towards the proper space $(X,x)$ then for any $\omega$
the ultralimit $\lim_\omega(X_j,x_j)$ is isometric to $(X,x)$.
\end{Rem*}

The ultralimit approach has the advantage that we can extend this notion naturally
to maps between metric spaces:
Let $f_j \colon (X_j,x_j)\to(Y_j,y_j)$ be a sequence of Lipschitz maps with uniform
Lipschitz constant then the ultralimit $f:=\lim_\omega f_j$ is given by
$f((z_j))=(f_j(z_j))$.

Now let us look in particular at the tangent cone of a finite dimensional Alexandrov
space $X$: The tangent space $T_xX$ at $x$  is the pointed Gromov-Hausdorff limit
of the scaled spaces $(\frac{1}{r_j}X,x)$ for any positive sequence $(r_j)$
tending to zero. By $\lambda X$ we mean the space $X$ with the scaled metric
$\lambda \cdot d$.

\begin{Rem*}
The tangent space $T_xX$ defined in this way is isometric to the metric cone~$\cone\Sigma_x$ over the space of directions at~$x$
(cf. \cite[Sect.~10.9]{BBI}).
\end{Rem*}

Based on this \cite{Lyt} makes the following definition:

\begin{Def}
Let $f\colon X \to Y$ be a Lipschitz map between finite dimensional Alexandrov
spaces.  We consider for any positive sequence~$(r_j)$ tending to zero the
ultralimit~$\lim_\omega f_j$ of the sequence
$f_j:=f\colon(\frac{1}{r_j}X,x)\to(\frac{1}{r_j}Y,f(x))$.

We say $f$ is \emph{differentiable} at $x\in X$ if $\lim_\omega f_j$ does not
depend on the choice of~$(r_j)$ and call the resulting Lipschitz
map~$\diff[x]{f}\colon T_xX\to T_{f(x)}Y$ the \emph{differential} of $f$ at $x$.
\end{Def}

In detail $\diff[x]{f}$ is given in the following way: Let $p\in X$ be close to
$x$ and let $\gamma$ be a shortest path connecting $x$ to $p$ with direction $\xi$
at $x$.
Then considering that $(\frac{1}{r_j}X,x)$ converges to $T_xX$ we see that
$(\gamma(r_j\cdot|xp|))$ converges towards $|xp|\cdot\xi$ and consequently
$(f(\gamma(r_j\cdot|xp|)))$ tends to some $\eta$ in $T_{f(x)}Y$. If $\eta$ is
independent of $(r_j)$ then $\diff[x]{f}(|xp|\cdot\xi)=\eta$.

Note that by this property $\diff[x]{f}$ is \emph{homogeneous}, i.e.
$\diff[x]{f}(t\xi)=t\diff[x]{f}(\xi)$ for any nonnegative $t$.

\newpage
\subsubsection*{Application to Submetries}

\begin{enumerate}
\item By \cite[Prop.~3.7]{Lyt} $f\colon X\to Y$ is differentiable at
$x\in X$ if and only if for any $y\in Y$ with $y\neq f(x)$ the function
$d_y \circ f$ is differentiable, thus reducing the question of differentiability
to the case treated by \cite{BGP}.
\item From \cite[Lem.~4.3]{Lyt} we know that $f\colon X\to Y$ is a submetry if
and only if $d_{f^{-1}(y)}=d_y \circ f$ for any point $y$ in $Y$.
Since for any closed $A\subset X$ the function $d_A$ is differentiable outside~$A$
(cf.~\cite[p.44]{BGP}) this implies that submetries are differentiable.
\item If $f_j\colon(X_j,x_j)\to(Y_j,y_j)$ is a sequence of submetries then its
ultralimit is a submetry as well.
This is an immediate consequence of the definition of ultralimits
and shows that the differential of a submetry is itself a submetry between the
tangent spaces.

Moreover, the fibres of $f_j$ converge to the fibres of $f$
(cf.~\cite[Lem.a~4.6]{Lyt}).
\end{enumerate}

Thus the study of the differential of a submetry reduces to the study of homogeneous
submetries $f\colon\cone\Sigma\to\cone{S}$ between cones or simply to submetries
$f\colon\Sigma\to S$ where $\Sigma$ and $S$ have curvature $\geq1$.

We give some more results from \cite{Lyt} for this setting:

\begin{Prop}\label{Prop:vertical and horizonal vectors}
Let $\Sigma$ and $S$ be finite dimensional Alexandrov spaces of curvature $\geq1$
and let $f\colon\cone{\Sigma}\to\cone{S}$ be a homogeneous submetry.
Then the following assertions hold:

\begin{enumerate}[(a)]
\item The preimage $f^{-1}(0)$ of the apex is the cone over some totally convex
set $V\subset\Sigma$. The directions in $V$ are called \emph{vertical}.

\item Let $H$ be the polar set of $V$ with respect to $\Sigma$.
Then $\cone{H}$ consists just of the \emph{horizontal} vectors of $f$, i.e.\ those
$h\in\cone{\Sigma}$ such that $|f(h)|=|h|$.

\item For any~$x\in \cone{\Sigma}\setminus(\cone{V}\cup\cone{H})$ there are
unique $v\in\cone{V}$ and $h\in\cone{H}$ such that $x=h+v$, $\ip{h}{v}=0$ and
$f(x)=f(h)$.

\item The restriction $f\colon\cone{H}\to\cone{S}$ is a submetry.
\end{enumerate}

\end{Prop}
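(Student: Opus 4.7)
My plan is to establish (a)--(d) in order, with the central structural input being the spherical join decomposition of $\Sigma$ with respect to the totally convex subset $V$ and its polar $H$: every direction $\xi\in\Sigma\setminus(V\cup H)$ lies on a unique minimising geodesic of length $\pi/2$ from some $v_\xi\in V$ to some $h_\xi\in H$. In the cone this translates directly into the orthogonal splitting claimed in (c), and (d) then falls out of (c) almost for free.

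For (a), homogeneity $f(tx)=tf(x)$ for $t\geq 0$ shows immediately that $f^{-1}(0)$ is closed under nonnegative scaling, hence equals $\cone{V}$ with $V:=f^{-1}(0)\cap\Sigma$. The real content is the total convexity of $V$ in $\Sigma$, which I would deduce from the submetry characterisation $d_{f^{-1}(0)}=|f(\cdot)|$ of \cite[Lem.~4.3]{Lyt} together with the extremality of submetry fibres in nonnegatively curved Alexandrov spaces: a geodesic in $\Sigma$ joining two points of $V$ but leaving $V$ would, via the flat sector of $\cone\Sigma$ it generates, contradict the equality $d_{\cone{V}}=|f(\cdot)|$ after rescaling by homogeneity. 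For (b) the argument is a direct computation in the cone: for $x\in\cone\Sigma$ with direction $\xi=x/|x|$ and any $v\in V$, the distance $d(x,\R_{\geq 0}\,v)$ in the flat half-plane spanned by $\xi$ and $v$ equals $|x|$ precisely when $\angle(\xi,v)\geq\pi/2$ and is strictly smaller otherwise. Taking the infimum over $v\in V$ gives $d_{\cone{V}}(x)=|x|$ iff $\xi\in H$, which combined with $|f(x)|=d_{\cone{V}}(x)$ proves (b).

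For (c), given $x\in\cone\Sigma\setminus(\cone{V}\cup\cone{H})$ with direction $\xi$, I would pick $v_\xi\in V$ and $h_\xi\in H$ supplied by the join decomposition of $\Sigma$ and lift the geodesic between them to a flat quarter-plane inside $\cone\Sigma$; inside this quarter-plane $x$ splits uniquely as $x=h+v$ with $h\in\cone{H}$, $v\in\cone{V}$ and $\ip{h}{v}=0$. The equality $f(x)=f(h)$ comes in two steps: $v$ is the foot of perpendicular from $x$ onto $\cone{V}$, so $d_{\cone{V}}(x)=|h|=|f(h)|$ settles $|f(x)|=|f(h)|$; then a horizontal lift to $h$ of a shortest path from $f(h)$ to $0$ in $\cone{S}$, guaranteed by Lemma~\ref{Lem:Horizontal lift of shortest paths}, must end at the unique nearest fibre point $v$, which pins down the direction of $f(x)$ and forces $f(x)=f(h)$. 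For (d), given $h\in\cone{H}$ and $r>0$, the inclusion $f(B(h,r)\cap\cone{H})\subset B(f(h),r)$ is Lipschitz; conversely any $y\in B(f(h),r)$ pulls back via the submetry property of $f$ on $\cone\Sigma$ to some $x\in B(h,r)$, which by (c) splits as $x=h'+v$ with $h'\in\cone{H}$ and $f(h')=y$, and non-expansiveness of the horizontal projection $x\mapsto h'$ (evident from the flat quarter-plane picture) gives $|h-h'|\leq|h-x|\leq r$, completing (d). I expect the principal obstacle to be (a): while (b), (c) and (d) reduce to relatively mechanical cone and join computations once (a) is established, total convexity of the fibre at the apex is genuinely delicate in the Alexandrov setting and is where the external machinery of \cite{Lyt} plays its essential role.
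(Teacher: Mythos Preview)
Your argument for part~(c) has a genuine gap in the step establishing $f(x)=f(h)$. You write that a horizontal lift to $h$ of a shortest path from $f(h)$ to $0$ ``must end at the unique nearest fibre point $v$''. This is false: since $h\in\cone{H}$ is horizontal, the unique nearest point of $\cone{V}=f^{-1}(0)$ to $h$ is the apex $0$, not $v$; the lift you describe is simply the radial segment $[h,0]$, and it carries no information about $f(x)$. Even under the charitable reading that you meant to lift to $x$ rather than to $h$, you would only learn that the segment $[x,v]$ is horizontal and projects to $[f(x),0]$ --- this determines $|f(x)|$ but not its direction in $\cone{S}$, so $f(x)=f(h)$ is still unproved. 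What you are missing is exactly the step the paper singles out: one must actually \emph{use} the homogeneity of $f$ to compare values along two parallel rays.

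The paper's route for~(c) is in a sense the reverse of yours. Rather than first producing $h$ and $v$ from a join decomposition and then trying to verify $f(x)=f(h)$, the paper lifts the ray $c\colon t\mapsto t\cdot\tfrac{f(x)}{|f(x)|}$ in $\cone{S}$ horizontally through $x$ to obtain a geodesic $\gamma$ with $\gamma(0)=:v\in\cone{V}$; it then takes the parallel ray $\tilde\gamma$ emanating from $0$ and shows, via the homogeneity of $f$ and the $1$-Lipschitz bound, that $f(\tilde\gamma(t))=f(\gamma(t))$ for all $t$. Setting $h:=\tilde\gamma(t_0)$ with $\gamma(t_0)=x$ then gives $f(h)=f(x)$ and $|h|=t_0=|f(h)|$, so $h\in\cone{H}$, and orthogonality $\ip{h}{v}=0$ follows because $\gamma$ meets the vertical fibre $\cone{V}$ perpendicularly at $v$. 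This yields $f(x)=f(h)$ directly, with the decomposition $x=h+v$ emerging as a consequence rather than an input.

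A second, smaller gap: in your argument for~(d) you assert that the horizontal projection $x\mapsto h'$ is $1$-Lipschitz, calling this ``evident from the flat quarter-plane picture''. But two points $x_1,x_2\in\cone\Sigma$ need not lie in the \emph{same} flat quarter-plane, so the Euclidean picture does not apply directly; this non-expansiveness requires a separate comparison argument in the Alexandrov setting. The paper does not attempt this and simply cites \cite[Cor.~6.10]{Lyt} for~(d), as it does \cite[Prop.~6.4, Lem.~6.5]{Lyt} for (a) and~(b); only~(c) is proved in detail here. Your assessment that~(a) is the ``principal obstacle'' is therefore also misplaced from the paper's point of view.
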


The proof for Proposition~\ref{Prop:vertical and horizonal vectors} can be found
in \cite[Prop.~6.4, Lem.~6.5, Cor.~6.10]{Lyt}. We give a detailed proof of part
(c) since this result will be essential later on.

\begin{proof}
First note that since $H$ is polar to $V$ there may be at most one shortest curve
in $\Sigma$ connecting $H$ and $V$ and passing through $\xi=\frac{x}{|x|}$.
Otherwise we could
combine two such geodesics in such a way as to produce a branch point.
So the notation $h+v$ is well defined.

Let $y=f(x)$ and let $c$ be the geodesic ray in $\cone{S}$ emanating at 0 (i.e.~$c(0)=0$)
and passing through $y$.
There is a unique horizontal lift $\gamma$  of $c$ through $x$ since $y$ lies in
the interior of $c$. Let $\tilde{\gamma}$ be the ray parallel to $\gamma$ and
emanating at 0, i.e.\ $\tilde{\gamma}(t)+\gamma(0)=\gamma(t)$.

We define $v:=\gamma(0)$, so $v$ is contained in $\cone{V}$ because
$f(\gamma(0))=c(0)=0$. Thus $\gamma(t)=\tilde{\gamma}(t)+v$ and so
$$
    f(\tilde{\gamma}(t)+v)=f(\gamma(t))=tf(\gamma(1)).
$$
Now as $f$ is 1-Lipschitz we get
\begin{equation}\label{Eq:vertizontal splitting}
    \big|f(\gamma(t))\,f(\tilde{\gamma}(t))\big| \leq
    \big|\gamma(t)\,\tilde{\gamma}(t)\big|=|v|
\end{equation}
but on the other hand using that $f$ is homogeneous and $\tilde{\gamma}$ is a ray
we get
$$
    \big|f(\gamma(t))\,f(\tilde{\gamma}(t))\big| =
    \big|(tc(1))\,f(t\tilde{\gamma}(1))\big| =
    t\big|c(1)\,f(\tilde{\gamma}(1))\big|
$$
for arbitrarily large $t$.
Using (\ref{Eq:vertizontal splitting}) this implies
$f(\gamma(t))=f(\tilde{\gamma}(t))$ for all $t\geq0$.

In particular choosing $t_0$ such that $\gamma(t_0)=x$ we define
$h:=\tilde{\gamma}(t_0)$. Then $h\in\cone{H}$ and $f(h)=f(x)$.

Finally, by construction $\gamma$ is perpendicular to the geodesic
ray~$\Set{tv}{t\geq0}$ and hence so is $\tilde\gamma$, i.e.\ $\ip{h}{v}=0$.
\end{proof}

\begin{Rem*}
Let $f\colon X\to Y$ be a submetry between Alexandrov spaces and consider
$\diff[x]{f}\colon\cone{\Sigma}_x\to\cone{\Sigma}_{f(x)}$.
The cone $\cone{V}_x$ is the tangent cone at $x$ of the fibre of $f$ containing $x$
and $\cone{H}_x$ is the tangent cone at $x$ of the set $N_x$ of points near to $x$
(cf. \cite[Chap.~5]{Lyt}).
\end{Rem*}

\section{Equidistant Foliations}

\begin{Def}
An \emph{equidistant foliation} of $\R^n$ is a partition $\Fol$ into complete, smooth,
connected, properly embedded submanifolds of $\R^n$ such that for any two
leaves~$F,G\in\Fol$ and $p\in F$ the distance $d_G(p)$ does not depend on the
choice of~$p\in F$.
Moreover, we demand the foliation to be smooth, i.e.\ any vector tangent to a leaf
can be locally extended to a vector field that is everywhere tangent to the leaves
of~$\Fol$.

The space $\base=\R^n/\Fol$ of the leaves of $\Fol$ bears the natural metric
$d_\base(F,G)=\operatorname{dist}_{\R^n}(F,G)$ and the canonical projection $\pi\colon\R^n\to\base$
is a submetry. The leaves of $\Fol$ are then the fibres of $\pi$.
\end{Def}

\begin{Rem}\label{Rem:Molino}
Note that this definition is a special case of that of a
\emph{singular Riemannian foliation} as given by \cite{Mol}:
A partition $\fol{L}$ of a Riemannian manifold into connected immersed
submanifolds such that
\begin{enumerate}[(a)]
\item any vector tangent to a leaf can be locally extended to a vector field
tangent to the leaves of $\fol{L}$, and
\item the foliation is \emph{transnormal}, i.e.\ every geodesic that is perpendicular
at one point to a leaf remains perpendicular to every leaf it meets.
\end{enumerate}
Note that transnormality characterizes local equidistance of the leaves --- and
indeed global equidistance if the leaves are properly embedded.

Concerning condition (a) observe that Lemma~\ref{Lem:Proj is submersion} already
implies that we may extend any vector tangent to a leaf to a local vector field
everywhere tangent to the leaves. However, this vector field need not --- a priori ---
be smooth at singular leaves.
\end{Rem}

It is, however, quite reasonable to stick to our more restrictive definition
as the additional structure we gain is very useful.
For example the submetry $\pi$ and the base space $\base$ have
some nice properties (cf. \cite[Prop.~12.8--12.11]{Lyt}):

\begin{Prop}
\begin{enumerate}[(a)]
\item Let $p$ be any point in $\R^n$. Then the set $N_p$ of points near to $p$
is convex.
\item Let $F$ be the leaf passing through $p$.
Then any direction perpendicular to $T_pF$ is horizontal,
and there is a positive number~$\eps$ such that for any direction $\xi_p\in\normal_pF$
there is a horizontal geodesic of length at least $\eps$ starting in the direction
of~$\xi_p$.

Consequently, at $\bar{p}:=\pi(p)$, for any $\bar{\xi}\in\Sigma_{\bar{p}}\base$
there is a geodesic in $\base$ emanating at $p$ of length at least $\eps$ with
direction $\bar{\xi}$.
\end{enumerate}
\end{Prop}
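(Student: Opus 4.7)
For (a) I plan a short Euclidean convexity computation. Given $q_1,q_2\in N_p$, set $q_t:=(1-t)q_1+tq_2$. A direct expansion yields the identity
$$
\Dist{x}{q_t}^2=(1-t)\Dist{x}{q_1}^2+t\Dist{x}{q_2}^2-t(1-t)\Dist{q_1}{q_2}^2
$$
for every $x\in\R^n$. The quadratic correction is independent of $x$, so subtracting the same identity with $p$ in place of $x$ cancels it and leaves
$$
\Dist{x}{q_t}^2-\Dist{p}{q_t}^2=(1-t)\bigl(\Dist{x}{q_1}^2-\Dist{p}{q_1}^2\bigr)+t\bigl(\Dist{x}{q_2}^2-\Dist{p}{q_2}^2\bigr).
$$
Taking $x\in F:=F_p$ and using that each $q_i$ is near to $p$, every bracket is nonnegative, so $\Dist{p}{q_t}\le d_F(q_t)$; the reverse inequality is automatic, yielding $q_t\in N_p$. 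This step should be entirely routine.

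The first clause of (b) I would read off the infinitesimal picture. Apply Proposition~\ref{Prop:vertical and horizonal vectors} to the differential $\diff[p]\pi\colon T_p\R^n\to T_{\bar p}\base$, which is a homogeneous submetry between tangent cones by the discussion in Section~\ref{Sec:Differentials}. Since $F$ is a smooth submanifold, the vertical cone of $\diff[p]\pi$ equals $T_pF$, and part~(b) of the proposition identifies the horizontal vectors with the cone over the polar set of $\Sigma_pF$ in $\Sigma_p\R^n=\S{n-1}$, that is, with $\normal_pF$. The concluding assertion will then follow from part~(d) of the same proposition: the restriction $\diff[p]\pi\colon\normal_pF\to T_{\bar p}\base$ being a submetry, it is surjective onto $\Sigma_{\bar p}\base$, so every $\bar\xi\in\Sigma_{\bar p}\base$ has a unit preimage $\xi_p\in\normal_pF$, and projecting a horizontal segment at $p$ in direction $\xi_p$ yields the desired geodesic in $\base$ of direction $\bar\xi$.

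The hard step is producing a \emph{uniform} $\eps>0$ that works for every unit $\xi_p\in\normal_pF$. My plan is to exploit that $F$ is smooth and properly embedded, so the normal exponential $(y,v)\mapsto y+v$ is a diffeomorphism on a neighborhood of the zero section above any compact piece of $F$; combined with compactness of the unit sphere in $\normal_pF$ this provides $\eps>0$ such that $p$ is the unique nearest point of $F$ to $p+t\xi_p$ for every unit $\xi_p$ and every $t\in[0,\eps]$, so $d_F(p+t\xi_p)=t$. Together with the characterization $d_F=d_{\bar p}\circ\pi$ of the submetry $\pi$ this will force $d_\base(\bar p,\pi(p+t\xi_p))=t=\Dist{p}{p+t\xi_p}$, identifying $t\mapsto p+t\xi_p$ as a horizontal geodesic of length $\eps$ in the prescribed direction. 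The delicate point is to verify that properness of the embedding really excludes distant parts of $F$ from being closer than $t$, uniformly in~$\xi_p$; this is standard but needs to be invoked explicitly.
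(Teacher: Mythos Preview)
The paper does not supply its own proof of this proposition; it is quoted with the attribution ``cf.\ \cite[Prop.~12.8--12.11]{Lyt}''. Your argument is therefore being measured against a citation rather than an in-text proof, and it is correct.

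Your treatment of (a) is a clean special-case argument: the quadratic identity you invoke is particular to Euclidean (or Hilbert) space, so it does not reproduce the cited general Alexandrov-space statement, but since the paper only ever uses the proposition for $\R^n$ this is entirely adequate and more elementary. For (b) your plan is sound. The identification of the infinitesimal horizontal cone with $\normal_pF$ via Proposition~\ref{Prop:vertical and horizonal vectors} is precisely how the paper's machinery is meant to be applied, and the tubular-neighbourhood argument for the uniform $\eps$ goes through once properness is used as you indicate: fix a large closed ball $B$ about $p$, note that points of $F$ outside $B$ are automatically farther from $p+t\xi_p$ than $t$ for small $t$, and apply the normal exponential on the compact piece $F\cap B$ to handle the rest. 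The step from $d_F(p+t\xi_p)=t$ to horizontality of $t\mapsto p+t\xi_p$ via $d_F=d_{\bar p}\circ\pi$ is correct, and the final surjectivity onto $\Sigma_{\bar p}\base$ follows from part~(d) of Proposition~\ref{Prop:vertical and horizonal vectors} as you say.
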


Moreover, we get from Chapter~13 of \cite{Lyt}:

\begin{Prop}
The set of regular points in $\base$ is a smooth Riemannian mani\-fold over which
$\pi$ is a smooth Riemannian submersion.
\end{Prop}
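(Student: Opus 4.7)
My plan is to work locally around a regular point $\bar{p}\in\base$, using the differential of $\pi$ together with the horizontal lifting lemma to show that $\pi$ is locally modelled by orthogonal projection of $\R^n$ onto an $m$-dimensional linear subspace, where $m=\dim\base$. Fix $p\in F:=\pi^{-1}(\bar{p})$ and consider $\diff[p]{\pi}\colon T_p\R^n\to T_{\bar{p}}\base$. By Proposition~\ref{Prop:vertical and horizonal vectors} this is a homogeneous submetry with vertical cone $\cone{V}=T_pF$ and horizontal cone $\cone{H}$; by the preceding proposition on equidistant foliations, every direction perpendicular to $T_pF$ is horizontal, so $\normal_pF\subseteq\cone{H}$. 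Regularity of $\bar{p}$ gives $T_{\bar{p}}\base\cong\R^m$, and the restriction $\diff[p]{\pi}|_{\cone{H}}\colon\cone{H}\to\R^m$ is a surjective submetry. Since $\cone{H}$ is already the Euclidean subspace $\normal_pF\subset\R^n$, the homogeneity of $\diff[p]\pi$ together with the submetry property on each ray forces $\diff[p]{\pi}|_{\normal_pF}$ to be orthogonal projection onto an $m$-dimensional summand; in particular $\dim F=n-m$.

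Since the regular set of $\base$ is open, the same analysis applies at every point of a saturated neighbourhood $\pi^{-1}(\ball{\eps}{\bar{p}})$, so all nearby leaves have dimension $n-m$ and the foliation is locally a genuine (nonsingular) Riemannian foliation there. I would then construct a chart: on a small normal disk $D\subset\normal_pF$ at $p$ set $\Phi(v)=p+v$. By Lemma~\ref{Lem:Horizontal lift of shortest paths} and Remark~\ref{Rem:Horizontal lift of geodesic} every geodesic of $\base$ starting at $\bar{p}$ has a unique horizontal lift to $p$, and in $\R^n$ this lift is a straight line $t\mapsto p+t\xi$ with $\xi\in\normal_pF$. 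Hence $\pi\circ\Phi$ coincides on $D$ with the linear orthogonal projection from $\normal_pF$ onto $T_{\bar{p}}\base\cong\R^m$ and is, by the submetry property, a bijective isometry onto $\ball{\eps}{\bar{p}}$. This isometry endows a neighbourhood of $\bar{p}$ with the structure of a smooth Riemannian manifold. Smoothness and the Riemannian-submersion property of $\pi$ on $\pi^{-1}(\ball{\eps}{\bar{p}})$ follow because $\pi$ is realised, in the normal tube, by $q+v\mapsto\bar{p}+\diff[p]{\pi}(v)$, with $q$ ranging over a smooth neighbourhood $U\subset F$ and $v$ over a normal disk, all orthogonal projections being linear.

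The main obstacle is the rigidity step identifying $\cone{H}$ with $\normal_pF$ and forcing $\diff[p]{\pi}|_{\cone{H}}$ to be a linear orthogonal projection: one must rule out that the submetry ``folds'' horizontal directions. This is handled by combining that (i)~$\cone{H}\supseteq\normal_pF$ by the equidistant-foliations proposition, (ii)~$\cone{H}$ cannot contain more than $\normal_pF$ because vertical and horizontal directions are $\cone{V}$-polar, and (iii)~a homogeneous submetry between Euclidean spaces that preserves rays emanating from $0$ is linear. A secondary point is the compatibility of the charts constructed at different regular base points, which is automatic from the uniqueness of horizontal lifts and the fact that each chart realises the same submetry $\pi$.
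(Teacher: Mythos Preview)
The paper does not prove this proposition; it is quoted from Chapter~13 of \cite{Lyt}. So there is no in-paper argument to compare against, only your attempt to supply one.

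The first half of your outline is sound: identifying $\cone H=\normal_pF$ via polarity and the preceding proposition is correct, and it is true that a homogeneous submetry from a Euclidean space onto a \emph{Euclidean} space must be a linear isometry (so $\dim\normal_pF=m$ and $\dim F=n-m$). Your justification (iii) is thin, though: ``homogeneity together with the submetry property on each ray'' does not by itself give linearity. One really needs that the target sphere is the round $\S{m-1}$ and an argument via antipodes to see that the restricted submetry $H\to\S{m-1}$ is distance-preserving, hence a bijective isometry.

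The genuine gap is the claim that $\pi\circ\Phi$ is a \emph{bijective isometry} onto $\ball{\eps}{\bar p}$. This would force the regular part of $\base$ to be flat, which already fails for the $\U{1}$-action on $\C^2$ by scalar multiplication: the regular part of the quotient is the cone over $\S{2}_{1/2}$ minus its apex and has strictly positive curvature. What horizontal lifting actually yields is $\pi\circ\Phi=\exp_{\bar p}\circ\lt(\diff[p]{\pi}|_{\normal_pF}\rt)$; the inner map is a linear isometry, but $\exp_{\bar p}$ is only $1$-Lipschitz, not an isometry. Your slice map therefore gives a topological chart but cannot be used to \emph{transport} a Riemannian metric from $D$ to $\ball{\eps}{\bar p}$, and the description of $\pi$ as $q+v\mapsto\bar p+\diff[p]{\pi}(v)$ in the tube fails for the same reason: the normal slice is orthogonal to the leaf only at $p$, not at nearby points. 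To complete the argument one must instead use that, once all nearby leaves have dimension $n-m$, the foliation is a regular Riemannian foliation near $F$, and then invoke the standard construction of the transverse metric (or the smoothness results of \cite{Lyt}) to obtain the Riemannian structure on the quotient and the submersion property of $\pi$.
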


We call the fibres over regular points of $\base$ the \emph{regular leaves}
of~$\Fol$.

We introduce some notation commonly used when dealing with Riemannian submersions:

We denote the \emph{vertical space} $T_pF$ at $p\in F$ by $\Ver[p]$ and the
\emph{horizontal space} $\normal_pF$ by $\Hor[p]$. Note that $\Ver$ and $\Hor$
are (at least locally) spanned by smooth vector fields
(see Lemma~\ref{Lem:Proj is submersion}).
We denote the set of vertical and horizontal vector fields by $\VerF$ and $\HorF$
respectively.

Let $\kov{}{}$ be the standard Levi-Civita connection on $\R^n$, and
$\kovV{}{}$ and $\kovH{}{}$ its projections to $\Ver$ and $\Hor$ respectively.

The \emph{shape operator} $\shape{}{}$ of $F\in\Fol$ is as usual the 1-form
on $\Hor[F]$ with values in the symmetric endomorphisms  of $\Ver[F]$ that is
dual to the second fundamental form $\sff$ of $F$:
$$
    \shape{X}{V} = -\kovV{V}{X}, \qquad X\in\HorF, V\in\VerF.
$$

The \emph{integrability tensor} or \emph{O'Neill tensor} $\oneill{}{}$ is
the skew symmetric 2-form on $\Hor$ with values in $\Ver$, given by
$$
    \oneill{X}{Y}=\frac{1}{2}\ver{\lie{X}{Y}}=\kovV{X}{Y}, \qquad X,Y\in\HorF.
$$

A vector field $\xi$ on the regular part of $\Fol$ which is everywhere horizontal
and for which $\diff{\pi}\xi$ is a well defined vector field on the regular part of~$\base$
is called \emph{basic horizontal} or \emph{Bott-parallel}. We denote the set
of Bott-parallel vector fields by~$\BottF$.

Observe that on the regular part of $\Fol$ we have
$$
    \lie{\BottF}{\VerF} \subset \VerF
$$
and as a consequence
$$
    \kovH{V}{\xi}=\kovH{\xi}{V}=-\oneillAd{\xi}{V}, \qquad V\in\VerF, \xi\in\BottF
$$
where $\oneillAd{\xi}{}$ is the pointwise adjoint of $\oneill{\xi}{}$.

\begin{Rem}\label{Rem:Oneills Formula}
As a consequence of O'Neill's formula
(using the constant curvature
of~$\R^n$) the O'Neill vector fields $\oneill{\xi}{\eta}$ for $\xi,\eta\in\BottF$
have constant norm along the regular leaves of $\Fol$.
\end{Rem}

\subsubsection*{Lifting through singular leaves}

We are frequently in a situation where we want to lift a curve that is the projection
of a geodesic which at least starts horizontally. This means the start of the
projected curve is a geodesic but the whole curve may not be due to the fact that
there may be points in the base, such as the boundary, beyond which a geodesic
cannot be extended.

Such projections of geodesics which start horizontally are
\emph{quasigeodesics} (see for example \cite{PP94} for a concise definition and further
properties of quasigeodesics).
We only mention a few key properties (cf.\ \cite[Sect.~12.4]{Lyt}):

\begin{Prop}\label{Fact:Quasigeodesics}
Let $X$ be an Alexandrov space.
\begin{enumerate}[(a)]
\item For any $x\in X$ and $\xi\in\Sigma_x$ there is a quasigeodesic $\bar\gamma$
emanating from $x$ with direction $\xi$.

\item If there is a shortest curve $\gamma$ of length $l$ starting at $x$ with the
same direction $\xi$ then $\bar\gamma$ agrees with $\gamma$ up to length $l$.
\item If $X$ is the base space of a submetry $f\colon M\to X$ from a Riemannian
manifold then any quasigeodesic in~$X$ defined on a bounded
(not necessarily compact)
interval consists
of finitely many geodesic pieces.

\item Let $\gamma$ be a geodesic in $M$ starting horizontally, then $f\circ\gamma$
is a quasigeodesic in~$X$.
\end{enumerate}
\end{Prop}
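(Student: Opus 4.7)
Parts (a), (b), (d) are essentially standard facts about Alexandrov spaces and I would only recall the main ideas; the real content lies in (c).

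For (a), I would construct $\bar\gamma$ as the limit of a sequence of broken geodesics. Starting at $x$ with initial direction $\xi$, I would build, for each mesh size $1/n$, a piecewise shortest curve whose successive segments have length $1/n$ and whose direction at each breakpoint is chosen to minimize the angle with the forward direction of the previous segment. Local compactness of finite dimensional complete Alexandrov spaces together with the uniform unit speed bound allows an Arzelà-Ascoli extraction; the Alexandrov comparison inequality for the broken segments passes to the limit and yields the defining quasigeodesic inequality. For (b), a shortest curve automatically satisfies the quasigeodesic inequality, so $\gamma$ itself is a quasigeodesic on $[0,l]$. Since geodesics in Alexandrov spaces do not branch, the constructed $\bar\gamma$ (starting at $x$ with direction $\xi$) must agree with $\gamma$ on some initial interval, and a standard maximality argument extends agreement to the whole of $[0,l]$.

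For (d), near $t=0$ the horizontality of $\gamma$ together with Lemma~\ref{Lem:Horizontal lift of shortest paths} identifies $f\circ\gamma$ as a shortest curve, hence a quasigeodesic on that initial piece. On a general subinterval of $[0,T]$ the image need not be shortest, but the quasigeodesic inequality is obtained by a lifting argument: at any interior point, a shortest path competitor in $X$ lifts horizontally to $M$ to compete with $\gamma$ near the corresponding point, and the fact that $f$ is $1$-Lipschitz together with the triviality of the inequality for the straight line $\gamma$ in the flat ambient space transfers the inequality to $f\circ\gamma$.

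For (c), given a quasigeodesic $\bar\gamma\colon[0,T]\to X$, I would iteratively apply horizontal lifting (Lemma~\ref{Lem:Horizontal lift of shortest paths}) to produce geodesic pieces in $M$ that project to $\bar\gamma$. Starting at $\bar\gamma(0)$, lift an initial shortest subarc horizontally; continue the resulting horizontal geodesic in $M$ as long as its projection agrees with $\bar\gamma$, obtaining a first maximal interval $[0,t_1]$. At $\bar\gamma(t_1)$ restart the process, producing $[t_1,t_2]$, and so on. The key step, and the main obstacle, is to exclude accumulation of the breakpoints $t_i$ in $[0,T]$. I would argue this by compactness: the lifted geodesic pieces live in the compact set $f^{-1}(\bar\gamma([0,T]))\subset M$, and at each breakpoint the forward direction of $\bar\gamma$ must leave the set of directions realizable as $\diff{f}$-images of horizontal tangent vectors at the chosen lift, i.e.\ cross a singular stratum of the natural stratification of $M$ by orbit-type of the fibres of $f$. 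Since this stratification is locally finite (this is where the manifold structure of $M$ is crucially used), only finitely many crossings can occur before $T$, and on each stratum the horizontal lift extends as a smooth geodesic; hence $\bar\gamma$ is a finite concatenation of geodesic pieces.
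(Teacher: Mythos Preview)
The paper does not prove this proposition at all: it is stated as a collection of ``key properties'' of quasigeodesics and is referenced to \cite{PP94} and \cite[Sect.~12.4]{Lyt}. There is therefore no proof in the paper to compare your proposal against; what you have written is an attempt to supply the argument the paper outsources to the literature.

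As for your sketches themselves, a few remarks. Your outline for (a) and (d) is in the right spirit and close to what one finds in Perelman--Petrunin and Lytchak. Your argument for (b), however, leans on ``geodesics in Alexandrov spaces do not branch'', which is the wrong tool here: quasigeodesics \emph{can} branch, and $\bar\gamma$ is only a quasigeodesic, not a geodesic. The actual reason (b) holds is the comparison characterization of quasigeodesics: along any quasigeodesic the distance to a fixed point is as concave as in the model space, and if a genuine shortest curve with the same initial data exists, equality in the comparison forces the quasigeodesic to coincide with it on that interval. For (c), your ``orbit-type stratification'' language is borrowed from group actions and does not literally apply to an arbitrary submetry $f\colon M\to X$; what Lytchak uses instead is the stratification of $X$ (or of $M$) by dimension of the fibres and the local structure theory of submetries from Riemannian manifolds. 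The finiteness of breakpoints then comes from the fact that a horizontal geodesic in $M$ can meet lower strata only at isolated parameter values on a bounded interval, which requires the regularity theory for submetries rather than a bare compactness argument. Your overall strategy for (c) --- lift piecewise, control the breakpoints --- is correct, but the justification for non-accumulation needs the machinery of \cite{Lyt} rather than the heuristic you give.
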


This enables us to prove:

\begin{Prop}\label{Prop:Horizontal lift of quasigeodesics}
Let $f\colon M\to X$ be a submetry between a Riemannian manifold and an
Alexandrov space and let $\gamma\colon[0,l]\to M$ be a geodesic such that the
restriction of $\gamma$ to $[0,\eps]$ for some $\eps>0$ is horizontal.

Then for any $p'$ in the same fibre as $p:=\gamma(0)$ it is possible to lift~$f\circ\gamma$
as a geodesic to $p'$ and this lift is unique if the lift of
$f\circ\gamma|_{[0,\eps]}$ is unique.
\end{Prop}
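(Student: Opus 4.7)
The plan is to build $\gamma'$ piece by piece, using horizontal lifts on each shortest segment of $\bar\gamma := f\circ\gamma$ and exploiting the Riemannian geodesic structure of $M$ to glue the pieces smoothly across the break points of the quasigeodesic $\bar\gamma$. Since $\gamma|_{[0,\eps]}$ is horizontal, $\bar\gamma$ is a quasigeodesic by Fact~\ref{Fact:Quasigeodesics}(d), and by Fact~\ref{Fact:Quasigeodesics}(c) it decomposes into finitely many shortest geodesic pieces along a partition $0 = t_0 < t_1 < \cdots < t_N = l$.

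I would first lift the initial piece $\bar\gamma|_{[t_0,t_1]}$ at $p'$ to a horizontal geodesic $\gamma'_0$ in $M$ using Lemma~\ref{Lem:Horizontal lift of shortest paths}(a) together with Remark~\ref{Rem:Horizontal lift of geodesic}. For the inductive step, assume a horizontal geodesic lift of $\bar\gamma|_{[t_0, t_i]}$ has been constructed, ending at some $p'_i \in f^{-1}(\bar\gamma(t_i))$. Extend this lift as a Riemannian geodesic $\tilde\gamma'_i$ in $M$ beyond $t_i$; since its tangent at $t_i$ is horizontal (being the limit of horizontal tangents of a horizontal lift), Fact~\ref{Fact:Quasigeodesics}(d) ensures that $f\circ\tilde\gamma'_i$ is a quasigeodesic emanating from $\bar\gamma(t_i)$. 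The differentiability of the submetry $f$ established in Section~\ref{Sec:Differentials}, together with the chain rule, forces both one-sided derivatives of $\bar\gamma$ at $t_i$ to coincide with $\diff[\gamma(t_i)]{f}(\dot\gamma(t_i))$, because $\dot\gamma(t_i)$ is a single tangent vector in $M$ and is sent by $\diff[\gamma(t_i)]{f}$ to a single direction in $T_{\bar\gamma(t_i)}X$. Hence $f\circ\tilde\gamma'_i$ starts at $\bar\gamma(t_i)$ with the same direction as the shortest piece $\bar\gamma|_{[t_i, t_{i+1}]}$, and Fact~\ref{Fact:Quasigeodesics}(b) forces the two to agree on $[t_i, t_{i+1}]$. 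Since $\tilde\gamma'_i$ has the same length as its projection over this interval, it is itself a horizontal lift of $\bar\gamma|_{[t_i, t_{i+1}]}$, so one may define $\gamma'_i := \tilde\gamma'_i|_{[t_i, t_{i+1}]}$. Iterating produces a smooth geodesic $\gamma'\colon [0, l] \to M$ with $f\circ\gamma' = \bar\gamma$.

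For uniqueness, suppose the horizontal lift of $\bar\gamma|_{[0,\eps]}$ at $p'$ is unique. Then any two geodesic lifts of $\bar\gamma$ starting at $p'$ must agree on $[0,\eps]$, and having the same initial data in the Riemannian manifold $M$ they coincide throughout. The main obstacle is the inductive step: ensuring that the Riemannian geodesic extension in $M$ continues to project onto $\bar\gamma$ across a break point of the quasigeodesic. This rests on the chain rule for submetries, which forces the one-sided derivatives of $\bar\gamma$ at each break point to agree, and on the directional uniqueness of quasigeodesics encoded in Fact~\ref{Fact:Quasigeodesics}(b), which rules out the projected extension following some other quasigeodesic branch at $\bar\gamma(t_i)$.
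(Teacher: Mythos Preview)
Your inductive, piece-by-piece strategy matches the paper's, but the step you label ``Hence'' hides the real work. Your chain-rule argument shows only that the outgoing direction of $\bar\gamma$ at the hinge $t_i$ is $\diff[\gamma(t_i)]{f}(\dot\gamma(t_i))$, computed at the point $q:=\gamma(t_i)$. The outgoing direction of $f\circ\tilde\gamma'_i$, however, is $\diff[p'_i]{f}(\dot{\tilde\gamma}'_i(t_i))$, computed at a \emph{different} point $p'_i$ in the same fibre. All you actually know is that the \emph{incoming} directions match: $\diff[q]{f}(-\dot\gamma(t_i))=\diff[p'_i]{f}(-\dot{\tilde\gamma}'_i(t_i))$, since both curves are horizontal lifts of the same segment $\bar\gamma|_{[t_{i-1},t_i]}$. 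Passing from agreement on the incoming directions to agreement on their antipodes is not a consequence of the chain rule; the differentials $\diff[q]{f}$ and $\diff[p'_i]{f}$ are two different submetries from spheres of horizontal directions into $\Sigma_{\bar\gamma(t_i)}X$, and nothing so far prevents them from sending antipodes to distinct points.

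This is exactly the content of the paper's Lemma~\ref{Lem:submetries and antipodes}: two submetries $f,g\colon\S{m}\to B$ that agree at one point must agree at its antipode. The paper proves this by induction on $m$, lifting a geodesic piece in $B$ horizontally under each of $f$ and $g$, and comparing their differentials at the next hinge. Without this lemma your inductive step does not go through, because Fact~\ref{Fact:Quasigeodesics}(b) only lets you conclude $f\circ\tilde\gamma'_i=\bar\gamma$ on $[t_i,t_{i+1}]$ \emph{after} you know their starting directions at $t_i$ coincide. So the overall architecture of your proof is correct, but you need to insert and prove the antipode lemma to justify the gluing at each hinge point.
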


First we show:

\begin{Lem}\label{Lem:submetries and antipodes}
Let $B$ be a connected Alexandrov space and $f,g\colon\S{n}\to B$ a~submetry with
$f(p)=g(p)$ for some point $p\in\S{n}$. Then $f(-p)=g(-p)$.
\end{Lem}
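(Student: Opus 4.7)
The plan is to show that for every direction $\bar\xi \in \Sigma_{\bar p}B$ at $\bar p := f(p) = g(p)$, the great circles in $\S{n}$ starting at $p$ with initial direction a horizontal lift of $\bar\xi$ project—under $f$ and $g$ respectively—to the \emph{same} quasi-geodesic in $B$, and that evaluating this common quasi-geodesic at parameter $\pi$ forces $f(-p) = g(-p)$.

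More concretely, I would fix any $\bar\xi \in \Sigma_{\bar p}B$ and, using Proposition~\ref{Prop:vertical and horizonal vectors}(d) applied to the submetries of tangent cones $\diff[p]{f}, \diff[p]{g} : T_p\S{n} \to T_{\bar p}B$, choose unit horizontal lifts $\xi$ (with respect to $f$) and $\eta$ (with respect to $g$) of $\bar\xi$ in $\Sigma_p\S{n}$, so that $\diff[p]{f}(\xi) = \diff[p]{g}(\eta) = \bar\xi$. The corresponding great circles $\gamma_\xi, \gamma_\eta : [0,\pi] \to \S{n}$, $\gamma_\xi(t) = \exp_p(t\xi)$ and $\gamma_\eta(t) = \exp_p(t\eta)$, both satisfy $\gamma_\xi(\pi) = \gamma_\eta(\pi) = -p$, since in $\S{n}$ every geodesic of length $\pi$ issuing from $p$ reaches the antipodal point. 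By Proposition~\ref{Fact:Quasigeodesics}(d), both $f\circ\gamma_\xi$ and $g\circ\gamma_\eta$ are quasi-geodesics in $B$ emanating from $\bar p$ with common initial direction $\bar\xi$. Because horizontal geodesics from the Riemannian total space $\S{n}$ stay horizontal under the submetry (the horizontal distribution being invariant along horizontal geodesic flow on the regular stratum), each projected curve has unit speed, so its parameter $t \in [0,\pi]$ coincides with arc length in $B$.

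The uniqueness of quasi-geodesics emanating from a fixed point in a prescribed direction, which is implicit in Proposition~\ref{Fact:Quasigeodesics}(a)--(b), then gives $f\circ\gamma_\xi = g\circ\gamma_\eta$ as arc-length parametrized curves in $B$. Evaluating at $t=\pi$ yields
\[
    f(-p) = (f\circ\gamma_\xi)(\pi) = (g\circ\gamma_\eta)(\pi) = g(-p),
\]
as required. The most delicate step is the claim that a horizontal great circle in $\S{n}$ remains horizontal along its entire length: without this, the two projected quasi-geodesics coincide only as unparametrized curves but might reach $f(-p)$ and $g(-p)$ at different arc lengths, breaking the argument. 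I would resolve this by combining the Riemannian-submersion structure of $f$ and $g$ on the regular locus of $\S{n}$ with a continuity argument at crossings of singular fibers, using that the great circles $\gamma_\xi$ and $\gamma_\eta$ are smooth in $\S{n}$ and that the vertical distribution is upper semicontinuous.
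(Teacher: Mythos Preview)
Your argument has a genuine gap at the step where you invoke ``uniqueness of quasi-geodesics emanating from a fixed point in a prescribed direction, which is implicit in Proposition~\ref{Fact:Quasigeodesics}(a)--(b)''. Neither (a) nor (b) gives uniqueness: (a) is pure existence, and (b) only forces a quasi-geodesic to coincide with a shortest curve \emph{as long as such a shortest curve exists in that direction}. In a general Alexandrov space quasi-geodesics with a prescribed initial direction are \emph{not} unique; the non-uniqueness arises exactly at the points where the geodesic in $B$ can no longer be extended as a shortest curve, i.e.\ at the hinge points of the projected curve. So your argument shows $f\circ\gamma_\xi = g\circ\gamma_\eta$ only on the first geodesic segment, and says nothing about what happens after the first hinge.

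The paper's proof is designed precisely to get past these hinge points, and it does so by induction on~$n$. At a hinge point $\bar q$ one looks at the differentials $\diff[q_1]{f},\,\diff[q_2]{g}\colon \S{n-1}\to\Sigma_{\bar q}B$, which are again submetries and agree on the incoming direction (after the natural identification of the two unit spheres). The inductive hypothesis then forces them to agree on the antipodal, i.e.\ outgoing, direction --- so the two projected quasi-geodesics continue in the same direction, and one can repeat the argument on the next geodesic segment. This induction is exactly the missing ingredient in your approach; without it there is no mechanism preventing $f\circ\gamma_\xi$ and $g\circ\gamma_\eta$ from diverging at the first singular crossing. Your side discussion about horizontal geodesics remaining horizontal is a separate (and lesser) issue; Proposition~\ref{Fact:Quasigeodesics}(d) already hands you an arc-length--parametrized quasi-geodesic, so that point does not need further justification.
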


\begin{proof}
We use induction over the dimension $n$ of the sphere.
For $n=0$ there is nothing to show since $B$ has to be a single point.

So suppose our claim holds for $\S{k}$ with $k=0,\ldots, n-1$.
Let $v,w$ be unit vectors in $T_p\S{n}$, horizontal with respect to $f$ and $g$
respectively, such that $\diff[p]{f}(v)=\diff[p]{g}(w)$.
Denote by $\gamma_v$ and $\gamma_w$ the geodesics starting at $p$ with direction
$v$ and $w$ respectively.

We show that $f\circ\gamma_v=g\circ\gamma_w$. Then $f$ and $g$ agree at
$\gamma_v(\pi)=\gamma_w(\pi)=-p$.

Note that up to some maximal time $t_0$ the curves $f\circ\gamma_v$ and $g\circ\gamma_w$
are geodesics in $X$ starting at the same point in the same direction; hence they
agree at the beginning, up to the point
$\bar{q}:=f\circ\gamma_v(t_0)=g\circ\gamma_w(t_0)$.
Denote by $q_1$ and $q_2$ the
points $\gamma_v(t_0)$ and $\gamma_w(t_0)$ respectively and define
$$
    \tilde{v}:=\Ddt\gamma_v(t_0-t), \qquad \tilde{w}:=\Ddt\gamma_w(t_0-t).
$$

We can then identify the space of directions $\S{n-1}$ at $q_1$ with that at $q_2$
setting $\tilde{v}=\tilde{w}$.
Then $\diff[q_1]{f}, \diff[q_2]{g}\colon\S{n-1}\to\Sigma_{\bar{q}}B$ are submetries
agreeing at a point and hence, by induction, at its antipode.

Now remember that $\gamma_v$ and $\gamma_w$ are both quasigeodesics in $X$ consisting
of finitely many geodesic segments. Applying the above argument successively to
each of these segments finishes our prove.

Note that the only problematic case, i.e.\ $\Sigma_{\bar{q}}B$ not being connected,
can arise only when $n=1$ with $\Sigma_{\bar{q}}B=\S{0}$. But then $f\circ\gamma$
can be extended beyond $\bar{q}$, so $\bar{q}$ is not a hinge point of $f\circ\gamma$.
\end{proof}

\begin{proof}[Proof of Proposition~\ref{Prop:Horizontal lift of quasigeodesics}]
We only need to check what happens at the hinge points of the quasigeodesic~$f\circ\gamma$.

Let $t_0$ be the first time $\gamma$ meets a singular fibre of $f$.
Let $\gamma'$ be the horizontal lift to $p'$ of $f\circ\gamma|_{[0,t_0]}$.

Identifying the spaces of horizontal directions at $q:=\gamma(t_0)$ and
$q'=\gamma'(t_0)$ we get that $\diff[q]{f}, \diff[q']{f}\colon\S{k}\to T_{f(q)}X$
agree on the direction from which $\gamma$ and $\gamma'$ arrive and hence on their
respective antipodes.

This allows us to continue $\gamma'$ smoothly by a lift of the next geodesic segment
in $f\circ\gamma$. Repeating this for the remaining hinge points finishes the proof.
\end{proof}

\begin{Rem}\label{Rem:Vertical Spaces}
Define $F+\xi$ to be $\Set{p+\xi_p}{p\in F}$. If $F$ is a \emph{regular} leaf in
$\Fol$ and $\xi$ is Bott-parallel along $F$ Proposition~\ref{Prop:Horizontal lift of quasigeodesics}
implies that $F+\xi$ is a leaf of $\Fol$ and the smooth map $p\mapsto p+\xi$
between these leaves is surjective.
Note that it is bijective and hence a diffeomorphism, if $F+\xi$ is regular.

In particular the tangent space $T_{p+\xi}(F+\xi)$ is given by $\Set{v+\kov{v}{\xi}}{v\in T_pF}$.
\end{Rem}

Even if $F+\xi$ is singular the map $p\mapsto p+\xi$ is at least a submersion:

\begin{Lem}\label{Lem:Proj is submersion}
Let $F\in\Fol$ be regular. Then the map $P\colon F\to G:=F+\xi$ with $P(x)=x+\xi_x$
is a surjective submersion.
\end{Lem}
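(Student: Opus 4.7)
Surjectivity of $P$ is already contained in Remark~\ref{Rem:Vertical Spaces}, so the content of the lemma is the submersion assertion, namely that $\diff[p]{P}\colon T_pF\to T_qG$ is onto for every $p\in F$, where $q:=P(p)$. Because $P\colon F\to G$ is a smooth map into the smooth submanifold $G$, the inclusion $\diff[p]{P}(T_pF)\subseteq T_qG$ is automatic; only the reverse inclusion requires argument.

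The plan is to construct a smooth local right inverse $\tau\colon U\to F$ of $P$ on a neighborhood $U\subset G$ of $q$; differentiating $P\circ\tau=\id_U$ at $q$ then yields $\diff[p]{P}\circ\diff[q]{\tau}=\id_{T_qG}$, which forces $\diff[p]{P}$ to be onto. For the construction, consider the straight line $\gamma(t):=p+(1-t)\xi_p$, $t\in[0,1]$, running from $q=\gamma(0)$ back to $p=\gamma(1)$. Since $\xi_p\in\normal_pF$, the forward line $p+t\xi_p$ is perpendicular to $F$ at $p$, so transnormality of $\Fol$ makes $\gamma$ perpendicular to $G$ at $q$; in particular $\gamma$ starts horizontally. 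Proposition~\ref{Prop:Horizontal lift of quasigeodesics} then lifts $\pi\circ\gamma$ to a geodesic $\gamma_{q'}\colon[0,1]\to\R^n$ starting at each $q'\in U$, which is again a straight line in $\R^n$ of length $|\xi_p|$ and terminates in the fibre $F$; we set $\tau(q'):=\gamma_{q'}(1)\in F$.

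The identity $P\circ\tau=\id_U$ follows from the Bott-parallel property of $\xi$: the vectors $q'-\tau(q')$ and $\xi_{\tau(q')}$ are both horizontal at the regular point $\tau(q')\in F$, both of length $|\xi_p|$, and both project under $d\pi$ to the direction $d\pi(\xi_p)\in T_{\bar p}\base$. Since $\bar p$ is a regular point of $\base$, the restriction $d\pi\colon\normal_{\tau(q')}F\to T_{\bar p}\base$ is a linear isomorphism, which forces $q'-\tau(q')=\xi_{\tau(q')}$ and hence $P(\tau(q'))=q'$. The main obstacle is the smoothness of $\tau$ on $U$; since geodesics in $\R^n$ depend smoothly on their initial data, this reduces to showing that the initial direction of $\gamma_{q'}$ depends smoothly on $q'$. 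The uniqueness clause of Lemma~\ref{Lem:Horizontal lift of shortest paths}, applied at interior points of $\gamma$ where the straight line extends in both directions, pins down $\gamma_{q'}$ from any smooth choice on an interior segment; the smooth structure of the normal bundle of the smooth submanifold $G\subset\R^n$ supplies such a choice.
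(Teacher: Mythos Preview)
Your construction of the section $\tau$ via Proposition~\ref{Prop:Horizontal lift of quasigeodesics} and the verification of $P\circ\tau=\id_U$ are correct and clean. The gap is in the last step, smoothness of $q'\mapsto\gamma_{q'}$, which is where the whole difficulty lies. Your final sentence does not do the job. When $G$ is singular --- the only case with content, since for regular $G$ Remark~\ref{Rem:Vertical Spaces} already gives a diffeomorphism --- the differential $\diff[q']{\pi}$ restricted to $\normal_{q'}G$ is a genuine submetry onto the (singular) cone $T_{\pi(q)}\base$, not a linear isomorphism. The preimage in $\normal_{q'}G$ of the fixed initial direction of $\pi\circ\gamma$ is therefore not a single vector, and an arbitrary smooth extension of $-\xi_p$ to a section of $\normal G$ will in general \emph{not} project to that fixed direction; the resulting straight lines are then not lifts of $\pi\circ\gamma$ and need not terminate in $F$. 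So ``the smooth structure of $\normal G$'' alone does not supply the required smooth family.

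Your approach can be salvaged: pick $t_0\in(0,1)$ before the first hinge of $\pi\circ\gamma$, small enough that $H:=F+(1-t_0)\xi$ is regular and $G$ lies in a tubular neighbourhood of $H$; let $h(q')\in H$ be the foot of the perpendicular from $q'$, which depends smoothly on $q'$. Equidistance forces $|q'-h(q')|=\dist{G}{H}$, so the segment $q'h(q')$ is a minimizing horizontal geodesic between the leaves and thus projects to the \emph{unique} shortest path from $\pi(G)$ to $\pi(H)$ (uniqueness because that path extends beyond $\pi(H)$). Hence $\gamma_{q'}(t):=q'+\tfrac{t}{t_0}(h(q')-q')$ is a lift of $\pi\circ\gamma$ depending smoothly on $q'$, and your argument goes through. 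The paper takes an entirely different route: it assumes $\diff[p]{\tilde P}$ misses some $v\in\Ver[q]$, transports $v$ Bott-parallel along $F$ to a field $\eta$, and uses the constancy of $\norm{\oneill{\eta}{\xi}}$ along $F$ (Remark~\ref{Rem:Oneills Formula}) to conclude that $\diff{\tilde P}$ is nowhere surjective, contradicting Sard.
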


\begin{proof}
Observe that we can extend $\xi$ to be a Bott-parallel normal field in a
neighborhood of $F$ such that $F'+\xi=G$ for all leaves $F'$ in that neighborhood.
Using this we can also extend $P\colon p\mapsto p+\xi_p$ to the same neighborhood
renaming $P|_F\colon F \to G$ to $\tilde{P}$.

Note that for any point $p$ the differential $\diff[p]{P}$ is just the orthogonal projection
onto~$\Ver[p+\xi_p]$.

Now assume there is a point $p\in F$ such that
$\diff[p]{\tilde{P}}\colon\Ver[p]\to\Ver[q]$, with $q:=p+\xi_p$, is not surjective.
We show that $\diff{\tilde{P}}$ is nowhere surjective along $F$.

So take some $v\in\Ver[q]$ perpendicular to the image of $\diff[p]{\tilde{P}}$.
Then $v$, or rather its parallel translate to $p$, is contained in~$\normal_pF$
since $\ip{v}{x}=\ip{v}{\diff[p]{P}x}$ for any vector $x$ with base point $p$.
Let $\eta$ be the extension of $v$ to a Bott-parallel normal field along~$F$.

We get
$$
    \diff{P}\eta = \eta + \kov{\eta}{\xi} =
        \lt(\eta + \kovH{\eta}{\xi}\rt) + \oneill{\eta}{\xi}
$$
and $\kovH{\eta}{\xi}$ is again a Bott-parallel normal field along $F$.
By Remark~\ref{Rem:Oneills Formula} the norm of $\diff{P}\eta$ is constant along~$F$,
which implies that $\diff{P}\eta=\eta$ for any point in $F$
since $\diff{P}$ is an orthogonal projection at every point.

Hence, the differential of $\tilde{P}$ is nowhere surjective. But since
$\tilde{P}\colon F\to G$ is a surjective map its singular values should be
a set of measure zero in $F$ by Sard's Theorem.
\end{proof}

Using Proposition~\ref{Prop:Horizontal lift of quasigeodesics} we can prove the
following rigidity result for the regular leaves of $\Fol$
(based on the idea of \cite[Lem.~6.1]{HLO} for the case of Riemannian submersions).

\begin{Prop}\label{Prop:Principal curvatures are constant}
Let $\Fol$ be an equidistant foliation of $\R^n$ and $\pi\colon \R^n\to \base$
the corresponding submetry.
Then for any regular leaf~$F$ the principal curvatures in the direction
of Bott-parallel~$\xi$ are constant along $F$.
\end{Prop}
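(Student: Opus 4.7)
The plan is to combine horizontal lifts of quasigeodesics with the Bott-parallel structure to show that the ``focal behavior'' along the normal geodesics $\gamma_p(t):=p+t\xi_p$ is the same for every $p\in F$, and then to extract the constancy of the principal curvatures from this invariance.

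First I would argue that the projected curves $\bar\gamma_p:=\pi\circ\gamma_p$ collapse to a single quasigeodesic $\bar\gamma$ in $\base$. Since $\xi_p\in\normal_pF=\Hor[p]$, each $\gamma_p$ is horizontal, so $\bar\gamma_p$ is a quasigeodesic starting at $\pi(p)$ with direction $\diff[p]{\pi}\xi_p$ by Proposition~\ref{Fact:Quasigeodesics}(d). Bott-parallelism makes this starting direction a well-defined vector at $\pi(F)\in\base$ independent of $p$, and uniqueness of quasigeodesics (Proposition~\ref{Fact:Quasigeodesics}(a),(b)) gives $\bar\gamma_p=\bar\gamma$. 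In particular $F_t:=F+t\xi=\pi^{-1}(\bar\gamma(t))$ is a single leaf depending only on $t$, and the set of times at which $F_t$ is singular is independent of $p$.

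Next, Lemma~\ref{Lem:Proj is submersion} applied to the Bott-parallel field $t\xi$ gives that $P_t\colon F\to F_t$, $x\mapsto x+t\xi_x$, is a surjective submersion of constant rank $\dim F_t$, so $m(t):=\dim\ker\diff[p]{P_t}$ is independent of $p\in F$. A direct computation in flat ambient space yields
\[
    \diff[p]{P_t}\,v \;=\; v+t\kov{v}{\xi} \;=\; \bigl(I-t\shape{\xi_p}{}\bigr)v\;+\;t\kovH{v}{\xi},
\]
whose two summands are vertical respectively horizontal at $p$. Hence $\ker\diff[p]{P_t}$ equals the intersection of the $1/t$-eigenspace of $\shape{\xi_p}{}$ with $\ker\kovH{\cdot}{\xi_p}$, and this intersection has dimension independent of $p$.

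Finally, to promote this to constancy of the individual eigenvalue multiplicities of $\shape{\xi_p}{}$, I would exploit that Bott-parallelism yields $\kovH{v}{\xi_p}=-\oneillAd{\xi}{v}$, while Remark~\ref{Rem:Oneills Formula} says $\oneill{\xi}{\cdot}$ has constant norm along $F$; combined with the perturbation strategy of \cite[Lem.~6.1]{HLO} --- replacing $\xi$ by Bott-parallel deformations $\xi+\veps\eta$ and tracking the singular times of the resulting submersions --- this decouples the shape-operator eigenspace condition from the O'Neill correction and forces every eigenvalue of $\shape{\xi_p}{}$, with its multiplicity, to arise as (the reciprocal of) a singular time of some perturbed submersion, hence to be $p$-independent. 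This last step is the main obstacle: the naive kernel computation is strictly weaker than what is needed, so one must genuinely use the interplay of Bott-parallelism, O'Neill's formula in flat ambient space, and the $p$-independence of the singular time set of $\bar\gamma$ to close the gap.
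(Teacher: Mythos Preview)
Your setup through the kernel computation is sound: indeed $\ker\diff[p]{P_t}$ equals the intersection of the $1/t$-eigenspace of $\shape{\xi_p}{}$ with $\ker\oneillAd{\xi_p}{}$, and its dimension is $p$-independent by Lemma~\ref{Lem:Proj is submersion}. But you yourself flag the obstacle, and it is real: this intersection only sees eigenvectors annihilated by $\oneillAd{\xi}{}$, and the ``perturbation strategy'' you sketch does not close the gap. Replacing $\xi$ by $\xi+\veps\eta$ perturbs $\shape{\xi}{}$ and $\oneillAd{\xi}{}$ simultaneously, so the intersection condition stays entangled; nothing in your outline explains why an eigenvalue whose eigenspace meets $\ker\oneillAd{\xi_p}{}$ trivially would become visible as a singular time of some perturbed submersion. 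As written this last step is a wish, not an argument.

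The paper sidesteps the kernel of $P_t$ altogether. For an eigenvector $v$ of $\shape{\xi_p}{}$ with eigenvalue $\lambda\neq0$ it takes the \emph{horizontal} variation $\alpha_p(s,t)=p+t\bigl(\xi_p-s\,\oneillAd{\xi}{v}|_p\bigr)$ of $\gamma_p$ --- varying the \emph{direction} within $\Hor[p]$, not the foot point along $F$. The associated Jacobi field $J_p(t)=-t\,\oneillAd{\xi_p}{v_p}$ is vertical at $t=1/\lambda$ because $v+\tfrac1\lambda\kov{v}{\xi}=-\tfrac1\lambda\oneillAd{\xi_p}{v_p}$. Projecting gives a variation of $\bar\gamma$ by quasigeodesics, which Proposition~\ref{Prop:Horizontal lift of quasigeodesics} lifts to any $q\in F$; since $\image{\oneillAd{\xi}{}}$ is Bott-parallel (Remark~\ref{Rem:Onormal is Bott-parallel}), the lift has the form $\alpha_q(s,t)=q+t(\xi_q-s\,\oneillAd{\xi_q}{w_q})$ for some $w_q$. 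Verticality of $J_q(1/\lambda)$ is inherited from the projection, and writing $J_q(1/\lambda)=v_q+\tfrac1\lambda\kov{v_q}{\xi}$ and comparing $\Ver[q]$- and $\Hor[q]$-components forces $(I-\tfrac1\lambda\shape{\xi_q}{})v_q=0$. The crucial difference from your approach is that this Jacobi field becomes vertical at $1/\lambda$ \emph{because of} $\oneillAd{\xi_p}{v_p}$ rather than despite it, so the decoupling you could not achieve by focal-multiplicity counting is built into the construction from the start.
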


\begin{proof}
Let $\lambda\neq0$ be an eigenvalue of $\shape{\xi}{}$ at $p\in F$ and $||\xi_p||=0$.
Let $v\in T_pF$ with $||v||=1$ be a corresponding eigenvector.

\begin{figure}[h]
    \begin{center}
\includegraphics{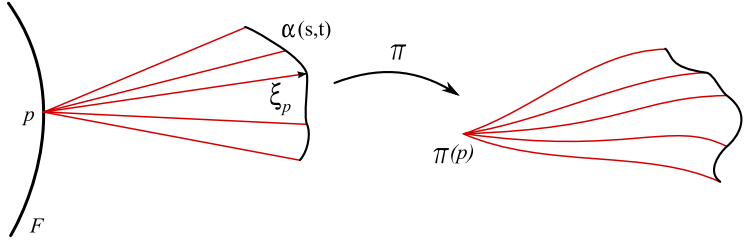}
    \caption[Main curvatures are constant.]
        {Main curvatures are constant along regular leaves.}
    \label{Fig:Main curvatures}
    \end{center}
\end{figure}

Consider the geodesic $\gamma_p(t)=t\xi_p$ and its horizontal variation
$$
    \alpha_p(s,t)=t\xi_p-st(\oneillAd{\xi}{v})_p
$$
yielding the Jacobi field
$$
    J_p(t):=\ddt[s]\alpha_p(s,t)=-t\left(\oneillAd{\xi_p}{v_p}\right)_{\gamma(t)}
$$
along $\gamma_p$.
Denote the leaf passing through $\gamma(t)$ by $F_t$ and remember that
$T_{\gamma(t)}F_t$ is spanned by $\set{v_i+t\kov{v_i}{\xi}}$ if $\set{v_i}$ is a basis
of $T_pF$.

In particular this implies for $t=1/\lambda$ that
$$
    J_p\lt(\frac{1}{\lambda}\rt)=
    -\frac{1}{\lambda}\oneillAd{\xi_p}{v_p}=
    v_p+\frac{1}{\lambda}\kov{v_p}{\xi}
$$
is vertical at $\gamma(\frac{1}{\lambda})$.

Now $\bar\alpha(s,t)=\pi\circ\alpha_p(s,t)$ is a variation of $\bar\gamma=\pi\circ\gamma_p$
by quasigeodesics and we can lift this variation to any point $q\in F$.
Thus we get the $\alpha_q(s,t)=q+t\xi_q-ts\eta$ where $\eta$ is the Bott parallel
continuation along $F$ of $\oneillAd{\xi_p}{v_p}$.

Note that the image of $\oneillAd{\xi}{}$ is equal to the image
of~$\oneillAd{\xi}{\oneill{\xi}{}}$ which is Bott parallel for $\xi\in\BottF$
(cf. Remark~\ref{Rem:Onormal is Bott-parallel}).
Hence there exists $w_q\in T_qF$ such that $\eta_q=\oneillAd{\xi_q}{w_q}$.

As a consequence of this lifting
$J_q\lt(\frac{1}{\lambda}\rt)=-\frac{1}{\lambda}\oneillAd{\xi_q}{w_q}$
is vertical at $\gamma_q(\frac{1}{\lambda})$ which means that there is a $v_q\in T_qF$
such that
$$
    J_q\lt(\frac{1}{\lambda}\rt) =
    v_q+\kov{v_q}{\xi} =
        \left(I-\frac{1}{\lambda}\shape{\xi_q}{}\right)v_q -\frac{1}{\lambda}\oneill{\xi_q}{v_q}.
$$
In particular $\left(I-\frac{1}{\lambda}\shape{\xi_q}{}\right)v_q$ vanishes,
which proves our claim.
Note that by continuity of the principal curvatures their multiplicities are
constant along~$F$ as well.
\end{proof}

\begin{Rem*}
A generalization of this result to singular Riemannian foliations has recently
been proved by Alexandrino and T\"oben (cf.\ \cite{AT}).
\end{Rem*}

\chapter{Existence of an Affine Leaf}\label{Chap:Existence}

Gromoll and Walschap show in \cite{GW:1} that a regular equidistant foliation
always has an affine leaf. To be more precise they show that the space of leaves
has a soul, which is a point, and that the leaf corresponding to the soul is an
affine space.

In Section~\ref{Sec:A soul construction} we show that it is possible to perform the
same soul construction for singular foliations as well and in
Section~\ref{Sec:Submetries onto compact Alexandrov spaces} we prove that
the soul in the singular case also has to be a point.
The approach used in the latter case is completely different to \cite{GW:1} since
their argument uses the spectral sequence for the homology of the fibration,
which does not work at all in the singular setting.

Thus we get:

\begin{Thm}\label{Thm:fibre over soul is totally geodesic}
    Let $\Fol$ be an equidistant foliation of $\R^n$ with $\pi\colon\R^n\to\base$
    the corresponding submetry.
    Then $\base$ has a soul $S$ which is a single point and the fibre over $S$
    is an affine subspace of $\R^n$.

    In short, $\Fol$ contains a leaf which is an affine subspace (possibly a single
    point) of $\R^n$.
\end{Thm}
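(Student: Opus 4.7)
My plan is to follow the two-step strategy previewed in the introduction: first produce a soul of the Alexandrov space $\base=\R^n/\Fol$ via a Cheeger-Gromoll style construction and identify its preimage as an affine subspace of $\R^n$; second, argue that the soul must collapse to a single point.

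For the soul construction, $\base$ is a complete, noncompact, nonnegatively curved Alexandrov space (nonnegativity is inherited from $\R^n$ under the submetry $\pi$). I would adapt the classical argument of Cheeger-Gromoll (cf.~\cite{CE}) to this setting: fix $\bar p\in\base$, form the intersection of the superlevel sets of the concave Busemann functions attached to all rays from $\bar p$ to obtain a maximal totally convex compact subset $C_0$; iteratively pass to the inner core of the previous totally convex set (the set of points at maximal distance from the boundary) until the dimension stabilizes at a compact totally convex soul $S\subset\base$. Every step rests only on convexity of distance functions in the presence of Toponogov comparison, tools which are available in Alexandrov geometry.

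Next, to see that $F_0:=\pi^{-1}(S)$ is an affine subspace of $\R^n$, I would show that $F_0$ is a complete, connected, totally geodesic submanifold. Total convexity of $S$ combined with the horizontal lifting property of Proposition~\ref{Prop:Horizontal lift of quasigeodesics} severely constrains the second fundamental form of $F_0$: a horizontal geodesic in $\R^n$ tangent to $F_0$ at $p$ projects to a geodesic tangent to $S$ at $\pi(p)$, which by total convexity remains in $S$, forcing its lift to stay in $F_0$. Handling the vertical directions uses that the leaves in $F_0$ are smooth equidistant submanifolds of $\R^n$ and that the principal curvatures of regular leaves in Bott-parallel directions are constant (Proposition~\ref{Prop:Principal curvatures are constant}). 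A complete connected totally geodesic submanifold of $\R^n$ is affine, which concludes this step.

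The main obstacle is showing $\dim S=0$. The proof of \cite{GW:1} used the Serre spectral sequence of the fibration $\pi$, a tool which is unavailable here because singular fibres have varying dimension. I would replace it by a geometric argument. Suppose $\dim S\geq 1$: the restriction $\pi|_{F_0}\colon F_0\to S$ is a submetry (by Lemma~\ref{Lem:factorizing submetries}) of the affine space $F_0$ onto the compact Alexandrov space $S$. Pick a regular point $\bar p\in S$, take any tangent direction $v$ to $S$ at $\bar p$, and lift $v$ horizontally to $\tilde v$ at some $p\in\pi^{-1}(\bar p)$; by the preceding step $\tilde v$ is tangent to the affine subspace $F_0$, so the entire straight line $t\mapsto p+t\tilde v$ lies in $F_0$. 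By Proposition~\ref{Prop:Horizontal lift of quasigeodesics} its projection is a complete quasigeodesic --- in fact a geodesic line --- contained in the compact soul $S$. The delicate part will be ruling out closed orbits and producing a genuine line in $\base$: once this is done, the Alexandrov splitting theorem forces $\base$ to split off an $\R$-factor, contradicting the maximality of the soul. Together with the equifocality of regular leaves, this should force $\dim S=0$ and conclude that $F_0$ is a single affine leaf.
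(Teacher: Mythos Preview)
Your two-step outline matches the paper's, but the execution diverges in both steps, and the second step has a real gap.

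For the soul construction, the paper does \emph{not} build the soul downstairs in $\base$ and then try to show $\pi^{-1}(S)$ is totally geodesic. Instead it lifts the Cheeger--Gromoll construction to $\R^n$ from the start: each horoball $B_\gamma\subset\base$ lifts to a union of open \emph{halfspaces} $B_{\tilde\gamma}\subset\R^n$, so $\tilde C=\pi^{-1}(C)$ is automatically a closed convex subset of $\R^n$, and one checks directly that it is foliated by $\Fol$. The inner-core iteration is likewise carried out upstairs, and the terminal set is a closed convex foliated subset of $\R^n$ with empty boundary in its affine hull --- hence an affine subspace, with no second-fundamental-form argument needed. Your sketch (``total convexity of $S$ forces geodesics tangent to $S$ to remain in $S$'') confuses total convexity with total geodesicness; the former controls geodesics \emph{between} points of $S$, not geodesics merely tangent to $S$. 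This can probably be repaired at regular points of $\base$, but the paper's route bypasses the issue entirely.

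The serious problem is your Step~2. You want to restrict $\pi$ to the affine preimage $F_0\to S$, take a horizontal line $\ell(t)=p+t\tilde v$ in $F_0$, and invoke the Alexandrov splitting theorem. But $\pi\circ\ell$ is only a \emph{quasigeodesic} in $S$ (Proposition~\ref{Fact:Quasigeodesics}(d)), not a line: a horizontal geodesic is perpendicular to fibres but need not minimize distance between fibres globally (think of a diameter of a circle of radius $1$ in the rotation foliation of $\R^2$). So you never produce a minimizing geodesic on all of $\R$ in the compact space $S$, and the splitting theorem does not apply. You flag this yourself (``the delicate part will be ruling out closed orbits''), but there is no mechanism in your proposal to do so; in fact the very foliations one is trying to rule out (e.g.\ a screw-motion foliation of a cylinder) produce horizontal lines whose projections are closed quasigeodesics.

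The paper's argument for Step~2 is entirely different and does not touch the splitting theorem. Assuming $\base$ compact with $\diam{\base}>0$, it fixes a regular leaf $F$, a Bott-parallel unit normal $\xi$, and studies $F_t=F+t\xi$ as $t\to\infty$. Using constancy of the principal curvatures along regular leaves (Proposition~\ref{Prop:Principal curvatures are constant}) and O'Neill's formula, it shows $\shape{\xi_t}{}$ and $\oneillAd{\xi_t}{}$ decay uniformly, so intrinsic $R$-balls in $F_t$ lie in arbitrarily thin slabs $\plane[\delta]{p_t}{\xi_t}$. A subsequential Hausdorff limit leaf $\tilde F$ then lies in a genuine hyperplane, which cannot be within $\diam{\base}$ of every point of $\R^n$ --- contradiction. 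This ``flattening'' argument is the missing idea in your proposal.
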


\section{A Soul Construction}\label{Sec:A soul construction}

We will first use the Cheeger-Gromoll soul construction
(cf.~\cite{CE})
to arrive at a totally convex, compact subset of $\base$ without boundary.

We will, however, concentrate on lifting this construction to $\R^n$ since we
are more interested in $\pi^{-1}(S)$ than in the soul $S$ itself.

\sep

Remember that a \emph{ray} $\gamma$ in a length space is a unit
speed geodesic defined on~$[0,\infty)$ such that any restriction
$\gamma|_{[0,T]}$ is a shortest path. By a ray in $\R^{n}$ we
will mean throughout this section a horizontal one (with respect to $\Fol$).
The following lemma ensures the existence of rays.

\begin{Lem}\label{Lem:existence of ray}
    For any point $p$ in a locally compact, complete, noncompact
    length space $X$ there is a ray $\gamma$ starting at $p$.
\end{Lem}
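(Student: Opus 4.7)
The plan is the standard Cheeger–Gromoll type construction of a ray via a limit of longer and longer shortest segments. Since $X$ is noncompact, closed balls around $p$ cannot exhaust $X$, so we can choose a sequence $(q_n) \subset X$ with $\Dist{p}{q_n} \to \infty$. By the Hopf–Rinow theorem (available here because $X$ is a complete, locally compact length space), $X$ is proper and geodesic, so we may connect $p$ to $q_n$ by a shortest curve $\gamma_n \colon [0, \ell_n] \to X$ parametrized by arc length, where $\ell_n = \Dist{p}{q_n}$.

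Next I would apply Arzelà–Ascoli. Fix $T>0$. For all sufficiently large $n$ we have $\ell_n \geq T$, and the curves $\gamma_n|_{[0,T]}$ are $1$-Lipschitz with all values in the compact ball $\overline{B_T(p)}$ (compact by properness). Hence a subsequence converges uniformly on $[0,T]$ to a $1$-Lipschitz curve $\gamma^T \colon [0,T] \to X$. A standard diagonal argument over $T = 1, 2, 3, \ldots$ produces a single subsequence, which I still call $(\gamma_n)$, that converges uniformly on every compact subset of $[0,\infty)$ to a $1$-Lipschitz curve $\gamma \colon [0, \infty) \to X$ with $\gamma(0) = p$.

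It remains to verify that $\gamma$ is a ray. For any fixed $T$ and all large $n$, the restriction $\gamma_n|_{[0,T]}$ is a shortest path of length exactly $T$. Uniform limits of shortest paths between converging endpoints are shortest paths in a complete length space: if $L(\gamma|_{[0,T]}) < T$ one could, by uniform convergence, construct for large $n$ a curve from $\gamma_n(0)$ to $\gamma_n(T)$ of length less than $T$, contradicting $\Dist{\gamma_n(0)}{\gamma_n(T)} = T$. Hence $\gamma|_{[0,T]}$ has length $T$ and is a shortest path, so in particular $\gamma$ is parametrized by arc length and is locally shortest, i.e.\ a geodesic. As this holds for every $T$, $\gamma$ is a ray.

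The only mildly delicate step is the lower semicontinuity argument for length showing the limit curve is still a shortest path; everything else is a routine compactness/diagonal extraction, made possible by properness of $X$.
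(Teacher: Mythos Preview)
Your proof is correct and follows essentially the same approach as the paper: pick points $q_n$ at distance tending to infinity, connect them to $p$ by shortest curves, and use Arzel\`a--Ascoli on compact balls together with a diagonal argument to extract a limiting ray. The only cosmetic difference is that the paper simply cites \cite[Prop.~2.5.17]{BBI} for the fact that a uniform limit of shortest paths is a shortest path, whereas you sketch this directly.
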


\begin{proof}
    Since $X$ is not compact it cannot be bounded
    (cf.\ the Hopf-Rinow-Cohn-Vossen Theorem \cite[Thm.~2.5.28]{BBI}).
    So let $(p_n)$ be a sequence in $X$ with
    $\Dist{p}{p_n}$ tending to infinity.
    Consider the sequence $(\gamma_n)$ of shortest paths,
    connecting $p$ to $p_n$ and denote
    by $\gamma_n^T$ their restriction to $[0,T]$.
    By the compactness of $\overline{\ball{T}{p}}$ an
    Arzela-Ascoli type argument (cf. \cite[Thm 2.5.14]{BBI})
    yields the uniform convergence of a subsequence of  $(\gamma_n^T)$
    towards some curve $\gamma^T$.
    However, in a length space, the limit of a sequence of shortest
    paths is itself a shortest path
    (cf. \cite[Prop.~2.5.17]{BBI}).

    By increasing $T$ and passing on to subsequences we arrive at a
    curve $\gamma\colon\R_0^+\to X$ starting at $p$
    and the restriction of $\gamma$ to any $[0,T]$ is a shortest path.
\end{proof}

Let $\gamma$ be a ray starting at some point $p_0$ of
$\base$. We define $B_\gamma$ to be the horosphere $\union_{t>0}\ball{t}{\gamma(t)}$ and
$C_\gamma := \base \setminus B_\gamma$.
Finally let $C$ be the intersection of all $C_\gamma$ where $\gamma$
ranges over all rays starting in $p_0$.

\begin{Rem*}
    It is easy to check that $C$ is totally convex by simply using the
    same proof as in the manifold case (cf. \cite[pp.\ 135f]{CE}).
    The essential ingredient there is Toponogov's Theorem, which
    holds for Alexandrov spaces as well (cf. \cite[p.~360]{BBI}).
\end{Rem*}

\begin{Rem*}
    Note that $C$ is nonempty since it contains $p_0$ and closed
    since the $B_\gamma$ are all open.
    Clearly $C$ is also compact. If it were not, we could find a
    ray starting at $p_0$ and lying in $C$ by the argument used in
    the proof of Lemma~\ref{Lem:existence of ray} using the fact
    that $C$ is closed.
    But by definition of $C$ no point of this ray --- apart from $p_0$ ---
    is contained in $C$.
\end{Rem*}

We will now pass on to the lift of this construction.
For any lift $\tilde{\gamma}$ of a ray
$\gamma$ starting in $p_0$ we define $B_{\tilde{\gamma}}\subset\R^n$
in analogy to $B_\gamma\subset\base$.
Note that the $B_{\tilde{\gamma}}$ are
open halfspaces of $\R^{n}$.

Denote by $\tilde{B}_\gamma$ the
union of the $B_{\tilde{\gamma}}$ where $\tilde{\gamma}$ ranges
over all lifts of $\gamma$ along $F_0$, and by $\tilde{C}_\gamma$ its complement.
Finally let $\tilde{C}$ be the intersection of the sets~$\tilde{C}_\gamma$.

Obviously the latter are closed and convex being the intersection
of closed halfspaces and hence so is $\tilde{C}$.

\begin{Prop}
    The set $\tilde{C}$ is the preimage of $C$.
\end{Prop}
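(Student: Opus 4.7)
The plan is to prove the two inclusions $\pi^{-1}(C)\subseteq\tilde C$ and $\tilde C\subseteq\pi^{-1}(C)$ separately. The first is nearly immediate from $\pi$ being a submetry: if $p\in B_{\tilde\gamma}$ for some horizontal lift $\tilde\gamma$ of a ray $\gamma$ from $p_0$, so $\lvert p\,\tilde\gamma(t)\rvert<t$ for some $t>0$, then the $1$-Lipschitz property of $\pi$ together with $\pi(\tilde\gamma(t))=\gamma(t)$ gives $\lvert\pi(p)\,\gamma(t)\rvert<t$, placing $\pi(p)$ in $B_\gamma$. Contraposition yields $\pi^{-1}(C)\subseteq\tilde C$.

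For the reverse inclusion I would argue by contradiction: let $p\in\tilde C$ and suppose $\pi(p)\in B_\gamma$ for some ray $\gamma$ starting at $p_0$, so $s:=\lvert\pi(p)\,\gamma(t)\rvert<t$ for some $t>0$. Applying Lemma~\ref{Lem:Horizontal lift of shortest paths}(a) to a shortest path from $\pi(p)$ to $\gamma(t)$ and lifting horizontally starting at $p$, I obtain an endpoint $q$ with $\pi(q)=\gamma(t)$ and $\lvert pq\rvert=s<t$. The crucial step is then to promote $q$ to a point on a full horizontal lift $\tilde\gamma$ of $\gamma$ along $F_0$ with $\tilde\gamma(t)=q$; once this is available, $\lvert p\,\tilde\gamma(t)\rvert<t$ puts $p$ into $B_{\tilde\gamma}\subseteq\tilde B_\gamma$, contradicting $p\in\tilde C_\gamma$.

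To produce $\tilde\gamma$, I would first lift the reversed segment $\gamma|_{[0,t]}$ — regarded as a shortest path from $\gamma(t)$ to $p_0$ — horizontally starting at $q$ via Lemma~\ref{Lem:Horizontal lift of shortest paths}(a); its endpoint lies in $F_0$, and reversing this lift gives a horizontal shortest curve $\tilde\gamma_t\colon[0,t]\to\R^n$ projecting onto $\gamma|_{[0,t]}$ with $\tilde\gamma_t(0)\in F_0$ and $\tilde\gamma_t(t)=q$. To extend beyond time $t$, for each $T>t$ I would lift $\gamma|_{[t,T]}$ horizontally starting at $q$; concatenating with $\tilde\gamma_t$ produces a horizontal lift of the shortest path $\gamma|_{[0,T]}$ of matching length, which is therefore itself a shortest path and hence a straight line segment in $\R^n$. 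This forces the two pieces to share a direction at $q$, so the lifts glue coherently into a horizontal ray $\tilde\gamma\colon[0,\infty)\to\R^n$ along $F_0$ with $\tilde\gamma(t)=q$.

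I expect the only real subtlety to be this gluing step at time $t$, where one has to rule out that a lift of the continuation starts in a different direction from the one $\tilde\gamma_t$ arrives with; the interior-point uniqueness in Lemma~\ref{Lem:Horizontal lift of shortest paths}(b) (applicable because $\gamma(t)$ is an interior point of the ray $\gamma$), combined with the Euclidean fact that shortest paths are straight segments, makes this routine. The first inclusion, by contrast, is essentially bookkeeping with the submetry axiom.
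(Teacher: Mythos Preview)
Your proof is correct and follows essentially the same route as the paper. The paper's argument for the second inclusion is just terser: rather than lifting a shortest path from $\pi(p)$ to $\gamma(t)$ and then building the full lift of the ray through its endpoint, the paper directly picks a point of $\pi^{-1}(\gamma(t))$ \emph{near} to $p$ (which exists by the submetry property and coincides with your $q$) and invokes the lifting lemma/Remark~\ref{Rem:Horizontal lift of geodesic} to obtain a lift $\tilde\gamma$ of the ray with $\tilde\gamma(t)$ equal to that point---your gluing argument is precisely what justifies this step.
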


\begin{proof}
    Let $q$ be any point in $\R^n\setminus\tilde{C}$. That means $q$ is contained in
    a ball $q\in\ball{t}{\tilde{\gamma}(t)}$, where $\tilde\gamma$ is a horizonal
    ray emanating from $F_0$.
    But since $\pi$ is a submetry this implies
    $\pi(q)\in\ball{t}{\gamma(t)}$, with $\gamma=\pi\circ\tilde\gamma$,
    so $q$ cannot lie in $\pi^{-1}(C)$.

    On the other hand, consider any $q\in\R^n\setminus\pi^{-1}(C)$.
    Then $\pi(q)$ must lie in some $\ball{t}{\gamma(t)}$
    for a ray $\gamma$ starting at $p_0$.

    Now take a lift $\tilde\gamma$ of $\gamma$ such that $\tilde\gamma(t)$ is
    near to $q$, i.e.\
$$
    \Dist{q}{\tilde\gamma(t)}=\dist{q}{\pi^{-1}(\gamma(t))} = \Dist{\pi(q)}{\gamma(t)}.
$$
    Thus $q\in\ball{t}{\tilde{\gamma}(t)}$ which implies that $q$ is not contained in
    $\tilde{C}$.
\end{proof}

\begin{Rem*}
Thus $\tilde{C}$ is \emph{foliated} by $\Fol$, i.e.\ any leaf of $\Fol$
intesecting~$\tilde{C}$ is contained in~$\tilde{C}$.
In particular $\tilde{C}$ it is nonempty.
\end{Rem*}

In fact this is also true of its boundary
but, since $\tilde{C}$ may have empty interior in $\R^{n}$ we
have to find the right notion of ``boundary'' first.

Let $V^m$ be the unique affine subspace of minimal dimension $m$
such that $\tilde{C}$ is contained in $V$. We will denote the
interior and boundary of a set $X\subset V$ with respect to $V$ by
$\inn[V]{X}$ and $\rand[V]{X}$ respectively.

The convexity of $\tilde{C}$ implies that $\inn[V]{\smash{\tilde{C}}}$ is nonempty:
By definition of $V$ we may choose $m+1$ points $q_0,\ldots,q_m\in\tilde{C}$ such
that the vectors $q_1-q_0,\ldots,q_m-q_0$ are linearly independent. But then the
convex hull of these points has nonempty interior in $V$ and is contained in~$\tilde{C}$.

\begin{Rem*}
    Thus it makes perfect sense to call $m$ the \emph{dimension} of
    $\tilde{C}$.
\end{Rem*}

\begin{Lem}\label{Lem:border consists of fibres}
    Let $A$ be a closed, convex subset of $\R^{n}$ foliated by~$\Fol$.
    Moreover, let $V$ be
    the the minimal affine subspace of $\R^{n}$ containing $A$.
    Then $\rand[V]{A}$ --- if it is non\-empty --- is also foliated by~$\Fol$.

\end{Lem}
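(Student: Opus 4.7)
The plan is to establish the stronger dichotomy that every leaf $F \in \Fol$ meeting $A$ lies entirely in $\inn[V]{A}$ or entirely in $\rand[V]{A}$; the lemma then follows at once. Since $F \subseteq A = \inn[V]{A} \sqcup \rand[V]{A}$ and $F \cap \inn[V]{A}$ is automatically open in the connected manifold $F$, by connectedness it suffices to prove the \emph{Key Claim}: if some $p \in F$ lies in $\inn[V]{A}$, then all of $F$ does.

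Fix $\eps > 0$ with $\ball[V]{\eps}{p} \subseteq A$, and let $q \in F$ be arbitrary; the goal is to produce an open neighbourhood of $q$ in $V$ contained in $A$. \emph{Step 1 (transport of a normal slice from $p$ to $q$).} For each $v \in (V-V) \cap \normal_pF$ with $|v| < \eps$, the segment $\gamma_v(t) := p + tv$ is a horizontal geodesic (since $v \perp T_pF$), and $\gamma_v(1) = p + v$ lies in $A$, so by saturation the entire leaf through $p+v$ lies in $A$. Proposition~\ref{Prop:Horizontal lift of quasigeodesics} furnishes a horizontal lift $\tilde\gamma_v(t) = q + t\tilde v$, with $\tilde v \in \normal_qF$, $|\tilde v| = |v|$, and $\tilde\gamma_v(1)$ on the same leaf as $p+v$. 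Hence $q + \tilde v \in A \subseteq V$, and since $q \in V$, necessarily $\tilde v \in (V-V) \cap \normal_qF$. Both subspaces $(V-V) \cap \normal_pF$ and $(V-V) \cap \normal_qF$ have common dimension $\dim V - \dim F$, and $d\pi$ restricts to length-preserving maps of each into $T_{\bar p}\base$; combining the inclusion just produced with the equality of dimensions forces $v \mapsto \tilde v$ to be an isometric isomorphism between the respective $\eps$-balls. Thus the entire normal slice $\{q + w : w \in (V-V) \cap \normal_qF,\ |w| < \eps\}$ is contained in $A$.

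\emph{Step 2 (filling out a neighbourhood of $q$ in $V$).} Performing the same construction with $q$ replaced by any $q'$ in a small neighbourhood $U \subseteq F$ of $q$ yields, at each $q' \in U$, a normal slice of radius $\eps$ contained in $A$. Using the orthogonal decomposition $V - V = T_qF \oplus ((V-V) \cap \normal_qF)$, consider the map $\phi(q', w) := q' + \tau_{q'}(w)$ defined on $U \times \{w \in (V-V) \cap \normal_qF : |w| < \eps\}$, where $\tau_{q'}(w) \in (V-V) \cap \normal_{q'}F$ denotes the horizontal lift at $q'$ of $d\pi(w)$. Its differential at $(q, 0)$ is the identity on $V - V$, so $\phi$ is a local diffeomorphism onto a neighbourhood of $q$ in $V$; by construction its image lies in $A$. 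Hence $q \in \inn[V]{A}$, proving the Key Claim.

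The main obstacle is the uniformity in Step 2 --- the continuous dependence of the horizontal lift $\tau_{q'}$ on both $q'$ and $w$. This follows from the standard continuity properties of horizontal lifts, but requires care when $F$ is singular, since the lift of a given direction at $q'$ may fail to be unique; the existence of a continuous local section of lifts near $(q, 0)$, always available, is nevertheless sufficient for the differential computation.
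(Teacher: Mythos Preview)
Your argument is sound when $F$ is a regular leaf, but it has a genuine gap in the singular case, and the difficulty is more serious than the continuity issue you flag at the end.

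In Step~1 you assert that the correspondence $v\mapsto\tilde v$ is an isometric isomorphism between the $\eps$--balls of $W_p:=(V-V)\cap\normal_pF$ and $W_q:=(V-V)\cap\normal_qF$, arguing by equality of dimensions.  When $F$ is singular the differentials $\diff[p]{\pi}|_{\normal_pF}$ and $\diff[q]{\pi}|_{\normal_qF}$ are genuine submetries, not isometries: the horizontal lift of a given direction is not unique, so $v\mapsto\tilde v$ is not even well defined, and there is no reason for the images $\diff[p]{\pi}(W_p)$ and $\diff[q]{\pi}(W_q)$ to be linear subspaces of $T_{\bar p}\base$ to which a dimension count applies.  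What your transport argument actually proves is only the inclusion $(\diff[q]{\pi})^{-1}\bigl(\diff[p]{\pi}(W_p)\bigr)\subset W_q$, i.e.\ $\diff[p]{\pi}(W_p)\subset\diff[q]{\pi}(W_q)$.  To fill the whole normal slice at $q$ you would need the \emph{reverse} inclusion $\diff[q]{\pi}(W_q)\subset\diff[p]{\pi}(W_p)$, and the natural symmetry argument for this is unavailable precisely because $q$ is not yet known to be an interior point.  The same missing inclusion is needed in Step~2 for $\tau_{q'}(w)$ to land in $W_{q'}$ (hence for $\phi$ to map into~$V$, let alone into~$A$).

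The paper sidesteps this entirely by arguing in the opposite direction with a \emph{single} vector.  Assuming $p\in\inn[V]{A}$ and $q\in\rand[V]{A}$, convexity of $A$ at the boundary point~$q$ produces one direction $\xi\in(V-V)\cap\normal_qF$ with $q+\eps\xi\in A$ and $q-\eps\xi\notin A$.  Lifting the line $t\mapsto q+t\xi$ to a geodesic $t\mapsto p+t\tilde\xi$ through~$p$ via Proposition~\ref{Prop:Horizontal lift of quasigeodesics}, the half $[q,q+\eps\xi]\subset A$ forces $[p,p+\eps\tilde\xi]\subset A\subset V$, hence $\tilde\xi\in V-V$; then $p-\eps\tilde\xi$ lies in the interior ball around~$p$, so $p-\eps\tilde\xi\in A$, forcing $q-\eps\xi\in A$ --- a contradiction.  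No surjectivity onto $W_q$ is required, and the antipodal continuation built into Proposition~\ref{Prop:Horizontal lift of quasigeodesics} handles the passage through the possibly singular leaf~$F$ automatically.
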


\begin{proof}
We have to make sure that fibres of $\pi$ that contain a boundary point of $A$
are themselves completely contained in the boundary of $A$.

\parpic{\includegraphics[height=5cm]{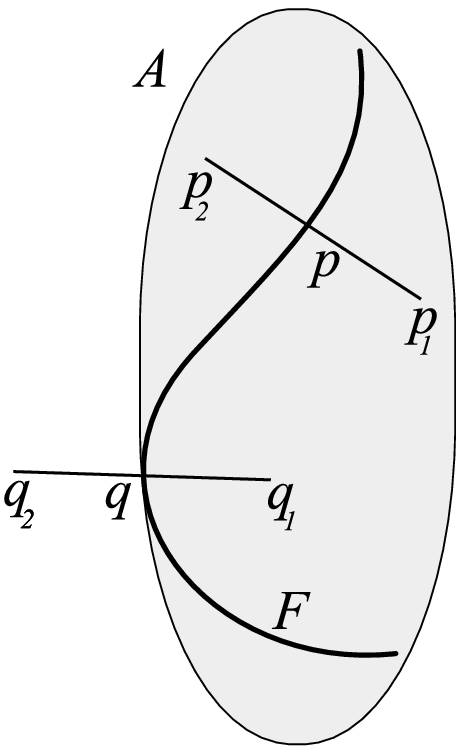}}
    So, suppose there is a leaf $F\subset A$ and two points $p,q\in F$ such that
    $p\in\inn[V]{A}$ and $q\in\rand[V]{A}$.

    By convexity of $A$ and since $F$ is smooth there is a geodesic $\gamma$ in
    $V$ passing through $q$, perpendicular to $F$ such that $\gamma(0)=q$ and
    $q_1:=\gamma(\eps)\in\inn[V]{A}$ and $q_2:=\gamma(-\eps)\notin A$ for ~$\eps>0$
    sufficiently small.
    We know that the line segment $[q_1q_2]$ is mapped by $\pi$
    onto a quasigeodesic in $\base$, which by Proposition~\ref{Prop:Horizontal lift of quasigeodesics}
    can be lifted to a geodesic $\gamma'$ passing through $p$.
    Denote by $p_1$ and $p_2$ the points $\gamma'(\eps)$ and $\gamma'(-\eps)$
    respectively.

    Since $[qq_1]$ is contained in $A$ so is $[pp_1]$ as $A$ is foliated by~$\Fol$.
    Hence $\gamma'$ is a line segment in $V$ and so for small $\eps$ the point~$p_2$
    is contained in $A$, which is a contradiction because $q_2$ and $p_2$ lie in the
    same leaf.
\end{proof}

Using this last result we can now continue the construction
recursively until we end up with a compact set in $\base$ the
preimage of which is an affine subspace of~$\R^{n}$.
To be more precise we set $\tilde{C}(1):=\tilde{C}$ and  construct $\tilde{C}(n+1)$
from $\tilde{C}(n)$ in the following way:

We will show inductively that $\tilde{C}(n)$ is again closed, convex and
foliated by~$\Fol$. Denote its dimension by $m(n)$
and write $\rand{\tilde{C}(n)}$ for its boundary with respect to the
$m(n)$-dimensional affine subspace containing it. If this boundary
is nonempty let $\tilde{C}(n+1)$ be the set of those points in
$\tilde{C}(n)$ whose distance from $\rand{\tilde{C}(n)}$ is
maximal.

More formally: For $p$ in $\tilde{C}(n)$ define $\rho_n(p)$ to be the distance
function $d_{\rand{\tilde{C}(n)}}(p)$ relative to $\rand{\tilde{C}(n)}$ and let
$R(n)$ be the maximum of $\rho_n$ on $\tilde{C}(n)$. Then $\tilde{C}(n+1)$ is the
$R(n)$-level set of $\rho_n$.

\begin{Rem*}
    Note the equality
$$
    \rho_n = d_{\rand{\tilde{C}(n)}}=d_{\pi(\rand{\tilde{C}(n)})}\circ\pi.
$$
    Since $\tilde{C}(n)$ is closed and foliated by~$\Fol$ we get that
    $\pi(\tilde{C}(n))$ is closed and thus compact being a subset of $C$.
    Hence, $\rho_n$ does indeed have a maximum, which is positive since
    $\tilde{C}(n)$ has nonempty interior.
\end{Rem*}

\begin{Prop}
    For any $n\in\N$ the set $\tilde{C}(n+1)$ (if defined) is closed, convex and
    foliated by~$\Fol$.
    Moreover, if $\rand{\tilde{C}(n+1)}$ is nonempty, then it too is foliated by~$\Fol$.
    Finally, the dimension of $\tilde{C}(n+1)$ is strictly
    less than that of $\tilde{C}(n)$.
\end{Prop}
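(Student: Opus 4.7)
The plan is to verify the four claims about $\tilde{C}(n+1)$ in turn, working inside the minimal affine subspace $V$ that contains $\tilde{C}(n)$; by the induction hypothesis, $\tilde{C}(n)$ is a closed convex subset of $V$, foliated by~$\Fol$, with nonempty interior in $V$. Closedness of $\tilde{C}(n+1)$ is immediate, since $\rho_n$ is a $1$-Lipschitz (hence continuous) function on the closed set $\tilde{C}(n)$ and $\tilde{C}(n+1)$ is a level set of~$\rho_n$. The foliated property follows from the identity $\rho_n=d_{\pi(\rand{\tilde{C}(n)})}\circ\pi$ noted in the remark preceding the proposition: $\rho_n$ is constant along each leaf of~$\Fol$, so any leaf meeting $\tilde{C}(n+1)$ is entirely contained in it. Once closedness, convexity, and foliation of $\tilde{C}(n+1)$ are established, the claim that $\rand{\tilde{C}(n+1)}$ is foliated by $\Fol$ is a direct application of Lemma~\ref{Lem:border consists of fibres} to $\tilde{C}(n+1)$.

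For convexity I would show that $\rho_n$ is a \emph{concave} function on $\tilde{C}(n)$ and then use the standard fact that the set on which a concave function attains its maximum is convex. Given $p_1,p_2\in \tilde{C}(n)$ with $\rho_n(p_i)=r_i$, the open $V$-ball of radius $r_i$ around $p_i$ lies entirely in $\tilde{C}(n)$: a point of that ball outside $\tilde{C}(n)$ would force the segment from $p_i$ to it to cross $\rand{\tilde{C}(n)}$ within distance $r_i$ of $p_i$, contradicting the definition of $\rho_n(p_i)$. A Minkowski-sum computation together with the convexity of $\tilde{C}(n)$ then yields, for $p_t=(1-t)p_1+tp_2$,
\[
    B_V\bigl(p_t,(1-t)r_1+tr_2\bigr) \;\subset\; (1-t)B_V(p_1,r_1)+tB_V(p_2,r_2) \;\subset\; \tilde{C}(n),
\]
so $\rho_n(p_t)\geq (1-t)r_1+tr_2$, establishing concavity.

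The step I expect to require the most care is the strict dimension drop. Suppose, for contradiction, that $\dim\tilde{C}(n+1)=\dim\tilde{C}(n)$; then $\tilde{C}(n+1)$ has nonempty interior in $V$, so there exist $p\in\tilde{C}(n+1)$ and $\varepsilon>0$ with $B_V(p,\varepsilon)\subset\tilde{C}(n+1)$, i.e.\ $\rho_n\equiv R(n)$ on this ball. Each $p'\in B_V(p,\varepsilon)$ therefore satisfies $B_V(p',R(n))\subset\tilde{C}(n)$, and an elementary ball-centering argument yields
\[
    B_V\bigl(p,R(n)+\varepsilon\bigr) \;\subset\; \bigcup_{p'\in B_V(p,\varepsilon)} B_V(p',R(n)) \;\subset\; \tilde{C}(n).
\]
Since the left-hand side is open in $V$, it is disjoint from $\rand{\tilde{C}(n)}$, forcing $\rho_n(p)\geq R(n)+\varepsilon$ and contradicting the maximality of $R(n)$. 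Hence $\dim\tilde{C}(n+1)<\dim\tilde{C}(n)$, which completes the inductive step and guarantees that the construction terminates after finitely many steps.
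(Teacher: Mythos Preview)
Your proof is correct and follows essentially the same route as the paper: closedness from continuity of $\rho_n$, the foliated property from the identity $\rho_n=d_{\pi(\rand{\tilde{C}(n)})}\circ\pi$, convexity from the ball-containment/Minkowski argument (the paper does only the special case $r_1=r_2=R(n)$ rather than full concavity of $\rho_n$), and Lemma~\ref{Lem:border consists of fibres} for the boundary. Your dimension-drop argument is a minor variant of the paper's --- the paper simply observes that a point of the small ball lying on the segment from $p$ to a nearest boundary point of $\tilde{C}(n)$ has $\rho_n<R(n)$, whereas you enlarge the ball to contradict maximality of $R(n)$; both are equally elementary.
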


\begin{proof}
    Obviously, $\tilde{C}(n+1)$ is closed as it is a level set of
    $\rho_{n}$.
\parpic{\includegraphics[width=4.7cm]{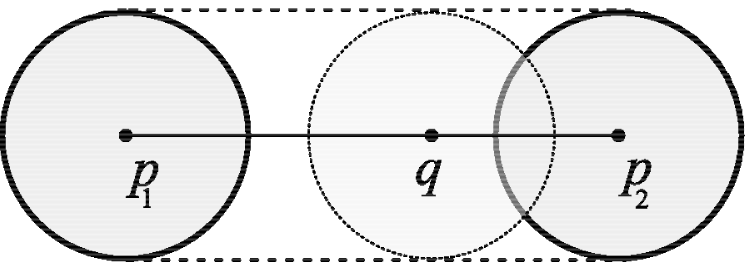}}
    To show its convexity assume $p_1,p_2$ to lie in $\tilde{C}(n+1)$.
    By definition, $\ball[V]{R(n)}{p_i}$ is then contained in
    $\tilde{C}(n)$, where $V$ is the minimal affine subspace
    containing $\tilde{C}(n)$. By the latter's convexity, the
    convex hull of the two balls is also contained in
    $\tilde{C}(n)$ and hence also the balls
    $\ball[V]{R(n)}{q}$ where $q$ is any point on the line
    segment $[p_1p_2]$. Thus, $[p_1p_2]$ is contained in
    $\tilde{C}(n+1)$.

    We now show that $\tilde{C}(n+1)$ is foliated by~$\Fol$.
    We begin by showing this property for the auxiliary set
$$
    \hat{C}(n+1):=\Set{p\in\R^{n}}{\dist{p}{\rand{\tilde{C}(n)}}=R(n)}.
$$
    Now
$$
    \dist{p}{\rand{\tilde{C}(n)}} = \min_F (d_F(p)),
$$
    where the minimum is taken over all leaves $F$ in
    $\rand{\tilde{C}(n)}$.
    As we have observed before, due to $\pi$ being a submetry we get
    $d_F(p) = \Dist{\pi(p)}{\pi(F)}$, which is constant along the leaf
    through~$p$. But this also holds for the minimum over all
    leaves $F$ in $\rand{\tilde{C}(n)}$, so for $p\in \hat{C}(n+1)$
    the whole leaf through $p$ is contained in this set.

    Observe that $\tilde{C}(n+1) = \hat{C}(n+1) \cap \tilde{C}(n)$
    and the intersection of two sets foliated by~$\Fol$ is also foliated.

    Then $\rand{\tilde{C}(n+1)}$ being foliated by~$\Fol$ is an immediate consequence
    of Lemma~\ref{Lem:border consists of fibres}.

    Obviously $m(n+1) \leq m(n)$, so assume equality holds.
    Then $\tilde{C}(n+1)$ has interior points
    with respect to the minimal affine subspace $V$ containing
    $\tilde{C}(n)$. Thus, $\tilde{C}(n+1)$ contains some ball
    $\ball[V]{\eps}{p}$. But clearly there are points in this ball
    that are closer to the boundary of $\tilde{C}(n)$ than $p$,
    which is a contradiction.
\end{proof}

This implies that our recursive construction terminates at some
$n$ --- at the very latest, when $\tilde{C}(n)$ is a point. The
final $\tilde{C}(n)$ then is a closed, convex subset of $\R^n$ without
boundary, i.e.\ an affine subspace, foliated by~$\Fol$. So,
restricting ourselves to this subspace, we have a submetry from a
Euclidean space onto $\pi(\tilde{C}(n))$.

\begin{Rem*}
    In $\R^n$ convexity and total convexity are the same. Hence, the set~$\pi(\tilde{C}(n))$,
    i.e.\ the \emph{soul} of $\base$
    is a totally convex subset of $\base$ (since we can lift geodesics)
    and so is again an Alexandrov space of nonnegative curvature.

    Observe that indeed any nonnegatively curved finite dimensional Alexandrov
    space~$X$ that is complete and unbouned has a soul,
    which can be obtained by the same construction as above. The only ingredient
    still needed in that construction is the fact that $C(n)$ is convex, which
    follows from the fact that the distance to $\rand{C}(n-1)$ is concave.
    This was proven
    (together with an Alexandrov space version of the soul theorem) by Perelman
    in 1991. A proof of the above mentioned concavity result can be found in
    \cite[Thm.~1.1(3B)]{AB} (as far as we know Perelman's result still exists only
    as a preprint).
\end{Rem*}

\section{Submetries onto Compact Alexandrov Spaces}
\label{Sec:Submetries onto compact Alexandrov spaces}

Let $\Fol$ be an equidistant foliation of $\R^n$ and assume the space of leaves~$\base$
to be compact.
We will show that $\base$ has to be a point.

Assume, for now, that $\base$ is not a point. Since $\base$ is
compact it has finite diameter $\diam{\base} > 0$. So for any
leaf $F$ of $\Fol$ the closed $\diam{\base}$-tube
$$
    \tube{\diam{\base}}{F}:=
        \Set{p\in\R^n}{d_F(p)\leq\diam{\base}}
$$
around $F$ is $\R^{n}$.

Consider a regular leaf $F$. Let $\xi_p$ be a unit normal
vector in $\Hor[p]$, $p\in F$, and denote by $\xi$ its Bott-parallel
continuation along $F$. We denote by $F_t$ the leaf $F+t\xi$
through $p_t:=p+t\xi_p$.
Note that if $F_t$ is regular then $\xi_t$ with $\xi_t(q+t\xi_q):=\xi(q)$ is Bott-parallel
along $F_t$.

\begin{Rem*}
Using Proposition~\ref{Fact:Quasigeodesics}
we can always make sure that
$F_t$ is regular by passing from $t$ to $t+\eps$, if necessary, for sufficiently
small $\eps>0$.
\end{Rem*}

We will now express $\shape{\xi_t}{}$ and $\oneillAd{\xi_t}{}$ on
$F_t$ in terms of $\shape{\xi}{}$ and $\oneillAd{\xi}{}$ on $F$:

Let $\gamma$ be a smooth curve on $F$ with $\gamma(0)= p$, $\dot{\gamma}(0)=v$
and denote by $\gamma_t$ its Bott-parallel translate
$\gamma_t(s):= \gamma(s)+t\xi(\gamma(s))$.
Recall from Remark~\ref{Rem:Vertical Spaces} that $\gamma_t$ is a smooth curve
on $F_t$  and we get
$\gamma_t(0)=p_t$ and  $\dot{\gamma}_t(0)=v+t\kov{v}{\xi}=:v_t$.

Using this we calculate
\begin{equation*}
    \shape{\xi_t}{v_t} = - \ver{\lt(\kov{v_t}{\xi_t}\rt)}
    = - \ver{\left(\ddt[s] \xi_t(\gamma_t(s))\right)}
    = - \ver{\lt(\kov{v}{\xi}\rt)}
\end{equation*}
and
\begin{equation*}
    \oneillAd{\xi_t}{v_t} = - \hor{\lt(\kov{v_t}{\xi_t}\rt)}
    = - \hor{\lt(\ddt[s] \xi_t(\gamma_t(s))\rt)}
    = - \hor{\lt(\kov{v}{\xi}\rt)}
\end{equation*}
with the vertical and horizontal parts taken with respect to $F_t$. Since
\begin{equation}\label{Eq:norm(v_t)}
    \begin{split}
    \norm{v_t}^2 &=
    \norm{v-t\shape{\xi}{v}-t\oneillAd{\xi}{v}}^2 \\
            &= \norm{(I-t\shape{\xi}{})v_t}^2 + t^2\norm{\oneillAd{\xi}{v_t}}^2\\
            &= \norm{v}^2 - 2t\ip{v}{\shape{\xi}{v}} + t^2\norm{\shape{\xi}{v}}^2
                + t^2\norm{\oneillAd{\xi}{v}}^2
    \end{split}
\end{equation}
we see that both
$\shape{\xi_t}{\frac{v_t}{\norm{v_t}}}$ and
$\oneillAd{\xi_t}{\frac{v_t}{\norm{v_t}}}$ tend to zero as $t$
goes to infinity.

Hence the leaves $F_t$ become more ``flat'' as $t$ increases. To formalize
this we introduce the following notation:

For any leaf $G\in\Fol$ let $\ball[G]{R}{p}$ be the intrinsic metric ball in $G$
around $p$ of radius $R$.
Furthermore, we will
denote by $\plane{p}{\xi}$ the hyperplane through $p$ with normal
vector $\xi$ and by $\plane[\eps]{p}{\xi}$  the $\eps$-tube
around $\plane{p}{\xi}$, i.e.\
$\plane[\eps]{p}{\xi}=\Set{x\in\R^n}{\,|\ip{x-p}{\xi}|<\eps}$.

\begin{Prop}\label{Prop:fibre is contained in thick hyperplane}
    Let $R$ and $\delta$ be positive, then for sufficiently
    large $t$ the closed intrinsic balls $\closure{\ball[F_t]{R}{p_t}}$ are
    contained in $\plane[\delta]{p_t}{\xi_t}$.
\end{Prop}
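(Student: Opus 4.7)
The strategy is to bound the perpendicular deviation of the intrinsic ball $\closure{\ball[F_t]{R}{p_t}}$ from the affine tangent hyperplane by controlling how fast the Bott-parallel unit normal $\xi_t$ turns as one moves along $F_t$. Since
$$
    \kov{V}{\xi_t} = -\shape{\xi_t}{V} - \oneillAd{\xi_t}{V}
$$
for $V$ tangent to $F_t$, and since both $\shape{\xi_t}{}$ and $\oneillAd{\xi_t}{}$ have operator norm of order $1/t$ at $p_t$ by \eqref{Eq:norm(v_t)} and the discussion following it, a short intrinsic arc in $F_t$ should keep $\xi_t$ nearly parallel to $\xi_t(p_t)$.

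Concretely, given $\eps > 0$, I would first use \eqref{Eq:norm(v_t)} to choose $t_0$ so large that at the basepoint $p_t$ one has
$$
    \norm{\shape{\xi_t}{u}} \le \eps \norm{u}, \qquad \norm{\oneillAd{\xi_t}{u}} \le \eps \norm{u}
$$
for every $u\in T_{p_t}F_t$ and every $t\ge t_0$. The crucial next step is to propagate these bounds to \emph{all} of $F_t$. For the shape operator this is immediate from Proposition~\ref{Prop:Principal curvatures are constant}: the eigenvalues of $\shape{\xi_t}{}$ are constant along the regular leaf $F_t$, so $\norm{\shape{\xi_t}{}}_{\mathrm{op}}$ is constant. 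For the O'Neill tensor I would pick a Bott-parallel orthonormal basis $\eta_1,\ldots,\eta_k$ of the horizontal bundle along $F_t$, expand
$$
    \oneillAd{\xi_t}{V} = \sum_j \ip{V}{\oneill{\xi_t}{\eta_j}}\eta_j,
$$
and use the constancy of each $\norm{\oneill{\xi_t}{\eta_j}}$ along regular leaves (Remark~\ref{Rem:Oneills Formula}) together with the bound at $p_t$ to conclude that $\norm{\oneillAd{\xi_t}{}}_{\mathrm{op}}$ is bounded by a constant multiple of $\eps$ everywhere on $F_t$.

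Combining both bounds yields a constant $C$ with $\norm{\kov{V}{\xi_t}} \le C\eps \norm{V}$ at every point of $F_t$. Now fix $q\in\closure{\ball[F_t]{R}{p_t}}$ and a unit-speed intrinsic minimizing geodesic $\gamma\colon[0,L]\to F_t$ from $p_t$ to $q$ with $L\le R$, and consider
$$
    h(s) := \ip{\gamma(s) - p_t}{\xi_t(p_t)}.
$$
Then $h(0)=0$, and since $\dot\gamma(s)\perp\xi_t(\gamma(s))$ in the flat ambient space,
$$
    h'(s) = \ip{\dot\gamma(s)}{\xi_t(p_t) - \xi_t(\gamma(s))}.
$$
Integrating $\frac{d}{d\sigma}\xi_t(\gamma(\sigma)) = \kov{\dot\gamma(\sigma)}{\xi_t}$ in $\R^n$ gives $\norm{\xi_t(\gamma(s)) - \xi_t(p_t)} \le C\eps s$, hence $|h'(s)|\le C\eps s$ and $|h(L)| \le C\eps R^2/2$. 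Choosing $\eps$ (i.e.\ $t$) so that $C\eps R^2/2 < \delta$ forces $q\in\plane[\delta]{p_t}{\xi_t}$, proving the claim.

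The main obstacle is the uniform propagation of the differential estimates from $p_t$ to all of $F_t$: the $\shape{\xi_t}{}$ bound comes essentially for free from Proposition~\ref{Prop:Principal curvatures are constant}, but the $\oneillAd{\xi_t}{}$ bound requires the short detour through a Bott-parallel frame and the constancy of the O'Neill norms. Once these uniform estimates are in hand, the remaining integration along an intrinsic geodesic is entirely routine.
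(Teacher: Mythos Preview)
Your argument is correct and shares the same skeleton as the paper's: obtain a uniform bound of the form $\norm{\kov{V}{\xi_t}}\le C\eps\norm{V}$ along $F_t$, then integrate along an arclength curve to control $\ip{\gamma(s)-p_t}{\xi_t(p_t)}$. The difference lies in where you establish uniformity. The paper pulls the arclength curve on $F_t$ back to the \emph{fixed} leaf $F$ via $q\mapsto q+t\xi_q$ and analyses $\kov{\dot{\tilde c}_t}{\xi}$ there, using that the eigenvalues of $\shape{\xi}{}$ and the norms $\norm{\oneill{\xi}{\eta}}$ are constant along $F$ (cf.\ Remark~\ref{Rem:kov(xi) is bounded}). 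You instead remain on $F_t$ and invoke the very same constancy results, but for $\shape{\xi_t}{}$ and $\oneill{\xi_t}{\eta_j}$ along $F_t$. Your route is a bit more direct since it avoids tracking the $t$-dependent pulled-back curves $\tilde c_t$; the paper's route has the mild advantage that the finitely many eigenvalues of $\shape{\xi}{}$ live on a single leaf independent of $t$.

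One point worth tightening: the discussion following \eqref{Eq:norm(v_t)} only asserts that $\shape{\xi_t}{(v_t/\norm{v_t})}$ and $\oneillAd{\xi_t}{(v_t/\norm{v_t})}$ tend to zero for each \emph{fixed} $v\in T_pF$, whereas you need the operator-norm bound at $p_t$, i.e.\ uniformity in $v$. This does hold---writing $v=\sum v_i$ in the eigenbasis of $\shape{\xi}{}$ one gets $\norm{\kov{v}{\xi}}^2/\norm{v_t}^2\le\max\bigl(\max_{\lambda_i\ne0}\lambda_i^2/(1-t\lambda_i)^2,\;1/t^2\bigr)$, which tends to zero since the eigenvalues are finitely many and constant along $F$---but you should state it rather than defer to the pointwise statement.
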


\begin{figure}[h]
    \begin{center}
    \includegraphics[width=8cm]{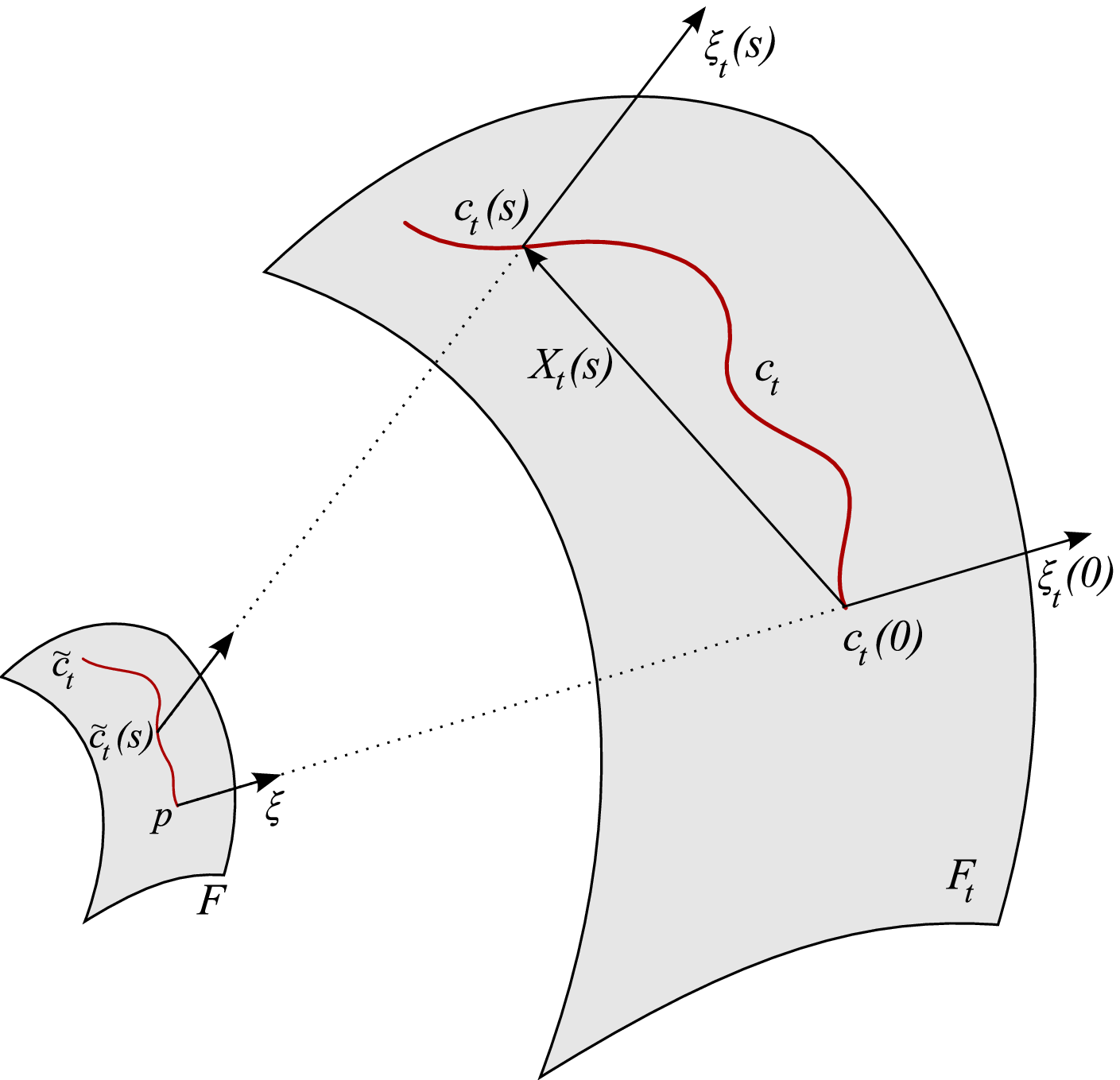}
    \caption[Illustrating the proof of Proposition~\ref{Prop:fibre is contained in thick hyperplane}]
        {In the proof of Proposition~\ref{Prop:fibre is contained in thick hyperplane}
        we can pull back the construction on $F_t$ to the original leaf $F$.}
    \end{center}
\end{figure}

\begin{proof}
    Let $c_t$ be a curve parameterized by arclength on $F_t$ starting in $p_t$.
    We define $X_t(s):=c_t(s)-p_t$
    and $\xi_t(s):=\xi_t(c_t(s))$.
    So, we only have to check that for sufficiently large $t$ the estimate
$$
    |\ip{X_t(s)}{\xi_t(0)}| < \delta
$$
    holds for all $s \leq R$.

    Observe first that we can pull back this construction to $F$ via the map
    $q\mapsto q+t\xi$. So, there is a curve $\tilde{c}_t$ on $F$ such that
    $c_t(s)=\tilde{c}_t(s)+\tilde{\xi}_t(s)$ where we define $\tilde{\xi}_t(s)$
    to be $\xi(\tilde{c}_t(s))$.

    {\bf Part 1.}
    The main step is to show that
    $\dot{\xi}_t(s)=\dot{\tilde{\xi}}_t(s)=\kov{\dot{\tilde{c}}_t(s)}{\xi}$
    tends uniformly to zero as $t$ goes to infinity.
    We first show this convergence pointwise:

    For any fixed $s\in[0,R]$ we apply
    Equation~(\ref{Eq:norm(v_t)}) to our situation:
\begin{equation}
\label{Eq:1=norm(v_t)}
    1=\norm{\dot{c}_t(s)}^2=\norm{(I-t\shape{\xi}{})w_t}^2+t^2\norm{\oneillAd{\xi}{w_t}}^2
\end{equation}
    where we have used $w_t$ as a shorthand for $\dot{\tilde{c}}_t(s)$.
    Obviously this implies that $\oneillAd{\xi}{w_t}$ tends to zero as $t$ goes to infinity.

    On the other hand the eigenvalues of $(I-t^2\shape{\xi})$ are $1-t^2\lambda_i$ where
    the $\lambda_i$ are the eigenvalues of $\shape{\xi}{}$. So, for any $\lambda_i\neq0$
    we get $1-t^2\lambda_i\to\pm\infty$ as $t\to\infty$ and hence the
    projection~$(w_t)_i$ of $w_t$ to the eigenspace of $\shape{\xi}{}$ corresponding
    to $\lambda_i$ tends to zero as $t$ goes to infinity.
    Note that this argument only works because the eigenvalues of $\shape{\xi}{}$
    are constant along $F$.

\begin{Rem}\label{Rem:kov(xi) is bounded}
    Observe that $\kov{}{\xi}$ is uniformly bounded on $F$, i.e.\ there is a
    constant $C$ such that $\norm{\kov{v}{\xi}}\leq C\norm{v}$.
    This is obviously true pointwise. Consider then
$$
    \norm{\kov{v}{\xi}}=\norm{\shape{\xi}{v}}+\norm{\oneillAd{\xi}{v}}.
$$
    The first term is bounded by $(\max\set{|\lambda_i|})\cdot\norm{v}$ and the $\lambda_i$
    are constant along~$F$.
    For the second term consider any $\eta\in\BottF$ and observe that
$$
    \ip{\oneillAd{\xi}{v}}{\eta} = \ip{v}{\oneill{\xi}{\eta}}
        \leq\norm{\oneill{\xi}{\eta}} \norm{v}
$$
    and $\norm{\oneill{\xi}{\eta}}$ is again constant along $F$.
\end{Rem}

Now suppose $\lambda_0=0$ then our conclusions from Equation~(\ref{Eq:1=norm(v_t)})
imply
$$
    \norm{\dot{\tilde{\xi}}_t(s)}=\norm{\kov{w_t}{\xi}}=
        \norm{
            \kov{\lt(\sum_{i\neq 0}(w_t)_i\rt)}{\xi}
            -\oneillAd{\xi}{(w_t)_0}}
        \leq \sum_{i\neq 0} \norm{\kov{(w_t)_i}{\xi}} + \norm{\oneillAd{\xi}{(w_t)_0}}
$$
and the last term tends to zero as $t\to\infty$ as we have seen. The remaining
terms tend to zero as well because of Remark~\ref{Rem:kov(xi) is bounded}.

But this also implies uniform convergence $\norm{\dot{\tilde{\xi}}_t(s)}\to 0$
since $\norm{\dot{\tilde{\xi}}_t(s)}$ is defined on the \emph{compact} interval $[0,R]$.
In particular we can choose $t$ large enough such that $\norm{\dot{\xi}_t(s)}<\frac{\eps}{R}$
uniformly in $s$.

\newpage
{\bf Part 2.}
We return to proving the assertion of the proposition:

Writing $\xi_t(s) = \int_{0}^{s} \dot{\xi}_t(\sigma) d\sigma +\xi_t(0)$ we get
$$
    \ip{\xi_t(s)}{\xi_t(0)} = 1 + \int_0^s\ip{\dot{\xi}_t(\sigma)}{\xi_t(0)} d\sigma
$$
and the modulus of the integrand is bounded by $\frac{\eps}{R}$.
Hence $\ip{\xi_t(s)}{\xi_t(0)}$ is contained in the interval $(1-\eps,1+\eps)$.

As a consequence we get the estimate
$$
    \norm{\xi_t(s)-\xi_t(0)}^2 =
    \norm{\xi_t(s)}^2 + \norm{\xi_t(0)}^2 - 2\ip{\xi_t(s)}{\xi_t(0)} < 2\eps
$$
since $\xi_t(s)$ is a unit vector for any $s$.
Moreover we can write
$$
    \ip{\xi_t(s)}{X_t(s)} = \int_{0}^{s} \lt(\Abl{}{\sigma} \ip{\xi_t(\sigma)}{X_t(\sigma)} \rt) d\sigma
$$
since $X_t(0)=0$ and
\begin{equation*}
    \lt|\Abl{}{s} \ip{\xi_t(s)}{X_t(s)}\rt| =
    \lt|\ip{\dot{\xi}_t(s)}{X_t(s)}\rt|< \lt\|\dot{\xi}_t(s)\rt\| \cdot R
    < \eps
\end{equation*}
since $\dot{X}_t(s)=\dot{\gamma}_t(s) \perp \xi_t(s)$.

So, $|\ip{\xi_t(s)}{X_t(s)}| < \eps \cdot R$.
Hence, we can finally show
\begin{equation*}
    |\ip{X_t(s)}{\xi_t(0)}| =
    |\ip{X_t(s)}{\xi_t(s) +(\xi_t(0) - \xi_t(s))}| <
    \eps R + \sqrt{2\eps}R.
\end{equation*}
Choosing $\eps$ sufficiently small proves our claim.
\end{proof}

Note that since $\xi_t$ is a Bott-parallel normal field along $F_t$
the assertion of Proposition~\ref{Prop:fibre is contained in thick hyperplane}
holds for every point of $F_t$.

Now, consider a sequence $t_n$ with $t_n \to \infty$ and denote by
$F_n$ the leaf $F_{t_n}$. By compactness of the base $\base$ we
may assume $\pi(F_n)$ to converge in $\base$. We will call the
fibre over this limit $\tilde{F}$. The compactness of $\base$ also
implies that the closed ball $\closure{\ball{\diam{\base}}{p}}$
meets all leaves. Choose now a sequence $p_n$ in $\closure{\ball{\diam{\base}}{p}}$
with $p_n\in F_n$. Remember that $\xi_n:=\xi_{t_n}(p_n)$ is a unit vector
for any $n$.
By passing on to subsequences we may assume that $p_n$ converges
towards some point $\tilde{p}\in\tilde{F}$ and $\xi_n(p_n) \ra
\tilde{\xi}(\tilde{p})$ for some unit vector $\tilde{\xi}$ with base
point~$\tilde{p}$. We do not care if $\tilde{\xi}$ is contained
in~$\normal_{\tilde{p}}\tilde{F}$.

\begin{Prop}\label{Prop:limit fibre is contained in hyperplane}
    The limit leaf $\tilde{F}$ is contained in the hyperplane
    $\plane{\tilde{p}}{\smash{\tilde{\xi}}(\tilde{p})}$.
\end{Prop}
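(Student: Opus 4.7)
The plan is to fix an arbitrary point $\tilde{q}\in\tilde{F}$ and deduce the pointwise identity $\ip{\tilde{q}-\tilde{p}}{\tilde{\xi}}=0$ by approximating $\tilde{q}$ from within the converging leaves $F_n$. The crucial observation is that Proposition~\ref{Prop:fibre is contained in thick hyperplane} applies, for any fixed radius $R$, to every basepoint of $F_n$ simultaneously: since $\xi_n$ is Bott-parallel along $F_n$, the same thick-hyperplane containment holds centred at $p_n$. So if I can produce points $q_n\in F_n$ with $q_n\to\tilde{q}$ and intrinsic distance $d_{F_n}(p_n,q_n)$ uniformly bounded in $n$, then $|\ip{q_n-p_n}{\xi_n}|<\delta$ for large $n$ and any prescribed $\delta>0$, and passing to the limit gives the claim.

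To obtain such $q_n$ I would exploit the smoothness of $\Fol$: by definition any tangent vector to a leaf may be locally extended to a smooth vector field on a neighborhood in $\R^n$ that is everywhere tangent to the leaves of $\Fol$. Connect $\tilde{p}$ to $\tilde{q}$ by a smooth curve $\tilde{c}\colon[0,L]\to\tilde{F}$ of finite length, cover its image by finitely many open sets in $\R^n$ on which such extensions of $\dot{\tilde{c}}$ exist, and patch them (via a partition of unity in the time parameter) into a time-dependent vector field $X_s$ defined on a neighbourhood of $\tilde{c}([0,L])$ satisfying $X_s(\tilde{c}(s))=\dot{\tilde{c}}(s)$ and everywhere tangent to $\Fol$. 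For $n$ large enough $p_n$ lies in this neighbourhood, so the flow of $X_s$ starting at $p_n$ yields a curve $c_n\colon[0,L]\to\R^n$. Since $X_s$ is tangent to the foliation, $c_n$ stays inside the single leaf $F_n$, and continuous dependence of ODEs on initial data gives $c_n(s)\to\tilde{c}(s)$ uniformly in $s$; in particular $q_n:=c_n(L)\to\tilde{q}$.

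Once the approximating curves are in place the argument finishes quickly. Because $X_s$ has bounded norm on the compact neighbourhood, the length of each $c_n$ is bounded by some $R$ independent of $n$, so $q_n\in\closure{\ball[F_n]{R}{p_n}}$ for all large $n$. Given $\delta>0$, Proposition~\ref{Prop:fibre is contained in thick hyperplane} yields $|\ip{q_n-p_n}{\xi_n}|<\delta$ for large $n$; letting $n\to\infty$ with $q_n\to\tilde{q}$, $p_n\to\tilde{p}$, $\xi_n\to\tilde{\xi}$ gives $|\ip{\tilde{q}-\tilde{p}}{\tilde{\xi}}|\leq\delta$, and since $\delta$ was arbitrary $\tilde{q}$ lies in $\plane{\tilde{p}}{\tilde{\xi}}$.

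The hard step is the construction of the approximating curves $c_n$: since $\tilde{F}$ is potentially singular, there is no local trivialization of $\Fol$ near $\tilde{F}$ to lean on, and lifting by the submetry $\pi$ is useless here because all of $\tilde{F}$ projects to a single point of $\base$. The saving grace is precisely the smoothness axiom of an equidistant foliation, namely the existence of foliation-tangent extensions of tangent vectors, which is exactly what is required to flow from $p_n$ while remaining in $F_n$.
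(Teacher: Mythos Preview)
Your proposal is correct and follows essentially the same route as the paper: extend the velocity of a curve in $\tilde{F}$ to a leaf-tangent vector field, flow the nearby basepoints $p_n$ along it to obtain curves $c_n\subset F_n$ of uniformly bounded length with endpoints $q_n\to\tilde{q}$, apply Proposition~\ref{Prop:fibre is contained in thick hyperplane} at radius~$R$ (using that it holds at every basepoint of $F_n$), and pass to the limit. The only cosmetic differences are that the paper takes $\tilde{\gamma}$ simple and uses a single time-independent vertical extension (citing Lemma~\ref{Lem:Proj is submersion}) rather than your time-dependent patching, and it writes out the final $\eps$-estimate for $|\ip{\tilde{q}-\tilde{p}}{\tilde{\xi}}|$ explicitly instead of appealing directly to continuity of the inner product.
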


\begin{figure}[h]
    \begin{center}
    \includegraphics[height=8cm]{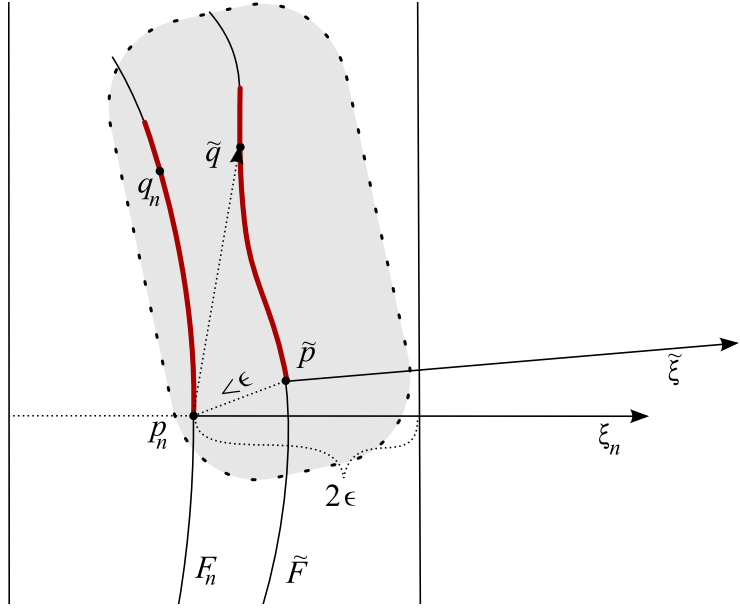}
    \caption[Illustrating the proof of Proposition~\ref{Prop:limit fibre is contained in hyperplane}]
        {The curve $\tilde{\gamma}$ from the proof of
        Proposition~\ref{Prop:limit fibre is contained in hyperplane}
        is contained in the blown up hyperplane~$\plane[2\eps]{p_n}{\xi_n}$.}
    \end{center}
\end{figure}

\begin{proof}
Let $\tilde{\gamma}\colon[0,R]\to \tilde{F}$ be a simple curve parameterized by
arclength starting at~$\tilde{p}$.
By Lemma~\ref{Lem:Proj is submersion} we may extend the velocity $\dot{\tilde{\gamma}}$
to a vertical vector field $V$ in some neighborhood of the image of $\tilde{\gamma}$.
Since the latter is compact we may choose this neighborhood to be some compact
tube around the image of $\tilde{\gamma}$.

Choose some point $\tilde{q}:=\tilde\gamma(t_0)$ lying on $\tilde\gamma$ and let
$\gamma_n$ be the integral curve of~$V$ starting at $p_n$. Using standard theory of
ordinary differential equations
we see that choosing $p_n$ sufficiently close to $\tilde{p}$ implies that
$q_n:=\gamma_n(t_0)$ is arbitrarily close to $\tilde{q}$ and also the length
of~$\gamma_n|_{[0,t_0]}$ is arbitrarily close to $t_0$, in particular it is less
than $2R$, say.

Let then $0<\eps<R$ and choose $n$ to be sufficiently large such that the following
inequalities hold:
$$
        \ball[F_n]{2R}{p_n} \subset \plane[\eps]{p_n}{\xi_n},
        \qquad
        \norm{p_n - \tilde{p}} < \eps,
        \qquad
        \norm{\tilde{\xi} - \xi_n} < \eps
$$
and increase $n$ even further if necessary such that the aforementioned properties
$$
    \norm{q_n-\tilde{q}} < \eps,
    \qquad
    \length{\gamma_n|_{[0,t_0]}} < 2R
$$
also hold.

This implies that $\tilde{q}$ is contained in the blown up
hyperplane~$\plane[2\eps]{p_n}{\xi_n}$:
$$
    |\ip{\tilde{q}-p_n}{\xi_n}| =
    |\ip{(\tilde{q}-q_n)+(q_n-p_n)}{\xi_n}| <
    2\eps,
$$
which in turn shows that $\tilde{q}$ lies in the hyperplane~$\plane{\tilde{p}}{\smash{\tilde\xi}}$
because
\begin{align*}
    \lt|\ip{\tilde{q}-\tilde{p}}{\tilde{\xi}}\rt|
        & =
            \lt|\ip{(\tilde{q}-p_n) + (p_n - \tilde{p})}
                    {\xi_n + (\tilde{\xi} - \xi_n)}\rt|
            \\
        & \leq
            \lt|
                \ip{\tilde{q}-p_n}{\xi_n}
            +
                \ip{\tilde{q}-p_n}{\tilde{\xi} - \xi_n}
            +
                \ip{p_n - \tilde{p}}{\xi_n}
            +
                \ip{p_n - \tilde{p}}{\tilde{\xi} - \xi_n}
            \rt| \\
        & <
            2\eps + (\eps + R)\eps + \eps + \eps^2
\end{align*}
for arbitrarily small $\eps>0$.

Since this holds for all $\tilde{q}\in\ball[\tilde{F}]{R}{\tilde{p}}$ and indeed
for any radius $R$ it follows that the whole leaf $\tilde{F}$ is contained
in the hyperplane~$\plane{\tilde{p}}{\smash{\tilde\xi}}$.
\end{proof}

Now $\tilde{F}$ being contained in a hyperplane means that it cannot be
$\diam{\base}$-close to every point in $\R^n$.
So $\base$ must be a point.
Thus we have shown:

\begin{Thm}\label{Thm:Submetries with compact base}
Let $\Fol$ be an equidistant foliation of $\R^n$ and suppose the space of leaves
$\base$ to be compact. Then $\base$ is a single point.
\end{Thm}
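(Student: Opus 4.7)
The plan is to argue by contradiction, assembling the machinery that has already been set up in Propositions~\ref{Prop:fibre is contained in thick hyperplane} and~\ref{Prop:limit fibre is contained in hyperplane}. Essentially all the hard work is already done; the theorem is the punchline.

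Suppose $\base$ is not a single point. Then $\base$, being compact, has finite positive diameter $D := \diam{\base}>0$. Since $\pi$ is a submetry, for every leaf $F\in\Fol$ we have $d_F(q) = \Dist{\pi(q)}{\pi(F)}\leq D$ for all $q\in\R^n$, so $\tube{D}{F}=\R^n$ — in particular, every leaf comes arbitrarily close (in fact within $D$) to every point of $\R^n$.

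I would then pick a regular leaf $F$, a point $p\in F$, a unit horizontal vector $\xi_p\in\Hor[p]$, and its Bott-parallel continuation $\xi$. Along a sequence $t_n\to\infty$ (which, by passing to a subsequence and using Proposition~\ref{Fact:Quasigeodesics}, may be chosen so that each $F_n:=F_{t_n}$ is regular and so that $\pi(F_n)$ converges in the compact space $\base$) I take limits exactly as in the paragraphs preceding the theorem: pick $p_n\in F_n\cap\closure{\ball{D}{p}}$ (possible because $\tube{D}{F_n}=\R^n$), pass to a subsequence so that $p_n\to\tilde p\in\tilde F$ and the unit vectors $\xi_{t_n}(p_n)\to\tilde\xi$, and conclude by Proposition~\ref{Prop:limit fibre is contained in hyperplane} that $\tilde F\subset\plane{\tilde p}{\tilde\xi}$.

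This is the contradiction: if $\tilde F$ lies in the affine hyperplane $\plane{\tilde p}{\tilde\xi}$, then any point $q\in\R^n$ with $|\ip{q-\tilde p}{\tilde\xi}|>D$ — for instance $q=\tilde p+2D\tilde\xi$ — satisfies $d_{\tilde F}(q)\geq\dist{q}{\plane{\tilde p}{\tilde\xi}}>D$, contradicting $\tube{D}{\tilde F}=\R^n$. Hence $\base$ must consist of a single point.

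There is no real obstacle: the only thing worth double-checking is the subsequence extraction, i.e.\ making sure one can simultaneously arrange that $\pi(F_n)$ converges, that $p_n$ lies in the compact ball $\closure{\ball{D}{p}}$ and converges, and that the unit vector $\xi_{t_n}(p_n)$ in the compact unit sphere of $\R^n$ converges — but this is immediate by three successive diagonal extractions, since all three ambient spaces ($\base$, $\closure{\ball{D}{p}}$, and the unit sphere) are compact.
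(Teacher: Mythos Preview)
Your argument is correct and follows exactly the paper's own approach: the contradiction via $\tube{D}{\tilde F}=\R^n$ versus $\tilde F\subset\plane{\tilde p}{\tilde\xi}$ is precisely how the paper concludes after Proposition~\ref{Prop:limit fibre is contained in hyperplane}. Your version is in fact slightly more explicit in exhibiting the point $q=\tilde p+2D\tilde\xi$, but otherwise the structure, the subsequence extractions, and the invocation of the preceding propositions are identical.
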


\begin{Rem*}
Observe that the leaves of $\Fol$ being connected, as we assumed in definition
of an equidistant foliation, is essential for this assertion to hold.
A simple counterexample to the theorem, dropping connectedness, is given by the
covering
$$
    f\colon\R\to\S{1},\qquad t\mapsto e^{it}.
$$
\end{Rem*}

We denote the affine leaf of $\Fol$ by $\affineleaf$ and for the rest of this thesis
we assume that $\affineleaf=\R^k\times\set{0}\subset\R^{k+n}$.

\begin{Rem*}
We end this chapter by observing that due to $\Fol$ being equidistant the affine
leaf~$\affineleaf$ of course is the most singular leaf of $\Fol$, i.e.\
the dimension of $\affineleaf$ is smallest.

Note also that we may assume $\affineleaf$ to be unique. For assume there is another
affine leaf, $\affineleaf'$ say, then $\affineleaf$ and $\affineleaf'$ are
parallel and the affine space $A$, spanned by them is foliated by leaves
of~$\Fol$ parallel to $\affineleaf$.
Observe that $A\cong\R^{k+n}$ is spanned by $\affineleaf$ and a line $l=p_0+\spann{v}$
meeting $\affineleaf$ and $\affineleaf'$ perpendicularly.

\parpic{\includegraphics[height=5cm]{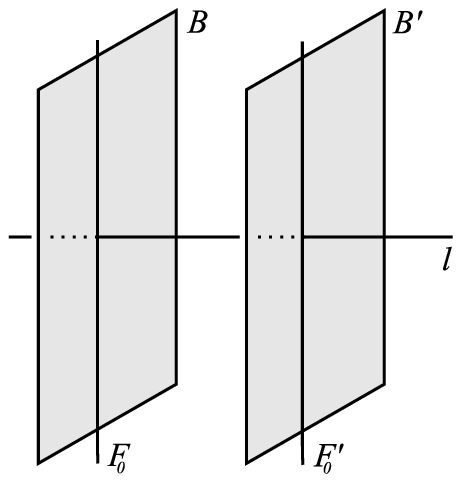}}
Consequently, for each point $p\in l$ the orthogonal complement $p+\spann{v}^\perp$
to the line~$l$ is invariant under $\Fol$, i.e.\ each leaf meeting that space is contained in it.
Since $\Fol$ is equidistant, the restrictions of $\Fol$ to any two such perpendicular complements $B$
and $B'$ of~$l$ differ only by a parallel translation along $l$.
Hence, $\Fol$ is the product of the induced foliation of $B$ and the discrete
foliation of the line~$l$.
\end{Rem*}

\chapter{The Induced Foliation in the Horizontal Layers}\label{Chap:Horizontal}

The existence of the affine leaf $\affinefibre$ leaves us in the special situation
that $\Fol$ together with the horizontal distribution along $\affinefibre$ induces
a further, refined foliation $\IndFol$ of $\R^{n+k}$ by intersecting the leaves
of $\Fol$ with the normal spaces of $\affineleaf$.

We first look at the homogeneous case and show that $\Fol$ is given by the orbits
of $G\times\R^k$ with $G$ compact and $\R^k$ acting on $\R^{k+n}$ by generalized
screw motions around the axis $\affineleaf$.
In particular we conclude that the induced foliation $\IndFol$ is equidistant.

In the remainder of this chapter we examine how much of this rather nice structure
of~$\IndFol$ can be recovered in the general case.

\sep

For any $p\in\affineleaf$ we denote the affine space $p+\Hor[p]$ by $\layer{p}$
and call it the \emph{horizontal layer} through $p$.

\begin{Def}\label{Def:Induced_Fibration}
For any $p\in F_0$ we will denote by $\IndFol[p]$ the foliation of $\layer{p}$
induced by $\Fol$, i.e.\
\begin{equation*}
    \IndFol[p]:=\Set{F\cap\layer{p}}{F\in\Fol}.
\end{equation*}
Consequently, the union $\IndFol$ over all $\IndFol[p]$, where $p$ is in $F_0$,
is a foliation of $\R^{n+k}$.
We denote the leaf $F\cap\layer{p}$ of $\IndFol[p]$ by $\IndLeaf{F}{p}$.
\end{Def}

Note that we have to make sure that the $\layer{p}$ intersect the leaves of~$\Fol$
as transversally as possible.

Let us first introduce some tools and notation used throughout this chapter.

\subsection*{Projections onto the affine leaf}

Let $\pos$ be the vector field on $\R^{k+n}$ indicating the position relative to
$\affinefibre$, i.e.\ for $x=(x_1,x_2)\in\R^{k+n}$ we set $\pos_x:=(0,-x_2)$.
Obviously, the shortest path from a point $x$ to $\affinefibre$ is given by
$t\mapsto x+t\pos_x$, hence the restriction of $\pos$ to the regular part of $\Fol$
is a Bott-parallel horizontal field.

\begin{Def}
Let $\proj\colon\R^{k+n}\ra\affinefibre$ be the orthogonal projection onto
the affine leaf $\affineleaf$.
We denote by $\projV$ and $\projH$ the restriction of $\diff{\proj}$ to the
vertical and horizontal distributions respectively.
\end{Def}

We can easily describe these projections using $\pos$ since $\proj{x}=x+\pos_x$.
Consequently, its derivative is given by
\begin{equation}
    \diff{\proj}X=X+\kov{X}{\pos},\label{Eq:Def Proj}
\end{equation}
for any vector $X$.

\begin{Lem}\label{Lem:IP of Projections}
Let $F$ be a regular leaf and $\xi,\eta$ two Bott-parallel vector fields on~$F$.
Then $\ip{\projH{\xi}}{\projH{\eta}}$ is constant along $F$.
\end{Lem}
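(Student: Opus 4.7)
The plan is to establish a Pythagorean identity relating $\ip{\projH\xi}{\projH\eta}$, $\ip{\xi}{\eta}$ and a correction term, and then to show that both the total and the correction term are constant along $F$. From Equation~(\ref{Eq:Def Proj}) we have $\projH\xi=\xi+\kov{\xi}{\pos}$, so $\xi=\projH\xi-\kov{\xi}{\pos}$. Since $\projH\xi$ takes values in $T\affineleaf$ while $\kov{\xi}{\pos}$ (being, up to sign, the $\normal\affineleaf$-component of $\xi$) takes values in the orthogonal complement $\normal\affineleaf$, the two summands are pointwise orthogonal, and polarizing $\norm{\xi}^{2}=\norm{\projH\xi}^{2}+\norm{\kov{\xi}{\pos}}^{2}$ gives
\[
    \ip{\xi}{\eta}=\ip{\projH\xi}{\projH\eta}+\ip{\kov{\xi}{\pos}}{\kov{\eta}{\pos}}.
\]

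Next I would use that $\ip{\xi}{\eta}$ is constant along $F$: this is a standard property of Bott-parallel (basic) horizontal fields, since on the regular part of $F$ the value agrees with the inner product of the pushforwards $\diff\pi\,\xi$ and $\diff\pi\,\eta$ on $\base_{\mathrm{reg}}$. It therefore suffices to prove that $\ip{\kov{\xi}{\pos}}{\kov{\eta}{\pos}}$ is constant along $F$. Exploiting that $\pos$ is itself Bott-parallel on the regular part of $\Fol$, I decompose
\[
    \kov{\xi}{\pos}=\kovH{\xi}{\pos}+\kovV{\xi}{\pos}=\kovH{\xi}{\pos}+\oneill{\xi}{\pos},
\]
and similarly for $\eta$. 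The orthogonality of $\Hor$ and $\Ver$ then splits the inner product as
\[
    \ip{\kov{\xi}{\pos}}{\kov{\eta}{\pos}}=\ip{\kovH{\xi}{\pos}}{\kovH{\eta}{\pos}}+\ip{\oneill{\xi}{\pos}}{\oneill{\eta}{\pos}}.
\]

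The vertical summand is constant along $F$ by polarizing Remark~\ref{Rem:Oneills Formula}: applied to $\xi$, $\eta$ and $\xi+\eta$ (each paired with the Bott-parallel field $\pos$) it yields the constancy of the norms $\norm{\oneill{\xi}{\pos}}$, $\norm{\oneill{\eta}{\pos}}$, $\norm{\oneill{(\xi+\eta)}{\pos}}$ along $F$, and polarization delivers the constancy of the inner product.

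For the horizontal summand I would appeal to the standard Riemannian-submersion fact that, for basic horizontal fields $\xi$ and $\pos$, the horizontal covariant derivative $\kovH{\xi}{\pos}$ is again basic, realising on $\base_{\mathrm{reg}}$ the Levi-Civita derivative $\bar\nabla_{\diff\pi\,\xi}\diff\pi\,\pos$ of the pushforwards. Consequently $\ip{\kovH{\xi}{\pos}}{\kovH{\eta}{\pos}}$ is the inner product of two Bott-parallel horizontal fields and hence constant along $F$ by the very principle invoked at the start. The main obstacle — really the only nontrivial step — is this last basicness assertion, proved by a Koszul-formula computation on the regular Riemannian submersion $\pi\colon\pi^{-1}(\base_{\mathrm{reg}})\to\base_{\mathrm{reg}}$ using the basicness of $\xi$ and $\pos$; everything else is direct computation or polarization.
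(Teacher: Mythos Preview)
Your proof is correct and follows essentially the same route as the paper. The paper's one-line proof cites the proof of Lemma~\ref{Lem:Proj is submersion} (which supplies that $\kovH{\xi}{\pos}$ is Bott-parallel, exactly your ``basicness'' step) and Remark~\ref{Rem:Onormal is Bott-parallel} (which gives the constancy of $\ip{\oneill{\pos}{\xi}}{\oneill{\pos}{\eta}}$, your polarization step); the only difference is that the paper would decompose $\projH\xi=(\xi+\kovH{\xi}{\pos})+\oneill{\xi}{\pos}$ directly into $\Hor\oplus\Ver$ parts rather than going via your Pythagorean identity, but this is a cosmetic rearrangement of the same two ingredients.
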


\begin{proof}
This follows immediately from the proof of Lemma~\ref{Lem:Proj is submersion}
and Remark~\ref{Rem:Onormal is Bott-parallel}.
\end{proof}

By Lemma~\ref{Lem:Proj is submersion} the projection $\projV$ is surjective
at any regular point of the foliation $\Fol$, which
enables us to lift any tangent vector field on $\affinefibre$ to one on the
regular leaves of $\Fol$.

\begin{Def}\label{Def:Vertical Lift}
Let $v$ be a vector in $T_p\affinefibre$ and let $x$ be a point in a regular
leaf~$F\in\Fol$ such that $\proj{x}=p$.
We will call the unique vector $\Lift[x]{v} \in (\ker{\projV_x})^\perp \subset T_xF$
such that $\projV{\Lift[x]{v}}=v$ the \emph{vertical lift} of $v$ to $x$.
\end{Def}


\sep
After this digression we show that $\IndFol$ is indeed a smooth foliation.

\begin{Lem}\label{IndLeafs are submanifolds}
For any $p\in\affinefibre$ the leaves of $\IndFol[p]$ are complete smooth
submanifolds of $\layer{p}$.
\end{Lem}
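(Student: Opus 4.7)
The plan is to show that for every leaf $F\in\Fol$ the orthogonal projection $\proj\colon\R^{k+n}\to\affinefibre$ restricts to a surjective submersion on $F$. Granting this, $F\cap\layer{p}=(\proj|_F)^{-1}(p)$ is a closed smooth submanifold of $F$; since $F$ is properly embedded in $\R^{k+n}$, the intersection is closed in the Euclidean space $\layer{p}$ and therefore a complete Riemannian submanifold thereof.

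The case $F=\affinefibre$ is trivial, with $F\cap\layer{p}=\{p\}$. For a regular leaf $F$, the vector field $\pos|_F$ is Bott-parallel (as noted just before Definition~\ref{Def:Vertical Lift}), so by Remark~\ref{Rem:Vertical Spaces} the set $F+\pos$ is again a leaf of $\Fol$. But for every $x\in F$ the point $x+\pos_x=\proj(x)$ lies in $\affinefibre$, so by connectedness of leaves and the uniqueness of $\affinefibre$ as the affine leaf (established at the end of Chapter~\ref{Chap:Existence}), we conclude $F+\pos=\affinefibre$. Applying Lemma~\ref{Lem:Proj is submersion} with $\xi=\pos$ then yields that $\proj|_F\colon F\to\affinefibre$ is a surjective submersion, settling this case.

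For a singular leaf $F$ different from $\affinefibre$, Lemma~\ref{Lem:Proj is submersion} does not apply directly. The plan here is to establish transversality of $F$ and $\layer{p}$ at each intersection point $x\in F\cap\layer{p}$, i.e.\ to show that $\diff{\proj}|_{T_xF}$ surjects onto $T_p\affinefibre$, where $p:=\proj(x)$. To do this I would pick a basis $v_1,\dots,v_k$ of $T_p\affinefibre$, invoke the smoothness condition on $\Fol$ to extend each $v_i$ to a local vector field $V_i$ tangent to $\Fol$ near $p$, and propagate $V_i$ along the horizontal straight segment from $p$ to $x$ by a finite chain of local extensions --- which can be arranged to cross any singular strata via Proposition~\ref{Prop:Horizontal lift of quasigeodesics}. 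Evaluating the resulting vector field at $x$ produces vectors $w_1,\dots,w_k\in T_xF$, and a continuity argument shows that $\diff{\proj}(w_1),\dots,\diff{\proj}(w_k)$ remain linearly independent, hence span $T_p\affinefibre$.

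The main obstacle is precisely this singular case: one must ensure that the propagation of $V_i$ from $\affinefibre$ to a singular stratum preserves both tangency to $\Fol$ and the surjectivity of $\diff{\proj}$ on the resulting subspace of $T_xF$, despite the possible drop in leaf dimension encountered en route. This delicate combination of the smoothness axiom with the quasigeodesic lifting machinery developed in Section~\ref{Sec:Differentials} constitutes the technical core of the proof.
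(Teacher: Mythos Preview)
Your treatment of the regular case matches the paper's: both use Lemma~\ref{Lem:Proj is submersion} with $\xi=\pos$ to conclude that $\proj|_F\colon F\to\affineleaf$ is a surjective submersion, whence $\IndLeaf{F}{p}=(\proj|_F)^{-1}(p)$ is a smooth submanifold.

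For the singular leaves, however, the paper takes a completely different and much cleaner route. Rather than attempting a direct transversality argument, it forward-references Proposition~\ref{Prop:Equidistance of each induced Foliation}, which identifies $\IndFol[p]$ with the fibre decomposition of the submetry $\diff[p]{\pi}|_{\Hor[p]}\colon\layer{p}\to T_{\pi(p)}\base$. It then invokes a general structural result from Lytchak's theory of submetries (\cite[Prop.~13.5]{Lyt}): for a submetry from a Riemannian manifold, once the regular fibres are known to be smooth submanifolds, the singular ones are automatically smooth as well. This sidesteps entirely the propagation difficulties you identify.

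Your direct approach for singular leaves is plausible in spirit, but the obstacles you flag are genuine and your sketch does not overcome them. Proposition~\ref{Prop:Horizontal lift of quasigeodesics} concerns lifting quasigeodesics through singular strata, not propagating vertical vector fields, so its role in your argument is unclear. More seriously, the assertion that $\diff{\proj}(w_1),\dots,\diff{\proj}(w_k)$ remain linearly independent ``by continuity'' is unjustified: the local extensions $V_i$ furnished by the smoothness axiom are not canonical, patching them along the segment does not produce a globally coherent vector field, and there is no reason the projections to $\affineleaf$ should stay independent as you cross into a lower-dimensional stratum. The paper's appeal to submetry theory buys exactly the avoidance of this delicate bookkeeping.
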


\begin{proof}
Let us first look at a \emph{regular} leaf $F$ of $\Fol$.
By Lemma~\ref{Lem:Proj is submersion} every $p\in\affinefibre$ is a regular
value of the orthogonal projection $\proj|_{F}\colon F\to\affineleaf$ so the
preimage $\IndLeaf{F}{p}$ of $p$ is a smooth submanifold of $F$.

To deal with the \emph{singular} leaves of $\Fol$ note that we will show
in Proposition~\ref{Prop:Equidistance of each induced Foliation} that~$\IndFol[p]$
is equidistant. To be more precise, for any $p\in\affinefibre$
the restriction to $\layer{p}$ of $\Deriv{\pi}{p}$ is a submetry and its fibres
are the leaves of $\IndFol[p]$.

But the regular fibres being smooth submanifolds
already implies the same property for the singular fibres
(cf.\ \cite[Prop.\ 13.5]{Lyt}).
\end{proof}

\section{The Homogeneous Case}

In order to understand the role of the induced foliation $\IndFol$ better
let us first consider the homogeneous case.
So, in this section we assume the fibres of $\Fol$ to be the orbits of a connected
Lie group $G\subset \Isom{\R^{k+n}}$
acting effectively on $\R^{k+n}$
with $\affinefibre = \R^k \times \{0\}$
being its most singular orbit.

Obviously, for any $p\in \affinefibre$ the foliation $\IndFol[p]$ is then given
by the orbits of the slice representation of $G_p$. Hence, each $\IndFol[p]$ is
equidistant and since the isotropy groups along a fibre are conjugate
any two $\IndFol[p]$ and $\IndFol[q]$ are isometric to each other.
Moreover, we show:

\begin{Thm}\label{Thm:Induced foliation is equidistant in homogeneous case}
In the homogeneous case the induced foliation $\IndFol$ is equidistant.
\end{Thm}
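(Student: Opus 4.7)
The plan is to reduce the theorem to the structural fact that (possibly after enlarging $G$ without altering its orbits) the foliation $\Fol$ arises as the orbit foliation of a product $K \times T \subset \Isom{\R^{k+n}}$, in which $K$ is a compact Lie group fixing $\affineleaf$ pointwise and $T \cong \R^k$ acts by generalized screw motions around $\affineleaf$. Once this is in hand, $\IndFol$ will be identified with the orbit foliation of the isometric $K$-action on $\R^{k+n}$, from which equidistance is automatic.

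To obtain the decomposition, I would first analyze the transitive isometric $G$-action on $\affineleaf \cong \R^k$: the image of the restriction homomorphism $G \to \Isom{\affineleaf}$ is a closed connected subgroup acting transitively on $\R^k$, and any such subgroup must contain the full translation group. Lifting these translations to $\Isom{\R^{k+n}}$ yields a $k$-dimensional abelian subgroup $T$ acting by generalized screw motions around $\affineleaf$. The kernel of $G \to \Isom{\affineleaf}$ fixes $\affineleaf$ pointwise and hence embeds into the compact stabilizer $\O{n}$ of the $k$-flat $\affineleaf$; after possibly augmenting it by isometries that fix $\affineleaf$ pointwise and preserve each leaf of $\Fol$ (an enlargement which by construction does not change the orbit foliation), one obtains the compact group $K$ commuting with $T$ whose joint orbits with $T$ are exactly the leaves of $\Fol$.

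With this structure available, the theorem follows directly. Since $K$ fixes every $p \in \affineleaf$ and is orthogonal at $p$, it preserves the normal space $\Hor[p]$ and hence the horizontal layer $\layer{p} = p + \Hor[p]$. Because $T$ acts simply transitively on $\affineleaf$, the isotropy of $p$ in $K \times T$ is exactly $K$, and the associated slice representation at $p$ is precisely the restriction of $K$ to $\layer{p}$. Therefore the leaves of $\IndFol[p]$ coincide with the $K$-orbits on $\layer{p}$. Letting $p$ range over $\affineleaf$ assembles these layer-wise identifications into the global statement that $\IndFol$ is the orbit foliation of the isometric $K$-action on $\R^{k+n}$, and as such it is equidistant.

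The principal obstacle lies in the structure theorem itself, not in the present deduction. Specifically, constructing $T$ as a compatible lift of the translation subgroup of $\Isom{\affineleaf}$ and arranging a compact $K$ commuting with $T$ and still yielding the original $\Fol$-orbits requires careful bookkeeping of how conjugation by screw-motion elements permutes the isotropy groups $G_p$ along the axis; once this commutation is secured the equidistance claim is a formal consequence.
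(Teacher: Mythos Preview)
Your overall plan---deduce equidistance of $\IndFol$ from the $K\times T$ structure theorem---is sound, and indeed the paper eventually establishes exactly that structure (Proposition~\ref{Prop:Phi goes to Centralizer}). But the paper proves Theorem~\ref{Thm:Induced foliation is equidistant in homogeneous case} \emph{before} and with less machinery: it shows directly that the isotropy $G_p$ is independent of $p\in\affineleaf$ (Lemma~\ref{Lem:Isotropy acts trivially on F_0}), whence all $\IndFol[p]$ are parallel translates of one another. The structure theorem is then a further consequence, not a prerequisite.

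More importantly, your sketch of the structure theorem contains a genuine error. You assert that the image of $G\to\Isom{\affineleaf}$, being a closed connected transitive subgroup of $\Isom{\R^k}$, must contain the full translation group. This is false: in $\Isom{\R^3}$ the group
\[
H=\bigl\{\,(R_\theta\oplus 1,\,(a,b,\theta))\ \bigm|\ a,b,\theta\in\R\,\bigr\}
\]
(with $R_\theta\in\SO{2}$ the rotation by angle $\theta$) is closed, connected, and transitive on $\R^3$, yet contains no translation in the third coordinate. So you cannot produce your $T$ this way.

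The paper circumvents this by first passing to the situation where $P_2\colon G\to\SO{n}$ is injective (splitting off a Euclidean factor otherwise), so that $G$ is isomorphic to a subgroup of a compact group. Then the Lie algebra decomposes as $\Lie g=\Lie z\oplus\Lie g'$ with $\Lie g'$ semisimple, the corresponding compact $G'$ has a fixed point on $\R^k$ and (being normal) acts trivially there; this forces $\hat G$ to be abelian and $G_0=N_1$. Only after knowing $\hat G$ is abelian and acts simply transitively does one prove it consists of translations. Your argument lacks this reduction and the Lie-algebraic step that replaces the incorrect translation claim.
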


To achieve this we must take a closer look on how $G$ acts on $\R^k$ and $\R^n$
respectively.
Since $G$ leaves the affine space $\affinefibre$ invariant it must be a subgroup
of
$$
    \Isom{\R^k}\times\SO{n}=
    \Set{\groupelement{A}{B}{a}}{A\in\SO{k}, B\in\SO{n}, a\in\R^k}
$$
where any $g \in G$ acts on $\point{x}{y}\in\R^{k}\times\R^n$ via
$$
    \groupelement{A}{B}{a}.\point{x}{y}=\point{Ax+a}{By}.
$$

\begin{Rem*}
Consider the two natural projections
\begin{align*}
    P_1\colon G \ra &\Isom{\R^k} \quad\text{and}\\
    P_2\colon G \ra &\SO{n},
\end{align*}
both of which are continuous group homomorphisms.
Note that $P_i(G)$ may not be a closed group.
We will use the following notation:

We denote the kernel of $P_i$ by $N_i$.
For any subgroup $H$ of $G$ we will use $\hat{H}$ and $\tilde{H}$ for its image
under the projections $P_1$ and $P_2$ respectively.
\end{Rem*}

We start by proving a reducibility result.

\begin{Lem}
Either $N_2$ is trivial or $\Fol$ splits off a Euclidean factor.
\end{Lem}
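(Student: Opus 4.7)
The plan is to prove the contrapositive: assuming that $N_2$ has positive dimension (i.e.\ $\mathfrak{n}_2 \neq 0$), I will construct a nonzero subspace $T \subset \R^k$ of pure translations lying in $\mathfrak{n}_2$ and show that $T$ furnishes the desired Euclidean factor. Throughout, I identify elements of $\mathfrak{g}$ with triples $(X,v,Y) \in \mathfrak{so}(k) \oplus \R^k \oplus \mathfrak{so}(n)$; by definition $\mathfrak{n}_2$ is contained in $\mathfrak{so}(k) \oplus \R^k \oplus \{0\}$.

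First I would observe that $N_2$ acts trivially on the $\R^n$-component, so all its orbits remain inside horizontal slices $\R^k \times \{y_0\}$. If $\hat{N}_2 := P_1(N_2)$ in addition fixed the origin of $\R^k$, then by normality of $\hat{N}_2$ in the transitive group $\hat{G}$ it would fix every point of $\R^k$, making $N_2$ act trivially on $\R^{k+n}$ and contradicting the effectiveness of $G$. Consequently the $\hat{N}_2$-orbit through $0$ has positive dimension, so $\mathfrak{n}_2$ contains elements $(X,v,0)$ with $(X,v) \neq 0$.

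Next I would extract a pure translation using the ideal property $[\mathfrak{g}, \mathfrak{n}_2] \subset \mathfrak{n}_2$. A classical structural result on connected transitive Lie subgroups of $\Isom{\R^k}$ ensures that $\hat{G}$ contains the full translation subgroup $\R^k$, so every $w \in \R^k$ lifts to an element $(0, w, Y_w) \in \mathfrak{g}$. For $\xi = (X, v, 0) \in \mathfrak{n}_2$, a direct computation gives $[(0, w, Y_w), \xi] = (0, -Xw, 0) \in \mathfrak{n}_2$. If $X = 0$ then $\xi$ itself is already a pure translation; if $X \neq 0$ then $\ker X \subsetneq \R^k$ so we may choose $w$ with $Xw \neq 0$. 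In either case $T := \mathfrak{n}_2 \cap \bigl(\{0\} \oplus \R^k \oplus \{0\}\bigr)$ is a nonzero subspace of $\R^k$, moreover $\operatorname{Ad}(G)$-invariant and in particular preserved by the linear parts of all elements of $\hat{G}$.

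Finally, $\exp(T) \subset G$ acts on $\R^{k+n}$ as translations along $T \subset \R^k \subset \R^{k+n}$, and every leaf of $\Fol$, being $G$-invariant, is invariant under these translations. Orthogonally decomposing $\R^{k+n} = T \oplus T^{\perp}$, each leaf $F$ satisfies $F = T + (F \cap T^{\perp})$, yielding a product decomposition $\Fol = T \times \Fol'$ where $\Fol'$ is the induced equidistant foliation on $T^\perp$; this exhibits $T$ as the Euclidean factor off which $\Fol$ splits. The main obstacle I anticipate is the cited structural fact --- that a connected transitive Lie subgroup of $\Isom{\R^k}$ contains the full translation subgroup --- which is essential for producing the translations needed to bracket against elements of $\mathfrak{n}_2$.
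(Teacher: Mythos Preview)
Your overall strategy---finding a pure translation inside $\mathfrak{n}_2$ and splitting it off---is reasonable, and your opening observation (that if $\hat N_2$ fixed the origin then normality under the transitive group $\hat G$ would force it to act trivially) is correct and matches the paper's endgame. The argument breaks, however, at the ``classical structural result'' you invoke: a connected Lie subgroup of $\Isom{\R^k}$ acting transitively on $\R^k$ need \emph{not} contain the full translation group, even at the Lie algebra level. For instance, in $\Isom{\R^3}$ take
\[
\hat G=\bigl\{\,(R_\theta,(a,b,\theta)) : a,b,\theta\in\R \,\bigr\},
\]
where $R_\theta$ rotates the first two coordinates by angle $\theta$. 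This group is connected and acts simply transitively on $\R^3$, yet its Lie algebra is spanned by $(0,e_1)$, $(0,e_2)$, $(J,e_3)$ and does \emph{not} contain the infinitesimal translation $(0,e_3)$. Hence you cannot in general produce $(0,w,Y_w)\in\mathfrak g$ for every $w\in\R^k$, and the bracket step $[(0,w,Y_w),(X,v,0)]=(0,-Xw,0)$ that manufactures your pure translation is unavailable. Note also that the fact that $\hat G$ acts by translations is established in the paper only \emph{after} this lemma and using it as input, so it is not at your disposal here.

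The paper avoids this obstacle by a geometric argument that does not require knowing which translations lie in $\hat G$. It applies the affine-leaf theorem (Theorem~\ref{Thm:fibre over soul is totally geodesic}) directly to the equidistant foliation of $\R^k$ by $\hat N_2$-orbits, obtaining an affine orbit $\mathcal A$ through the origin. Normality of $\hat N_2$ in the transitive group $\hat G$ then forces every $\hat N_2$-orbit to be a translate $g.\mathcal A$, and equidistance makes these translates mutually parallel. Since $N_2$ acts trivially on the $\R^n$-factor, each $N_2$-orbit in $\R^{k+n}$ is $(x,y)+\mathcal A\times\{0\}$, which exhibits the Euclidean factor at once. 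If you want to rescue your Lie-algebraic route, the missing ingredient is precisely a direct proof that $\hat{\mathfrak n}_2$ contains a nonzero pure translation; the paper's soul argument supplies exactly that, in geometric disguise.
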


\begin{proof}
Assume that $N_2$ is not trivial.
Observe that since
$$
    N_2=\Set{\groupelement{A}{E}{a}}{(A,a)\in P_1(G)}
$$
the projection $P_1|_{N_2}\colon N_2 \to \hat{N_2}$ is an isomorphism.

Consider the action of $\hat{N_2}$ on $\R^k$.
By Theorem~\ref{Thm:fibre over soul is totally geodesic} one of the orbits of
this action is an affine space $\mathcal{A}$, which we may assume without loss
of generality to pass through the origin.

Remember that $\hat{G}$ acts transitively on $\R^{k}$.
Let $x$ be an arbitrary point in $\R^k$ and let $g\in \hat{G}$ be such that
$g.0=x$.
Since $N_2$ is a normal subgroup of $G$ we get $\hat{N_2}\lhd\hat{G}$.
Thus the $\hat{N_2}$-orbit passing through $x$, given by
$$
    \hat{N_2}.x=\hat{N_2}.g.0=g.\hat{N_2}.0=g.\mathcal{A},
$$
is also an affine space, which we denote by $\mathcal{A}_x$.
By the equidistance of the orbits of $\hat{N_2}$ all these affine spaces
$\mathcal{A}_x$ must be parallel.

Remember that $N_2$ acts trivially on $\R^n$. So for any $\point{x}{y}\in\R^{k+n}$
the $N_2$-orbit through $\point{x}{y}$ is just the affine space
$\point{x}{y}+\mathcal{A}\times\{0\}$. Hence, $\Fol$ splits off the Euclidean
factor $\mathcal{A}\times\{0\}$.

Suppose that $\mathcal{A}=\{0\}$. Then $N_2$ acts trivially on $\R^k$ since
$\hat{N_2}$ does. So $N_2$ is trivial as we assumed the action of $G$ to be
effective.
\end{proof}

\begin{Rem}\label{Rem:P_2 is isomorphism}
In the following we will concentrate on the
case of $P_2$ being an isomorphism by passing on to the reduced foliation if
necessary.
\end{Rem}

\begin{Lem}\label{Lem:Isotropy acts trivially on F_0}
The isotropy group $G_0$ is equal to $N_1$ and the projection $\hat{G}$ of $G$
is abelian.
\end{Lem}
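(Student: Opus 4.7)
The plan is to prove that the semisimple part of $\hat{\mathfrak{g}}$ vanishes, from which both assertions follow at once: the triviality of the semisimple part makes $\hat{G}$ abelian, and an abelian transitive isometric action on $\R^k$ consists only of translations, giving $\hat{G}_0=\{e\}$, which is equivalent to $G_0=N_1$.

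First I would observe that because $P_2$ is an isomorphism, $\mathfrak{g}$ embeds as a subalgebra of $\mathfrak{so}(n)$ and is therefore of compact type. Let $\mathfrak{s}:=[\mathfrak{g},\mathfrak{g}]$ and let $S\subset G$ be the corresponding connected subgroup. Since $\mathfrak{s}$ is compact semisimple, its simply connected cover is compact by Weyl's theorem, and $S$ --- being a quotient of that cover by a discrete central subgroup --- is itself compact. As a compact group of isometries of $\R^{k+n}$ preserving the closed affine subspace $\affineleaf$, $S$ admits a fixed point in $\affineleaf$; after translating coordinates within $\affineleaf$ (which preserves the normal form $\affineleaf=\R^k\times\{0\}$ and merely conjugates $G$) I may assume that $S$ fixes the origin of $\R^{k+n}$, so that $S\subset G_0$ acts linearly on $\R^{k+n}$ and $P_1(\mathfrak{s})\subset\mathfrak{so}(k)$.

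The key step is a rigidity argument. Since $\mathfrak{s}$ is an ideal of $\mathfrak{g}$, the subgroup $S$ is normal in $G$, and consequently its fixed set $\mathrm{Fix}(S)\subset\R^{k+n}$ is $G$-invariant, hence a union of leaves of $\Fol$. The linear action splits as
$$
    \mathrm{Fix}(S)=\mathrm{Fix}_{\R^k}(P_1(\mathfrak{s}))\oplus\mathrm{Fix}_{\R^n}(P_2(\mathfrak{s})),
$$
so intersecting with $\affineleaf$ yields $\affineleaf\cap\mathrm{Fix}(S)=\mathrm{Fix}_{\R^k}(P_1(\mathfrak{s}))\times\{0\}$. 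But $\affineleaf$ is a single leaf, so this intersection is either all of $\affineleaf$ or empty; since it contains the origin, it must equal $\affineleaf$, which forces $\mathrm{Fix}_{\R^k}(P_1(\mathfrak{s}))=\R^k$. Faithfulness of the standard representation of $\mathfrak{so}(k)$ on $\R^k$ then yields $P_1(\mathfrak{s})=0$, hence $\mathfrak{s}\subset\mathfrak{n}_1$ and $\hat{\mathfrak{g}}=\mathfrak{g}/\mathfrak{n}_1$ is abelian.

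To finish, once $\hat G$ is known to be abelian, any $h\in\hat G_0$ commutes with every $g\in\hat G$, so $h.(g.0)=g.(h.0)=g.0$ shows that $h$ fixes all of $\hat G.0=\R^k$; effectiveness of the $\hat G$-action then forces $h=e$, so $\hat G_0=\{e\}$ and $G_0=N_1$. The main obstacle is isolating the correct $G$-invariant subset of $\R^{k+n}$ on which to apply the uniqueness of the affine leaf: the fixed set of the normal compact semisimple subgroup $S$ is precisely what is needed, and the argument depends fundamentally on $\mathfrak{g}$ being of compact type, which is where the hypothesis that $P_2$ is an isomorphism enters.
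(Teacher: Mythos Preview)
Your proof is correct and follows essentially the same route as the paper's: use the isomorphism $P_2$ to see that $\Lie g$ is of compact type, show the connected semisimple part $S$ is compact with a fixed point, exploit normality of $S$ in $G$ to conclude that $S$ acts trivially on $\affineleaf$ (hence $\Lie s\subset\Lie n_1$ and $\hat G$ is abelian), and finally deduce $\hat G_0=\{e\}$ from abelian transitivity. Your ``$\mathrm{Fix}(S)$ is a union of leaves'' argument is exactly the content of the paper's Remark (a normal subgroup contained in an isotropy acts trivially under a transitive action), just phrased geometrically; and your appeal to Weyl's theorem for compactness of $S$ is a clean replacement for the paper's slightly more indirect argument via the projection $\Isom{\R^{k+n}}\to\SO{k+n}$.
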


\begin{proof}
According to Remark \ref{Rem:P_2 is isomorphism} we have
$G \cong \tilde{G}=P_2(G)$ and
since $\tilde{G}$ is contained in the compact Lie group $\SO{n}$ we get the
following decomposition for the Lie algebra $\Lie{g}$ of $G$:
\begin{equation}\label{Eq:Lie algebra decomposition}
    \Lie{g} = \Lie{z} \oplus \Lie{g}',
\end{equation}
where $\Lie{z}$ is its center and $\Lie{g}'$ is semisimple.
\begin{Rem*}
Note that a priori we only get the decomposition
$$
    \Lie{g}= \rad\Lie{g}\oplus\Lie{h},
$$
where $\rad\Lie{g}$ is the solvable radical of $\Lie{g}$ and $\Lie{h}$ is
semisimple (cf.\ \cite[Thm.~3.8.1]{Var}).
Let then $R$ be the connected Lie group corresponding to $\rad\Lie{g}$ and consider
its image under $P_2$.

Obviously $P_2(R)$ is solvable, i.e.\ there is a
chain $\{1\}=:G_0 \lhd \ldots \lhd G_n:=P_2(R)$ of normal subgroups
such that subsequent quotients $G_{i+1}/G_i$ are abelian.
But clearly by continuity of the group operations the property of being a normal
subgroup is preserved if we take closures and the subsequent
quotients of the $\closure{G_i}$ remain abelian as well.

Hence, $\closure{P_2(R)}$ is solvable. But as a compact Lie group this
can only be the case if it is abelian.
Now $\closure{P_2(G)}$ is contained in the normalizer of $\closure{P_2(R)}$ and
acts on $\closure{P_2(R)}$ by conjugation.
But since $\closure{P_2(R)}$ is a torus its automorphism group is discrete.
Also note that $\closure{P_2(G)}$ is connected, so $\closure{P_2(G)}$ is in fact
contained in the centralizer of $\closure{P_2(R)}$.

In particular $P_2(R)$ lies in the center of $P_2(G)$ and hence $R$ lies in the
center of $G$ because $P_2$ is a group isomorphism.
The reverse inclusion follows from the definition of $R$.
\end{Rem*}

Let $G'$ be the unique connected Lie subgroup of $G$ corresponding to the
Lie algebra~$\Lie{g}'$.
Note, that the decomposition (\ref{Eq:Lie algebra decomposition})
implies $G'$ to be a normal subgroup of $G$.

Observe that $G'$ is a semisimple subgroup of
$\Isom{\R^{k+n}}=\R^{k+n}\ltimes\SO{k+n}$ and
consider the natural projection~$P\colon\Isom{\R^{k+n}}\to\SO{k+n}$.
Note that $P$ is a Lie group homomorphism.

Now the Lie algebra $\Lie{g}'$ decomposes into a sum of simple Lie algebras $\Lie{g}'_i$.
Each~$\Lie{g}'_i$ is either mapped to zero or to an isomorphic image of $\Lie{g}'_i$.
But $\diff{P}(\Lie{g}'_i)=0$ means that the corresponding connected Lie group $G'_i$
consists only of translations of $\R^{k+n}$ and hence is solvable, which contradicts
$G'$ being semisimple.

So $\Lie{g}'$ is isomorphic to a subalgebra of $\Lie{so}(n)$ and
hence $G'$ is compact.
In particular it has a fixed point $\point{x}{y}\in\R^{k+n}$.
Consequently, $\hat{G'}$ leaves $x\in\R^k$ invariant.

\begin{Rem}\label{Rem:Normal subgroup acts trivially}
Let $G$ be any group acting transitively on some space $X$ and suppose $H$
to be a normal subgroup of $G$ which is contained in the isotropy group $G_x$ of
some point $x\in X$. Then $H$ acts trivially on $X$.

To see this observe that the $H$-orbit passing through some $y\in X$ is given by
$$
    H.y=H.g.x=g.H.x=g.x=y,
$$
for some $g\in G$ since $G$ acts transitively.
\end{Rem}

This implies that $\hat{G'}$ acts trivially on $\R^k$, i.e.\ $G'$ is contained
in $N_1$.
So $N_1$ has Lie algebra $\Lie{t}\oplus\Lie{g}'$ for some subalgebra
$\Lie{t}$ of $\Lie{z}$.
Since $\hat{G}=P_1(G)\cong G/N_1$ and $G/N_1$ has Lie algebra
$(\Lie{z}\oplus\Lie{g}')/(\Lie{t}\oplus\Lie{g}'$) it follows that
$\hat{G}$ is abelian.

Thus $\hat{G}_0$ is a normal subgroup of $\hat{G}$ and Remark~\ref{Rem:Normal subgroup acts trivially}
implies $G_0 \subset N_1$, which finishes the proof.
\end{proof}

In particular this means that the isotropy group $G_p$ does not depend on the
choice of $p\in\affinefibre$, hence, the induced foliations $\IndFol[p]$ are
equal up to parallel transport along $\affinefibre$, which proves
Theorem~\ref{Thm:Induced foliation is equidistant in homogeneous case}.

Also, this provides a convenient way to describe the action of $G$ on $\R^{k+n}$:

\begin{Prop}\label{Prop:Phi goes to Centralizer}
If $\Fol$ is irreducible there exists a Lie group homomorphism
$\Phi\colon\R^k\to\Centr{\tilde{G}_0}$
into the centralizer of $\tilde{G}_0$ relative to $\SO{n}$ such that the orbits
of~$G$ are of the form
\begin{equation}\label{Eq:Orbits of G}
    G.(x,y)=\Set{(x+v,\Phi(v).\tilde{G}_0.y)}{v\in\R^k}.
\end{equation}
\end{Prop}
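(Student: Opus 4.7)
The plan is to construct $\Phi$ as the composition $P_2 \circ \Phi_0$ of $P_2$ with a central section $\Phi_0\colon \R^k \to G$ of the natural quotient $G \to \hat G$. Irreducibility of $\Fol$ permits the reduction of Remark~\ref{Rem:P_2 is isomorphism} without loss of generality, so $P_2$ is an isomorphism; together with Lemma~\ref{Lem:Isotropy acts trivially on F_0} this gives $G_0 = N_1 = \ker P_1$ and $\hat G$ abelian. In particular $\hat G$ acts freely and transitively on $\R^k$, making it a connected abelian Lie group of dimension $k$.

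The first step is to show that $\hat G$ acts purely by translations. Writing the Lie algebra $\hat{\Lie g} \subset \Lie{so}(k) \oplus \R^k$ as the graph $\{(\phi(v), v) : v \in \R^k\}$ of a linear map $\phi\colon \R^k \to \Lie{so}(k)$, abelianness of $\hat{\Lie g}$ is equivalent to the two identities $\phi(v)w = \phi(w)v$ and $[\phi(v), \phi(w)] = 0$. The trilinear form $T(u,v,w) := \ip{\phi(v)w}{u}$ is then symmetric in $v,w$ (by the first identity) and skew in $u,w$ (since $\phi(v) \in \Lie{so}(k)$), and a short permutation argument combining these two symmetries forces $T \equiv 0$, hence $\phi \equiv 0$. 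Thus $\hat G$ is the translation group $\R^k$, each $g \in G$ acts as $g.(x,y) = (x + v_g, P_2(g).y)$ with $v_g := P_1(g).0$, and $g \mapsto v_g$ is a Lie group homomorphism $G \to \R^k$ with kernel $G_0$.

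Next, from the proof of Lemma~\ref{Lem:Isotropy acts trivially on F_0} I have $\Lie g = \Lie z \oplus \Lie g'$ with $\Lie g' \subset \Lie g_0$, so that $\Lie g_0 = (\Lie g_0 \cap \Lie z) \oplus \Lie g'$. Picking any linear complement $\Lie s \subset \Lie z$ to $\Lie g_0 \cap \Lie z$, dimension counting gives $\dim \Lie s = k$, and the differential of $g \mapsto v_g$ restricts to a linear isomorphism $\Lie s \to \R^k$. Its inverse, being an abelian Lie algebra homomorphism, integrates --- since $\R^k$ is simply connected --- to a unique Lie group homomorphism $\Phi_0\colon \R^k \to G$ satisfying $v_{\Phi_0(u)} = u$ for all $u \in \R^k$. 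Because $\Lie s \subset \Lie z$ and $G$ is connected, $\Phi_0(\R^k) \subset Z(G)$.

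Finally, set $\Phi := P_2 \circ \Phi_0 \colon \R^k \to \tilde G \subset \SO{n}$, which lands in $P_2(Z(G)) = Z(\tilde G) \subset \Centr{\tilde G_0}$. Any $g \in G$ decomposes uniquely as $g = \Phi_0(v_g) \cdot h$ with $h = \Phi_0(v_g)^{-1} g \in G_0$; since $G_0$ acts trivially on the $\R^k$-factor this gives $g.(x,y) = (x + v_g, \Phi(v_g) \cdot P_2(h) \cdot y)$. Letting $g$ range over $G$, $v_g$ sweeps out $\R^k$ and $P_2(h)$ sweeps out $\tilde G_0$, producing the claimed orbit formula. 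The main obstacle is the trilinear-form computation forcing $\phi \equiv 0$; once $\hat G$ is known to act by translations, the rest reduces to standard integration of an abelian subalgebra complementary to the isotropy inside the center.
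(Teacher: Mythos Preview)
Your proof is correct and takes a genuinely different route from the paper's.

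For the translation step, the paper proves a separate lemma showing that any abelian group acting simply transitively on $\R^m$ by isometries must act by translations, arguing by induction on invariant subspaces of the rotational part. Your trilinear-form argument at the Lie algebra level is shorter and equally valid (connectedness of $\hat G$, which you implicitly use, follows from connectedness of $G$).

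For the construction of $\Phi$, the paper proceeds more externally: it first obtains a homomorphism $\phi\colon\R^k\to\NormNull{\tilde G_0}$ describing the orbits, then corrects $\phi$ by passing to the Killing-orthogonal complement $(\tilde{\Lie g}_0)^\perp$ inside $\Lie{norm}(\tilde{\Lie g}_0)$, observing this is an ideal and hence lies in the centralizer, and finally integrates. You instead work internally in $G$: using the decomposition $\Lie g = \Lie z \oplus \Lie g'$ already established in the proof of Lemma~\ref{Lem:Isotropy acts trivially on F_0}, you pick a complement $\Lie s\subset\Lie z$ to $\Lie g_0\cap\Lie z$ and integrate directly. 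This is cleaner and lands $\Phi$ in the (potentially smaller) group $Z(\tilde G)\subset\Centr{\tilde G_0}$. The paper's approach, however, introduces the object $(\tilde{\Lie g}_0)^\perp$, which is exactly the group $\layergroup{\star}^\perp$ that recurs throughout Chapters~\ref{Chap:Reducibility} and~\ref{Chap:Homogeneity}; so its extra machinery pays off downstream.
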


Remember that $G_0$ acts trivially on $\R^k$, thus $\tilde{G}_0$ is just the trivial
embedding of $G_0$ into $\Isom{\R^n}$ and hence a Lie group.

Let us first show that $\hat{G}$ and thus $G$ act on $\R^k$ by translations.
Since the action of $\hat{G}$ on $\R^k$ has trivial isotropy and $\hat{G}$ is
abelian it suffices to prove:

\begin{Lem}
Let $H$ be an abelian group acting simply transitively on $\R^m$ by Isometries.
Then $H$ acts by translations.
\end{Lem}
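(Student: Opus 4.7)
The plan is to show that every $h \in H$ is a \emph{Clifford--Wolf translation} of $\R^m$, meaning the displacement function $D_h(x) := \Dist{x}{h(x)}$ is constant, and then to invoke the elementary fact that the only Clifford--Wolf translations of Euclidean space are the ordinary translations. This replaces any direct analysis of the linear part of each $h$ (which can look combinatorially awkward) by a single short symmetry argument.

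First I would exploit commutativity together with transitivity. Given $h \in H$ and two points $x,y \in \R^m$, transitivity of the action provides some $g \in H$ with $g(x) = y$, and commutativity gives $h(y) = h(g(x)) = g(h(x))$. Since $g$ is an isometry, this yields
\begin{equation*}
    \Dist{y}{h(y)} = \Dist{g(x)}{g(h(x))} = \Dist{x}{h(x)},
\end{equation*}
so $D_h$ is constant on $\R^m$. Note that full simple transitivity is not needed here; plain transitivity suffices, and the abelian hypothesis is the crucial ingredient.

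Second I would carry out the routine linear-algebra step showing that a Clifford--Wolf translation of $\R^m$ is a Euclidean translation. Writing $h(x) = Ax + a$ with $A \in \O{m}$ and $a \in \R^m$, the condition that $|(A-I)x + a|^2$ be independent of $x$ expands to a polynomial in $x$ whose quadratic part forces $(A-I)^T(A-I) = 0$, hence $A = I$ and $h$ is the translation by $a$.

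I do not anticipate any real obstacle: the whole argument hinges on spotting the right reformulation in terms of the displacement function, after which both steps are a few lines. The only small matter of taste is whether to record explicitly that transitivity alone (not simple transitivity) is used, but for the application in the paper simple transitivity is what is available anyway.
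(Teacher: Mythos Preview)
Your proof is correct and takes a genuinely different route from the paper's. The paper exploits simple transitivity to parametrize $H$ as $\{(\varphi(a),a):a\in\R^m\}$ for a map $\varphi\colon\R^m\to\O{m}$, then uses commutativity and a rank argument (the image of an abelian subgroup of $\O{m}$ has dimension strictly less than $m$) to show that the kernel $V_0=\ker\varphi$ is nontrivial and that $\varphi(H)$ acts trivially on $V_0$; iterating the rank argument on $V_1=V_0^\perp$ forces $V_1=\{0\}$. Your Clifford--Wolf argument is shorter and more conceptual: it bypasses the parametrization entirely and, as you note, uses only transitivity rather than simple transitivity. The paper's approach buys an explicit structural picture of $H$ via $\varphi$, but for the purpose of the lemma your displacement-function argument is cleaner and arguably the ``right'' proof.
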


\begin{proof}
Remember that $H$ may be viewed as a subgroup of
$$
    \Isom{\R^m}=\Set{(A,a)}{A\in\O{m},a\in\R^m}
$$
with the group multiplication given by $(A,a)\circ(B,b)=(AB,a+Ab)$.

Since $H$ acts simply transitively any $h=(A,a)\in H$ is uniquely determined by
its translational part, i.e.\ there is a group homomorphism
$\vphi\colon\R^m\to\O{m}$ such that any $h\in H$ is of the form $h=(\vphi(a),a)$
for some $a\in\R^m$.

Define $V_0:=\ker{\vphi}$ and $V_1:=V_0^\perp$. Observe that since $H$ is abelian
the dimension of its image under $\vphi$ is at most the rank of $O(m)$ which is
strictly less than $m$ so $V_0$ has positive dimension.

Assume $V_1$ to be non-trivial. Let $v\in V_1$ with $v\neq0$ and $w\in V_0$.
The group $H$ being abelian then implies
$$
    (\vphi(v)\vphi(w),v+\vphi(v)w) = (\vphi(w)\vphi(v),w+\vphi(w)v).
$$
In particular, using $w\in\ker\vphi$, this means $v+\vphi(v)w=w+v$ for all
$w\in V_0$.
So, $\vphi(H)$ acts trivially on $V_0$ and thus the image of $\vphi$ is contained
in $\O{V_1}$.

This yields the group homomorphism $\vphi|_{V_1}\colon V_1\to\O{V_1}$ which by
the above rank argument must have a non-trivial kernel. But this contradicts
$\vphi|_{V_1}$ being injective so $V_1$ must be trivial.
\end{proof}

As a consequence of this and because $G_0$ is a normal subgroup of $G$ any element
of $G$ is uniquely determined by a translation on $\R^k$ up to multiplication
with $G_0$.

This yields a homomorphism $\phi\colon\R^k\to\SO{n}$ such that the orbits of $G$
are of the form described in~(\ref{Eq:Orbits of G}).

\begin{Rem*}
Note that the image of $\phi$ has to be contained in $\Norm{\tilde{G}_0}$ since
$G_0$ is a normal subgroup of $G$. But it need not, in general, be contained
in $\Centr{\tilde{G}_0}$.

In fact, the map $\Phi$ we construct in the following may lead to a different group
action, which, however, is orbit equivalent to that of $G$.
\end{Rem*}

\begin{proof}[Proof of Proposition~\ref{Prop:Phi goes to Centralizer}]

Let us first take a look at $\phi$ at the level of Lie algebras:
$\diff[0]{\phi}\colon\R^k\to\Lie{n}$ maps the abelian
Lie algebra $\R^k$ into the normalizer $\Lie{n}:= \Lie{norm}(\tilde{\Lie{g}}_0)$
(relative to $\Lie{so}(n)$) of the Lie algebra $\tilde{\Lie{g}}_0$ of $\tilde{G}_0$.

\parpic[l]{
\setlength{\unitlength}{1.2cm}
\begin{picture}(3,3.2)(-1,-1.2)
    \put(-1,0){\line(1,0){2.5}}
    \put(0,-1){\line(0,1){2.5}}
    \put(-.5,-1){\line(1,2){1.2}}
    \put(-2.5,.8){$\Lie{n}:=\Lie{norm}(\tilde{\Lie{g}}_0)$}
    \put(1.6,0){$\tilde{\Lie{g}}_0$}
    \put(-.2,1.7){$(\tilde{\Lie{g}}_0)^\perp$}
    \put(.6,1){$\diff[0]{\phi}\R^k$}
\end{picture}
}
Consider the natural projection
$P\colon \Norm{\tilde{G}_0} \to \Norm{\tilde{G}_0}/\tilde{G}_0$
and its derivative
$\diff[e]{P}\colon \Lie{n} \to \Lie{n} / \tilde{\Lie{g}}_0$.
We may assume that $\diff[e]{P}\circ\diff[0]{\phi}$ is injective for otherwise
$\Fol$ would split off the kernel of $\diff[e]{P}\circ\diff[0]{\phi}$ as a
Euclidean factor (cf.\ also Section~\ref{Sec:Noncompact Case}).

Now $\Lie{n} / \tilde{\Lie{g}}_0$ is canonically isomorphic
to $(\tilde{\Lie{g}}_0)^\perp$, with the orthogonal complement taken with respect to
the Killing form in~$\Lie{n}$.
And since $\tilde{\Lie{g}}_0$ is an ideal of $\Lie{n}$ so is~$(\tilde{\Lie{g}}_0)^\perp$
(cf.~\cite[Chap.~6]{Helg}).
Hence, $(\tilde{\Lie{g}}_0)^\perp$ is contained in the centralizer of
$\tilde{\Lie{g}}_0$, in fact
$\Lie{centr}(\tilde{\Lie{g}}_0)=(\tilde{\Lie{g}}_0)^\perp\oplus\Lie{z}(\tilde{\Lie{g}}_0)$,
where $\Lie{z}(\tilde{\Lie{g}}_0)$ is the center of $\tilde{\Lie{g}}_0$.

Thus there is a Lie algebra homomorphism
$\tilde{\Phi}\colon\R^k\to\Lie{centr}(\tilde{\Lie{g}}_0)$ such that the following
diagram commutes:
$$
    \xymatrix{
        &\R^k \ar[r]^{\diff[0]{\phi}} \ar@{.>}[d]_{\tilde{\Phi}}
            & \Lie{n} \ar[d]^{\diff[e]{P}} \\
        \Lie{centr}(\tilde{\Lie{g}}_0) \ar@{}[r]|-{\supset}&(\tilde{\Lie{g}}_0)^\perp
            & \Lie{n}/\tilde{\Lie{g}}_0 \ar[l]_\cong
}
$$

And since $\R^k$ is simply connected we can lift $\tilde{\Phi}$ to a Lie group
homomorphism~$\Phi$ from $\R^k$ to the connected component of $\Centr{\tilde{G}_0}$.

By this construction we get for any $v\in\R^k$ that $\Phi(v)$ is equal to $\phi(v)$
up to multiplication by some element in $\tilde{G}_0$, which implies that the orbits
of $G$ may indeed be written in the form~(\ref{Eq:Orbits of G}).
\end{proof}

\section[The Induced Foliation in each Horizontal Layer]%
{The Induced Foliation in each Horizontal Layer is Equidistant}%
\label{Sec:IndFol in each Layer}

We have seen that in the homogeneous case each of the induced foliations~$\IndFol[p]$
is equidistant.
This holds in general for equidistant foliations of $\R^{k+n}$:

\begin{Prop}\label{Prop:Equidistance of each induced Foliation}
For any point $p$ in the affine leaf $\affineleaf$ the induced foliation $\IndFol[p]$
of the horizontal Layer $\layer{p}$ is equidistant.
\end{Prop}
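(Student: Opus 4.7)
My plan is to realize $\IndFol[p]$ as the equidistant foliation of $\Hor[p]\cong\layer{p}$ whose leaves are the fibres of the restricted differential submetry $\diff[p]\pi|_{\Hor[p]}$.

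First, I will use Section~\ref{Sec:Differentials} to see that the differential $\diff[p]\pi\colon T_p\R^{k+n}\cong\R^{k+n}\to T_{\bar p}\base$ (writing $\bar p:=\pi(p)$) is a submetry between cones. Since the fibre of $\pi$ through $p$ is the affine leaf $\affineleaf$, the vertical cone of $\diff[p]\pi$ is $T_p\affineleaf=\R^k\times\{0\}$, and its polar, the horizontal cone, is exactly $\Hor[p]=\{0\}\times\R^n$. Applying part~(d) of Proposition~\ref{Prop:vertical and horizonal vectors} then gives that
\[
    \diff[p]\pi|_{\Hor[p]}\colon\Hor[p]\to T_{\bar p}\base
\]
is itself a submetry, so its fibres form an equidistant foliation of $\Hor[p]$---equivalently, via the translation $u\leftrightarrow p+u$, an equidistant foliation of $\layer{p}$.

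The second step will be to identify these fibres with the leaves of $\IndFol[p]$; equivalently, I want to show that for $u,u'\in\Hor[p]$,
\[
    \pi(p+u)=\pi(p+u')\iff\diff[p]\pi(u)=\diff[p]\pi(u').
\]
My key input is that $\gamma_u(t):=p+tu/|u|$ is a horizontal ray in $\R^{k+n}$ (its initial direction lies in $\normal_p\affineleaf=\Hor[p]$ and horizontality propagates by Proposition~\ref{Prop:Horizontal lift of quasigeodesics}), and that $\pi\circ\gamma_u$ is a \emph{minimizing} ray in $\base$ starting at $\bar p$: by the submetry identity $d_{\bar p}\circ\pi=d_\affineleaf$ together with Theorem~\ref{Thm:fibre over soul is totally geodesic}, one has $|\bar p\,\pi(\gamma_u(t))|=d_\affineleaf(\gamma_u(t))=t$. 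Granted this, ``$\Leftarrow$'' reduces to non-branching of geodesics in Alexandrov spaces: $\diff[p]\pi(u)=\diff[p]\pi(u')$ forces $|u|=|u'|$ and $\diff[p]\pi(u/|u|)=\diff[p]\pi(u'/|u'|)$, so the two projected rays share initial direction and length, hence coincide. For ``$\Rightarrow$'', if the projected rays meet a common endpoint $\bar q$, their lengths must equal $|\bar p\,\bar q|$ and their initial directions must agree in $\Sigma_{\bar p}\base$, so $\diff[p]\pi(u)=\diff[p]\pi(u')$.

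The hard part, I expect, will be handling interior points of $\gamma_u$ at which $\pi\circ\gamma_u$ passes through hinge points of $\base$ (caused by $\gamma_u$ meeting singular leaves of $\Fol$): \emph{a priori} the quasigeodesic could branch there. I would deal with this using Proposition~\ref{Prop:Horizontal lift of quasigeodesics}, which ensures that the horizontal lift to $p$ of $\pi\circ\gamma_u$ continues uniquely past any such hinge as long as the initial lift is unique, and hence so does the projected quasigeodesic in $\base$.
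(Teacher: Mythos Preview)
Your approach is essentially the paper's: restrict $\diff[p]\pi$ to $\Hor[p]$, invoke Proposition~\ref{Prop:vertical and horizonal vectors}(d) to get a submetry, and identify its fibres with $\IndFol[p]$ via the exponential map. Your write-up is in fact more detailed than the paper's, and your observation that $\pi\circ\gamma_u$ is a globally minimizing ray (from $d_{\bar p}\circ\pi=d_{\affineleaf}$) is exactly the right ingredient.

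Two small remarks. In your ``$\Rightarrow$'' step the assertion that the two initial directions at $\bar p$ must agree needs one more line: given minimizing rays $\sigma=\pi\circ\gamma_u$ and $\sigma'=\pi\circ\gamma_{u'}$ meeting at $\bar q=\sigma(t_0)=\sigma'(t_0)$, the concatenation of $\sigma|_{[0,t_0]}$ with $\sigma'|_{[t_0,t_0+\varepsilon]}$ has length $t_0+\varepsilon=|\bar p\,\sigma'(t_0+\varepsilon)|$ and is therefore a shortest curve; it agrees with $\sigma'$ on $[t_0,t_0+\varepsilon]$, so by non-branching it agrees with $\sigma'$ on $[0,t_0]$ as well, i.e.\ $\sigma=\sigma'$. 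Secondly, your closing paragraph about hinge points is unnecessary: precisely because the projected curves are globally minimizing rays (not merely quasigeodesics), they contain no hinge points, so Proposition~\ref{Prop:Horizontal lift of quasigeodesics} is not needed here.
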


\begin{proof}
Let $\bar{\pi}$ be the restriction $\diff[p]{\pi}\colon\Hor[p]\to T_{\pi(p)}\base$ of the
differential $\diff[p]{\pi}$ to the horizontal space at $p\in\affineleaf$.
By Proposition~\ref{Prop:vertical and horizonal vectors} we know that $\bar{\pi}$ is
a submetry, hence its fibres are equidistant.

Now $\Hor[p]$ is isometric to $L_p$ via the normal exponential map at $p$.
And from Section~\ref{Sec:Differentials} we know that $\bar{\pi}$ maps a
vector~$h$ to~$\bar{h}$ if and only if $\pi$ maps the geodesic with starting direction
$h$ to that with starting direction $\bar{h}$.
So the exponential map commutes for horizontal directions with the differential
of the submetry.
Hence, the fibration of $\Hor[p]$ by $\bar{\pi}$ is isometric to the induced
foliation~$\IndFol[p]$.
\end{proof}

\section[The Induced Foliations in distinct Horizontal Layers]
    {The Induced Foliations in distinct Horizontal Layers are Isometric}

We have seen that each individual induced foliation~$\IndFol[p]$
is equidistant. In this section we will examine how these foliations change if we
move along the affine leaf~$\affineleaf$.

\begin{Rem}\label{Rem:Homogeneity assumption}
Unless we say otherwise we will from now on assume each of the
induced foliations $\IndFol[p]$ to be homogeneous.
In particular, for each $p\in\affineleaf$
the leaves of $\IndFol[p]$ are the orbits of some connected closed subgroup $G$
of $\SO{n}$, where $G$ acts on $\R^n$ via the restriction of the standard
representation of $\SO{n}$.

\end{Rem}

Note that for any fixed $p$ this group $G$ need not be unique.
Therefore we will pass on to the maximal group that has the same orbits.

\begin{Def}
Let $G\subset\SO{n}$ be a closed connected Lie group acting on $\R^n$ via the
restriction of the standard representation of $\SO{n}$. Then
\begin{equation*}
    \maxgroup{G}:=\Set{g\in \SO{n}}{g(Gx)=G.(gx),\forall x\in\R^n}_0
\end{equation*}
is the maximal connected Lie subgroup of $\SO{n}$ having the same orbits as $G$.
\end{Def}

By definition $\maxgroup{G}$ leaves the orbits of $G$ invariant and acts
transitively on them, since $G\subset\maxgroup{G}$. A straightforward calculation
shows that $\maxgroup{G}$ is indeed a Lie group.

We will denote the maximal connected subgroup of
$\SO{n}$ whose orbits are the leaves of $\IndFol[p]$ by $\layergroup{p}$.
This notation already suggests that $\layergroup{p}$ is just the isotropy group
of~$p$ in case $\Fol$ is homogeneous.

\begin{Prop}\label{Prop:Isometry of_fibrations}
For any $p,q\in F_0$ the induced foliations $\IndFol[p]$ and $\IndFol[q]$ are
isometric to each other.
\end{Prop}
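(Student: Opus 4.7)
The plan is to construct an isometry $\Psi\colon\layer{p}\to\layer{q}$ which sends leaves of $\IndFol[p]$ to leaves of $\IndFol[q]$. Since $\affineleaf=\R^k\times\{0\}$ is affine, the horizontal distributions satisfy $\Hor[p]=\Hor[q]=\{0\}\times\R^n$ canonically, so translation along $\affineleaf$, namely $\tau\colon x\mapsto x+(q-p)$, already gives an isometry $\layer{p}\to\layer{q}$. This $\tau$ need not respect the foliations, and the task is to correct it by an orthogonal transformation.

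First I would transport the problem to a fixed copy of $\R^n$ via the identifications $\layer{p}\cong\Hor[p]\cong\R^n\cong\Hor[q]\cong\layer{q}$. Under these, both $\IndFol[p]$ and $\IndFol[q]$ become equidistant foliations of $\R^n$ (Proposition~\ref{Prop:Equidistance of each induced Foliation}), given respectively by the orbits of connected closed subgroups $\layergroup{p},\layergroup{q}\subset\SO{n}$ acting standardly (Remark~\ref{Rem:Homogeneity assumption}). Moreover, the proof of Proposition~\ref{Prop:Equidistance of each induced Foliation} shows that $\IndFol[p]$ is isometric to the fibration of $\Hor[p]$ under the differential $\bar\pi_p:=\diff[p]{\pi}\colon\Hor[p]\to T_{\pi(p)}\base$, and similarly $\IndFol[q]$ corresponds to $\bar\pi_q$. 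Since $p,q$ both lie in the leaf $\affineleaf$, we have $\pi(p)=\pi(q)=s$ (the soul of $\base$), so $\bar\pi_p$ and $\bar\pi_q$ become two submetries $\R^n\to T_s\base$ onto the \emph{same} target, each with homogeneous fibres.

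The core step will then be to exhibit $h\in\O{n}$ such that $\bar\pi_q\circ h=\bar\pi_p$, equivalently $h\layergroup{p}h^{-1}=\layergroup{q}$; granted this, $\Psi:=\tau\circ h^{-1}$ is the desired leaf-preserving isometry. I would produce $h$ by a continuity argument along the line segment $\gamma\colon[0,1]\to\affineleaf$ from $p$ to $q$: the submetries $\bar\pi_{\gamma(t)}\colon\R^n\to T_s\base$ depend continuously on $t$, the maximal group $\layergroup{\gamma(t)}$ realizing the fibres at time $t$ is unique by definition of $\maxgroup{\,\cdot\,}$, and the orbit-type decomposition of $\R^n$ under $\layergroup{\gamma(t)}$ is read off from the fibres of $\bar\pi_{\gamma(t)}$. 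One should then be able to construct a continuous lift $h_t\in\O{n}$ with $h_0=\id$ and $\bar\pi_{\gamma(t)}\circ h_t=\bar\pi_p$, so that $h:=h_1$ does the job. The hard part will be this last rigidity statement — that two continuously varying homogeneous orthogonal foliations of $\R^n$ with a common metric quotient are $\O{n}$-conjugate via a continuous family of rotations. This may be handled either by a local argument comparing slice representations and multiplicities of principal orbits near each $t_0\in[0,1]$, or by appealing to the classical uniqueness of orthogonal representations whose orbit spaces coincide as metric spaces, applied infinitesimally along $\gamma$.
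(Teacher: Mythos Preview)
Your overall strategy matches the paper's: transport both foliations to $\R^n$, use that they are orbit foliations of connected closed subgroups $\layergroup{p},\layergroup{q}\subset\SO{n}$, and exploit continuity along a path in $\affineleaf$ to conclude that these groups are conjugate. The paper proceeds exactly this way, first proving that the assignment $p\mapsto\IndFolS[p]$ is continuous in the Hausdorff distance (via the flow of the vertical lifts $\lift$, which you do not mention but would need to justify your ``$\bar\pi_{\gamma(t)}$ depends continuously on $t$''), and then that this forces $p\mapsto\layergroup{p}$ to be continuous as a map into the compact metric space $\subgrp{\SO{n}}$ of closed subgroups.

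The genuine gap is your ``hard part''. Neither of your two suggestions is a proof. There is no off-the-shelf classical statement guaranteeing that two orthogonal representations with isometric orbit spaces are conjugate, and the slice-representation idea is too vague to constitute an argument. More importantly, you are aiming for something unnecessarily strong: you want a \emph{continuous family} $h_t\in\O{n}$ lifting the path of groups, i.e.\ a local section of the conjugation action on $\subgrp{\SO{n}}$. The paper sidesteps this entirely with a short topological argument: a compact Lie group has only \emph{countably many} conjugacy classes of closed connected subgroups (finitely many semisimple parts up to conjugation, countably many closed tori), and the conjugation action of $\SO{n}$ on $\subgrp{\SO{n}}$ is isometric for the Hausdorff metric coming from a bi-invariant metric, so the conjugacy classes are compact, mutually equidistant, and hence are precisely the path-connected components of $\subgrp{\SO{n}}$. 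Since $\affineleaf$ is connected and $p\mapsto\layergroup{p}$ is continuous, all the $\layergroup{p}$ lie in a single conjugacy class. This is the missing idea in your outline; once you have it, no explicit $h_t$ is needed, and your stronger requirement $\bar\pi_q\circ h=\bar\pi_p$ (rather than merely $h\layergroup{p}h^{-1}=\layergroup{q}$) becomes irrelevant.
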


We first show that $\IndFol[p]$ and $\IndFol[q]$ are diffeomorphic to each other.
Next we prove that $\IndFol[p]\to\IndFol[q]$ in a suitable way as $p\to q$.
We conclude then that $\layergroup{p}$ and $\layergroup{q}$ have to be in the same
conjugation class and $\layergroup{p}\to\layergroup{q}$ as $p$ tends to $q$.

\begin{Lem}\label{Lem:F_p and F_q are diffeomorphic}
Let $p$ and $q$ be any two points in $\affinefibre$ then $\IndFol[p]$
and $\IndFol[q]$ are diffeomorphic to each other.
\end{Lem}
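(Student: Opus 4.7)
I will construct a diffeomorphism $\Phi\colon\layer{p}\to\layer{q}$ as the time-one map of the flow of a suitable vector field on $\R^{k+n}$. Setting $v:=q-p\in T_{p}\affineleaf$ and viewing $v$ as a constant tangent vector field on $\affineleaf\cong\R^{k}$, I define on the union $M_{0}$ of regular leaves of $\Fol$ the vector field $V$ by $V_{x}:=\Lift[x]{v}$ as in Definition~\ref{Def:Vertical Lift}. The field $V$ is smooth, tangent to the leaves of $\Fol$ (so its integral curves stay inside a single leaf of $\Fol$), and satisfies $\diff{\proj}\,V\equiv v$.

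The first task is to show that the flow $\Phi_{t}$ of $V$ is complete on $M_{0}$. Along any integral curve $\gamma$ contained in a regular leaf $F$, we have $\proj\gamma(t)=\proj\gamma(0)+tv$, while equidistance of $\Fol$ keeps $|\gamma(t)-\proj\gamma(t)|=\dist{F}{\affineleaf}$ constant in $t$. Hence $\gamma([0,T])$ is bounded in $\R^{k+n}$ and, because $F$ is properly embedded, relatively compact in $F$; on such a compactum $V$ is bounded, precluding finite-time blow-up. The time-one map $\Phi:=\Phi_{1}$ is therefore a diffeomorphism of $M_{0}$ with $\proj\circ\Phi=\proj+v$, so it sends $\layer{p}\cap M_{0}$ to $\layer{q}\cap M_{0}$, and being leaf-preserving for $\Fol$ it maps each regular leaf $\IndLeaf{F}{p}$ of $\IndFol[p]$ diffeomorphically onto $\IndLeaf{F}{q}$.

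The remaining step, which I expect to be the main obstacle, is extending $\Phi$ across the singular leaves. For $y$ in a singular leaf $\IndLeaf{F}{p}$, I would approximate by regular points $y_{n}\in M_{0}\cap\layer{p}$ with $y_{n}\to y$. Equidistance of $\Fol$ forces $|\Phi(y_{n})-q|=|y_{n}-p|$, so the sequence $\Phi(y_{n})$ stays bounded and, after extracting a subsequence, converges to some $z\in\layer{q}$. Continuity of $\pi$ together with $\pi(\Phi(y_{n}))=\pi(y_{n})\to\pi(y)$ forces $z$ to lie in the leaf through $y$, hence in $\IndLeaf{F}{q}$. Running the analogous argument for $\Phi^{-1}$ then yields a continuous extension $\layer{p}\to\layer{q}$ that interchanges $\IndFol[p]$ and $\IndFol[q]$ leaf-wise.

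Upgrading this homeomorphism to a diffeomorphism, and in particular showing that the limit $z$ above is independent of the chosen subsequence, is the delicate point. Here I would invoke the homogeneity hypothesis of Remark~\ref{Rem:Homogeneity assumption}: the singular leaves of $\IndFol[p]$ and $\IndFol[q]$ are smooth submanifolds (Lemma~\ref{IndLeafs are submanifolds}) realised as $\layergroup{p}$- and $\layergroup{q}$-orbits, and combining this group-theoretic description of the singular strata with smoothness of $\Phi$ on the dense open set $M_{0}$ should yield the smoothness of the extension at and along the singular leaves.
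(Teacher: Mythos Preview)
Your approach is essentially the same as the paper's: take the parallel vector field $V$ on $\affineleaf$ with $p+V_p=q$, form its vertical lift $\Lift{V}$, and use the time-one map of its flow as the desired diffeomorphism carrying horizontal layers to horizontal layers and preserving the leaves of $\Fol$.

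The paper's proof is far terser than yours --- three sentences --- and does not discuss completeness of the flow or the extension across singular leaves at all; it simply asserts that the flow does what is needed. Your proposal is thus more scrupulous in identifying the technical points (boundedness via equidistance for completeness, and the limiting argument at singular leaves). Note, however, that the remark immediately following Lemma~\ref{Lem:Horizontal leaf convergence is uniform} in the paper states explicitly that the homogeneity assumption of Remark~\ref{Rem:Homogeneity assumption} is \emph{not} needed for this lemma, so your plan to invoke it for the smooth extension at singular leaves is not the route the author has in mind; the paper evidently regards the flow construction as yielding the diffeomorphism directly.
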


\begin{proof}
Consider a parallel vector field $V$ on $\affinefibre$ such that $p+V_p=q$
and its vertical lift $\Lift{V}$ to $\R^{k+n}$ as introduced in Definition~\ref{Def:Vertical Lift}.
By construction the flow~$\phi_t$ of $\Lift{V}$ maps horizontal layers
onto each other preserving the leaves of $\Fol$, in particular $p$ is mapped to
$q$ for $t=1$. This yields the desired diffeomorphism.
\end{proof}

As we have seen in the previous section the induced foliations $\IndFol[p]$ are
equidistant, so it makes sense to contemplate the restriction of this foliation
to the unit sphere in $\layer{p}$ based at $p$ even if we drop the homogeneity
assumption made in Remark~\ref{Rem:Homogeneity assumption}.
We will denote this restriction by $\IndFolS[p]$.

\begin{Lem}\label{Lem:Horizontal leaf convergence is uniform}
Let $(p_j)$ be a sequence in $\affinefibre$ with $p_j\ra p\in\affinefibre$.
Then $\IndFolS[p_j]$ converges uniformly in Hausdorff distance towards
$\IndFolS[p]$.
\end{Lem}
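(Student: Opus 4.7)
The plan is to use the vertical-lift diffeomorphism from Lemma~\ref{Lem:F_p and F_q are diffeomorphic} to produce a natural leaf-preserving map between the two foliated spheres, and then show that this map is uniformly close to the identity. First observe that, since $\Fol$ is equidistant and $\affineleaf$ is a leaf, every leaf of $\Fol$ meeting $\layer{p_j}$ at distance $1$ from $p_j$ lies entirely at distance $1$ from $\affineleaf$. Identifying $\layer{p_j}$ with $\layer{p}$ via parallel translation $\tau_j$, which is an isometry of affine spaces sending $p_j$ to $p$, both $\IndFolS[p_j]$ and $\IndFolS[p]$ can be regarded as foliations of the common unit sphere $S_p^1 \subset \layer{p}$.

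For the constant parallel vector field $V_j$ on $\affineleaf$ with value $v_j := p_j - p$, let $\Lift{V_j}$ be its vertical lift (cf.\ Definition~\ref{Def:Vertical Lift}). By Lemma~\ref{Lem:F_p and F_q are diffeomorphic} the time-$1$ flow $\phi_j$ of $\Lift{V_j}$ is a diffeomorphism $\layer{p} \to \layer{p_j}$ mapping each $\IndLeaf{F}{p}$ onto $\IndLeaf{F}{p_j}$. Composing with $\tau_j$ yields $\Psi_j := \tau_j \circ \phi_j \colon \layer{p} \to \layer{p}$, a diffeomorphism fixing $p$ and sending the leaves of $\IndFol[p]$ bijectively to the leaves of $\tau_j(\IndFol[p_j])$.

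The heart of the plan is to show that $\Psi_j \to \mathrm{id}$ \emph{uniformly} on $S_p^1$. Since the vertical lift is linear in its argument and its operator norm is bounded on compact subsets of the regular part of $\Fol$, we obtain $\|\phi_j(y) - y\| \le C\|v_j\|$ uniformly in $y$ on the regular part of $S_p^1$, and hence $\|\Psi_j(y) - y\| \to 0$ uniformly there. Passing to singular leaves meeting $S_p^1$ is accomplished by a density-continuity argument: $\phi_j$ preserves the closed leaves of $\Fol$, so uniform convergence on the regular part together with closedness of leaves forces the same conclusion at singular points. From uniform convergence $\Psi_j \to \mathrm{id}$ on $S_p^1$, the uniform Hausdorff convergence of the foliations is immediate, since for every leaf $L$ of $\IndFolS[p]$ the corresponding leaf $\Psi_j(L)$ of $\tau_j(\IndFolS[p_j])$ satisfies $d_H(L, \Psi_j(L)) \le \sup_{x \in L} \|\Psi_j(x) - x\| \to 0$ uniformly in $L$.

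The main obstacle will be controlling the vertical lift near singular leaves of $\Fol$ meeting $S_p^1$, where the projection $\projV$ may degenerate and $\Lift{V_j}$ may blow up. One can either exploit the stratified structure of $\IndFol[p]$ around such leaves (using that $\IndFol[p]$ is itself equidistant by Proposition~\ref{Prop:Equidistance of each induced Foliation}), or combine the estimate above on the regular part with continuity of the foliation across strata coming from the submetry structure.
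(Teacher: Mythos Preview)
Your approach is essentially the same as the paper's: both use the flow of the vertical lift $\Lift{V_j}$ of $v_j=p_j-p$ to carry $\IndLeaf{F}{p}$ onto $\IndLeaf{F}{p_j}$ leaf by leaf, and both exploit linearity of $\lift$ in the $T\affineleaf$-argument to get a bound proportional to $\lVert v_j\rVert$. The paper's write-up is a bit more direct than yours: rather than composing with $\tau_j$ and studying an endpoint map $\Psi_j$, it simply bounds $d_H(\IndLeaf{F}{p},\IndLeaf{F}{p_j})$ by $\max_{x\in\IndLeaf{F}{p}}\length{\gamma_{j,x}}$, where $\gamma_{j,x}$ is the lifted segment, and then observes that $\length{\gamma_{j,x}}=\int_0^1\lVert\Lift[\gamma_{j,x}(t)]{v_j}\rVert\,dt$.

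The one place where the two write-ups differ in substance is exactly the obstacle you flag. You say the operator norm of $\lift$ is bounded on \emph{compact subsets of the regular part}, and then try to pass to singular leaves by density. The paper instead asserts outright that $\lift$ is continuous on the full compact set $K\times\S{n-1}$ (with $K$ a compact neighbourhood of $p$ in $\affineleaf$), which together with linearity gives the uniform bound in one stroke. Your formulation ``bounded on compact subsets of the regular part'' is strictly weaker and does \emph{not} by itself yield a uniform $C$ on all of $S_p^1$, so your density argument as sketched has a gap: uniform convergence on each compact piece of the regular part does not automatically extend to the open dense regular part as a whole. Either you need the same global continuity of $\lift$ the paper uses, or you must argue more carefully (e.g.\ show $\proj|_G\colon G\to\affineleaf$ is a submersion for every leaf $G$, so that $\lift$ is defined and continuous at singular points too).
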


\begin{Rem*}
By $\IndFolS[p_j]\HausConv\IndFolS[p]$ we mean the following. Let us identify
all horizontal layers~$\layer{p}$ by parallel translation along $\affinefibre$.
Thus we understand the $\IndFol[\star]$ to be foliations on the same euclidean
space $\R^n$.
Then $\IndFolS[p_j]$ tends to $\IndFolS[p]$ in the Hausdorff distance if and only if
for any leaf $F\in\IndFolS[p]$ there is a sequence of leaves $F_j\in\IndFolS[p_j]$
such that $F_j\HausConv F$ and this convergence is uniform in the leaves $F$.
\end{Rem*}

\begin{proof}
First we show that $\IndFolS[p_j]$ converges towards $\IndFolS[p]$ leafwise,
i.e.\ for any leaf $F$ in $\FolS$ the leaves $\IndLeaf{F}{p_j}$ tend towards
$\IndLeaf{F}{p}$ in Hausdorff distance.

For any $j\in\N$ consider the vertical lift $\gamma_{j,x}$ of the line segment
$pp_j$ through $x\in\IndLeaf{F}{p}$, which gives us the estimate
\begin{equation*}
    \dHaus{\IndLeaf{F}{p}}{\IndLeaf{F}{p_j}}
        \leq \max_{x\in\IndLeaf{F}{p}} \length{\gamma_{j,x}}.
\end{equation*}
Using the lifting map $\lift\colon \R^{k+n}\times T\affineleaf \to T\R^{k+n}$
we can express the length of $\gamma_{j,x}$ via
\begin{equation*}
    \length{\gamma_{j,x}}=\int_0^1 \norm{\Lift[\gamma_{j,x}(t)]{p_j-p}} dt.
\end{equation*}
By construction $\lift$ is linear in its second argument.
So $\Lift[x]{p_j-p}$ tends to zero for fixed $x$ as $j$ tends to
infinity.
Since $\lift$ is continuous this convergence is uniform
in $K\times\S{n-1}$, where $K$ is any compact neighbourhood of $p$
in $\affinefibre$.
Hence, $\IndLeaf{F}{p_j}\HausConv\IndLeaf{F}{p}$ and this convergence is uniform
in the choice of $F\in\FolS$.
\end{proof}

\begin{Rem*}
Whereas the homogeneity assumption for the foliations $\IndFol[p]$ was not necessary
for the previous two lemmas it is essential for the following arguments.
\end{Rem*}

Choose any biinvariant metric on $G$ and
consider the space $\subgrp{G}$ of closed subgroups of $G$
equipped with the Hausdorff metric. Compactness of $G$ implies
that $\subgrp{G}$ is compact as well. To see this consider the following:

\begin{Rem}\label{Rem:Hausdorff}
For any compact metric space $X$ the set $\hausdorffspace{X}$
of all closed subsets of $X$ equipped with the
Hausdorff distance is compact (cf.\ \cite[Thm.~7.3.8, p.~253]{BBI}).

Suppose $A_j\ra A$ in $\hausdorffspace{X}$ then $A$ is the set of all limits of
all sequences $(a_j)\in X$ such that $a_j\in A_j$ (cf.\ \cite[p.~253]{BBI}).
\end{Rem}

Now, suppose $(H_j)\in\subgrp{G}$ converges to $H\in\hausdorffspace{G}$.
The previous remark then clearly implies that the 1-element of $G$ is in $H$.
And since all of the $H_j$ are groups so is $H$ by continuity of the group operations.
Thus, $\subgrp{G}$ is a closed subset of $\hausdorffspace{G}$ and so is compact
as well.

\begin{Lem}\label{Lem:Groups are close}
Let $G$ and $G_j,$ with $j\in\N$, be closed Lie subgroups of $\SO{n}$ and let
$\FolS_G$ and $\FolS_{G_j}$ be the foliations of $\S{n-1}$ by the orbits of
$G$ and $G_j$ respectively.
Assume the group actions to be the restrictions of the standard representation
of $\SO{n}$ and assume further that $G=\maxgroup{G}$ and $G_j=\maxgroup{G_j}$
for all $j$.
Then, the uniform convergence of $\FolS_{G_j}$ towards $\FolS_G$ in Hausdorff
distance implies $G_j\HausConv G$.
\end{Lem}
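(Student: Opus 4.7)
By the compactness of $\subgrp{\SO{n}}$ under the Hausdorff distance (Remark~\ref{Rem:Hausdorff}), it suffices to show that every Hausdorff-convergent subsequence of $(G_j)$ has limit $G$. So after passing to a subsequence assume $G_j \HausConv H$, and aim at $H = G$. From the characterization of Hausdorff limits as sets of limits of sequences $(g_j)$ with $g_j \in G_j$ together with the continuity of the group operations in $\SO{n}$ one checks that $H$ is a closed subgroup, hence a Lie subgroup by Cartan's theorem.

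\textbf{Connectedness and matching orbits.} I would next verify that $H$ is connected: otherwise $H = A \sqcup B$ as a disjoint union of nonempty closed sets separated by some distance $3\eps > 0$ in $\SO{n}$, and for large $j$ the set $G_j$ would lie in the disjoint open $\eps$-neighborhoods of $A$ and $B$ while meeting both, contradicting the connectedness of $G_j$. Then I would show that $H \cdot x = G \cdot x$ for every $x \in \S{n-1}$. The leaf of $\FolS_{G_j}$ through $x$ is precisely $G_j \cdot x$, and by the uniform Hausdorff convergence $\FolS_{G_j} \HausConv \FolS_G$ applied to the leaf $G \cdot x$ one obtains $G_j \cdot x \HausConv G \cdot x$, since the limit leaf must contain $x$ and hence coincide with the unique leaf through~$x$. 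Since elements of $H$ are exactly the limits $h = \lim h_j$ with $h_j \in G_j$, passing $h_j \cdot x \to h \cdot x$ to the limit together with the reverse direction yields $H \cdot x = G \cdot x$.

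\textbf{Equality via maximality --- the main obstacle.} Combining connectedness of $H$ with the orbit coincidence and $G = \maxgroup{G}$ gives $H \subset G$. The crux is the reverse inclusion: a proper connected subgroup can genuinely share all orbits with its ambient group (for example $\SU{2} \subsetneq \SO{4}$ acting standardly on $\R^4$, both foliating by spheres), so the assumption $G_j = \maxgroup{G_j}$ must be used. The strategy I would pursue is to show that maximality is preserved under the Hausdorff limit, i.e.\ $H = \maxgroup{H}$; combined with the fact that the maximal connected group with a given orbit decomposition is unique, this would yield $H = \maxgroup{H} = \maxgroup{G} = G$. A concrete approach is a dimension count: uniform Hausdorff convergence of foliations forces the dimensions of the regular leaves of $\FolS_{G_j}$ and $\FolS_G$ to coincide for $j$ large, and an induction on the cohomogeneity applied to the slice representations of the isotropy groups $(G_j)_x$ --- which inherit the same setup --- should give $\dim G_j = \dim G$, so in the limit $\dim H = \dim G$ and therefore $H = G$. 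Making this bookkeeping of dimensions and slice representations rigorous is what I expect to be the main technical difficulty.
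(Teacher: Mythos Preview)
Your overall structure --- pass to a Hausdorff-convergent subsequence $G_j \to H$, check that $H$ is a closed subgroup, show $Hx = Gx$ for all $x$, and deduce $H \subset G$ from $G = \maxgroup{G}$ --- is exactly the paper's. The paper then dispatches the reverse inclusion in a single sentence: after noting that the orbit equality gives $\maxgroup{H} = G$, it asserts ``since all $G_j$ are maximal so is their limit, hence $H = G$.'' No further argument is offered for why maximality passes to Hausdorff limits. So you have correctly located the crux; the paper simply treats that step as evident, while you propose to substantiate it via a dimension count.

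Two remarks on your route. First, your verification that $H$ is connected is a point the paper leaves implicit but which is needed for the conclusion $H \subset \maxgroup{H}$ (the latter being connected by definition). Second, the induction on cohomogeneity through slice representations is more delicate than it first looks: the isotropy groups $(G_j)_x$ are in general \emph{not} maximal for their own slice actions, so the inductive hypothesis does not apply to them directly. A cleaner packaging of the dimension idea is to observe that Hausdorff convergence $G_j \to H$ forces (after passing to a subsequence) the Lie algebras $\mathfrak{g}_j$ to converge in the Grassmannian to a subspace of $\mathfrak{h}$, whence $\dim H \geq \limsup \dim G_j$; combined with $H \subset G$ this reduces everything to showing $\dim G_j = \dim G$ for large $j$. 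That last equality is where the maximality of the $G_j$ must genuinely enter, and it is precisely the step that both your sketch and the paper's one-line assertion leave without a detailed proof.
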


\begin{proof}
Since $\subgrp{\SO{n}}$ is compact we may assume for the moment
without loss of generality that $G_j$ converges
to some Lie subgroup $H\subset\SO{n}$.

The main part of this proof is to show that $H$ is contained in $G$, which is to
say that $H$ leaves the orbits of $G$ invariant.
Assume the contrary, i.e.\ there is an $h\in H$ and a point $x\in\S{n-1}$ such
that $hx\notin Gx$. By Remark~\ref{Rem:Hausdorff} we get a sequence
$g_j\in G_j$ tending to $h$. The uniform convergence of $\FolS_{G_j}$ towards
$\FolS_G$ then implies that the distance between $g_jx$ and $Gx$ tends to zero,
which contradicts our assumption.

Note that by an analogous argument $H$ acts transitively on the leaves of $\FolS_G$,
which implies $\maxgroup{H}=G$.
But since all $G_j$ are maximal so is their limit, hence $H=G$.

To finish the proof we drop the convergence assumption on $(G_j)$.
Since any subsequence of $(G_j)$ contains itself a convergent subsequence and the
limit of these is always $G$ we get that $G_j\HausConv G$.
\end{proof}

Hence the map
$$
    \affineleaf\to\subgrp{\SO{n}}, \qquad p\mapsto\layergroup{p}
$$
is continuous and we finish the proof of Proposition~\ref{Prop:Isometry of_fibrations}
by showing:

\begin{Lem}\label{Lem:Conjugacy classes are connected components}
The conjugacy classes in $G$ are the path connected components of~$\subgrp{G}$.
\end{Lem}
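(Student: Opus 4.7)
The plan is to prove both inclusions, working under the (implicit) assumption that the ambient group $G$ is connected; this holds in our application since $\layergroup{p}\subseteq\SO{n}$. For the direction that each conjugacy class lies in a single path component of $\subgrp{G}$, the key step is: given $H_1,H_2\in\subgrp{G}$ with $H_2=gH_1g^{-1}$, choose a continuous path $\gamma\colon[0,1]\to G$ with $\gamma(0)=e$ and $\gamma(1)=g$; then $t\mapsto\gamma(t)H_1\gamma(t)^{-1}$ is a continuous path in $\subgrp{G}$. Continuity in the Hausdorff distance is immediate from the estimate $d_H(aHa^{-1},H)\le 2\,d_G(a,e)$, which follows from bi-invariance of the metric on $G$.

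For the reverse direction, fix a continuous path $\gamma\colon[0,1]\to\subgrp{G}$ and define $T:=\{t\in[0,1]\mid\gamma(t)\text{ is conjugate to }\gamma(0)\}$. Clearly $0\in T$, and $T$ is closed: if $t_j\to t$ with $\gamma(t_j)=g_j\gamma(0)g_j^{-1}$, a convergent subsequence $g_j\to g$ (guaranteed by compactness of $G$) yields $\gamma(t)=g\gamma(0)g^{-1}$ by the continuity argument above.

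The main obstacle is the openness of $T$, for which I would invoke the Montgomery--Zippin slice theorem: for each $H\in\subgrp{G}$ there is a neighborhood $U_H$ in $\subgrp{G}$ such that every $H'\in U_H$ is $G$-conjugate to a closed subgroup of $H$. Applying this around both $\gamma(t)$ and $\gamma(s)$ for $s$ sufficiently close to $t$, one obtains a subgroup $K\subseteq\gamma(t)$ conjugate to $\gamma(s)$, and simultaneously a subgroup of $\gamma(s)$ conjugate to $\gamma(t)$. Comparing dimensions and numbers of connected components along these chains of inclusions and conjugacies forces $\dim K=\dim\gamma(t)$ and an equal component count, which (since a codimension-zero subgroup contains the identity component) implies $K=\gamma(t)$. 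Hence $\gamma(s)$ is itself conjugate to $\gamma(t)$, and so to $\gamma(0)$. Thus $T$ is open; being a nonempty clopen subset of the connected set $[0,1]$, we conclude $T=[0,1]$, so $\gamma(0)$ and $\gamma(1)$ are conjugate.
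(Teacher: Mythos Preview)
Your argument is correct, but it takes a rather different route from the paper's. For the easy direction both approaches agree: the continuous conjugation action of the connected group $G$ on $\subgrp{G}$ makes each conjugacy class path connected. For the hard direction, however, the paper avoids Montgomery--Zippin entirely. Instead it observes that a compact Lie group has only \emph{countably many} conjugacy classes of closed subgroups, and that --- since conjugation acts by isometries in the bi-invariant Hausdorff metric --- the orbits form an equidistant decomposition of $\subgrp{G}$. Then for a path $\gamma$ the function $t\mapsto\dist{\gamma(t)}{K_1}$ is continuous and takes values in a countable set, hence is constant; this forces $\gamma(1)\in K_1$.

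Your approach trades the countability fact (whose justification in the paper is somewhat sketchy, especially for disconnected subgroups) for the Montgomery--Zippin neighbourhood theorem, which is a heavier but standard tool. The dimension-and-component-count squeeze you use is clean, and it has the advantage of being local and structural rather than relying on a global counting argument. The paper's argument, on the other hand, is shorter and exploits the metric structure of $\subgrp{G}$ in a pleasant way. Both are valid; yours is arguably more robust, while the paper's is more in keeping with the metric-geometric spirit of the surrounding text.
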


\begin{proof}
Let us denote the conjugacy classes in $G$ by $(K_\alpha)_{\alpha\in A}$.
Obviously $\subgrp{G}$ is the disjoint union of these $K_\alpha$.

First observe that $G$ contains only countably many conjugacy classes:
There are only finitely many semisimple Lie subgroups up to conjugation.
And the tori are characterized by the slope of their embedding in a maximal torus.
Since we only consider closed subgroups of $G$ this slope has to be rational.
Hence, up to conjugation, there are only countably many closed abelian subgroups of $G$.
Since any Lie subalgebra of $\Lie{g}$ is the sum of an abelian and a semisimple Lie algebra
this proves our first claim.

Now, consider the action of $G$ on $\subgrp{G}$ by conjugation:
$g.H = gHg^{-1}$, for $g\in G$ and $H\in\subgrp{G}$.
Obviously, this action is continuous. Moreover $G$ acts by isometries
since the Hausdorff metric on $\subgrp{G}$ is based on a biinvariant metric on~$G$.
Hence, each $G$-orbit in $\subgrp{G}$ is compact and path connected and the
orbits form an equidistant decomposition of $\subgrp{G}$.

On the other hand, suppose $K_1$ and $K_2$ to be in the same path connected component
of $\subgrp{G}$ and $\gamma$ a path connecting them. Clearly $t\mapsto \dist{\gamma(t)}{K_1}$
is continuous but takes only values in a countable set and, hence, is constant.
So, the two conjugacy classes are identical, proving the lemma's assertion.
\end{proof}

\begin{Rem}\label{Rem:Parametrization of Fol}
As a consequence we may describe the leaves of $\Fol$ in analogy to
Equation~(\ref{Eq:Orbits of G}) from Proposition~\ref{Prop:Phi goes to Centralizer}.
That is to say, we can find for any $x\in\R^k$ a smooth
map $\Psi_x\colon\R^k\to\SO{n}$ such that the leaf $F$
passing through $(x,y)\in\R^{k+n}$ is given by
$$
    F=\Set{(x+v,\Psi_x(v).\layergroup{x}.y)}{v\in\R^k}.
$$
We stress again that this map depends on $x\in\R^k$ but not on $y$.

We call $\Psi_x$ a \emph{screw motion map} although $\Psi_x$ need not, a priori,
be a group homomorphism.
However, we can of course choose $\Psi_x$ such that $\Psi_x(0)=\id$ holds.
\end{Rem}

\section{Equidistance of the Leaves in distinct Horizontal Layers}

We have seen that in the homogeneous case the induced foliations $\IndFol[p]$ are
the same for every point $p$ up to parallel translation along $\affineleaf$.
We show that in general  this property is characterized by the behavior of the
projections of Bott-parallel fields.

We first introduce some more notation.

\begin{Def}
We denote by $\vproj[p]\colon\R^{k+n}\to\layer{p}$ the orthogonal projection
onto the horizontal layer $\layer{p}$ and by $\vprojH[p]\colon\Hor\to\R^n$
the restriction of its differential to the horizontal distribution $\Hor$.
\end{Def}

We sometimes omit the index $p$ and write just $\vprojH$ if it is not important
which specific horizontal layer we are considering.

\begin{Def}
We call $\Fol$ \emph{horizontally full} if at every regular point $x$ of $\Fol$
the map $\projH\colon\Hor[x]\to T_{\proj{x}}\affineleaf$ is surjective.
\end{Def}

Let us now examine how the projections of Bott-parallel normal fields behave.
Our first result states that $\Fol$ and $\IndFol[p]$ are ``compatible'' via the
projection $\vproj[p]$.

\begin{Lem}\label{Lem:Projection to horizontal Layer of Boot-parallel fields}
Let $F$ be a regular leaf of $\Fol$ and $\xi$ a Bott-parallel normal field along~$F$.
For any $p\in\affineleaf$ consider the restriction of $\xi$ to the induced leaf
$\IndLeaf{F}{p}$.
Then the projection $\vprojH[p]{\xi}$ of $\xi$ to the horizontal layer $\layer{p}$
is Bott-parallel (with respect to~$\IndFol[p]$) along $\IndLeaf{F}{p}$.
\end{Lem}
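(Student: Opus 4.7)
The plan is to verify that $\vprojH[p]\xi$ restricted to $\IndLeaf{F}{p}$ (i) is normal to $\IndLeaf{F}{p}$ within $\layer{p}$, and (ii) its flow carries $\IndLeaf{F}{p}$ into a single leaf of $\IndFol[p]$. By the characterization from Remark~\ref{Rem:Vertical Spaces} applied to the equidistant foliation $\IndFol[p]$ of $\layer{p}$, these two properties together establish Bott-parallelism.

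For (i) I decompose $\xi_x=\vprojH[p]\xi_x+\projH\xi_x$ orthogonally into its $\Hor[p]$- and $T_p\affineleaf$-components. For $v\in T_x\IndLeaf{F}{p}\subset T_xF\cap\Hor[p]$ we have $v\perp\xi_x$ (since $\xi$ is horizontal at $x$) and $v\perp\projH\xi_x$ (since $\Hor[p]\perp T_p\affineleaf$), and hence $v\perp\vprojH[p]\xi_x$ as required.

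For (ii) I would take $F_t:=F+t\xi$, a leaf of $\Fol$ by Remark~\ref{Rem:Vertical Spaces}, and aim for the identity
\[
  F_t\cap\layer{p}\;=\;\{x+t\vprojH[p]\xi_x : x\in\IndLeaf{F}{p}\}.
\]
Granted this, the left-hand side is by definition the leaf $\IndLeaf{F_t}{p}$ of $\IndFol[p]$, exhibiting the flow image $\IndLeaf{F}{p}+t\vprojH[p]\xi$ as a single leaf. The substantive direction $\supseteq$ asks that the translate of $x+t\xi_x\in F_t$ by $-t\projH\xi_x\in T_p\affineleaf$---which brings it into $\layer{p}$ as the point $x+t\vprojH[p]\xi_x$---still lies in $F_t$. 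I would establish this at the level of the submetry $\tilde\pi_p\colon\layer{p}\to B_p$ from Proposition~\ref{Prop:Equidistance of each induced Foliation}, showing that $\tilde\pi_p(x+t\vprojH[p]\xi_x)$ is independent of $x\in\IndLeaf{F}{p}$ by exploiting that $\diff[p]\pi$ annihilates $T_p\affineleaf$ while $\diff[x]\pi(\xi_x)$ is independent of $x\in F$ thanks to Bott-parallelism. The reverse direction $\subseteq$ would follow from a dimension-and-connectedness argument using Lemma~\ref{Lem:Proj is submersion}: the smooth map $x\mapsto x+t\vprojH[p]\xi_x$ between $\IndLeaf{F}{p}$ and $\IndLeaf{F_t}{p}$ is a local diffeomorphism for small $t$.

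The main obstacle is that $\diff[p]\pi|_{\Hor[p]}$ is only a homogeneous submetry (Proposition~\ref{Prop:vertical and horizonal vectors}(d)), not a linear map, so we cannot naively split sums under it. To handle this I would first invoke Lemma~\ref{Lem:IP of Projections} applied to a basis of Bott-parallel fields to show that $\projH\xi$ is constant along $F$ as a vector in the flat space $T_p\affineleaf$, and then compare the geodesic $s\mapsto x+s\xi_x$---which is horizontal at $x$ and hence projects to a quasigeodesic in $\base$ by Proposition~\ref{Fact:Quasigeodesics}---with its ``vertical'' translate $s\mapsto x+s\vprojH[p]\xi_x$ inside $\layer{p}$, using horizontal lifting (Proposition~\ref{Prop:Horizontal lift of quasigeodesics}) to identify them after projection.
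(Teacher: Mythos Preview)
Your approach has a genuine gap at the step you flag as the ``main obstacle.'' You propose to resolve it by invoking Lemma~\ref{Lem:IP of Projections} to conclude that $\projH\xi$ is \emph{constant} along $\IndLeaf{F}{p}$. But Lemma~\ref{Lem:IP of Projections} only gives constancy of the inner products $\ip{\projH\xi}{\projH\eta}$ for Bott-parallel $\xi,\eta$; a constant Gram matrix determines a configuration of vectors in $T_p\affineleaf$ only up to a common orthogonal transformation, so the individual vectors $\projH\xi$ may still rotate as $x$ varies over $\IndLeaf{F}{p}$. In fact, the constancy you need is precisely condition $(\ast)$ of Proposition~\ref{Prop:Characterisation of equidistance of IndFol}, which the paper proves \emph{using} the present lemma and shows to be equivalent (under horizontal fullness) to equidistance of the full induced foliation $\IndFol$. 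Thus your workaround is both unjustified and circular with respect to the paper's development.

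The same issue undermines the target identity $F_t\cap\layer{p}=\{x+t\vprojH[p]\xi_x:x\in\IndLeaf{F}{p}\}$. Showing that the translate $x+t\xi_x - t\projH\xi_x$ remains on $F_t$ would again require knowing that $\projH\xi$ is constant along $\IndLeaf{F}{p}$; in general the points $x+t\vprojH[p]\xi_x$ lie on \emph{some} common leaf of $\IndFol[p]$, but not necessarily on $\IndLeaf{(F_t)}{p}$. The paper avoids this altogether: it fixes one point $x$, takes $\tilde\xi$ to be the $\IndFol[p]$-Bott-parallel continuation of $\vprojH[p]\xi_x$, and then compares the curves $\gamma(t)=x+t\tilde\xi_x$ and $\gamma'(t)=x'+t\tilde\xi_{x'}$ through the radial variation $\alpha_t(s)=p+s(\gamma(t)-p)$. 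Since the $\alpha_t$ are horizontal for $\Fol$, one can pass between $\diff[p]\pi$ on the tangent cone and $\pi$ on $\base$ without needing additivity of $\diff[p]\pi$, and Proposition~\ref{Prop:vertical and horizonal vectors} then identifies the relevant starting direction with $\diff[x]\pi(\xi_x)$, which is constant along $F$ by Bott-parallelism of $\xi$.
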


\begin{proof}
We refer the reader to figure~\ref{Fig:Projection of Bott-parallel field} for an
illustration of the construction used in this proof.

\begin{figure}[h]
    \begin{center}
    \includegraphics[width=13.5cm]{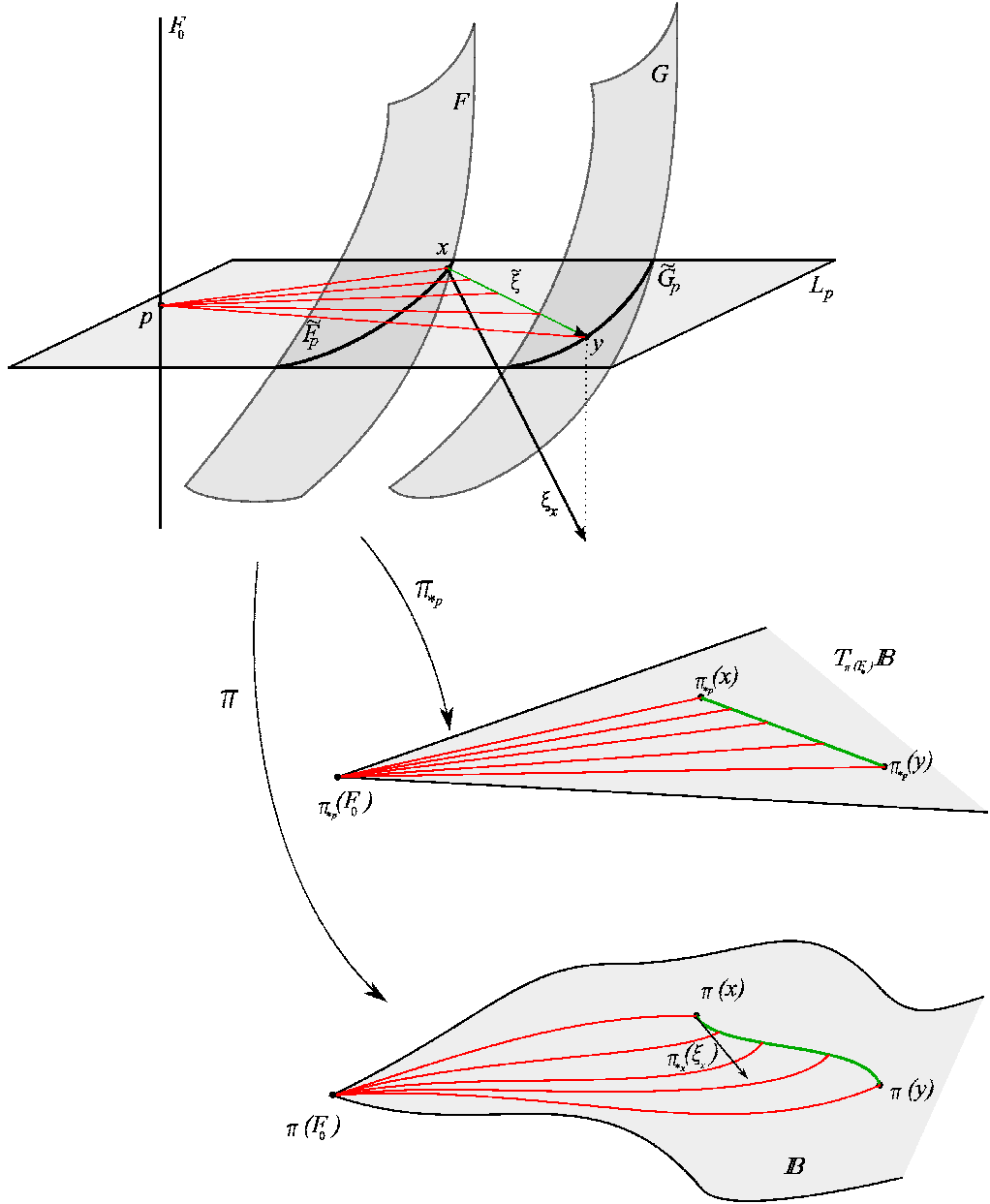}
    \caption[Projection of Bott-parallel fields are Bott-parallel.]
        {The projection of a Bott-parallel normal field to a horizontal
        layer is Bott-parallel with respect to the induced foliation in that layer.}
    \label{Fig:Projection of Bott-parallel field}
    \end{center}
\end{figure}

Choose an arbitrary point~$x\in\IndLeaf{F}{p}$.
Denote by $\tilde{\xi}_x$ the projection $\vprojH[p]{\xi_x}$ of $\xi_x$ and
by $\tilde{\xi}$ its Bott-parallel continuation (with respect to $\IndFol[p]$).
The leaf $\IndLeaf{F}{p}+\tilde{\xi}$ in $\IndFol[p]$ will be called $\IndLeaf{G}{p}$.

Consider the curve $\gamma\colon[0,1]\to\layer{p}$ with $\gamma(t)=x+t\tilde\xi_x$
and denote its endpoint by $y$. In the following we will examine the image of
$\gamma$ under both $\pi$ and $\diff[p]{\pi}$.

Since the image of $\gamma$ is a horizontal shortest path in $\IndFol[p]$ it
is mapped by $\diff[p]{\pi}$ to a shortest path in the tangent cone
$T_{\pi(\affinefibre)}\base$.

Note that in general this might only yield a quasi-geodesic in
$T_{\pi(\affinefibre)}\base$ but we get a proper geodesic if $\gamma$ is
sufficiently short. Since our argument works for arbitrary small $|\xi_x|>0$
this poses no problem.

On the other hand the variation given by the curves
$\alpha_t\colon s\mapsto p+s(\gamma(t)-p)$ are horizontal shortest paths with
respect to $\Fol$ so $\pi$ maps them to shortest paths
in $\base$. Again we may have to assume the image of $\gamma$ to be close to $p$,
which we can do without loss of generality since the assertion we want to prove
is left invariant by dilating radially from $\affinefibre$.

So the curve $\pi\circ\gamma$ is given by the endpoints of the shortest paths
$\pi\circ\alpha_t$:
$$
    \pi(\gamma(t)) = \pi(\alpha_t(1))
$$
and the starting direction of $\pi\circ\gamma$ is just $\diff[x]{\pi}(\tilde{\xi}_x)$.
By taking the horizontal part of $\tilde{\xi}_x$ with respect to $\Fol$, i.e.\ $\xi_x$,
and using Proposition~\ref{Prop:vertical and horizonal vectors}
we see that in fact the starting direction of $\pi\circ\gamma$ is given by
$\diff[x]{\pi}({\xi}_x)$.

As an aside we observe that we need not bother to check whether $\pi\circ\gamma$
has a well defined starting direction, since for small $|\xi_x|$ the image of
$\gamma$ lies within the regular part of $\Fol$ and here $\pi$ is given by a
Riemannian submersion.

Now if we choose another starting point on $\IndLeaf{F}{p}$, $x'$ say, and
construct a curve~$\gamma'$ in analogy to $\gamma$ using $\tilde{\xi}_{x'}$
we get $\diff[p]{\pi}\circ\gamma'=\diff[p]{\pi}\circ\gamma$ in
$T_{\pi(\affinefibre)}\base$.
Consequently, the variation $\diff[p]{\pi}\circ\alpha_t'$ does not depend on the
choice of $x'\in\IndLeaf{F}{p}$ and so neither does $\pi\circ\alpha_t'$ since
shortest paths in $\base$ are uniquely determined by their starting direction
and their length.

But this means that $\pi\circ\gamma'$ is independent of the choice of~$x'$ as well.
Hence the above argument implies that at any point $x'\in\IndLeaf{F}{p}$ it is
exactly the $\Fol$-Bott-parallel continuation of $\xi_x$ that projects onto the
$\IndFol[p]$-Bott-parallel continuation of $\tilde{\xi}_x$ via $\vprojH[p]$
thus proving our claim.
\end{proof}

\begin{Prop}\label{Prop:Characterisation of equidistance of IndFol}
If the induced foliation $\IndFol$ is equidistant then:
\begin{enumerate}
\item[($\ast$)] For any Bott-parallel vector field $\xi$ and any $p\in\affinefibre$
the projection $\projH{\xi}$ of $\xi$ to $\affinefibre$
is constant along any regular leaf $\IndLeaf{F}{p}$ of $\IndFol[p]$.
\end{enumerate}

Conversely, if ($\ast$) holds and if $\Fol$ is horizontally full
then $\IndFol$ is equidistant.
\end{Prop}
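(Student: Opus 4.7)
Forward direction. I plan to exploit the equidistance of $\IndFol$ via the associated quotient submetry $q \colon \R^{k+n} \to \intermediatespace := \R^{k+n}/\IndFol$. Given a Bott-parallel field $\xi$ along a regular $\Fol$-leaf $F$ and two points $x, x' \in \IndLeaf{F}{p}$, the line $\gamma_x(t) = x + t\xi_x$ lies in the parallel leaf $F + t\xi$ and is $\Fol$-horizontal; since each $\IndFol$-leaf is contained in the $\Fol$-leaf through the same point, $\gamma_x$ is in addition $\IndFol$-horizontal. Hence for small $t$ the curve $q\circ\gamma_x$ is a shortest path in $\intermediatespace$ of length $t|\xi|$ ending at the $\IndFol$-leaf $\IndLeaf{F+t\xi}{p + t\projH\xi_x}$. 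Equidistance gives the same distance from $x'$, and Lemma~\ref{Lem:Horizontal lift of shortest paths} lifts this horizontally to a line segment of length $t|\xi|$ from $x'$ whose endpoint lies in $(F+t\xi)\cap\layer{p + t\projH\xi_x}$. For $t$ smaller than the first focal distance of $F$ in direction $\xi_{x'}$, however, the unique point of $F+t\xi$ at distance $t|\xi|$ from $x' \in F$ is $x' + t\xi_{x'}$; hence $x' + t\xi_{x'} \in \layer{p + t\projH\xi_x}$, forcing $\projH\xi_{x'} = \projH\xi_x$ and proving $(\ast)$.

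Converse direction. Assuming $(\ast)$ and horizontal fullness, I must show that $d_{L_2}$ is constant on $L_1$ for any two $\IndFol$-leaves. The central observation is that for a $\Fol$-Bott-parallel $\xi$ along $F$ with $F+\xi = G$, condition $(\ast)$ forces $\projH\xi \equiv \zeta$ constant on $L_1 := \IndLeaf{F}{p}$, so the map $x \mapsto x + \xi_x$ sends $L_1$ entirely into the single $\IndFol$-leaf $L_2 := \IndLeaf{G}{p+\zeta}$; each segment $[x, x+\xi_x]$ realizes the distance $|\xi|$ from $x$ to $F+\xi$ by $\Fol$-equidistance, and since $L_2 \subset F+\xi$ this yields $d_{L_2}(x) = |\xi|$ independent of $x \in L_1$. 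Horizontal fullness supplies the remaining flexibility: because $\projH \colon \Hor[x_0] \to T_p\affineleaf$ is surjective at a base point $x_0 \in L_1$, varying $\xi_{x_0}$ lets me reach $\Fol$-Bott-parallel fields with any prescribed $\projH\xi_{x_0}$, so that combining Bott-parallel moves with intra-layer moves along $\IndFol[p]$ (equidistant by Proposition~\ref{Prop:Equidistance of each induced Foliation}) exhausts every nearby $\IndFol$-leaf of the form $\IndLeaf{G}{q}$.

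Main obstacle. The delicate step of the converse is that a single $\Fol$-Bott-parallel move reaches only the leaf $\IndLeaf{G}{p+\projH\xi}$, not the general target $\IndLeaf{G}{q}$, so one must combine it with an $\IndFol[p]$-shift inside $\layer{p}$ and then verify that the concatenated path in $\R^{k+n}$ remains $\IndFol$-horizontal throughout---so that its image in $\intermediatespace$ is an honest shortest path and the above distance formula actually witnesses $d_{L_2}$. Horizontal fullness is precisely what splits the $\IndFol$-horizontal space at $x$ into the $\vprojH$-image of $\Hor[x]$ and the $T\affineleaf$-factor in the right way for this concatenation to be admissible, and carrying out this compatibility check is where the real work of the converse lies.
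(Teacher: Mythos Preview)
Your forward direction is correct and close in spirit to the paper's: you argue by lifting the $\IndFol$-horizontal geodesic $q\circ\gamma_x$ to $x'$ and invoking uniqueness of the nearest point on $F+t\xi$, whereas the paper builds the candidate $\xi_y := \projH\xi_x + \tilde\xi_y$ explicitly (with $\tilde\xi$ the $\IndFol[p]$-Bott-parallel extension of $\vprojH\xi_x$) and checks that $|\xi_y|=|\xi_x|$ realizes $\dist{F}{G}$, hence $\xi_y$ is the $\Fol$-Bott-parallel extension. Both routes are fine.

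Your converse, however, has a genuine gap precisely where you flag it. You correctly show that for the \emph{special} target $L_2 = \IndLeaf{G}{p+\projH\xi}$ one has $d_{L_2}\equiv|\xi|$ on $L_1$. But for a general target $\IndLeaf{G}{q}$, concatenating a Bott-parallel step with an intra-layer step only gives an \emph{upper bound} on the distance: a broken path, even if each leg is $\IndFol$-horizontal, projects to a broken geodesic in $\intermediatespace$, not a shortest one. The orthogonal splitting of the $\IndFol$-horizontal space you describe is correct but does not repair this, because the straight segment from $x$ to its foot on $L_2$ simply does not decompose as such a concatenation.

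The paper avoids distance bookkeeping altogether by proving something stronger and structural: the parallel translate of $\IndLeaf{F}{p}$ by $\projH\xi_x$ is itself a leaf of $\IndFol[q]$. The missing ingredient is Lemma~\ref{Lem:Projection to horizontal Layer of Boot-parallel fields}, applied to the \emph{reversed} field $\zeta:=-\xi$ on $G=F+\xi$: it says $\tilde\zeta:=\vprojH\zeta$ is $\IndFol[q]$-Bott-parallel along $\IndLeaf{G}{q}$, so $\IndLeaf{G}{q}+\tilde\zeta$ is a leaf of $\IndFol[q]$; a direct computation then shows this leaf contains the parallel translate of $\IndLeaf{F}{p}$. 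Horizontal fullness enters only to ensure that $\projH\xi$ can be prescribed arbitrarily, so that every layer $\layer{q}$ is reached. Once all the $\IndFol[q]$ are parallel translates of one another, equidistance of $\IndFol$ follows without any further distance computation.
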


\begin{proof}
{\bfseries Part 1:}
We first assume $\IndFol$ to be equidistant.

\begin{figure}[t]
    \begin{center}
    \includegraphics[height=10cm]{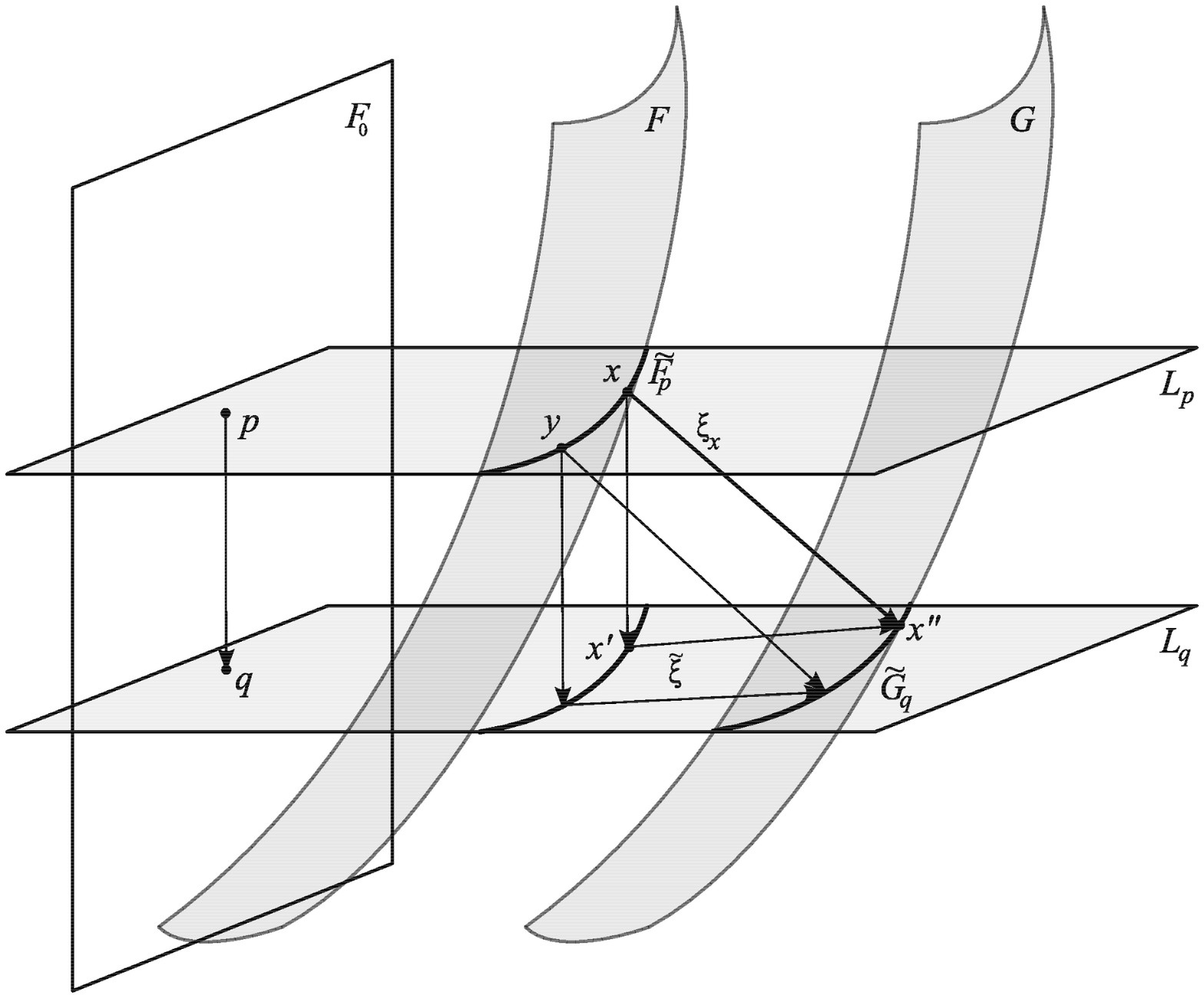}
    \caption[Equidistance of $\IndFol$ and the properties of the projections
            of $\xi$]
        {The equidistance of $\IndFol$ is equivalent to $\projH\xi$ being constant
        along $\IndLeaf{F}{p}$.}
    \end{center}
\end{figure}

Let $p$ be a point in $\affinefibre$ and $x\in F$ such that $\proj{x}=p$.
Choose any $\xi_x\in\normal_xF$ and define $q\in\affinefibre$ by
$q:=p+\projH\xi_x$.
We define $\tilde{\xi}_x:=\vprojH\xi_x$ and denote by $\tilde{\xi}$ its Bott
parallel (with respect to $\IndFol[p]$) continuation along $\IndLeaf{F}{p}$.

Then $\gamma_x \colon t \mapsto x+ t\xi_x$, for $t\in[0,1]$, is the
shortest path between $F$ and the leaf passing through $x+\xi_x$, which we will
denote by $G$.
Note that we may have to replace $\xi_x$ by $\eps\xi_x$ for $\gamma_x$ to be not
only \emph{locally} shortest, but the assertion of the lemma is invariant under
such a scaling of $\xi$.
Moreover, choosing $\eps$ sufficiently small guarantees the regularity of $G$.

Now for any point $y\in\IndLeaf{F}{p}$ we define $\xi_y:=\projH\xi_x+\tilde{\xi}_y$,
where we have identified vectors differing only by parallel transport in $\R^{k+n}$.

The equidistance of $\IndFol$ implies that both $x':=p+\projH\xi_x$ and
$y':=y+\projH\xi_x$ lie in the same leaf of $\IndFol[q]$. In particular
$x'':=x'+\tilde{\xi}_x=x+\xi_x$ and $y'':=y'+\tilde{\xi}_y=y+\xi_y$ both lie in
$\IndLeaf{G}{q}$ since $\tilde{\xi}$ is $\IndFol$-Bott parallel.

On the other hand, by definition $\xi$ has constant norm along $\IndLeaf{F}{p}$,
i.e.\ $|xx''|=|yy''|=\dist{F}{G}$. So, $\xi$ is the $\Fol$-Bott parallel
continuation of $\xi_x$ along $\IndLeaf{F}{p}$ and by construction $(\ast)$ holds.

{\bfseries Part 2:}
Let $p$ and $q$ be any two points in $\affinefibre$ and assume $(\ast)$ holds.
We will show that $\IndFol[p]$ and $\IndFol[q]$ are equidistant to each other.

Let $x\in F$ be a regular point of $\IndFol[p]$ and let $\xi$ be any Bott parallel
normal field along $F$.
Other than this, we will use the same notation as in Part~1.
Assertion ($\ast$) implies that $y+\xi_y$ lies in the same leaf $\IndLeaf{G}{q}$ of
$\IndFol[q]$ for any $y\in\IndLeaf{F}{p}$, in fact $\IndLeaf{F}{p}+\xi=\IndLeaf{G}{q}$.

On the other hand, assume $G=F+\xi$ to be regular. Then $\xi$ yields a Bott
parallel normal field on $G$ by defining $\zeta_{z+\xi_z}:=-\xi_z$ for $z\in G$.

Using assertion (b), we conclude that $\tilde\zeta=\vprojH\zeta$ is
$\IndFol[q]$-Bott parallel along $\IndLeaf{G}{q}$, i.e.\
$\IndLeaf{G}{q}+\tilde{\zeta}$ is some leaf $\IndLeaf{H}{q}$ in $\IndFol[q]$.
But by construction this is just the parallel translate by $\projH{\xi_x}$ of
$\IndLeaf{F}{p}$.

\begin{Rem*}
Note that $F+t\xi$ may be singular for certain values of $t$. However, this can
only happen for finitely many values of $t\in[0,1]$ (cf. Proposition~\ref{Fact:Quasigeodesics}).
So, the parallel translate of $\IndLeaf{F}{p}$ to $\layer{r}$ is a leaf in
$\IndFol[r]$ for almost all points $r$ lying on the line~$pq$.
By continuity of $\IndFol$
this holds indeed for \emph{all} $r$ in $pq$.
\end{Rem*}
\end{proof}

In general condition (*) appears hard to verify.
However, equidistance of $\IndFol$ follows if we prescribe certain dimensional
restrictions to the leaves of $\Fol$.

\begin{Cor}
If the affine leaf $\affineleaf$ is 1-dimensional the induced foliation $\IndFol$
is equidistant.
\end{Cor}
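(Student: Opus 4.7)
The plan is to apply Proposition~\ref{Prop:Characterisation of equidistance of IndFol}, which reduces equidistance of $\IndFol$ to the conjunction of condition~$(\ast)$ and horizontal fullness of~$\Fol$.

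Verifying $(\ast)$ will essentially be a connectedness argument. I will fix a unit vector $e$ spanning the one-dimensional space $T\affineleaf$. Given a regular leaf $F\in\Fol$ and a Bott-parallel normal field $\xi$ along $F$, the projection $\projH\xi$ takes values in $\spann{e}$, so I will write $\projH\xi=f\cdot e$ for a continuous function $f\colon F\to\R$. Lemma~\ref{Lem:IP of Projections} (applied with $\eta=\xi$) forces $f^2=\norm{\projH\xi}^2$ to be constant along $F$, whence $|f|$ is constant; since $F$ is connected and $f$ continuous, $f$ itself is constant on $F$, and therefore constant along every regular leaf $\IndLeaf{F}{p}\subset F$. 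This is precisely condition~$(\ast)$.

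For horizontal fullness, I will use that in one dimension $\projH|_{\Hor[x]}$ fails to be surjective at a regular $x\in F$ exactly when $e\in T_xF$. If this tangency never occurs, Proposition~\ref{Prop:Characterisation of equidistance of IndFol} concludes the proof immediately. In the opposite case, at a regular $x\in F$ with $e\in T_xF$ every Bott-parallel $\xi$ satisfies $\projH\xi_x=0$ (indeed $\Hor[x]\subset\{0\}\times\R^n$); invoking the constancy established above for each such $\xi$ extends this to $\projH\xi\equiv0$ on all of $F$, so $e\in T_{x'}F$ for every regular $x'\in F$. Propagating this tangency across nearby regular leaves via Bott-parallel families, together with smooth variation of the tangent distribution and completeness of leaves, will show that every leaf of $\Fol$ is invariant under the one-parameter group of translations by $e$. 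An argument in the spirit of the splitting remark at the end of Chapter~\ref{Chap:Existence} will then exhibit $\Fol$ as a metric product $\spann{e}\times\Fol'$ for an equidistant foliation $\Fol'$ of the hyperplane $e^\perp\cong\R^n$ whose affine leaf is a single point. In this reduced setup each horizontal layer is a translate of $e^\perp$ and $\IndFol$ restricts on it to a copy of $\Fol'$; the distance between two leaves of $\IndFol$ then splits orthogonally into a constant translation along $\affineleaf$ plus the (constant) distance of the corresponding leaves of $\Fol'$, so $\IndFol$ is equidistant.

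The main obstacle I anticipate is this last propagation step: transferring $e$-tangency from a single regular leaf of $\Fol$ to every leaf. The ingredients for handling it are the constancy of $\norm{\projH\xi}$ along individual leaves (from the verification of $(\ast)$), the smooth variation of Bott-parallel data transverse to $\Fol$, and the rigidity that the equidistance condition imposes on the family of leaves.
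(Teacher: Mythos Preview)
Your approach matches the paper's two-case split: apply Proposition~\ref{Prop:Characterisation of equidistance of IndFol} when $\Fol$ is horizontally full, and argue directly via a cylinder structure otherwise. Your verification of~$(\ast)$ --- writing $\projH\xi=f\cdot e$, invoking Lemma~\ref{Lem:IP of Projections} to get $|f|$ constant, then using connectedness of the leaf --- is exactly the content the paper compresses into the phrase ``immediate consequence,'' and it is correct.

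The propagation obstacle you flag is genuine if you insist on a global product decomposition, and the paper's terse ``$\Hor[x]$ is everywhere perpendicular to $\affineleaf$'' glosses over the same point. But the obstacle can be bypassed entirely: rather than showing that \emph{all} leaves are $e$-cylinders, rerun the proof of Part~2 of Proposition~\ref{Prop:Characterisation of equidistance of IndFol} leaf by leaf. Fix $p,q\in\affineleaf$ and a regular leaf $K\in\Fol$. If $\projH|_{\Hor[x]}$ is surjective at some (hence, by your constancy argument, every) $x\in K$, pick a Bott-parallel $\xi$ on $K$ with $\projH\xi=q-p$ and the Proposition's own argument gives that $\IndLeaf{K}{p}+(q-p)$ is a leaf of $\IndFol[q]$. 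If instead $\projH|_{\Hor}\equiv 0$ along $K$, then $e\in T_xK$ for all $x\in K$, so this particular $K$ is an $e$-cylinder and $\IndLeaf{K}{p}+(q-p)=\IndLeaf{K}{q}$ directly. Either way, the $(q-p)$-translate of every regular leaf of $\IndFol[p]$ is a leaf of $\IndFol[q]$; density of regular leaves then gives $\IndFol[p]+(q-p)=\IndFol[q]$, and equidistance of $\IndFol$ follows. No transverse propagation is needed.
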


\begin{proof}
If $\Fol$ is horizontally full this is an immediate consequence of
Proposition~\ref{Prop:Characterisation of equidistance of IndFol}.

Otherwise, $\Hor[x]$ is everywhere perpendicular to $\affineleaf$, i.e.\ the leaves~$F$
of $\Fol$ are cylinders $\affineleaf\times\IndLeaf{F}{\star}$ and hence the assertion holds.
\end{proof}

\begin{Rem*}
Observe that if the regular leaves have codimension~2 horizonal fullness implies
$\affineleaf$ to be 1-dimensional and hence $\IndFol$ is equidistant as we have
seen.

Of course $\IndFol$ is equidistant if the regular leaves are hypersurfaces
and hence spherical cylinders around $\affineleaf$. Obviously, $\Fol$ cannot
be horizontally full in this case.
\end{Rem*}

\section{Isometries of the Induced Foliation}\label{Sec:Isometries}

We close this chapter with some observations on the group of isometries of the
induced foliation in each horizontal layer. Though interesting in themselves they
will become particularly important in the following chapters.

We are often interested in the objects related to the horizontal layer based at
a generic point in $\affineleaf$. Often these objects will be essentially
independent of the particular choice of base point and we will denote this generic
point by $\star$ and the objects based at this point by $\layer{\star}$,
$\IndFol[\star]$, etc.

The (effective) isometry group of $\IndFol[\star]$ is given by
\begin{equation}\label{Eq:Isom(IndFol)}
    \Isom{\IndFol[\star]}=\Norm{\IndFol[\star]}/\Centr{\IndFol[\star]},
\end{equation}
where the normalizer of $\IndFol[\star]$ consists of all $g\in\SO{n}$ leaving $\IndFol[\star]$
invariant while the centralizer of $\IndFol[\star]$ fixes each leaf of $\IndFol[\star]$.

If $\IndFol[\star]$ is homogeneous, i.e.\ given by the orbits of $\layergroup{\star}$,
then maximality of $\layergroup{\star}$ implies that $\Isom{\IndFol[\star]}$ is
simply $\Norm{\layergroup{\star}}/\layergroup{\star}$.
At least for irreducible $\IndFol[\star]$ we get some a priori information
about its isometry group.

\begin{Lem}\label{Lem:Isometry of irreducible foliation}
If the action of $\layergroup{\star}$ on $\layer{\star}$ is irreducible then the
the connected component of $\Isom{\IndFol[\star]}$
is contained in either $\set{\pm1}$, $\U{1}$ or $\Sp{1}$ depending on the type
of the $\layergroup{\star}$-action.
\end{Lem}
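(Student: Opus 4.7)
Setting $G:=\layergroup{\star}$, the plan is to identify $\Isom{\IndFol[\star]}^0$ with a quotient of the centralizer of $G$ in $\SO{n}$ and then to read off the three possibilities from Schur's lemma. By~(\ref{Eq:Isom(IndFol)}) together with the discussion just above the lemma (maximality of $\layergroup{\star}$ ensures that the centralizer of the foliation is $G$, while any element preserving the foliation normalizes $G$), one obtains $\Isom{\IndFol[\star]}^0=\Norm{G}^0/G$; the core task therefore becomes to compare $\Norm{G}^0$ with $\Centr{G}$.

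For this comparison, I consider the conjugation homomorphism $\vphi\colon\Norm{G}\to\Aut{G}$, whose kernel is $\Centr{G}$ and whose restriction to $G$ has image the group $\mathrm{Inn}(G)$ of inner automorphisms. The key input is the standard fact $\Aut{G}^0=\mathrm{Inn}(G)$ for any compact connected Lie group $G$, which can be checked via the decomposition $G=G_{\mathrm{ss}}\cdot Z(G)^0$ by treating the semisimple and toral parts separately (for the former $\operatorname{Out}$ is discrete, for the latter $\Aut{G}$ itself is discrete). Granting this, every $g\in\Norm{G}^0$ satisfies $\vphi(g)\in\mathrm{Inn}(G)=\vphi(G)$, so $g=hc$ for some $h\in G$ and $c\in\Centr{G}$. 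Combined with the trivial inclusion $G\cdot\Centr{G}^0\subset\Norm{G}^0$, this yields
\begin{equation*}
    \Isom{\IndFol[\star]}^0 \;=\; \Centr{G}^0\big/\bigl(\Centr{G}^0\cap G\bigr),
\end{equation*}
exhibiting $\Isom{\IndFol[\star]}^0$ as a quotient of $\Centr{G}^0$.

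The final step is an application of Schur's lemma to the irreducible action of $G$ on $\R^n$: the associative algebra $D:=\operatorname{End}_G(\R^n)$ is a real division algebra, hence isomorphic to $\R$, $\C$, or $\Quaternions$ according to the type of the representation. After fixing a $G$-invariant inner product, an element $d\in D$ acts orthogonally on $\R^n$ precisely when $|d|=1$ in the canonical norm of $D$, so the centralizer of $G$ in $\O{n}$ coincides with the unit sphere of $D$, namely $\set{\pm1}$, $\U{1}$, or $\Sp{1}$. Passing to $\SO{n}$ and taking the identity component then gives one of $\set{1}$, $\U{1}$, $\Sp{1}$, and the claim follows since $\Isom{\IndFol[\star]}^0$ is a quotient of this group.

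The main obstacle I expect is the identity $\Aut{G}^0=\mathrm{Inn}(G)$: it uses some nontrivial structure theory of compact connected Lie groups, but is classical and I would invoke rather than reprove it. Once this is granted, the rest is essentially formal.
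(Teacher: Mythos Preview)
Your argument is correct and reaches the same conclusion as the paper, but via a genuinely different route. The paper does not invoke $\Aut{G}^0=\mathrm{Inn}(G)$; instead it works on the Lie algebra level, using an $\mathrm{Ad}$-invariant inner product on $\Lie{n}=\operatorname{Lie}(\Norm{\layergroup{\star}})$ to split $\Lie{n}=\Lie{g}_\star\oplus\Lie{g}_\star^\perp$ into complementary ideals (this argument is imported from the proof of Proposition~\ref{Prop:Phi goes to Centralizer}). The connected subgroup $\layergroup{\star}^\perp:=\exp(\Lie{g}_\star^\perp)$ then lies in $\Centr{\layergroup{\star}}$ and surjects onto $\IsomNull{\IndFol[\star]}$, after which Schur's lemma is applied exactly as you do. Your use of the outer-automorphism fact is a clean, group-theoretic substitute for the ideal-complement construction; conversely, the paper's route has the practical advantage that the concrete subgroup $\layergroup{\star}^\perp\subset\SO{n}$ is needed again later (Lemma~\ref{Lem:Parametrization into centralizer}), so for the paper it is not merely a proof device.

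One small point worth flagging: your final sentence deduces the lemma from ``$\IsomNull{\IndFol[\star]}$ is a \emph{quotient} of $\Centr{G}^0$''. For real and complex type this is harmless (every connected quotient of $\{1\}$ or $\U{1}$ embeds back), but in the quaternionic case a quotient of $\Sp{1}$ could a priori be $\SO{3}$, which is not literally ``contained in $\Sp{1}$''. The paper's Remark~\ref{Rem:G star perp} asserts an actual isomorphism $\layergroup{\star}^\perp\cong\IsomNull{\IndFol[\star]}$, which would rule this out, though that isomorphism (as opposed to a covering) is not spelled out there either. In any case, the conclusion that $\IsomNull{\IndFol[\star]}$ has rank at most one --- which is all that is used downstream --- follows from either argument without difficulty.
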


\begin{Rem}\label{Rem:G star perp}
Let $N:=\Norm{\layergroup{\star}}$ and denote by $\layergroup{\star}^\perp$ the
Lie subgroup $\exp(\Lie{g}_\star^\perp)$ of $N$ where $\Lie{g}_\star$ is the Lie
algebra of~$\layergroup{\star}$ and the orthogonal complement is taken with respect to
the Killing form on $N$ (cf.\ the proof of Proposition~\ref{Prop:Phi goes to Centralizer}).
Then $\layergroup{\star}^\perp$ is contained in the connected component
of the centralizer of $\layergroup{\star}$ and it is isomorphic to $\IsomNull{\IndFol[\star]}$
(cf.\ the proof of Proposition~\ref{Prop:Phi goes to Centralizer}).

\end{Rem}

\begin{proof}
Obviously $\layergroup{\star}^\perp$ acts on $\layer{\star}$ as a group of
$\layergroup{\star}$-invariant endomorphisms.
Since the $\layergroup{\star}$-action on $\layer{\star}$ is irreducible
Schur's Lemma implies that these endomorphisms are either zero or invertible.
Thus they form an associative division algebra over $\R$, namely $\R$, $\C$ or
$\Quaternions$, depending on the type of the representation
(cf.~\cite[Chap.II]{BtD}, in particular Thm.~(6.7)).

As $\layergroup{\star}^\perp$ acts by isometries it is contained in the respective
group of units. Hence, $\layergroup{\star}^\perp$ is either $\set{\pm1}$, $\U{1}$
or $\Sp{1}$.
\end{proof}

\begin{Rem*}
Let $H$ denote $\set{\pm1}$, $\U{1}$ or $\Sp{1}$ depending on the type of the
representation. Note that the isometric $H$-action on $\IndFol[\star]$ need not
be effective. For example consider the standard representation of $\U{n}$ on $\C^{n}$,
which is obviously of complex type but the isometry group of the orbit foliation
is trivial.

So $\IsomNull{\IndFol[\star]}$ may be much smaller
than~$H$. But at least the lemma provides an upper bound on $\IsomNull{\IndFol[\star]}$.
\end{Rem*}

The main reason for our interest in the isometries of $\IndFol[\star]$ is the
description of the leaves of $\Fol$ by the screw motion maps $\Psi_x$ as
introduced in Remark~\ref{Rem:Parametrization of Fol}.
From this description it is clear that the induced foliation $\IndFol$ is equidistant
if and only if the image of $\Psi_x$ is contained in the normalizer of
$\layergroup{x}$, for one and thus for any $x\in\affineleaf$.

So, assuming $\IndFol$ to be equidistant, Equation~(\ref{Eq:Isom(IndFol)})
implies even more, since it is not really $\Psi_x\colon\R^k\to\Norm{\IndFol[\star]}$
we are interested in but rather the induced
map~$\bar{\Psi}_x\colon\R^k\to\Isom{\IndFol[\star]}$.
As a consequence we get
a rather stronger result than that in Remark~\ref{Rem:Parametrization of Fol}:

\begin{Lem}\label{Lem:Parametrization into centralizer}
Let $\IndFol$ be equidistant and $\IndFol[\star]$ homogeneous.
Then for any $x\in\R^k$ there is a smooth map $\Psi_x\colon\R^k\to\layergroup{\star}^\perp$
such that the leaf $F$ passing through $(x,y)\in\R^{k+n}$ is given by
\begin{equation}\label{Eq:screw motion map}
    F=\Set{(x+v,\Psi_x(v).\layergroup{\star}.y)}{v\in\R^k}.
\end{equation}
\end{Lem}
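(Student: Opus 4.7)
The plan is to start from the screw motion map $\Psi_x\colon\R^k\to\SO{n}$ provided by Remark~\ref{Rem:Parametrization of Fol} and massage it, using the hypothesis that $\IndFol$ is equidistant, so that its values lie in $\layergroup{\star}^\perp$ rather than merely in $\SO{n}$.

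First, I would observe that equidistance of $\IndFol$ forces $\Psi_x(v)$ to leave $\IndFol[\star]$ invariant. Indeed, by Remark~\ref{Rem:Parametrization of Fol} the map $y\mapsto\Psi_x(v).\layergroup{\star}.y$ sends the leaf $\IndLeaf{F}{x}\subset\layer{x}$ to $\IndLeaf{F}{x+v}\subset\layer{x+v}$; after identifying horizontal layers by parallel translation along $\affineleaf$, equidistance of $\IndFol$ means this map takes each leaf of $\IndFol[\star]$ to a parallel leaf of $\IndFol[\star]$, so $\Psi_x(v)$ normalizes~$\layergroup{\star}$. Thus $\Psi_x$ in fact takes values in $\Norm{\layergroup{\star}}$.

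Next, I would pass to the induced map $\bar\Psi_x\colon\R^k\to\Norm{\layergroup{\star}}/\layergroup{\star}=\Isom{\IndFol[\star]}$ obtained by composing with the quotient map. Since $\R^k$ is connected and $\bar\Psi_x(0)$ is the identity, the image lies in the connected component $\IsomNull{\IndFol[\star]}$. By Remark~\ref{Rem:G star perp}, $\layergroup{\star}^\perp$ is isomorphic to $\IsomNull{\IndFol[\star]}$ via the quotient map, so I can lift $\bar\Psi_x$ to a smooth map $\Psi_x'\colon\R^k\to\layergroup{\star}^\perp$ (using either the isomorphism directly, or, if only a covering is available, simple connectedness of $\R^k$ to lift through the covering); smoothness follows from the Lie group structure and the smoothness of $\bar\Psi_x$.

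Finally, I would verify that replacing $\Psi_x$ by $\Psi_x'$ preserves the parametrization~(\ref{Eq:screw motion map}). By construction, $\Psi_x'(v)$ and the original $\Psi_x(v)$ differ by an element of $\layergroup{\star}$, which acts trivially on the orbit $\layergroup{\star}.y$; hence $\Psi_x'(v).\layergroup{\star}.y=\Psi_x(v).\layergroup{\star}.y$ for every $y$, and the formula for $F$ carries over unchanged. The only delicate point I anticipate is checking that the lift is genuinely smooth (and not merely continuous) and globally defined on all of $\R^k$; this is handled by simple connectedness of $\R^k$ together with the Lie group structure on $\layergroup{\star}^\perp$ and the fact that the quotient $\Norm{\layergroup{\star}}\to\IsomNull{\IndFol[\star]}$ admits a smooth local section along $\layergroup{\star}^\perp$.
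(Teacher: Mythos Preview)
Your proposal is correct and follows essentially the same route as the paper: start from the screw motion map of Remark~\ref{Rem:Parametrization of Fol}, use equidistance of $\IndFol$ to see its values lie in $\Norm{\layergroup{\star}}$, project to $\IsomNull{\IndFol[\star]}$, and then compose with the isomorphism $\IsomNull{\IndFol[\star]}\cong\layergroup{\star}^\perp$ from Remark~\ref{Rem:G star perp}. The paper uses that isomorphism directly (so no lifting through a covering is needed), and the resulting map automatically differs from the original by an element of $\layergroup{\star}$, which is exactly your final verification step.
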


\begin{proof}
As said above, Remark~\ref{Rem:Parametrization of Fol} yields a smooth map
$\psi_x\colon\R^k\to\Norm{\layergroup{\star}}$ satisfying~(\ref{Eq:screw motion map}).
Also the image of $\psi_x$ is contained in the connected component
of~$\Norm{\layergroup{\star}}$ as $\R^k$ is connected.

Let $P$ be the canonical projection
$\Norm{\layergroup{\star}}\to\Norm{\layergroup{\star}}/\layergroup{\star}$.
According to Remark~\ref{Rem:G star perp} there is a
Lie group isomorphism
$$
    \varphi\colon(\Norm{\layergroup{\star}}/\layergroup{\star})_0=
    \IsomNull{\IndFol[\star]}
    \to\layergroup{\star}^\perp
$$
such that any $h\in\NormNull{\layergroup{\star}}$ differs from $\varphi(P(h))$ only by
multiplication with some element of $\layergroup{\star}$.
And $\layergroup{\star}^\perp$ commutes with $\layergroup{\star}$.

In particular, setting $\Psi_x:=\varphi\circ P\circ\psi_x$ gives us the desired map
since $\psi_x$ and~$\Psi_x$ describe the same foliation $\Fol$.
\end{proof}

Finally observe that $\Fol$ is homogeneous if and only if
$\Psi_x\colon\R^k\to\layergroup{\star}^\perp$ is a Lie group homomorphism that
is independent of the base point~$x$.

\chapter{Reducibility of Equidistant Foliations}\label{Chap:Reducibility}

This chapter deals with two different notions of reducibility. The concept we start
with, the existence of invariant subspaces, is well known from representation theory
and we show that fullness of regular leaves characterizes irreducibility even in
the inhomogeneous case.
We then examine reducibility in the sense that the foliation splits as a product
and examine how this is linked to the notion of horizontal fullness we introduced
in the last chapter.

\section{Invariant Subspaces}

It is a well known fact
that a homogeneous foliation of Euclidean
space containing a non-full leaf is reducible.
To be more precise, suppose $G$ to be a Lie group acting on $\R^n$ by isometries.
Let $F$ be a $G$-orbit such that the minimal affine subspace $V$ containing $F$
has dimension strictly less than $n$. Then $V$ is invariant under the action of $G$.
This follows, using minimality of $V$, from the fact that the action of $G$ is
affine.

An analogous result holds for equidistant foliations:

\begin{Prop}\label{Prop:equidistant irreducibility}
    Let $\Fol$ be an equidistant foliation of $\R^n$ and let $F$ be
    a regular leaf.
    If $F$ is not full the minimal affine space $V$ containing $F$ consists of
    leaves of~$\Fol$, i.e.\ all leaves intersecting~$V$ are contained in $V$.
\end{Prop}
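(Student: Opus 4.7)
The approach I would take uses the orthogonal reflection $R\colon\R^n\to\R^n$ across $V$ as a rigid symmetry of $\Fol$ near $F$, together with the Bott-parallel structure of nearby leaves. Let $V_0$ denote the linear direction of $V$ and $W:=V_0^\perp\subset\R^n$. Since $T_pF\subset V_0$ for all $p\in F$, we have the orthogonal decomposition $\normal_pF=(V_0\cap\normal_pF)\oplus W$ at every $p\in F$; the linear part $\dot R$ of $R$ acts as $+\id$ on $V_0$ and as $-\id$ on $W$, so it preserves this decomposition.

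The main structural step is to show that $R$ preserves $\Fol$ in a tubular neighborhood of $F$. The push-forward $R^\ast\Fol=\{R(G):G\in\Fol\}$ is again an equidistant foliation of $\R^n$ sharing the regular leaf $F$; since near a regular leaf an equidistant foliation is locally determined by that leaf alone --- its nearby leaves are exactly the parallel manifolds $F+\xi$ for Bott-parallel sections $\xi$ of $NF$, a local metric invariant --- one concludes $R^\ast\Fol=\Fol$ near $F$. Consequently $\dot R\xi$ is Bott-parallel whenever $\xi$ is, and taking half-sum and half-difference shows that the projections $\xi^V:=\tfrac12(\xi+\dot R\xi)$ and $\xi^W:=\tfrac12(\xi-\dot R\xi)$ onto $V_0\cap NF$ and $W$ respectively are individually Bott-parallel.

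The key deduction is now immediate. Bott-parallel sections of $NF$ correspond bijectively to vectors in $T_{\pi(F)}\base$ via evaluation $\xi\mapsto\diff[p]\pi\,\xi_p$ at any fixed $p\in F$ (an isomorphism since $F$ is regular), so a Bott-parallel section vanishing at one point vanishes everywhere. Applied to $\xi^W$: if $\xi$ is Bott-parallel with $\xi_p\in V_0$ at some $p\in F$, then $\xi^W_p=0$, forcing $\xi^W\equiv 0$ and $\xi_q\in V_0$ for all $q\in F$. Combined with the description of nearby leaves as $F+\xi$ for Bott-parallel $\xi$ in Remark~\ref{Rem:Vertical Spaces}, this shows that every regular leaf intersecting $V$ in a tubular neighborhood of $F$ is in fact contained in $V$.

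To globalize, take any regular leaf $G$ meeting $V$ at a point $q$. By Lemma~\ref{Lem:Horizontal lift of shortest paths}, lift a shortest path in $\base$ from $\pi(G)$ to $\pi(F)$ to a horizontal geodesic $\gamma\colon[0,L]\to\R^n$ from $q$ to some $p\in F$; being a line segment with both endpoints in $V$, one has $\gamma\subset V$. For $t$ close to $L$ the leaf $G_t:=\pi^{-1}(\bar\gamma(t))$ is regular, close to $F$, and meets $V$ at $\gamma(t)$, hence lies in $V$ by the preceding paragraph. Iterating the same local argument around each regular $G_t\subset V$ (and using Proposition~\ref{Fact:Quasigeodesics} to handle the finitely many singular times of $\bar\gamma$ via Hausdorff limits in the closed set $V$) propagates containment all the way to $G=G_0$. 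Finally, singular leaves meeting $V$ are handled by Hausdorff limits of regular leaves meeting $V$, using that regular points are dense in $V$ (since singular leaves have strictly smaller dimension than $F$) and that $V$ is closed. The principal obstacle is the local rigidity invoked in the second paragraph --- that an equidistant foliation near a regular leaf is determined by that leaf alone --- and the careful control of Hausdorff limits of fibers at singular leaves.
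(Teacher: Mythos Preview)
Your argument has a genuine gap at the ``local rigidity'' step: the claim that an equidistant foliation is determined near a regular leaf by that leaf alone is \emph{false}, and you correctly flag it as the principal obstacle, but it is not merely an obstacle --- it simply does not hold. Concretely, for each $\alpha\in\R$ consider the homogeneous (regular) equidistant foliation of $\R^3=\R\times\R^2$ by the orbits of the screw motion $s\cdot(t,x)=(t+s,R_{\alpha s}x)$, where $R_\theta\in\SO{2}$ is rotation by $\theta$. All of these foliations share the $t$-axis as a common regular leaf, yet they are pairwise distinct and their Bott-parallel transports along the $t$-axis differ (a Bott-parallel normal field rotates with angular speed $\alpha$). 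Hence Bott-parallelism along $F$ is \emph{not} a metric invariant of $F$ alone: it encodes the O'Neill tensor of the foliation, since for $V$ vertical and $\xi\in\BottF$ one has $\kovH{V}{\xi}=-\oneillAd{\xi}{V}$, which is in general nonzero and not determined by the submanifold $F$. Consequently, from $R(F)=F$ you cannot conclude that $R^\ast\Fol=\Fol$, and the deduction that $\dot R\xi$ is Bott-parallel whenever $\xi$ is collapses.

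The paper's proof avoids this entirely by working with the osculating filtration of $F$: it shows that the bundle $\minnormal F=(V_0\cap\normal F)$ is Bott-parallel, arguing inductively that the first normal space $\normal[1]F$ is Bott-parallel (this uses the equifocality of regular leaves, Proposition~\ref{Prop:Principal curvatures are constant}: the eigenspaces of $\shape{\xi}{}$ for Bott-parallel $\xi$ are invariant under Bott transport), and that each successive $\minnormal[k+1]F$ is obtained from $\minnormal[k]F$ by adding $\image{\oneillAd{\xi_i}{}}$, which is Bott-parallel because the tensor $\onormal{\xi}{\eta}=\oneillAd{\xi}{\oneill{\xi}{\eta}}$ is. Once $\minnormal F$ is known to be Bott-parallel, the containment $F+\xi\subset V$ for any $\xi$ with $\xi_p\in V_0$ follows exactly as in your third paragraph; no globalization step or reflection symmetry is needed. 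If you want to rescue the reflection idea, you would have to prove directly that $R$ preserves the Bott connection along $F$ --- but doing so amounts to showing that $V_0\cap\normal F$ is Bott-invariant, which is precisely what the paper establishes by other means.
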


To prove this proposition we show that there is a Bott-parallel subbundle of~$\normal F$
such that at any point $x\in F$ the affine space $x+T_xF+\normal_xF$ is equal to $V$.
We achieve this by studying the following tensor.

\begin{Def}
Let $\onormal{}{}\colon\Hor\times\Hor\to\Hor$ be the tensor on the regular part
of $\Fol$ given by
\begin{equation}
    \onormal{X}{Y}:=\oneillAd{X}{\oneill{X}{Y}},
\end{equation}
where $\oneill{}{}$ and $\oneillAd{}{}$ are the O'Neill-tensor of $\Fol$ and its
pointwise adjoint.
\end{Def}

\begin{Rem}\label{Rem:Onormal is Bott-parallel}
Note that $\onormal{}{}$ is Bott-parallel, i.e.\ for $\xi,\eta\in\BottF$ the
vectorfield $\onormal{\xi}{\eta}$ is Bott-parallel as well. To see this let
$\xi,\eta,\zeta$ be Bott-parallel and observe that
$$
    \ip{\onormal{\xi}{\eta}}{\zeta}=\ip{\oneill{\xi}{\eta}}{\oneill{\xi}{\zeta}},
$$
which is constant along the regular leaves of $\Fol$ (cf.\ \cite[p.\ 145]{GG:2}).

Observe also that the image of any linear map $A$ is equal to the image of
$A^\ast A$, where $A^\ast$ is the adjoint of $A$.
In particular $\image{\oneillAd{\xi}{}}=\image{\onormal{\xi}{}}$ is Bott-parallel
if $\xi$ is.
\end{Rem}

\begin{Def}
The \emph{$k$-th osculating space} of $F$ at $p$ is the space $\osc{k}_pF$
spanned by the first $k$ derivatives of curves $\gamma\colon(-\eps,\eps)\ra F$
with $\gamma(0)=p$.

The \emph{$k$-th normal space} of $F$ at $p$ is the orthogonal complement
$\normal[k]_pF$ of $\osc{k}_pF$ in $\osc{k+1}_pF$.

We will use the notation
$$
    \minnormal[k]_pF:   = \bigoplus_{i=1}^k \normal[k]_pF
                        = \osc{k+1}_pF\cap\normal_pF
$$
for the direct sum over the first $k$ normal spaces and denote the sum over all
$\normal[k]_pF$ by $\minnormal_pF$.
\end{Def}

\begin{Rem}\label{Rem:Non-full leaf}
Note that the dimension of these spaces may depend on the point~$p$, hence,
in general, they do not form bundles over $F$.
However, if they do
then $F$ is contained in the affine space $p+ T_pF+\minnormal_pF=p+\osc{\infty}_pF$
for any $p\in F$
and this space is minimal in that respect.
(cf. \cite[Sect.~2.5 and p.~213]{BCO}).
\end{Rem}

\begin{Lem}\label{Lem:Ocsulating bundle is Bott-parallel}
Let $F\in\Fol$ be a regular leaf. Then for any $k$ the space
$\minnormal[k]_pF$ forms a Bott-parallel bundle over $F$.
\end{Lem}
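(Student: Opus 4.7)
The plan is to induct on $k$, using two ingredients from earlier in the paper: Proposition~\ref{Prop:Principal curvatures are constant}, stating that the principal curvatures of $\shape{\xi}{}$ for Bott-parallel $\xi$ are constant along $F$, and Remark~\ref{Rem:Onormal is Bott-parallel}, stating that $\image(\oneillAd{\xi}{\cdot})$ is a Bott-parallel subbundle of $\Hor = \normal F$ whenever $\xi \in \BottF$. I will also repeatedly invoke the submersion-principle that inner products of Bott-parallel horizontal vector fields are constant along the fibres, which is immediate from $\pi$ being a Riemannian submersion on the regular part.

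For the base case $k = 1$, I identify $\minnormal[1]_p F$ with $\image(\sff_p) \subset \normal_p F$; its orthogonal complement in $\normal_p F$ is $W_p := \{\eta \in \normal_p F : S_\eta = 0\}$. Given any $\eta_p \in W_p$, extend it to a Bott-parallel section $\eta$ of $\normal F$; by Proposition~\ref{Prop:Principal curvatures are constant} all eigenvalues of $S_\eta$ (with their multiplicities) are constant along $F$, so $S_\eta(p) = 0$ forces $S_\eta \equiv 0$ and hence $\eta(q) \in W_q$ for all $q \in F$. A symmetry argument swapping $p$ and $q$ makes $\dim W_q$ constant, so $W$ is a Bott-parallel subbundle; its orthogonal complement $W^\perp = \minnormal[1] F$ is then Bott-parallel as well, since inner products of Bott-parallel normal fields are constant along $F$ and thus orthogonality is preserved.

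For the induction step, assume $\minnormal[k] F$ is a Bott-parallel subbundle and use the standard recursive description
$$\minnormal[k+1]_p F = \minnormal[k]_p F + \spann{\nabla^{\perp_F}_v \eta(p) : v \in T_p F,\ \eta \in \Gamma(\minnormal[k] F)}.$$
By the inductive hypothesis I may restrict to Bott-parallel local sections $\eta$ of $\minnormal[k] F$. Since $T_p F = \Ver_p$, the identity $\kovH{V}{\eta} = -\oneillAd{\eta}{V}$ recorded in the text rewrites $\nabla^{\perp_F}_v \eta$ as $-\oneillAd{\eta}{v}$, so the newly added directions span exactly $\sum_\eta \image(\oneillAd{\eta}{\cdot})_p$. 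By Remark~\ref{Rem:Onormal is Bott-parallel} each summand is a Bott-parallel subbundle of $\normal F$; summing over a local Bott-parallel frame of $\minnormal[k] F$ and adjoining $\minnormal[k] F$ itself produces a Bott-parallel distribution whose fibre at $p$ coincides with $\minnormal[k+1]_p F$. The constancy of rank needed to upgrade this distribution to a genuine subbundle follows, as in the base case, by extending generators Bott-parallelly and invoking the constancy of pairwise inner products.

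The main obstacle is verifying that the pointwise subspaces $\minnormal[k]_p F$ actually assemble into a smooth vector bundle of constant rank, since they are prescribed by rank conditions on the second fundamental form and its iterated covariant derivatives that could a priori jump. The resolution is always to reformulate the dimension question as \emph{``every vector in the fibre over $p$ extends to a Bott-parallel section whose values lie in $\minnormal[k]$ at every other point of $F$''}, at which point Proposition~\ref{Prop:Principal curvatures are constant}, Remark~\ref{Rem:Onormal is Bott-parallel}, and the submersion-preserved inner products close the loop.
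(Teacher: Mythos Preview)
Your proposal is correct and follows essentially the same route as the paper: induct on $k$, handle the base case via the orthogonal complement $W_p=\{\eta:S_\eta=0\}$ together with Proposition~\ref{Prop:Principal curvatures are constant}, and for the induction step rewrite $\nabla^{\perp_F}_v\eta=-\oneillAd{\eta}{v}$ for Bott-parallel $\eta$ and invoke Remark~\ref{Rem:Onormal is Bott-parallel}. You are somewhat more explicit than the paper about why the resulting pointwise sums have constant rank (via constancy of inner products of Bott-parallel fields), but the argument is the same.
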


\begin{proof}
We will prove this lemma by induction over $k$.

First we show that the first normal spaces $\normal[1]_pF$ are Bott-parallel,
in particular their dimensions are constant along $F$.
Let $x\in\normal_pF$ be a vector in the orthogonal complement of $\normal[1]_pF$,
i.e.\
$$
    0=\ip{x}{\Sff{v}{w}}=\ip{\shape{x}{v}}{w}
$$
for all $v,w\in T_pF$.

Let $X$ be the Bott-parallel continuation of $x$. Then $\shape{X}=0$, which is to
say $X$ is orthogonal to $\normal[1]F$, along $F$
by Proposition~\ref{Prop:Principal curvatures are constant}.
Hence, $(\normal[1]F)^\perp$ is Bott-parallel and consequently so is $\normal[1]F$.

Suppose $\minnormal[k]F$ to be a Bott-parallel bundle over $F$ and let
 $\xi_1,\ldots,\xi_{m}\in\BottF$ be an orthonormal frame of $\minnormal[k]F$.

Now $\minnormal[k+1]_pF$
can be viewed as the sum of $\minnormal[k]_pF$ and the space spanned by the
horizontal part $\kovH{v}{X}$ of the covariant derivatives at $p$ of vector
fields~$X\in\schnitt{\minnormal[k]F}$ in directions $v\in T_pF$.
In fact, writing such a vector field $X$ as a $C^\infty$ linear combination
$\sum_i f_i \xi_i$ of the $\xi_i$, it is easily seen that $\minnormal[k+1]F$
is spanned at each point by the $\xi_i$ and the horizontal part of their covariant
derivatives.

Remember that for Bott-parallel normal fields $\xi_i$ the equality
$$
    \kovH{v}{\xi_i} = -\oneillAd{\xi_i}{v}
$$
holds.
Remark \ref{Rem:Onormal is Bott-parallel} then implies that $\minnormal[k+1]F$
is a Bott-parallel bundle over $F$, which proves our claim.
\end{proof}

Now, Proposition~\ref{Prop:equidistant irreducibility} is a simple corollary of
Lemma \ref{Lem:Ocsulating bundle is Bott-parallel}:
Let $F$ be a non-full regular leaf of $\Fol$ and $V$ the minimal affine space
containing it. By Remark \ref{Rem:Non-full leaf}, $V=p+T_pF+\minnormal_pF$ for
any $p\in F$.

Take any point $q\in V\setminus F$, let $F'$ be the leaf passing through $q$ and
denote by $p_0$ the point in $F$ minimizing the distance to $q$.
Consider the Bott-parallel continu\-ation~$\xi$ of $q-p\in \normal_pF$ along $F$.
By Lemma \ref{Lem:Ocsulating bundle is Bott-parallel}, the horizontal geodesic
$t\mapsto p+t\xi_p$ is contained in $p+\minnormal_pF$ for any $p$, hence,
$F'=\Set{p+\xi_p}{p\in F}$ is a subset of $V$.

\section{The Non-compact Case}\label{Sec:Noncompact Case}

The results of the previous section make no assumptions on the affine leaf being
compact or not.
In order to deal with the stronger reducibility concept of $\Fol$ being a
product we now concentrate on the non-compact case.

\begin{Def}
For any leaf $F\in\Fol$ and points $x\in F$ and $p=\proj{x}\in\affineleaf$
let $\pDistN[F]{p,x}$ and $\pDist[F]{p,x}$ be the subspaces of $T_p\affineleaf$ defined by
$$
    \pDistN[F]{p,x}=\projH(\normal_xF), \qquad \pDist[F]{p,x}=(\pDistN[F]{p,x})^\perp.
$$
We call $F$ \emph{\nice} if for all $p\in\affineleaf$ the space $\pDistN[F]{p,x}$
(and hence $\pDist[F]{p,x}$) only depends on $p$ but not on $x\in\IndLeaf{F}{p}$.
The foliation $\Fol$ is called \emph{\nice} if all regular leaves are \nice.
\end{Def}

If $F$ is \nice we omit the index $x$.
By Lemma~\ref{Lem:IP of Projections} the dimension of~$\pDistN[F]{p}$ does not depend
on $p\in\affineleaf$ so $\pDistN[F]{}$ and $\pDist[F]{}$ are well defined distributions
on~$\affineleaf$.
Also we frequently omit the index $F$ and write just $\pDistN{}$ and $\pDist{}$ if
it is clear from the context which leaf the distributions are associated with.

\begin{Rem*}
Observe that Proposition~\ref{Prop:Characterisation of equidistance of IndFol}
implies that $\Fol$ is \nice if $\IndFol$ is equidistant. In particular
the regular leaves of a homogeneous foliation $\Fol$ are \nice.
Finally, $\Fol$ is \nice if it is horizontally full.
\end{Rem*}

We will show that there is a connection between $\Fol$ not being horizontally full
and $\Fol$ being reducible in the sense that it splits off a Euclidean factor.
By the latter we mean that there is an orthogonal vector space decomposition
$\R^{k+n}=V \oplus W$ and an equidistant Foliation $\Fol'$ of $V$ such that
$\Fol=\Set{F'\times W}{F'\in\Fol'}$.

Let us first list some properties of the distribution $\pDist{}$ beginning with
an auxiliary lemma:

\begin{Lem}\label{Lem:Projections and pDist}
Let $F$ be a leaf of $\Fol$ (not necessarily \nice), $x$ a point in $F$
and $p=\proj{x}\in\affineleaf$.
Identifying $\R^{k+n}$ with its tangent space at any point a vector $v\in\R^{k+n}$
is contained in $T_xF$ \emph{and} $T_p\affineleaf$ if and only if $v\in\pDist[F]{p,x}$.
\end{Lem}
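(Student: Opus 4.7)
The plan is to unpack the definition of $\pDist[F]{p,x}$ and reduce the statement to a short computation involving the horizontal/vertical decomposition of $v$ at $x$ together with the fact that $\proj$ is simply the orthogonal projection onto the affine subspace $\affineleaf = \R^k \times \set{0}$.

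First I would observe that $\pDist[F]{p,x}$ is by definition a subspace of $T_p\affineleaf$, so the nontrivial content is: for $v \in T_p\affineleaf$, we have $v \in T_xF$ if and only if $v \in \pDist[F]{p,x}$. The key structural fact to exploit is that, since $\proj$ is just orthogonal projection onto $\R^k\times\set{0}$, its differential $\diff[x]{\proj}$ at \emph{any} point $x$ is the standard orthogonal projection onto this $\R^k$-factor. Indeed from (\ref{Eq:Def Proj}) and $\pos_{(x_1,x_2)}=(0,-x_2)$ one computes $\diff[x]{\proj}(w_1,w_2)=(w_1,0)$; in particular for $w\in\normal_xF$ the vector $\projH w$ is simply the component of $w$ in the direction of $\affineleaf$.

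Next I would decompose $v$, regarded as based at $x$, as $v=v^h+v^v$ with $v^h\in\Hor[x]=\normal_xF$ and $v^v\in\Ver[x]=T_xF$, so that $v\in T_xF$ is equivalent to $v^h=0$. For an arbitrary $w\in\normal_xF$ one has $\ip{v}{w}=\ip{v^h}{w}$ since $v^v\perp w$. Moreover, writing $w=\projH w+(w-\projH w)$, the complement $w-\projH w$ is orthogonal to $T_p\affineleaf$ by the previous paragraph, so the assumption $v\in T_p\affineleaf$ yields
\[
    \ip{v}{w}=\ip{v}{\projH w}.
\]

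The chain of equivalences is then immediate: $v\in T_xF\iff v^h=0 \iff \ip{v}{w}=0\text{ for all }w\in\normal_xF \iff \ip{v}{\projH w}=0\text{ for all }w\in\normal_xF \iff v\perp\pDistN[F]{p,x}\text{ in }T_p\affineleaf \iff v\in\pDist[F]{p,x}$. There is no real obstacle here; the only point requiring a moment of care is to keep track of the two different roles played by $v$ — as a tangent vector at $x$ (for testing membership in $T_xF$) and as a tangent vector at $p$ (for testing membership in $\pDist[F]{p,x}$) — which is handled by the standing identification of $\R^{k+n}$ with its tangent space at any point via parallel translation.
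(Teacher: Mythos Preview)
Your proof is correct and follows essentially the same route as the paper's: both reduce the claim to the identity $\ip{v}{w}=\ip{v}{\projH w}$ for $v\in T_p\affineleaf$ and $w\in\normal_xF$, which the paper states via the self-adjointness relation $\ip{Pv}{w}=\ip{v}{Pw}$ for the orthogonal projection $P=\diff{\proj}$ and which you derive equivalently by noting that $w-\projH w\perp T_p\affineleaf$. Your version is simply a bit more explicit about the horizontal/vertical decomposition of $v$, but the argument is the same.
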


\begin{proof}
The vector $v$ is contained in both $T_xF$ and $T_p\affineleaf$ if and only if
$\projV{v}=v$ (ignoring the base point).
From elementary linear algebra we know that
if $P$ is any orthogonal projection then
$$
    \ip{Pv}{Pw}=\ip{v}{Pw}=\ip{Pv}{w}, \quad \forall v,w.
$$
The rest follows taking $w\in\normal_xF$.
\end{proof}

If $F$ is \nice this implies that $\pDist[F]{}$ lifts to $F$ by parallel translation.

\begin{Prop}
Let $F$ be a \nice regular leaf of $\Fol$. Then $\pDist[F]{}$ is integrable.
Moreover if $M_p$ is an integral manifold passing through $p\in\affineleaf$ and
$x\in\IndLeaf{F}{p}$ then the parallel translate $M_p+(x-p)$ of $M_p$ to $x$ is
contained in $F$.
\end{Prop}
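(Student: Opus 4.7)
My plan is to lift $\pDist[F]{}$ from $\affineleaf$ to $\R^{k+n}$ via parallel translation, turning sections of $\pDist[F]{}$ into vector fields that are automatically tangent to $F$. Integrability will then follow from a Lie bracket computation, and the statement about integral manifolds from ODE uniqueness. The key preparatory observation is that by Lemma~\ref{Lem:Projections and pDist} one has $\pDist[F]{p,x}=T_p\affineleaf\cap T_xF$ (identifying tangent spaces via parallel translation in $\R^{k+n}$); since $F$ is \nice, this intersection depends only on $p$ and sits inside $T_xF$ for every $x\in\IndLeaf{F}{p}$.

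For a smooth section $X$ of $\pDist[F]{}$ I would define $\tilde X$ on $\R^{k+n}$ by $\tilde X_z:=X_{\proj z}$ (parallel translated to $z$). This is smooth, constant along each horizontal layer, and tangent to $F$ at every point of $F$ by the observation above. For two such sections $X,Y$, the bracket $[\tilde X,\tilde Y]$ on $\R^{k+n}$ is again a parallel lift, namely the lift of $[X,Y]$. Since $\tilde X|_F$ and $\tilde Y|_F$ are tangent to $F$, so is $[\tilde X,\tilde Y]|_F$; evaluated at any $x\in\IndLeaf{F}{p}$ this forces $[X,Y]_p\in T_p\affineleaf\cap T_xF=\pDist[F]{p}$. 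Frobenius then yields integrability of $\pDist[F]{}$.

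For the second statement, fix $x\in\IndLeaf{F}{p}$ and an integral manifold $M_p$ of $\pDist[F]{}$ through $p$, and set $N:=M_p+(x-p)$. By construction $T_yN$ equals the parallel translate of $\pDist[F]{\proj y}$ for every $y\in N$, so $N$ is an integral manifold in $\R^{k+n}$ of the parallel lift of $\pDist[F]{}$. I would show $N\subset F$ by a connectedness argument: the set $S:=\Set{q\in M_p}{q+(x-p)\in F}$ contains $p$ and is closed because $F$ is properly embedded; for openness, given $y=q+(x-p)\in F$ one parametrises $M_p$ near $q$ by the flows of a basis of sections of $\pDist[F]{}$, and the parallel lifts of these sections are tangent to $F$ near $y$, so their integral curves stay in $F$ by ODE uniqueness. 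Hence $S=M_p$ and $N\subset F$.

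The main obstacle I anticipate is precisely the openness step: one must guarantee that an integral curve of a parallel lift $\tilde X$ starting inside $F$ cannot leave $F$. The cleanest resolution is to note that $\tilde X|_F$ is a smooth vector field on $F$ (since $\proj\colon F\to\affineleaf$ is smooth and $\tilde X_x\in T_xF$ at every $x\in F$), so its integral curves on $F$ exist locally and, regarded as curves in $\R^{k+n}$, satisfy the same ODE as the integral curves of $\tilde X$; the two therefore coincide by uniqueness. Once this is in place, everything else reduces to a standard Frobenius argument applied first on $\affineleaf$ and then, intrinsically, on $F$.
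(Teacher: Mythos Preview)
Your proposal is correct and follows essentially the same route as the paper's own proof. The paper lifts sections of $\pDist[F]{}$ to $F$ via the vertical lift $\lift$ of Definition~\ref{Def:Vertical Lift}, but for vectors in $\pDist[F]{p}$ this vertical lift coincides with the parallel translate you use (exactly by Lemma~\ref{Lem:Projections and pDist}); the Lie bracket computation and the appeal to that lemma to conclude $[X,Y]\in\pDist[F]{}$ are then identical. Your treatment of the integral-manifold statement is more detailed than the paper's ``the rest follows immediately,'' but it is the same idea fleshed out via the standard connectedness/ODE-uniqueness argument.
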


\begin{proof}
First note that by Lemma~\ref{Lem:Projections and pDist} we can lift $\pDist{}$
to $F$ just by parallel translating it, i.e.\ the distribution $\pDistLift{}$
defined by $\pDistLift{x}:=\pDist{\proj{x}}$ is tangent to $F$.

Hence, if $X, Y$ are tangent vector fields on $\affineleaf$ with values in $\pDist{}$
their vertical lifts $\bar{X}=\lift{X}$ and $\bar{Y}=\lift{Y}$ to $F$
(see Definition~\ref{Def:Vertical Lift}) take values in $\pDistLift{}$.
Obviously the Lie brackets $\lie{X}{Y}$ and $\lie{\bar X}{\bar Y}$ are tangent to
$\affineleaf$ and $F$ respectively. Now $X,\bar{X}$ and $Y,\bar{Y}$ differ just
by parallel translation, which yields an equality of Lie brackets:
$$
    \lie{\bar X}{\bar Y}_x = \lie{X}{Y}_{\proj{x}}
$$
up to parallel transport.
Lemma~\ref{Lem:Projections and pDist} then implies that $\lie{X}{Y}$ can only have
values in $\pDist{}$, so the latter is integrable. The rest follows immediately.
\end{proof}

As mentioned above, we now examine the connections between horizontal fullness
and reducibility of $\Fol$.

\begin{Prop}
Let $\Fol$ be horizontally full, then $\Fol$ does not split off a Euclidean factor.
\end{Prop}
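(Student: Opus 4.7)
The plan is to argue by contradiction: assume $\Fol$ splits off a nontrivial Euclidean factor and show that horizontal fullness must fail at every regular point. So suppose we have an orthogonal decomposition $\R^{k+n}=V\oplus W$ with $\dim W\geq 1$ and an equidistant foliation $\Fol'$ of $V$ such that $\Fol=\Set{F'\times W}{F'\in\Fol'}$.

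First I would identify the affine leaf under this decomposition. Applying Theorem~\ref{Thm:fibre over soul is totally geodesic} to $\Fol'$ produces an affine leaf $F_0'\in\Fol'$, so $F_0'\times W$ is an affine leaf of $\Fol$. By the uniqueness of the affine leaf established at the end of Chapter~\ref{Chap:Existence} we conclude $\affineleaf=F_0'\times W$; in particular $W\subset T_p\affineleaf$ for every $p\in\affineleaf$.

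Next I would carry out the tangent and horizontal computations at a regular point. Let $F=F'\times W$ be a regular leaf of $\Fol$ and $x=(x',w)\in F$. Since $T_xF=T_{x'}F'\oplus W$, taking the orthogonal complement in $V\oplus W$ yields
$$
\Hor[x]=\normal_xF=\normal_{x'}F'\oplus\set{0}\subset V\oplus\set{0}.
$$
On the other hand, $\proj$ is orthogonal projection onto the affine subspace $\affineleaf=F_0'\times W$, so $\diff{\proj}$ is orthogonal projection onto its linear direction $T_{\bar a}F_0'\oplus W$, where $\bar a:=\proj(x')\in F_0'$.

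Finally, the contradiction drops out by orthogonality: any $h\in\Hor[x]$ lies in $V\oplus\set{0}$ and is therefore orthogonal to $\set{0}\oplus W$, so $\projH(h)$ has vanishing $W$-component. Hence
$$
\projH(\Hor[x])\subset T_{\bar a}F_0'\oplus\set{0},
$$
which is a proper subspace of $T_{\proj x}\affineleaf=T_{\bar a}F_0'\oplus W$ since $\dim W\geq 1$. This contradicts horizontal fullness at $x$, ruling out the splitting. The argument is essentially a one-line orthogonality computation; the only point requiring care is invoking uniqueness of the affine leaf so that the assumed splitting of $\R^{k+n}$ is genuinely compatible with the standing convention $\affineleaf=\R^k\times\set{0}$, and I anticipate no real obstacle.
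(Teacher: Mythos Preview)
Your proof is correct and follows essentially the same orthogonality argument as the paper. The paper's version is slightly more compressed: it observes that $W\subset T_xF$ for every leaf $F$ (in particular for $\affineleaf$), invokes Lemma~\ref{Lem:Projections and pDist} to conclude $W\subset\pDist[F]{p,x}$, and then notes that horizontal fullness forces $\pDist[F]{p,x}=0$, hence $W=0$. Your direct computation of $\Hor[x]$ and its projection does the same thing without passing through the $\pDist$ notation. One minor simplification: you do not actually need Theorem~\ref{Thm:fibre over soul is totally geodesic} applied to $\Fol'$ nor the uniqueness of the affine leaf; since $\affineleaf$ is itself a leaf of $\Fol$, it already has the form $F_0'\times W$ for some $F_0'\in\Fol'$, and that is all your argument uses.
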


\begin{proof}
Since the linear space $W$ is contained in $T_xF$ for all $x\in F$ and $F\in\Fol$
Lemma~\ref{Lem:Projections and pDist} implies that $W$ is a subspace of $\pDist[F]{p}$
for all $p\in\affineleaf$.
But since $\Fol$ is horizontally full $\pDist[F]{}$, and hence $W$, is trivial.
\end{proof}

Now, the natural question is whether the converse holds as well.
At least for homogeneous foliations we can show that $\Fol$ is reducible if it is
not horizontally full.

\subsection{Homogeneous Foliations}

Let $\Fol$ be homogeneous, $G$ the Lie group acting on $\R^{k+n}$ such that the
leaves of $\Fol$ are the orbits of $G$.
Remember from Proposition~\ref{Prop:Phi goes to Centralizer} that we can describe~$\Fol$
by giving the isotropy group $G_\star$ and a Lie group homomorphism $\Phi\colon\R^k\to\Centr{G_\star}$
from the affine leaf $\affineleaf\cong\R^k$ to the centralizer of $G_\star$ in $\SO{n}$.
We may assume that $G_\star$ is the maximal connected subgroup of
$\SO{n}$ with the given orbits.

\begin{Lem}\label{Lem:Ker(Phi) splits off}
The distribution $\ker\diff{\Phi}$ on $\affinefibre$ is parallel and $\Fol$ splits
off the Euclidean factor $\ker\diff[0]{\Phi}$. In particular $\Fol$ splits if
$\dim\affineleaf>\rk{\Isom{\IndFol[\star]}}$.
\end{Lem}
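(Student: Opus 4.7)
The plan is to exploit the explicit orbit parametrization from Proposition~\ref{Prop:Phi goes to Centralizer}. First, since $\affineleaf\cong\R^k$ is abelian and $\Phi$ is a Lie group homomorphism, the differential $\diff[v]{\Phi}$ at any $v$ coincides with $\diff[0]{\Phi}$ under the canonical identification $T_v\R^k\cong\R^k$. Hence $\ker\diff{\Phi}$ is the constant distribution $V:=\ker\diff[0]{\Phi}$ on $\affineleaf$, which is then trivially parallel. Moreover, the restriction $\Phi|_V\colon V\to\Centr{\layergroup{\star}}$ is a Lie group homomorphism from a connected abelian group whose differential vanishes identically, so $\Phi(v)=e$ for every $v\in V$.

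Next I would establish the product decomposition via the orbit formula. Split $\R^k=V\oplus V^\perp$ and correspondingly $\R^{k+n}=V\oplus V^\perp\oplus\R^n$. For any $(x,y)=(x_V+x_{V^\perp},y)$, the $G$-orbit reads
$$
G.(x,y)=\Set{(x_V+v_V,\,x_{V^\perp}+v_{V^\perp},\,\Phi(v_V+v_{V^\perp}).\layergroup{\star}.y)}{v_V\in V,\,v_{V^\perp}\in V^\perp}.
$$
Using the homomorphism property $\Phi(v_V+v_{V^\perp})=\Phi(v_V)\Phi(v_{V^\perp})=\Phi(v_{V^\perp})$, the $V$-component entirely decouples from the action on $V^\perp\oplus\R^n$, so each orbit is a Cartesian product of an affine $V$-translate and an orbit of the subgroup of $\Isom{V^\perp\oplus\R^n}$ generated by $\layergroup{\star}$ and the screw motion $\Phi|_{V^\perp}$. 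This exhibits $\Fol$ as the product of the trivial foliation of $V$ and an equidistant foliation of $V^\perp\oplus\R^n$, which is the desired splitting.

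For the final assertion, I would invoke Lemma~\ref{Lem:Parametrization into centralizer} together with Remark~\ref{Rem:G star perp} to arrange that $\Phi$ takes values in $\layergroup{\star}^\perp\cong\IsomNull{\IndFol[\star]}$. The image of $\diff[0]{\Phi}$ is then an abelian Lie subalgebra of the Lie algebra of the compact Lie group $\IsomNull{\IndFol[\star]}$, and any such subalgebra has dimension at most $\rk{\Isom{\IndFol[\star]}}$. Thus
$$
\dim V=\dim\affineleaf-\dim\image{\diff[0]{\Phi}}\geq\dim\affineleaf-\rk{\Isom{\IndFol[\star]}}>0
$$
under the given hypothesis, so $V$ is nontrivial and $\Fol$ splits.

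The argument is largely bookkeeping; the only conceptual content is the observation that $\Phi$ must be trivial on $V$, which is immediate from $\R^k$ being abelian. I do not foresee any genuine obstacle—the one point requiring some care is that $\Phi(\R^k)$ need not be closed in $\layergroup{\star}^\perp$, but this does not affect the dimension count because we work throughout with the image of $\diff[0]{\Phi}$ at the Lie algebra level.
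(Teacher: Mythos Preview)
Your proof is correct and takes a more direct route than the paper's. You exploit that $\Phi$ is a Lie group homomorphism from the abelian group $\R^k$, so $\ker\diff[v]{\Phi}=\ker\diff[0]{\Phi}$ for every $v$; parallelism is then immediate, and the splitting follows by plugging into the orbit formula. The paper instead proves parallelism by first establishing the geometric identity
\[
    \ker\diff[p]{\Phi}=\bigcap_{F\in\Fol}\pDist[F]{p},
\]
noting that each $\pDist[F]{p}$ is $G$-equivariant (via the position field $\pos$), and then using that $G$ acts on $\affineleaf$ by translations. Your argument is certainly the quickest way to the lemma as stated; the paper's detour, however, produces the displayed identity as a byproduct, and this characterization is precisely what motivates the distribution $\pDist[\Psi_a]{}$ introduced later for the non-homogeneous setting (cf.\ Lemma~\ref{Lem:EPhi lifts}). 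So the paper's approach is doing double duty. Your treatment of the rank inequality is also more explicit than the paper's, which leaves that step to the reader.
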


\begin{proof}
We start by proving that the distribution $\ker\diff{\Phi}$ is $G$-equivariant.
Observe that the velocity field of a curve $\gamma$ in $\affineleaf$ is everywhere
tangent to $\ker\diff{\Phi}$ if and only if $\Phi(\gamma(t)).x=x$ for
all~$x\in\R^n$ and all~$t$.
That is to say that $\gamma$ can be lifted into any leaf of $\Fol$ by parallel
transport, which implies
\begin{equation}\label{Eq:ker Phi}
    \ker\diff[p]{\Phi} = \bigcap_{F\in\Fol}\pDist[F]{p}, \quad \forall p\in\affineleaf,
\end{equation}
where the inclusion of the right hand side in the left follows from
Proposition~\ref{Prop:Phi goes to Centralizer}.

Now for any $F\in\Fol$ the distribution $\pDist[F]{p}$ is $G$-equivariant
($\proj{x}=x+\pos_x$ and $\pos$ is $G$-equivariant) and hence so is $\ker\diff{\Phi}$.

Consequently, $\ker\diff{\Phi}$ is parallel since $G$ acts on $\affineleaf$ by
translations. Thus $\Fol$ splits off the Euclidean factor $\ker\diff[0]{\Phi}$.
\end{proof}

\begin{Rem}\label{Rem:F_0 is 1-dimensional}
Assume $\IndFol[\star]$ to be irreducible, which is to say that the action of~$G_\star$
is irreducible.
By Lemma~\ref{Lem:Isometry of irreducible foliation} the rank of
$\Isom{\IndFol[\star]}$ is at most 1 and hence so is $\rk{\Phi(\R^k)}$
(cf.\ Lemma~\ref{Lem:Parametrization into centralizer}).

So, if $\Fol$ does not split Lemma~\ref{Lem:Ker(Phi) splits off} asserts that
the affine leaf $\affineleaf$ can be at most 1-dimensional.
\end{Rem}

\begin{Prop}\label{Prop:Homogeneous nonfull splits off}
If  $\Fol$ is homogeneous and not horizontally full then $\Fol$ splits off the
Euclidean factor $\affineleaf$ or $\IndFol[\star]$ is reducible.
\end{Prop}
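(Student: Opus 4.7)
My plan is to reduce to a one-dimensional situation via Lemma~\ref{Lem:Ker(Phi) splits off} and then to promote the pointwise failure of horizontal fullness to a global tangency statement using the structure of $\Lie{g}_\star^\perp$ under irreducibility. Concretely, I assume $\IndFol[\star]$ is irreducible (else the second alternative holds). By Remark~\ref{Rem:F_0 is 1-dimensional} combined with Lemmas \ref{Lem:Isometry of irreducible foliation} and \ref{Lem:Parametrization into centralizer} this forces $\rk\Phi(\R^k)\le 1$, so $W:=\ker\diff[0]\Phi$ has dimension at least $k-1$, and Lemma~\ref{Lem:Ker(Phi) splits off} gives a splitting $\Fol=\Fol'\times W$ with $W\subset\affineleaf$. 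If $W=\R^k$ we are done immediately, so we are reduced to the case $\dim W=k-1$.

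Next I pass to the reduced foliation $\Fol'$ on $W^\perp$. It is again homogeneous, carries the irreducible slice representation $\IndFol[\star]$, has one-dimensional affine leaf $\affineleaf'$, and by construction is described by a nontrivial screw motion $\Phi'\colon\R\to\layergroup{\star}^\perp$ with $X_0:=\diff[0]\Phi'(1)\in\Lie{g}_\star^\perp\setminus\{0\}$ a $G_\star$-equivariant skew-symmetric endomorphism. Since $\affineleaf=W\oplus\affineleaf'$, it suffices to show that $\Fol'$ splits off $\affineleaf'$, which by Lemma~\ref{Lem:Parametrization into centralizer} is equivalent to $\Phi'(\R)$ preserving every $G_\star$-orbit setwise, and infinitesimally to the condition
\[
    X_0\cdot y\in\Lie{g}_\star\cdot y\quad\text{for all }y\in\layer{\star}.
\]
Unpacking the formula for $\pDist[F]{p}$ in the homogeneous setting, the hypothesis that $\Fol$ is not horizontally full provides at least one regular point $y_0$ at which this tangency holds.

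The main obstacle is to promote this pointwise tangency at $y_0$ to global tangency. My approach is to analyse the set $S:=\{y\in\layer{\star}\mid X_0\cdot y\in\Lie{g}_\star\cdot y\}$: it is closed, a cone, and $G_\star$-invariant because $X_0$ commutes with $G_\star$; moreover on the open dense regular stratum the $G_\star$-equivariant normal component $N(y):=X_0\cdot y-\mathrm{proj}_{\Lie{g}_\star.y}(X_0\cdot y)$ defines an analytic section of the normal bundle to the orbit foliation whose zero set is $S$. The hard part will be to argue that this $G_\star$-equivariant section either vanishes identically on the regular stratum or nowhere; once that is established, the hypothesis $N(y_0)=0$ at a regular $y_0$ forces $N\equiv 0$, whence $S=\layer{\star}$ by continuity. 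I would attack this dichotomy via Schur's lemma applied to $\Lie{g}_\star^\perp$: by Lemma~\ref{Lem:Isometry of irreducible foliation} the space $\Lie{g}_\star^\perp$ is of complex or quaternion type, generated by a complex (respectively quaternion) structure $J$, and the tensor $N$ is then a distinguished $G_\star$-equivariant object whose vanishing locus is forced by irreducibility to be $G_\star$-saturated in a way that admits no intermediate closed invariant subset meeting the principal stratum.

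Once $X_0\cdot y\in\Lie{g}_\star\cdot y$ for every $y$, the flow of $X_0$ preserves each $G_\star$-orbit (integrating the tangency), hence $\Phi'(\R)$ does, so $\Fol'$ splits off $\affineleaf'$ and together with the initial splitting of $W$ we obtain that $\Fol$ splits off $\affineleaf=W\oplus\affineleaf'$. The delicate step I expect to require the most care is the analytic/Schur-theoretic argument forcing the obstruction section $N$ to be ``all or nothing'' on the regular stratum; should this dichotomy instead fail, the same analysis would at least pin the failure locus down to a proper $G_\star$-invariant subset and identify an invariant subspace of $\layer{\star}$, contradicting the standing irreducibility assumption and thereby still closing the argument.
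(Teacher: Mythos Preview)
Your initial reduction is correct and matches the paper: assume $\IndFol[\star]$ irreducible, use Lemma~\ref{Lem:Isometry of irreducible foliation} and Lemma~\ref{Lem:Ker(Phi) splits off} to reduce to a one-dimensional affine leaf with $H:=\Phi(\R)\subset\layergroup{\star}^\perp$ either trivial or isomorphic to $\S{1}$, and observe that non-horizontal-fullness of a regular leaf $F$ forces $F=\affineleaf\times\IndLeaf{F}{\star}$, hence $H$ preserves the single orbit $\IndLeaf{F}{\star}$ setwise.

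The genuine gap is in your main step. You correctly isolate what must be shown --- that the tangency $X_0\cdot y\in\Lie{g}_\star\cdot y$ at one regular orbit propagates to all of $\layer{\star}$ --- but your proposed mechanism (an ``all-or-nothing'' dichotomy for the normal section $N$ via Schur-type reasoning or analyticity) is left as a hope rather than an argument. There is no reason a priori why the zero set of a $G_\star$-equivariant analytic section of the normal bundle should be empty or everything: a union of regular orbits is a proper closed $G_\star$-invariant set with empty interior, so neither analyticity nor equivariance alone rules this out. Your fallback (``the failure locus would be a proper invariant subset, contradicting irreducibility'') conflates invariant \emph{subsets} with invariant \emph{subspaces}.

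The paper closes exactly this gap with a concrete idea you are missing: irreducibility of $\IndFol[\star]$ implies that the regular induced leaf $\IndLeaf{F}{\star}$ is \emph{full} in $\layer{\star}$, i.e.\ affinely spans it. One then argues (via Proposition~\ref{Prop:Phi goes to Centralizer}, adjusting $\Phi$ within $\Centr{G_\star}$) that $H$ may be taken to fix $\IndLeaf{F}{\star}$ pointwise, not merely setwise. Now decompose $\layer{\star}$ into $\S{1}$-weight spaces for $H$: since $\IndLeaf{F}{\star}$ is full, it meets every nonzero weight space nontrivially, but pointwise fixation forces every such component to lie in the zero-weight space. Hence $H$ acts trivially on all of $\layer{\star}$, and $\Fol$ splits off $\affineleaf$. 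This replaces your undetermined dichotomy with a two-line linear-algebra argument; the leverage comes from fullness of the orbit, which is where irreducibility is actually used.
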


\begin{Rem*}
N.b.\ the assertion does not hold if $\IndFol[0]$ is reducible.
To illustrate this consider the homogeneous foliation of $\R^4$ given by
$$
    \Fol = \Set{\left(t,
        \left(\begin{smallmatrix}
            \cos t & \sin t\\
            -\sin t & \cos t\\
            && 1
        \end{smallmatrix}\right)
        x
        \right)}{t\in\R, x\in\R^3}.
$$
Let $F$ be the leaf passing through $(0,0,0,1)$ then $\pDistN[F]{}$ is trivial
while $\Fol$ does not split off a Euclidean factor.
\end{Rem*}

\begin{proof}[Proof of Proposition~\ref{Prop:Homogeneous nonfull splits off}]
Assume $\IndFol[\star]$ to be irreducible. By Remark~\ref{Rem:F_0 is 1-dimensional}
we may also assume $\affineleaf$ to be 1-dimensional so  $H:=\Phi(\affineleaf)$
is trivial or isomorphic to $\S{1}$.
Let us assume the latter since in the former case we are already finished.

Let $F\in\Fol$ be a regular leaf that is not horizontally full.
This means that $\pDistN[F]{}$ is trivial
and $F$ is a cylinder $F=\affineleaf\times\IndLeaf{F}{\star}$.
Thus, $\IndLeaf{F}{\star}$ is invariant under the action of $H$.

Observe first that we may assume $H$ to act trivially on $\IndLeaf{F}{\star}$
since by Proposition~\ref{Prop:Phi goes to Centralizer} we can choose $\Phi$ such
that its image is contained in $\Centr{G_\star}$.

Irreducibility of $\IndFol[\star]$ implies that any regular leaf, in
particular~$\IndLeaf{F}{\star}$, is full.
Since $H\cong\S{1}$ the horizontal layer $\layer{\star}$ splits into an orthogonal
sum of 1- or 2-dimensional $H$-modules. We only have to consider the latter
since the action on the 1-dimensional modules is of course trivial.
But $\IndLeaf{F}{\star}$ being full means that for any $H$-module $V$ we can find a
point $x\in\IndLeaf{F}{\star}$ such that the $V$-component of $x$ is nonzero.
Since $H$ fixes $\IndLeaf{F}{\star}$ pointwise the action of $H$ on $V$ must be trivial.

Thus $H$ acts trivially on $\layer{\star}$, which means that \emph{all} the leaves of
$\Fol$ are cylinders splitting off the Euclidean factor $\affineleaf$.
\end{proof}

\subsection{The General Case}

We show that a somewhat weaker analogue to Proposition~\ref{Prop:Homogeneous nonfull splits off}
holds even if we drop the homogeneity assumption for $\Fol$.
But let us first generalize some of the findings of the previous section.

The key ingredient for the results in the previous section was describing~$\Fol$ via
the Lie group homomorphism~$\Phi\colon\affineleaf\to\NormNull{G_\star}$.

Remember that by Remark~\ref{Rem:Parametrization of Fol} we can describe any
equidistant foliation $\Fol$ of~$\R^{k+n}$ in a way similar to this as long as
$\IndFol[\star]$ is homogeneous. This result is refined by
Lemma~\ref{Lem:Parametrization into centralizer} for equidistant $\IndFol$.

As noted before, the screw motion map $\Psi_a$, $a\in\R^k\cong\affineleaf$,
need not be a Lie group homomorphism.
However,
we can still use it as a tool to examine reducibility of~$\Fol$.

We first introduce a further distribution on $\affineleaf$, which is motivated by
Equation~(\ref{Eq:ker Phi}).

\begin{Def}
Let $\pDist[\Psi_a]{}$ be the distribution on $\affineleaf$ given by
$$
    \pDist[\Psi_a]{p}:=\ker\left(\diff[p]{(\Psi_a)}\right), \qquad p\in\affineleaf.
$$
\end{Def}

The connection to (\ref{Eq:ker Phi}) becomes clear in the next lemma:

\begin{Lem}\label{Lem:EPhi lifts}
Let $a$ be an arbitrary point in $\R^k$.
Then for any $p\in\affineleaf$ the space~$\pDist[\Psi_a]{p}$ can be vertically
lifted to any leaf in $\Fol$ by parallel translation to some $x\in\layer{p}$,
i.e.~we have the inclusion
\begin{equation}\label{Eq:EpPsi}
    \pDist[\Psi_a]{p} \subset \bigcap_{F\in\Fol,\ x\in\IndLeaf{F}{p}}\pDist[F]{p,x}.
\end{equation}
\end{Lem}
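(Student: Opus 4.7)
The plan is to exhibit, for each leaf $F$ and each $x\in\IndLeaf{F}{p}$, a curve in $F$ through $x$ whose velocity is $(w,0)$ whenever $w\in\pDist[\Psi_a]{p}$; Lemma~\ref{Lem:Projections and pDist} will then identify this velocity as an element of $\pDist[F]{p,x}$. A preliminary observation is that every leaf of $\Fol$ meets the horizontal layer $\layer{a}$: indeed, Remark~\ref{Rem:Parametrization of Fol} writes any leaf $F$ in the form $\Set{(x_0+v,\Psi_{x_0}(v).\layergroup{x_0}.y_0)}{v\in\R^k}$, whose projection onto $\affineleaf$ is surjective. Hence for each $F\in\Fol$ one may pick some $y_0^F\in\R^n$ with $(a,y_0^F)\in F$ and parametrize $F$ using the single screw motion map $\Psi_a$ --- read as a map $\affineleaf\to\SO{n}$ with $\Psi_a(a)=\id$ --- as $F=\Set{(q,\Psi_a(q).\layergroup{a}.y_0^F)}{q\in\affineleaf}$.

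Now fix such an $F$ and any $x\in\IndLeaf{F}{p}$; then $x=(p,\Psi_a(p).g_0.y_0^F)$ for some $g_0\in\layergroup{a}$. Given $w\in\pDist[\Psi_a]{p}=\ker(\diff[p]{(\Psi_a)})$, I would consider the smooth curve $\gamma(t):=(p+tw,\Psi_a(p+tw).g_0.y_0^F)$, which lies in $F$, passes through $x$ at $t=0$, and carries velocity $\dot\gamma(0)=(w,0)$: the $\R^n$-factor has zero derivative at $t=0$ precisely because $\diff[p]{(\Psi_a)}(w)=0$. Since $(w,0)$ is thereby tangent to both $F$ at $x$ (as a curve velocity) and to $\affineleaf$ at $p$ (its $\R^n$-component vanishing), Lemma~\ref{Lem:Projections and pDist} places $w$ in $\pDist[F]{p,x}$. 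Arbitrariness of $F$ and $x\in\IndLeaf{F}{p}$ yields the inclusion~(\ref{Eq:EpPsi}).

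The argument is essentially computational, and the only mild obstacle is bookkeeping: one must justify that a single map $\Psi_a$ based at $a$ parametrizes \emph{every} leaf (settled by the universality in Remark~\ref{Rem:Parametrization of Fol} together with the surjectivity of each leaf's projection onto $\affineleaf$) and that the $\layergroup{a}$-freedom in $g_0$ covers every $x\in\IndLeaf{F}{p}$ rather than merely a distinguished representative. Notably no assumption on equidistance of $\IndFol$ or on horizontal fullness of $\Fol$ is required for this inclusion.
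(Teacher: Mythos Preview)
Your proof is correct and follows essentially the same approach as the paper's: both choose a point $b$ in $\layer{a}$ parametrizing the given $x\in\IndLeaf{F}{p}$ via Remark~\ref{Rem:Parametrization of Fol}, lift a curve in $\affineleaf$ tangent to $\ker(\diff[p]{\Psi_a})$ to the curve $t\mapsto(p+tw,\Psi_a(p+tw).b)$ in $F$, observe that its velocity is $(w,0)$, and conclude via Lemma~\ref{Lem:Projections and pDist}. Your version is slightly more explicit about why every leaf admits such a parametrization based at $a$ and about the role of the $\layergroup{a}$-freedom, but the substance is identical.
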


\begin{proof}
Let $\gamma\colon(-1,1)\to\affineleaf$ be a smooth curve such that
its derivative $\dot{\gamma}(0)$ is tangent to $\pDist[\Psi_a]{\gamma(0)}$,
i.e. $\Ddt \Psi_a(\gamma(t))=0$.

Let $F$ be an arbitrary leaf in $\Fol$ and $x\in\IndLeaf{F}{\gamma(0)}$.
Describing $F$ in accordance with Remark~\ref{Rem:Parametrization of Fol},
choose $b\in\R^n\simeq\layer{a}$ such that
$$
    x=\big(\gamma(0),\ \Psi_a(\gamma(0)-a).b\big).
$$
Here we have identified $\gamma(t)$ with just its first $k$ coordinates (since
the last $n$ coordinates vanish anyway).

Now, consider the \emph{lifted} curve $\bar\gamma\colon(-1,1)\to F$ given by
$$
    \bar\gamma(t)=\big(\gamma(t),\ \Psi_a(\gamma(t)-a).b\big).
$$
Looking at its derivative, we obviously get
$\dot{\bar{\gamma}}(0)=\big(\dot\gamma(0), 0\big)$ which is just $\dot\gamma(0)$,
abusing notation again.
Hence, Lemma~\ref{Lem:Projections and pDist} implies that $\dot\gamma(0)$ is contained
in~$\pDist[F]{\gamma(0),x}$.
\end{proof}

\begin{Rem*}
\emph{For the remainder of this section we assume $\IndFol$ to be equidistant and $\IndFol[\star]$
to be homogeneous.}
Then by Lemma~\ref{Lem:Parametrization into centralizer} we can choose
$\Psi_a$ such that its image is contained in $\layergroup{\star}^\perp$ and thus
equality holds in (\ref{Eq:EpPsi}).
\end{Rem*}

An immediate consequence is the following:

\begin{Cor}
If $\Isom{\IndFol[\star]}$ is discrete $\Fol$ splits off the Euclidean
factor~$\affineleaf$.
\end{Cor}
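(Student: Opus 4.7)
The plan is to trace the logical chain: discrete isometry group $\Rightarrow$ trivial screw motion $\Rightarrow$ affine leaf lifts parallel-wise to every leaf $\Rightarrow$ splitting.

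First I would observe that if $\Isom{\IndFol[\star]}$ is discrete, then its connected component $\IsomNull{\IndFol[\star]}$ is trivial. By Remark~\ref{Rem:G star perp}, $\layergroup{\star}^\perp$ is Lie group isomorphic to $\IsomNull{\IndFol[\star]}$, so $\layergroup{\star}^\perp=\{\id\}$. Since we are in the setting where $\IndFol$ is equidistant and $\IndFol[\star]$ is homogeneous, Lemma~\ref{Lem:Parametrization into centralizer} lets us choose the screw motion map $\Psi_a\colon\R^k\to\layergroup{\star}^\perp$, and so necessarily $\Psi_a\equiv\id$ for every base point $a\in\R^k$.

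Next I would exploit the equality case in Lemma~\ref{Lem:EPhi lifts}. Since $\Psi_a$ is constant, $\diff[p]{\Psi_a}=0$, hence $\pDist[\Psi_a]{p}=T_p\affineleaf$ for every $p\in\affineleaf$. The remark following Lemma~\ref{Lem:EPhi lifts} gives equality in $(\ref{Eq:EpPsi})$ in the present setting, so
\begin{equation*}
    T_p\affineleaf \;=\; \bigcap_{F\in\Fol,\ x\in\IndLeaf{F}{p}} \pDist[F]{p,x}.
\end{equation*}
In particular $T_p\affineleaf\subset \pDist[F]{p,x}$ for every regular leaf $F$ and every $x\in\IndLeaf{F}{p}$.

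Then I would apply Lemma~\ref{Lem:Projections and pDist}: the containment above says that every vector tangent to $\affineleaf$ at $p$, viewed as a parallel vector at $x$, lies in $T_xF$. Thus each regular leaf $F$ contains the full translate $x+\R^k\times\{0\}$ through each of its points, i.e.\ $F=(\R^k\times\{0\})+\IndLeaf{F}{\star}$ after identifying $\affineleaf=\R^k\times\{0\}$. Equidistance of $\Fol$ propagates this cylindrical structure to the singular leaves as well (they are Hausdorff limits of regular leaves), and so $\Fol$ splits as the product of $\affineleaf$ with the foliation $\IndFol[\star]$ on $\layer{\star}$.

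I do not anticipate a serious obstacle here: the only step that requires a moment's care is verifying that we really are entitled to the \emph{equality} in $(\ref{Eq:EpPsi})$, which is precisely the content of the remark preceding the corollary (it is there that the assumptions ``$\IndFol$ equidistant'' and ``$\IndFol[\star]$ homogeneous'' are used, via Lemma~\ref{Lem:Parametrization into centralizer}, to put the image of $\Psi_a$ inside $\layergroup{\star}^\perp$).
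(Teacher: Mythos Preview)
Your proposal is correct and follows exactly the route the paper intends; the corollary is stated there as an ``immediate consequence'' of the preceding Remark, with no separate proof given. One small simplification: you do not actually need the \emph{equality} in $(\ref{Eq:EpPsi})$ --- once the Remark lets you place $\Psi_a$ in $\layergroup{\star}^\perp$ and conclude $\Psi_a\equiv\id$, the bare inclusion of Lemma~\ref{Lem:EPhi lifts} already yields $T_p\affineleaf=\pDist[\Psi_a]{p}\subset\pDist[F]{p,x}$, which is all you use.
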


Remember that an essential point in the proof of Proposition~\ref{Prop:Homogeneous nonfull splits off}
was to assume that $\affineleaf$ is at most 1-dimensional.
We show that --- provided $\IndFol$ is equidistant and $\IndFol[\star]$ is given by the
orbits of an irreducible representation of complex type ---
$\Fol$~splits if $\affineleaf$ has dimension larger than 1:

\begin{Lem}\label{Lem:Fol splits if dim(F_0) is greater than 1}
Assume $\Isom{\IndFol[\star]}$ to be 1-dimensional.
Then either the affine leaf $\affineleaf$ of $\Fol$ is at most 1-dimensional or
$\Fol$ splits off a Euclidean factor.
\end{Lem}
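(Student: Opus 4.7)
The plan is to show that under the hypothesis $\dim\Isom{\IndFol[\star]}=1$, the screw motion $\Psi_a$ must be the exponential of an affine linear function on $\R^k$ times a fixed generator of $\layeralg{\star}^\perp$; once this is established, $\Fol$ becomes homogeneous and the splitting follows from Lemma~\ref{Lem:Ker(Phi) splits off}. First I would linearize: by Lemma~\ref{Lem:Parametrization into centralizer} we may take $\Psi_a\colon\R^k\to\layergroup{\star}^\perp$ with values in its connected component, a $1$-dimensional abelian Lie group. Since $\R^k$ is simply connected, fix a generator $X\in\layeralg{\star}^\perp$ and lift $\Psi_a$ through $\exp$; combining this with the cocycle identity $\Psi_v(p-v)=\Psi_w(p-w)\Psi_v(w-v)$ (obtained by comparing the descriptions of a single leaf from two different base points) yields a smooth $g\colon\R^k\to\R$ with
\begin{equation*}
    \Psi_a(v)=\exp\bigl((g(a+v)-g(a))X\bigr),\qquad \pDist[\Psi_a]{p}=\ker dg|_p.
\end{equation*}

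The core step uses equidistance of $\Fol$ to force $g$ affine. Since $\Psi(p)$ is an isometry commuting with $\layergroup{\star}$, for any two regular leaves $F_y, F_{y'}$ one has
\begin{equation*}
    d(F_y,F_{y'})^2=\min_{r\in\R^k,\,h\in\layergroup{\star}}\Bigl[|r|^2+\bigl|y-\exp\bigl((g(p+r)-g(p))X\bigr)h\,y'\bigr|^2\Bigr],
\end{equation*}
which must be independent of $p$. At a minimizer $(r^*,h^*)$, setting $z=\exp((g(p+r^*)-g(p))X)h^*y'$, the envelope theorem applied to differentiation in $p$ gives $\langle y-z,Xz\rangle\bigl(\nabla g(p+r^*)-\nabla g(p)\bigr)=0$, and the first-order condition in $r$ yields $r^*=\langle y-z,Xz\rangle\,\nabla g(p+r^*)$. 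Since $\IsomNull{\IndFol[\star]}\cong\layergroup{\star}^\perp$ is nontrivial by hypothesis, $\exp(\R X)$ acts effectively on the leaf space of $\IndFol[\star]$, so $Xz\notin\layeralg{\star}\cdot z$ for some $z$; choosing $y,y'$ accordingly gives $\langle y-z,Xz\rangle\neq 0$ at the minimizer (at the $h$-minimizer $y-z\perp\layeralg{\star}\cdot z$, so the inner product vanishes only if $Xz\in\layeralg{\star}\cdot z$). Then $\nabla g(p+r^*)=\nabla g(p)$ with $r^*$ a nonzero scalar multiple of $\nabla g(p)$; letting this scalar vary and Taylor-expanding yields $\mathrm{Hess}(g)(p)\cdot\nabla g(p)=0$, i.e.\ $\nabla(|\nabla g|^2)=0$, so $|\nabla g|$ is constant on $\R^k$.

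A smooth function on $\R^k$ with $|\nabla g|$ constant is necessarily affine: either $\nabla g\equiv 0$ and $g$ is constant, or $|\nabla g|=c>0$ and the level sets of $g$ form a parallel family of smooth hypersurfaces filling $\R^k$, which by the absence of focal singularities in Euclidean space must all be parallel affine hyperplanes. With $g$ affine, $\Psi_a$ becomes a Lie group homomorphism independent of the base point, so by the remark concluding Section~\ref{Sec:Isometries} $\Fol$ is homogeneous, and Lemma~\ref{Lem:Ker(Phi) splits off} provides the Euclidean factor $\ker dg$, of dimension $k-1$ if $\nabla g\neq 0$ and $k$ if $\nabla g\equiv 0$. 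Since $\dim\affineleaf=k\geq 2$, this factor has positive dimension, so $\Fol$ splits off a nontrivial Euclidean factor.

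The hard part is making the envelope step rigorous: the minimizer $(r^*,h^*)$ must be shown to depend smoothly enough on $p$ to justify differentiating under the $\min$ (true on the regular part for generic $y,y'$), and one must verify that the nondegeneracy condition $\langle y-z,Xz\rangle\neq 0$ can actually be realized rather than forced to vanish by some hidden symmetry --- as indicated above, this reduces to the nontriviality of $\IsomNull{\IndFol[\star]}$ granted by hypothesis.
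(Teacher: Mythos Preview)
Your approach is genuinely different from the paper's, and aims at a stronger conclusion than required.

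The paper never introduces the scalar potential $g$ nor analyzes the distance formula. Instead it argues directly on the distribution $\pDist[\Psi_a]{}=\ker d\Psi_a$: since the target group is one-dimensional, at each $p\in\affineleaf$ this kernel is either a hyperplane or everything. If it is everything at one point, Lemma~\ref{Lem:EPhi lifts} together with the constancy of $\dim\pDist[F]{}$ forces $\Fol$ to split off all of $\affineleaf$. Otherwise the level sets of $\Psi_a$ are smooth hypersurfaces of $\affineleaf$, and the paper shows this hypersurface foliation is \emph{transnormal}: given $\xi\perp\pDist[\Psi_a]{p}$, one uses Lemma~\ref{Lem:EPhi lifts} (with equality) to find a leaf $F$ and a lift $\bar\xi\in\normal_xF$ with $\projH\bar\xi=\xi$; the horizontal geodesic $x+t\bar\xi$ stays horizontal for $\Fol$, and projecting back to $\affineleaf$ shows that $p+t\xi$ remains orthogonal to $\pDist[\Psi_a]{}$. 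Transnormal hypersurface foliations of Euclidean space are by parallel hyperplanes, and their common orthogonal complement then visibly splits off from $\Fol$. This bypasses any variational analysis.

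Your route, by contrast, pushes for the stronger statement that $g$ is affine and hence $\Fol$ is homogeneous. This is indeed true under the standing hypotheses, but the paper establishes it only later (as a consequence of Theorem~\ref{Thm:Homogeneity}). The envelope step, as you yourself flag, is where the work lies. Two points deserve emphasis. First, the passage from $\nabla g(p+r^*)=\nabla g(p)$ with $r^*=\lambda\nabla g(p)$ for one specific $\lambda$ to $\mathrm{Hess}(g)(p)\nabla g(p)=0$ requires that the admissible $\lambda$'s accumulate at zero; you would need to argue, for instance, that taking $y'=\exp(sX)y$ and letting $s\to 0$ makes $r^*\to 0$ while keeping $\langle y-z,Xz\rangle\neq 0$, and that the minimizer depends continuously on $s$. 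Second, the envelope theorem itself needs the minimizer to be locally unique and to depend at least $C^1$ on $p$; for generic regular $y,y'$ this is plausible via the implicit function theorem applied to the first-order conditions, but it is not automatic. Both issues can likely be handled, but the paper's transnormality argument sidesteps them entirely at the cost of the weaker (and here sufficient) conclusion.
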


\begin{proof}
By Lemma~\ref{Lem:Parametrization into centralizer} we may assume the image of $\Psi_a$ to be
contained in the 1-dimensional Lie group $\layergroup{\star}^\perp$.
So for any $p\in\affineleaf$ the kernel of $\diff[p]{(\Psi_a)}$ is either a
hyperplane or all of $T_p\affineleaf$.

If the latter holds at any $p_0\in\affineleaf$ Lemma~\ref{Lem:EPhi lifts} clearly
implies that $\pDist[F]{{p_0}}=T_{p_0}\affineleaf$ for all $F\in\Fol$.
Since the dimension of $\pDist[F]{p}$ is independent of $p\in\affineleaf$ it follows
that $\Fol$ splits off the whole affine leaf $\affineleaf$.

So let us assume $\ker\left(\diff[p]{(\Psi_a)}\right)$ to be a hyperplane at
every point, which means $\Psi_a$ has only regular values. Consequently the level
sets of $\Psi_a$ are regular hypersurfaces of $\affineleaf$. We show that their
connected components form the leaves of an equidistant foliation of $\affineleaf$.
We achieve this by showing that this foliation is transnormal, i.e.\ geodesics
meeting any leaf perpendicularly meet all leaves perpendicularly
(cf.\ Remark~\ref{Rem:Molino}).

Let $p$ be any point in $\affineleaf$ and $\xi\in T_p\affineleaf$ perpendicular
to $\pDist[\Psi_a]{p}$. By Lemma~\ref{Lem:EPhi lifts} there is some leaf $F\in\Fol$
such that $\xi\in\pDistN[F]{p}$. Let then $\bar\xi\in\normal_xF$ be such that
$\projH{\bar\xi}=\xi$ with $x\in\IndLeaf{F}{p}$.
Then $\bar\gamma(t):=x+t\bar\xi$ meets $F$ perpendicularly and stays perpendicular to
all leaves of $\Fol$ it meets. Hence, its projection $\gamma\colon t\mapsto p+t\xi$
to $\affineleaf$ stays perpendicular to the distribution $\pDist[\Psi_a]{}$ since
$$
    \dot{\gamma}(t)=\projH(\dot{\bar\gamma}(t))\in\pDistN[F_t]{\gamma(t)},
$$
where $F_t$ is the leaf passing through $\bar\gamma(t)$.

Now the only equidistant foliation of Euclidean space by hypersurfaces is given by
parallel hyperplanes and lifting these to all leaves of $\Fol$ we see that $\Fol$
splits if the dimension of $\affineleaf$ is greater than 1.
\end{proof}

Assume $\IndFol[\star]$ to be given by the orbits of an irreducible representation.
If the representation is of \emph{real} type $\Fol$ splits off $\affineleaf$ since
the isometry group of $\IndFol[\star]$ is discrete.
If it is of \emph{complex} type and $\affineleaf$ has dimension greater than 1
then $\Fol$ splits, as we have just shown.

\begin{Rem*}
Note, that we cannot use the proof of Lemma~\ref{Lem:Fol splits if dim(F_0) is greater than 1}
if the representation is of \emph{quaternionic} type:

In the worst case $\IsomNull{\IndFol[\star]}=\Sp{1}$.
Assume $\Psi_a$ to have only regular values and its fibres to be
equidistant.
Let $\fol{G}$ be the foliation of $\affineleaf$ given by the fibres of $\Psi_a$
and let $\bar{\fol{G}}$ be the refinement of $\fol{G}$ given by the connected
components of its leaves.
Then $\Psi_a$ factorizes in the following way

$$
\xymatrix{
    \affineleaf \ar[r]^{\bar\Psi_a} \ar[dr]_{\Psi_a}
                                & \affineleaf/\bar{\fol{G}} \ar[d]^{p} \\
                                & \affineleaf/{\fol{G}}=\Sp{1}
}
$$
where $\bar\Psi_a\colon\affineleaf\to\affineleaf/\bar{\fol{G}}$ and
$p\colon\affineleaf/\bar{\fol{G}}\to\Sp{1}$ are the canonical projections.

Both $\fol{G}$ and $\bar{\fol{G}}$ are equidistant so $\Psi_a$ and $\bar\Psi_a$
are submetries if we take the induced metrics on the respective quotients.
Then $p$ is a submetry as well (cf.~Lem\-ma~\ref{Lem:factorizing submetries}).

Observe that $p$ has to be a covering map
because the fibres of $\bar\Psi_a$ are all regular and $p$ must be discrete
(cf.~\cite[Thm.~10.1]{Lyt}).
So $\affineleaf/\bar{\fol{G}}$ must be $\Sp{1}$ since $\Sp{1}\simeq\S{3}$ is simply
connected.
But on the other hand Theorem~\ref{Thm:Submetries with compact base} implies that
$\affineleaf/\bar{\fol{G}}$ cannot be compact.
Hence our assumption was wrong.
\end{Rem*}

We close with the generalized version of Proposition~\ref{Prop:Homogeneous nonfull splits off}:

\begin{Prop}\label{Prop:Reducibility}
If $\Fol$ is not horizontally full and $\IndFol[\star]$ is given by the orbits
of an irreducible representation of complex type then $\Fol$ splits off a Euclidean
factor.
\end{Prop}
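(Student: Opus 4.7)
The plan is to reduce the problem to $\dim\affineleaf=1$ and then to mimic the proof of Proposition~\ref{Prop:Homogeneous nonfull splits off}, using the screw motion map $\Psi_x$ of Lemma~\ref{Lem:Parametrization into centralizer} in place of the homomorphism $\Phi$.

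First, Lemma~\ref{Lem:Isometry of irreducible foliation} tells us that $\IsomNull{\IndFol[\star]}\subseteq\U{1}$. If $\Isom{\IndFol[\star]}$ is discrete, the corollary following Lemma~\ref{Lem:EPhi lifts} immediately yields that $\Fol$ splits off $\affineleaf$. Otherwise $\IsomNull{\IndFol[\star]}\cong\U{1}$ is genuinely $1$-dimensional, and Lemma~\ref{Lem:Fol splits if dim(F_0) is greater than 1} reduces us to $\dim\affineleaf\leq 1$; the hypothesis that $\Fol$ is not horizontally full excludes $\dim\affineleaf=0$, so $\dim\affineleaf=1$.

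Now pick a regular leaf $F$ that is not horizontally full. Since $T_p\affineleaf$ is one-dimensional, the only possibility is $\pDistN[F]{p}=0$, i.e.\ $\pDist[F]{p}=T_p\affineleaf$. By the proposition showing that integral manifolds of $\pDist[F]{}$ lift to $F$ by parallel translation, the line $\affineleaf+(x-p)$ sits inside $F$ for every $x\in\IndLeaf{F}{\star}$, so $F=\affineleaf\times\IndLeaf{F}{\star}$ is a cylinder. Parametrising $F$ via Lemma~\ref{Lem:Parametrization into centralizer} as
\[
F=\Set{(x+v,\,\Psi_x(v).\layergroup{\star}.y)}{v\in\R},
\]
the cylinder condition translates into $\Psi_x(v).\layergroup{\star}.y=\layergroup{\star}.y$ for every $v\in\R$.

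The crux — and main obstacle — is upgrading this to $\Psi_x\equiv\id$. Complex type realises $\layergroup{\star}^\perp$ as the group $\U{1}$ of scalar multiplications by the $\layergroup{\star}$-invariant complex structure on $\layer{\star}\cong\C^m$, so we may write $\Psi_x(v)=e^{i\theta(v)}$. The closed subgroup $S:=\Set{e^{i\theta}\in\U{1}}{e^{i\theta}y\in\layergroup{\star}.y}$ of $\U{1}$ cannot equal $\U{1}$, for that would force $\U{1}\cdot\layergroup{\star}$ to share orbits with $\layergroup{\star}$, contradicting the maximality $\layergroup{\star}=\maxgroup{\layergroup{\star}}$ together with $\U{1}\not\subset\layergroup{\star}$ (which is ensured by $\IsomNull{\IndFol[\star]}$ being non-trivial). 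Hence $S$ is discrete, and continuity of $\theta$ with $\theta(0)=0$ gives $\Psi_x\equiv\id$. Since $\Psi_x$ parametrises every leaf through $\layer{x}$, all leaves of $\Fol$ are cylinders over $\affineleaf$, and $\Fol$ splits off this Euclidean factor. Everything apart from the explicit identification of $\layergroup{\star}^\perp$ with scalar $\U{1}$ and the exclusion of $S=\U{1}$ is then a routine consequence of the lemmas already in place.
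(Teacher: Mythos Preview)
Your reduction to $\dim\affineleaf=1$ and the identification of the non-horizontally-full leaf $F$ as a cylinder are correct and match the paper's strategy (the paper leaves the reduction implicit in the discussion preceding the proposition, but it is the same).

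The divergence is in the final step. The paper does \emph{not} argue via discreteness of your subgroup $S$; instead it reruns the fullness-and-module-decomposition argument from the proof of Proposition~\ref{Prop:Homogeneous nonfull splits off} with $H$ replaced by $\IsomNull{\IndFol[\star]}$: once the image of $\Psi_x$ (lying in scalar $\U{1}$) is known to fix $\IndLeaf{F}{\star}$ pointwise, fullness of this regular orbit forces the scalar action to be trivial on every irreducible $\U{1}$-summand of $\layer{\star}$, hence $\Psi_x\equiv\id$.

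Your alternative route has a genuine gap. From $S=\U{1}$ you conclude that ``$\U{1}\cdot\layergroup{\star}$ shares orbits with $\layergroup{\star}$'', but all you actually get is that the \emph{single} orbit $\layergroup{\star}.y$ is preserved by scalar $\U{1}$. Maximality of $\layergroup{\star}$ is only contradicted if $\U{1}$ preserves \emph{every} $\layergroup{\star}$-orbit, and there is no reason this follows: scalar $\U{1}$ lies in the normalizer and hence permutes orbits, but an isometry of the orbit space $\cone{\basedir}$ can perfectly well fix one point while moving others. Equivalently, $S=\U{1}$ says the Killing field $z\mapsto iz$ is tangent to the orbit at $y$, i.e.\ $iy=A.y$ for some $A\in\Lie{g}_\star$; this puts $y$ in the $i$-eigenspace of a single element $A$, which is neither $\layergroup{\star}$-invariant nor forced to be all of $\C^m$. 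So the maximality contradiction does not fire, and the step ``$S$ is discrete'' is unjustified as written. To close the argument you need to use more than the single orbit through $y$ --- this is precisely what the paper's fullness argument supplies.
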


\begin{proof}
In analogy to the proof of Proposition~\ref{Prop:Homogeneous nonfull splits off}
we choose a regular not horizontally full leaf $F\in\Fol$.
Then $\layergroup{\star}^\perp$ and hence the image of $\Psi_a$ leaves $\IndLeaf{F}{\star}$
invariant, even pointwise by Lemma~\ref{Lem:Parametrization into centralizer}.
The rest is exactly the same as in the proof of Proposition~\ref{Prop:Homogeneous nonfull splits off}
replacing $H$ with $\IsomNull{\IndFol[\star]}$.
\end{proof}

\chapter{Homogeneity Results}\label{Chap:Homogeneity}

In this chapter we finally address homogeneity of $\Fol$.
First, we consider the quotient $\intermediatespace=\R^{k+n}/\IndFol$ and show
that --- provided $\IndFol$ is equidistant --- the image of $\Fol$ under the
natural projection is an equidistant foliation of $\intermediatespace$.
Moreover, this new foliation is described by the same screw motion map as the
original one.
Reversing this construction we show how to construct new \emph{inhomogeneous}
equidistant foliations of Euclidean space.

We conclude with a homogeneity result for $\Fol$ if $\IndFol[\star]$ is homogeneous
and if $\Isom{\IndFol[\star]}$ fulfills certain conditions, e.g. if it is
sufficiently small.

\sep

\emph{Throughout this chapter we will assume $\IndFol$ to be equidistant.}

\section{Factorizing the Submetry}

In this section we will show that the submetry $\pi$ factorizes
into a composition $\pi_2\circ\pi_1$ such that both $\pi_i$ are submetries again.
This yields a foliation $\InterFol$ of the intermediate space
$\intermediatespace:=\pi_1(\R^{k+n})$ given by the fibres of $\pi_2$.
We construct the factorization of $\pi$ in such a way that the leaves of $\InterFol$
are exactly the images of the leaves of $\Fol$ under $\pi_1$.

It turns out that $\InterFol$ is more regular than $\Fol$ in the
sense that the leaves of $\InterFol$ are all of the same dimension.
This regularity of $\InterFol$ will be the key ingredient of our study of $\Fol$
during the following sections.
It is, however, bought at the expense of $\intermediatespace$ only
being an Alexandrov space albeit of a rather nice type.

\sep

In order to construct the map $\pi_1$ consider the following:
Let $\basedir$ denote the space of directions of $\base$ at the point
$\pi(\affinefibre)$, then $\cone{\basedir}$ is the tangent cone $T_{\pi(\affineleaf)}\base$.
Consider the map $\bar{\pi}:\layer{0}\to\cone{\basedir}$,  where $\bar{\pi}$
is the restriction of $\diff[0]{\pi}$ to the horizontal layer $\layer{0}$,
identifying $\layer{0}$ with $\Hor[0]$.
As we have seen in Section~\ref{Sec:IndFol in each Layer}, $\bar{\pi}$ is just
the canonical projection from $\layer{0}$ to $\layer{0}/\IndFol[0]\cong\cone{\basedir}$.

\begin{Def}
We set
$$
    \pi_1 \colon \R^{k+n} \cong \affinefibre \times \layer{0} \to \affineleaf \times \cone{\basedir},
    \qquad
    \pi_1:= \ID{\affinefibre} \times \bar{\pi}
$$
and $\intermediatespace:=\affineleaf \times \cone{\basedir}$. We define the map
$\pi_2\colon\intermediatespace\to\base$ by
$$
    \pi_2(\bar{x}):=\pi\circ\pi_1^{-1}(\bar{x}).
$$
\end{Def}

\begin{Rem*}
Observe that $\pi_1$
is a submetry since its components $\ID{\affinefibre}$ and $\bar{\pi}$ are.
Moreover, the fibres of $\pi_1$ are the leaves of $\IndFol$ because the latter
is equidistant. So, $\pi_1$ is just the canonical projection $\R^{k+n}\to\R^{k+n}/\IndFol$.

Since $\IndFol$ is a subfoliation of $\Fol$ the map $\pi_2$ is well defined and
by Lemma~\ref{Lem:factorizing submetries} it is a submetry.
\end{Rem*}

So, the fibres of $\pi_2$ define an equidistant foliation $\InterFol$ of $\interspace$, which
by the remark above is given by the images of the leaves of $\Fol$, i.e.\
$$
    \InterFol=\Set{\pi_2^{-1}(x)}{x\in\base} = \Set{\pi_1(F)}{F\in\Fol}.
$$

\section{New Examples from Old}\label{Sec:New from Old}

We now study $\intermediatespace$ and its foliation $\InterFol$ in order to better
understand $\Fol$.

As the essential information about $\intermediatespace$ is contained in the
structure of $\basedir$ understanding $\Isom{\basedir}$ appears to be essential.
In  Section~\ref{Sec:Isometries} we have already discussed the isometry group of
the induced foliation $\IndFol[\star]$.
Now, remember that $\Isom{\IndFol[\star]}$ acts effectively and by isometries on
$\cone{\basedir}=\layer{\star}/\IndFol[\star]$ and hence on $\basedir$
as the action fixes the apex of the cone.
However, it is possible for the space of leaves to have more isometries
than the foliation.

\begin{Rem*}
The subgroup $\Isom{\IndFol[\star]}\subset\Isom{\basedir}$ consists exactly
of the isometries of~$\basedir$ that may be liftet to $\IndFol[\star]$.
\end{Rem*}

For example consider an isoparametric hypersurface in a sphere and
the foliation created by its parallel surfaces
(cf.~\cite[Sect.~8.4]{PT} and \cite{FKM}).
Such a foliation always has two
focal manifolds, hence the space of leaves is a compact interval with the reflection
at the midpoint being the only nontrivial isometry. But this reflection cannot always
be lifted to an isometry of the foliation since the two focal manifolds may have
different dimension.

It is not even clear whether the connected components of the two isometry groups
are the same.
Nevertheless we will see that understanding the action of $\IsomNull{\IndFol[\star]}$
is quite sufficient in order to understand $\InterFol$.

But first we mention a splitting result (cf.\ \cite[Prop.~12.14]{Lyt}) for the
submetry $\bar\pi\colon\R^{n}\to\cone{\basedir}$:

\begin{Prop}
If $\diam{\basedir} > \frac{\pi}{2}$ then $\cone{\basedir}$ splits as
$\cone{\basedir}=\R^l\times\cone{\basedir'}$ with
$\diam{\basedir'}\leq\frac{\pi}{2}$.
Moreover
$\bar\pi\colon \R^l\times\R^{n-l}\to\R^l\times\cone{\basedir'}$ splits as
$\bar\pi=\ID{\R^l}\times\bar\pi'$ and $\bar\pi'$ is a submetry.
\end{Prop}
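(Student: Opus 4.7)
The plan is to produce the splitting iteratively: each time $\diam{\basedir}>\pi/2$, split off a single $\R$-factor of $\cone{\basedir}$; since the dimension strictly decreases at each step, the process terminates with $\diam{\basedir'}\leq\pi/2$. For one such step, I first observe that $\bar\pi\colon\R^n\to\cone{\basedir}$ is a homogeneous submetry between metric cones, so by restricting to the unit sphere it yields a submetry $\S^{n-1}\to\basedir$. The central claim is then that the hypothesis $\diam{\basedir}>\pi/2$ forces the existence of a pair of antipodal points in $\basedir$, i.e.\ points at distance exactly~$\pi$. While for a generic compact Alexandrov space of curvature $\geq 1$ a diameter larger than $\pi/2$ does not force antipodes (convex subsets of a round sphere are counterexamples), the additional rigidity provided by the submetry from $\S^{n-1}$ supplies enough horizontal geodesics to propagate such a diameter all the way to $\pi$.

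Given antipodes $\bar\xi,\bar\xi^-\in\basedir$, the corresponding pair of rays in $\cone{\basedir}$ from the apex glue to a line, and by the Toponogov splitting theorem (valid for nonnegatively curved Alexandrov spaces) we obtain $\cone{\basedir}=\R\times Y$. Because $\cone{\basedir}$ is a cone and the line passes through the apex, $Y$ is itself a cone $\cone{\basedir''}$, and correspondingly $\basedir=\S^0\ast\basedir''$ is a spherical join. To lift this to $\R^n$, I lift $\bar\xi$ horizontally through the origin to $\hat\xi\in\S^{n-1}$; because $d(\bar\xi,\bar\xi^-)=\pi$ and $\bar\pi$ is $1$-Lipschitz, the opposite vector $-\hat\xi$ is also horizontal and projects onto $\bar\xi^-$, so the entire line $\R\hat\xi\subset\R^n$ is mapped isometrically onto the split $\R$-factor of $\cone{\basedir}$. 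The orthogonal complement $\R^{n-1}\subset\R^n$ is then preserved by the fibre structure, and the restriction yields a submetry $\R^{n-1}\to\cone{\basedir''}$ by Lemma~\ref{Lem:factorizing submetries}.

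Iterating this construction gives the decomposition $\cone{\basedir}=\R^l\times\cone{\basedir'}$ with $\diam{\basedir'}\leq\pi/2$ and the compatible factorization $\bar\pi=\id_{\R^l}\times\bar\pi'$. The main obstacle will be the very first step: producing the antipodal pair from the diameter bound, since this genuinely uses the extra structure provided by the submetry from $\S^{n-1}$ rather than only the curvature $\geq 1$ property of $\basedir$. Once this is available, everything else — invoking the splitting theorem, extracting that the splitting factor $Y$ is again a cone, and lifting a single line through a homogeneous submetry between cones — is routine and mirrors the proof given in \cite[Prop.~12.14]{Lyt}.
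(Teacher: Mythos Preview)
The paper does not supply its own proof of this proposition: it is stated with the citation ``(cf.\ \cite[Prop.~12.14]{Lyt})'' and then used as a black box. There is therefore nothing in the paper itself to compare your argument against.

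That said, your outline is structurally sound. The inductive reduction to splitting off one $\R$-factor at a time, the use of the Toponogov splitting theorem once a line in $\cone{\basedir}$ is exhibited, the observation that the complementary factor is again a metric cone, and the lifting of the line back through the homogeneous submetry to $\R^n$ --- these are all correct and routine.

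The genuine gap, which you yourself flag, is the passage from $\diam{\basedir}>\pi/2$ to the existence of a pair of points at distance exactly $\pi$ in $\basedir$. Your remark that ``the additional rigidity provided by the submetry from $\S{n-1}$ supplies enough horizontal geodesics to propagate such a diameter all the way to $\pi$'' gestures at an argument but is not one; as you note, curvature $\geq 1$ alone certainly does not force this. This step is precisely where the submetry hypothesis must be exploited nontrivially --- for instance by showing that a fibre at distance $>\pi/2$ from another is confined to a spherical ball of radius $<\pi/2$ and then pushing the transnormal structure until a zero-dimensional fibre appears, whose radial line in $\R^n$ is then genuinely horizontal --- and without it the proof is incomplete. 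As written, your proposal correctly identifies the skeleton of the argument but does not supply its key ingredient.
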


In particular if $\basedir$ has diameter greater than $\pi/2$, $\IndFol[\star]$ is
reducible.

\sep

Assuming $\IndFol[\star]$ to be homogeneous Section~\ref{Sec:Isometries} shows that
$\Fol$ is completely described by two data: the group $\layergroup{\star}$ acting
on $\layer{\star}$ and a smooth map (or rather a set of maps)
$\Psi_x\colon \R^k\cong\affineleaf\to\layergroup{\star}^\perp\cong\IsomNull{\IndFol[\star]}$.
Thus the foliation $\InterFol$ is completely described by $\Psi_x$ interpreting
it as a map into $\IsomNull{\IndFol[\star]}\subset\IsomNull{\basedir}$:
\begin{equation}\label{Eq:InterFol}
        \InterFol=\Set{
                \Set{(x+v,\Psi_x(v).a)}{v\in\R^k}
                }{(x,a)\in\affineleaf \times \cone{\basedir}},
\end{equation}
and $\InterFol$ is homogeneous if and only if $\Psi_x$ is a Lie group homomorphism
independent of the base point $x\in\R^k$, i.e.\ if and only if $\Fol$ is homogeneous.

\sep

Using the converse approach, we show how equidistant foliations $\Fol$ of $\R^{k+n}$
may be constructed from the data mentioned above.
In particular we give new examples of inhomogeneous equidistant foliations
of~$\R^{k+n}$.

So, let $\fol{G}$ be an equidistant foliation of $\S{n}$, $\basedir:=\S{n}/\fol{G}$
and $G:=\IsomNull{\fol{G}}$. Choose a smooth map
$\Psi_0\colon\R^k\to G\subset\Isom{\cone{\basedir}}$. Then,
setting $\intermediatespace:=\R^k\times\cone{\basedir}$ this yields a foliation
$\InterFol$ of $\intermediatespace$ with the leaf $A$ passing through
$(0,a)$ given by
$$
    A=\Set{(v,\Psi_0(v).a)}{v\in\R^k}.
$$
Viewing $G$ as a subgroup of $\SO{n}$ we can lift this construction to $\R^{k+n}$.
Thus we get the foliation $\Fol$ with the leaf $F\in\Fol$ passing through $(0,x)$
given by
$$
    F=\Set{(v,\Psi_0(v).y)}%
        {v\in\R^k,\; y \text{ in the same }\fol{G}\text{-leaf as }x}.
$$

This construction induces the two maps
$$
    \R^{k+n} \overset{\pi_1}{\longrightarrow}
    \R^{k+n}/\IndFol=\intermediatespace \overset{\pi_2}{\longrightarrow}
    \intermediatespace/\InterFol=:\base
$$
and $\Fol$ is given by the fibres of $\pi_2\circ\pi_1$.
Note that by construction $\IndFol$ is automatically equidistant, hence $\pi_1$
is a submetry.
So, $\Fol$ is equidistant if and only if $\InterFol$ is.

In general, equidistance of $\InterFol$ will be rather hard to check.
However, it follows immediately if $\InterFol$ is homogeneous, i.e.\ if $\Psi_0$
is a Lie group homomorphism.

\begin{Rem*}
Note that $\Fol$ inherits the remaining properties of an equidistant foliation
from $\fol{G}$ since $\Psi_0$ is smooth.
\end{Rem*}

Choosing $\Psi_0$ to be a group homomorphism means that $\Fol$ is homogeneous if
and only if $\fol{G}$ is.
Let us start then with $\fol{G}$ being inhomogeneous.
As said before the only known examples are the ones generated by isoparametric
hypersurfaces in spheres and the octonional Hopf fibration
$\S{7}\hookrightarrow\S{15}\to\S{8}$.
We already mentioned above that in the former case the leaf space is a compact
interval and hence $G$ is trivial. So here our construction yields nothing new.

\sep

Let us look at the Hopf fibration of $\S{15}$, which is given by
$$
\xymatrix{
    \S{7}   \ar@{^{(}->}[d] &= \Spin{8}/{\widetilde{\Spin{7}}} \\
    \S{15}  \ar[d] &= \Spin{9}/{\widetilde{\Spin{7}}} \\
    \S{8}   &= \Spin{9}/{\Spin{8}}
}
$$
and $\widetilde{\Spin{7}}$ is the image of the standard embedding of
$\Spin{7}$ in $\Spin{8}$ under a (non-trivial) triality automorphism of $\Spin{8}$.

\begin{Rem}\label{Rem:Hopf}
In general let $G$ be a Lie group and $K\subset H\subset G$ compact subgroups.
Thus we get the natural fibration $p\colon G/K\to G/H$ mapping $gK$ to~$gH$.

Then a result by B\'erard Bergery states that
we can find suitable $G$-invariant metrics on $G/K$ and $G/H$ and an $H$-invariant
metric on $H/K$ such that $p$ is a Riemannian submersion with totally geodesic
fibres isometric to $H/K$ (see~\cite[p.~256f]{Besse} for a detailed discussion).

Since the fibre through $gK$ is $(gH)K=\Set{ghK}{h\in H}\cong H/K$ the submersion~$p$
is obviously $G$-equivariant.
\end{Rem}

Note that in our case $\S{15}$ and $\S{7}$ bear just the standard metric
and $\S{8}$ is a Euclidean sphere of radius 1/2 (cf.~\cite[9.84]{Besse}).

We see that $\Spin{9}$ acting transitively on $\S{15}$ leaves the Hopf fibration
invariant. On the other hand let $N\subset\Spin{9}$ be the subgroup that maps
fibres into themselves, which hence has to be a normal subgroup.
But $\SO{9}=\Spin{9}/\set{\pm1}$ is simple so $N\subset\set{\pm1}$ and $-\id$
obviously does map the fibres into them\-selves.

This means that $\SO{9}$ acts transitively and effectively on the Hopf fibration.
Since $\SO{9}$ is the isometry group of the space of fibres $\S{8}$
it is already the full isometry group of the Hopf fibration.

Hence, we have proved:

\begin{Prop}
Taking any Lie group homomorphism $\Psi_0\colon\R^k\to\SO{9}$ the above construction
yields an inhomogeneous non-compact equidistant foliation of $\R^{k+n}$ with the induced
foliation being given by the Hopf fibration $\S{7}\hookrightarrow\S{15}\to\S{8}$.
\end{Prop}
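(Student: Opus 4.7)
The plan is to verify four things about the foliation $\Fol$ produced by the construction: it is a valid equidistant foliation of $\R^{k+n}$, the induced foliation $\IndFol[\star]$ in each horizontal layer is the extension to $\R^{16}$ of the Hopf fibration of $\S{15}$, it is non-compact, and it is inhomogeneous. The first three are essentially immediate from the construction and the computations preceding the proposition; the last is the only real content.

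First I would assemble the data: take $\fol{G}$ to be the Hopf fibration of $\S{15}$, so that $G=\IsomNull{\fol{G}}=\SO{9}$ by the discussion just above the proposition, and let $\Psi_0\colon\R^k\to\SO{9}$ be any Lie group homomorphism. Because $\Psi_0$ is a homomorphism, $\InterFol$ is by definition the orbit foliation of the $\R^k$-action on $\R^k\times\cone{\basedir}$, hence homogeneous and therefore equidistant. By the general construction in Section~\ref{Sec:New from Old}, this implies that the lifted $\Fol$ is also equidistant; smoothness, leafwise connectedness and proper embedding are inherited from $\fol{G}$ together with smoothness of~$\Psi_0$. The induced foliation $\IndFol[\star]$ in a horizontal layer is by construction the cone on $\fol{G}$, i.e.\ the Hopf fibration. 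The affine leaf is $\R^k\times\{0\}$ and generic leaves are diffeomorphic to $\R^k\times\S{7}$, so for $k\geq 1$ the foliation is non-compact.

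The main obstacle — and the part requiring an actual argument — is showing that $\Fol$ is inhomogeneous. I would argue by contradiction: suppose the leaves of $\Fol$ were the orbits of a Lie group $H\subset\Isom{\R^{k+n}}$. Then, as in Chapter~\ref{Chap:Horizontal}, for any $p\in\affineleaf$ the induced foliation $\IndFol[p]$ would be the orbit foliation of the slice representation of $H_p$ on $\normal_p\affineleaf\cong\R^{16}$. Because this slice representation is orthogonal, it preserves the unit sphere $\S{15}\subset\R^{16}$, and restricting it to $\S{15}$ would realize the Hopf fibration as the orbit foliation of a subgroup of $\O{16}$ — i.e.\ as a homogeneous foliation of $\S{15}$. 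This contradicts the classical fact (recalled in the introduction of this thesis) that the Hopf fibration of $\S{15}$ is inhomogeneous. Hence $\Fol$ itself cannot be homogeneous, completing the proof.
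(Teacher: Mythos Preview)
Your proof is correct and follows essentially the same approach as the paper. The paper's ``proof'' is the discussion preceding the proposition: equidistance comes from $\Psi_0$ being a homomorphism (making $\InterFol$ homogeneous), and inhomogeneity is handled by the earlier one-line observation that ``$\Fol$ is homogeneous if and only if $\fol{G}$ is'' --- your slice-representation argument is precisely the justification of the needed direction of that equivalence, spelled out explicitly via the material at the start of Chapter~\ref{Chap:Horizontal}.
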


Of course we can limit ourselves to $k\leq4$ since $\SO{9}$ has rank 4 and
the kernel of $\Psi_0$ splits off as a Euclidean factor
(cf.~Lemma~\ref{Lem:Ker(Phi) splits off}).

\section{Homogeneity}

We now present the main result of this chapter. The idea underlying it is
that we do not have to know too much about $\basedir$ to understand $\InterFol$
and thus $\Fol$.
The important thing is rather how $\IsomNull{\IndFol[\star]}$ acts on $\basedir$.
If this action is ``similar'' to a representation acting transitively on a sphere
we can use Gromoll and Walschap's result to prove homogeneity of $\InterFol$ and
thus of $\Fol$:

\begin{Thm}\label{Thm:Homogeneity}
Let $\Fol$ and $\IndFol$ be equidistant and let $\IndFol[\star]$ be homogeneous.
If the action of $H:=\IsomNull{\IndFol[\star]}$ on $\cone{\basedir}$ has an orbit~$B$
isometric to a round sphere and $H$ acts effectively on $B$ then $\Fol$ is
homogeneous.
\end{Thm}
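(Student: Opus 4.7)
The plan is to restrict $\InterFol$ to a totally geodesic Euclidean subspace $\Xi$ of $\intermediatespace$ where Gromoll--Walschap's homogeneity theorem applies, and then translate the resulting rigidity back to the screw motion map $\Psi_x$. The sphere orbit $B$ is what lets me single out a Euclidean subcone $\cone{B}$ of $\cone{\basedir}$ on which $H$ acts effectively.

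I will first set $\Xi := \affineleaf \times \cone{B} \subset \intermediatespace$. Since $H$ acts on $\cone{\basedir}$ by isometries fixing the apex, $B$ must lie on a concentric sphere, and after rescaling I may take $B \subset \basedir$. Because $B$ is isometric to a round $\S{m}$, the cone distance formula of Section~1.1 shows that $\cone{B}$ inherits the metric of $\R^{m+1}$ and that shortest paths in $\cone{\basedir}$ between two points of $\cone{B}$ can be chosen inside $\cone{B}$; thus $\Xi$ is totally geodesic in $\intermediatespace$ and isometric to $\R^{k+m+1}$.

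Next I will consider $\InterFol' := \Set{A \in \InterFol}{A \subset \Xi}$. Because $\Psi_x(v) \in H$ preserves $\cone{B}$, the leaf $A = \Set{(x+v,\Psi_x(v).a)}{v \in \R^k}$ lies in $\Xi$ precisely when $a \in \cone{B}$, so $\InterFol'$ partitions $\Xi$. Total geodesicity of $\Xi$ gives equidistance of $\InterFol'$, and since $v \mapsto (x+v,\Psi_x(v).a)$ is injective in the first coordinate every leaf---including the affine leaf $\affineleaf \times \{0\}$---has dimension exactly $k$. Thus $\InterFol'$ is a regular equidistant foliation of $\R^{k+m+1}$, and \cite{GW:1} describes it as a generalized screw motion around $\affineleaf \times \{0\}$: there exists a Lie group homomorphism $\Phi \colon \R^k \to \SO{m+1}$ such that every leaf has the form $\Set{(x+v,\Phi(v).a)}{v \in \R^k}$. (Any compact isotropy factor predicted by Proposition~\ref{Prop:Phi goes to Centralizer} collapses here, since its orbits on $\cone{B}$ must all be $0$-dimensional, hence trivial by connectedness of the compact group.) Comparing with the original parameterization gives $\Psi_x(v).a = \Phi(v).a$ for every $a \in \cone{B}$; effectiveness of the $H$-action on $\cone{B}$ then forces $\Psi_x(v) = \Phi(v)$, so $\Psi_x$ is a Lie group homomorphism independent of $x$. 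By the characterization at the end of Section~\ref{Sec:Isometries} this will imply $\Fol$ is homogeneous.

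The hardest step I anticipate is verifying that $\InterFol'$ genuinely satisfies the paper's definition of equidistant foliation---in particular, that the screw-motion parameterization yields smooth, properly embedded $k$-dimensional leaves with a smooth tangent distribution throughout $\Xi$, including across the apex $\affineleaf \times \{0\}$ where $\Psi_x$ may act with nontrivial isotropy on nearby fibres. Once this smoothness has been pinned down, the application of Gromoll--Walschap and the bookkeeping to eliminate the compact factor are routine.
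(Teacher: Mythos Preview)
Your approach is the paper's: restrict the intermediate foliation to the piece coming from the sphere orbit $B$, recognize a regular equidistant foliation of Euclidean space, apply Gromoll--Walschap to force the screw motion map to be a homomorphism independent of the base point, and use effectiveness of the $H$-action on $B$ to transfer this back to $\Psi_x$. The only cosmetic difference is that you take $\Xi = \affineleaf \times \cone{B}$ inside $\intermediatespace$ and invoke total geodesicity, whereas the paper works on the cylinder $\affineleaf \times B \cong \R^k \times \S{l}_r$ and then \emph{abstractly} extends the foliation to the ambient $\R^{k+l+1}$; your $\cone{B}$ is exactly that ambient $\R^{l+1}$. One word of caution: for $\cone{B}$ to be flat (and hence for your total-geodesicity shortcut to go through) you need the rescaling of $B$ into $\basedir$ to be a \emph{unit} sphere, which the hypothesis does not guarantee---in the $\U{1}$ corollary the orbits are circles of arbitrary radius---so the paper's abstract embedding is the safer packaging, with equidistance on the cylinder supplied via Lemma~\ref{Lem:factorizing submetries} rather than total geodesicity.
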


\begin{proof}
Since $H$ acts on $\cone{\basedir}$ by isometries, the partition $\fol{B}$
of~$\intermediatespace$ by the $\affineleaf$-cylinders over these $H$-orbits is
equidistant.
Moreover, $\InterFol$ is a refinement of $\fol{B}$, so Lemma~\ref{Lem:factorizing submetries}
implies that the restriction $\InterFol_B$ of $\InterFol$ to $\affineleaf\times B$
is equidistant as well.

Now, by assumption, $\affineleaf\times B$ is isometric to a round cylinder
$\R^k\times\S{l}_r\subset\R^{k+l+1}$ for some $l\geq1$. Let us call this
isometry~$\varphi$.
Consequently, the image of $\InterFol_B$ under~$\varphi$ is equidistant and may
be described via the maps $\bar\Psi_x$ with
$$
    \bar\Psi_x\colon\R^k\to\SO{l},\qquad
    \bar\Psi_x(v).\varphi(b):= \varphi(\Psi_x(v).b),\qquad
    \forall v\in\affineleaf, b\in B
$$
such that the leaf $\bar{A}$ of $\varphi(\InterFol_B)$ passing through $(x,y)$
is given by
$$
    \bar{A}=\Set{(x+v,\bar\Psi_x(v).y)}{v\in\R^k}.
$$

Now $\varphi(\InterFol_B)$ can be extended to an equidistant foliation of $\R^{k+l+1}$
and this foliation is regular. Thus, by \cite{GW:2}
this foliation is homogeneous.
In particular \cite[Thm.~2.6]{GW:1} implies that the maps $\bar{\Psi}_x$ must
be Lie group homomorphisms independent of $x$.
But then the same holds for the maps $\Psi_x$ and so $\Fol$ is homogeneous.
\end{proof}

We immediately get the following important application for $\IndFol[\star]$
having small isometry group:

\begin{Cor}
If $\dim\left(\Isom{\IndFol[\star]}\right)\leq1$, in particular if
\begin{enumerate}[(i)]
\item $\IndFol[\star]$ is given by the orbits of an irreducible representation
of real or complex type or
\item $\IndFol[\star]$ is given by an irreducible polar action
\end{enumerate}
then $\Fol$ is homogeneous.
\end{Cor}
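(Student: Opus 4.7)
The plan is to reduce everything to Theorem~\ref{Thm:Homogeneity}. First I would observe that $\Isom{\IndFol[\star]}=\Norm{\IndFol[\star]}/\Centr{\IndFol[\star]}$ is a compact Lie group, being a quotient of closed subgroups of $\SO{n}$. Under the dimensional hypothesis its identity component $H:=\IsomNull{\IndFol[\star]}$ is therefore either trivial or isomorphic to $\S{1}$.

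If $H$ is trivial, then the screw motion map $\Psi_x\colon\R^k\to\layergroup{\star}^\perp\cong H$ furnished by Lemma~\ref{Lem:Parametrization into centralizer} must be constantly the identity, so each leaf of $\Fol$ is a metric product $\affineleaf\times\IndLeaf{F}{\star}$. Consequently $\Fol$ is the direct product of the translation action of $\R^k$ on $\affineleaf$ with the homogeneous foliation $\IndFol[\star]$, and is in particular homogeneous.

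If $H\cong\S{1}$, I would produce a point $q\in\basedir$ whose $H$-isotropy is trivial. Such a point exists because $\basedir$ is connected and $H$ acts effectively on it: any nontrivial cyclic subgroup $\Z_k\subset\S{1}$ that fixed every point would contradict effectivity. Its orbit $B:=H\cdot q\subset\basedir\subset\cone{\basedir}$ is then a free $\S{1}$-orbit, hence a compact connected one-dimensional Alexandrov space on which $\S{1}$ acts transitively and freely by isometries; such a space is necessarily isometric to a metric circle, i.e. a round $\S{1}$ of some positive radius. Theorem~\ref{Thm:Homogeneity} now applies and yields the homogeneity of $\Fol$.

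It remains to verify the two specific sufficient conditions. Part (i) is immediate from Lemma~\ref{Lem:Isometry of irreducible foliation}: $H\subset\set{\pm 1}$ in the real case and $H\subset\U{1}$ in the complex case, so $\dim H\leq 1$ in both. For (ii), the quotient $\cone{\basedir}$ of an irreducible polar action is a closed Weyl chamber, a convex simplicial cone whose isometry group consists only of the (discrete) permutations of its codimension-one walls; hence $H$ is trivial. The most delicate step I expect is not the group-theoretic bookkeeping but rather the geometric identification of a free $\S{1}$-orbit in $\basedir$ as a round metric circle, which is what puts us in position to invoke Theorem~\ref{Thm:Homogeneity}.
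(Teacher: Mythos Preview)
Your proposal is correct and follows essentially the same route as the paper: split into the cases $H$ trivial versus $H\cong\S{1}$, dispose of the trivial case via Lemma~\ref{Lem:Parametrization into centralizer} (the paper phrases this as ``$\Fol$ splits off $\affineleaf$''), and in the circle case locate a round-circle orbit on which $H$ acts effectively so that Theorem~\ref{Thm:Homogeneity} applies; the special cases (i) and (ii) are handled identically via Lemma~\ref{Lem:Isometry of irreducible foliation} and the Weyl-chamber description.

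One small remark on where the weight actually lies. The sentence ``any nontrivial cyclic subgroup $\Z_k\subset\S{1}$ that fixed every point would contradict effectivity'' is just a restatement of effectivity and does not by itself produce a point with trivial isotropy; what you really need is that for an abelian group the principal isotropy is contained in every isotropy and hence acts trivially, so effectivity forces it to be trivial. (The paper is equally terse here, simply asserting that ``if there is no effective $H$-orbit the action of $H$ is trivial''.) Conversely, the step you flag as delicate --- identifying a free isometric $\S{1}$-orbit as a round metric circle --- is in fact immediate: a homogeneous one-dimensional compact length space is a circle. So the emphasis is slightly misplaced, but the argument is sound.
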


\begin{proof}
Assume $H:=\IsomNull{\IndFol[\star]}=\U{1}$ then the $H$-orbits on $\cone{\basedir}$
are either single points or diffeomorphic and hence isometric to $\S{1}_r$.
The latter holds if and only if $H$ acts effectively on that orbit.
So if there is an effective $H$-orbit on $\cone{\basedir}$ Theorem~\ref{Thm:Homogeneity}
implies homogeneity of $\Fol$.
On the other hand, if there is no effective $H$-orbit
the action of $H$ is trivial and hence $\Fol$ splits off $\affineleaf$.

Now let us consider the special cases mentioned:
If the representation is of real type we have already seen that $\IsomNull{\IndFol[\star]}$
is trivial and hence $\Fol$ splits off~$\affineleaf$.
If it is of complex type $H$ is a subgroup of $\U{1}$ and we are done by what we
mentioned above.

If $\IndFol[\star]$ is given by a polar representation
$\cone{\basedir}=\layer{\star}/\IndFol[\star]$ is the Weyl chamber of a principal
orbit.
In particular its isometry group is discrete, so $\Fol$ splits off $\affineleaf$
again.
\end{proof}

However, $\IsomNull{\IndFol[\star]}$ being small is not necessary as the following
result shows:

\begin{Cor}
If $\IndFol[\star]$ is given by the complex or quaternionic Hopf fibrations
$\S{1}\hookrightarrow\S{3}\to\S{2}$ or $\S{3}\hookrightarrow\S{7}\to\S{4}$
then $\Fol$ is homogeneous.
\end{Cor}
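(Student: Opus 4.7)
The plan is to apply Theorem~\ref{Thm:Homogeneity} with the orbit $B$ being $\basedir$ itself, viewed as the ``unit sphere'' at distance $1$ from the apex in $\cone{\basedir} = \layer{\star}/\IndFol[\star]$. First I would identify $\basedir$ metrically: since $\IndFol[\star]$ is the foliation of $\R^{2m}$ by the orbits of scalar multiplication by $\layergroup{\star} = \U{1}$ (on $\C^2$) or $\layergroup{\star} = \Sp{1}$ (on $\Quaternions^2$), the quotient $\cone{\basedir}$ is the metric cone over the base of the Hopf fibration, so $\basedir \cong \S{2}_{1/2}$ in the complex case and $\basedir \cong \S{4}_{1/2}$ in the quaternionic case (cf.\ \cite[9.84]{Besse}). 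In either case $\basedir$ is a round sphere, so the geometric hypothesis on $B$ is met.

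Next I would identify $H := \IsomNull{\IndFol[\star]}$ together with its induced action on $\basedir$, following the pattern used for the octonionic Hopf fibration in Section~\ref{Sec:New from Old}. Since $H$ preserves the cone structure of $\cone{\basedir}$, it restricts to an isometric action on $\basedir$. Using~(\ref{Eq:Isom(IndFol)}) and Remark~\ref{Rem:Hopf}, one computes the connected normalizer of $\layergroup{\star}$ in $\SO{2m}$: this is $\U{2}$ in the complex case and $\Sp{2}\cdot\Sp{1}$ in the quaternionic case. Together with the identification of the centralizer of $\IndFol[\star]$ with the scalar group $\layergroup{\star}$ itself, this yields $H \cong \SO{3}$ acting on $\S{2}_{1/2}$ in the complex case and $H \cong \Sp{2}/\Z_2 \cong \Spin{5}$ acting on $\S{4}_{1/2}$ in the quaternionic case. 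In both cases $H$ realises the full connected isometry group of the base sphere, hence acts transitively; effectiveness of the $H$-action on $\basedir$ is automatic because by construction elements of $H$ which fix every leaf of $\IndFol[\star]$ set\-wise are trivial.

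With these identifications in hand, $B := \basedir$ is a single $H$-orbit in $\cone{\basedir}$, isometric to a round sphere, on which $H$ acts effectively, and Theorem~\ref{Thm:Homogeneity} immediately yields homogeneity of $\Fol$.

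The main obstacle is the precise identification of the centralizer of $\IndFol[\star]$ in the paper's sense, because a priori set\-wise-preservation of every scalar orbit is weaker than commuting with $\layergroup{\star}$. The verification that this centralizer nevertheless reduces to $\layergroup{\star}$ itself is a direct algebraic check: testing a candidate $g \in \SO{2m}$ against the orbits through $(1,0)$, $(0,1)$ and $(1,1) \in \C^2$ (or their quaternionic analogues) forces the scalar factors on the two summands to coincide. Once this check is done, the remainder of the argument is precisely the B\'erard Bergery / Remark~\ref{Rem:Hopf} machinery already deployed for the octonionic case in Section~\ref{Sec:New from Old}.
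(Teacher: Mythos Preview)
Your proposal is correct and follows essentially the same route as the paper: identify $\basedir$ as a round sphere, compute $\IsomNull{\IndFol[\star]}$ as the full connected isometry group of that sphere (so that $B=\basedir$ is a single effective $H$-orbit), and invoke Theorem~\ref{Thm:Homogeneity}. The paper carries out the identification of $\IsomNull{\IndFol[\star]}$ by writing down the explicit groups $(\SU{2}\times\U{1})/_\sim$ and $(\Sp{2}\times\Sp{1})/_\sim$ and checking directly which elements fix every Hopf fibre setwise, which is exactly the ``direct algebraic check'' on coordinate vectors that you flag as the main obstacle; your phrasing via $\Norm{\layergroup{\star}}/\layergroup{\star}$ from~(\ref{Eq:Isom(IndFol)}) is the same computation in different notation. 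One small slip: in the quaternionic case $\Sp{2}/\Z_2\cong\SO{5}$, not $\Spin{5}$ (it is $\Sp{2}$ itself that is isomorphic to $\Spin{5}$).
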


\begin{proof}
In both cases $\basedir$ is a sphere so to apply Theorem~\ref{Thm:Homogeneity}
we show that $\IsomNull{\IndFol[\star]}$ acts transitively and effectively on
$\basedir$. This can be done using Remark~\ref{Rem:Hopf}. However, a more direct
approach is possible:

Consider the $\U{1}$-action on $\S{3}\subset\C^{2}$ by complex multiplication with
unit complex numbers: $\lambda.(z_1,z_2)=(\lambda z_1,\lambda z_2)$.
The complex Hopf fibration is then the natural projection to the orbit space
$\C P^1\cong\S{2}$. We show that $\Isom{\IndFol[\star]}=\SO{3}$:

Let $G:=(\SU{2}\times\U{1})/_\sim$ where we identify $(A,\lambda)$ with $(-A,-\lambda)$.
Then $G$ acts on $\S{3}\subset\C^2$ in the following way:
$(A,\lambda).(z_1,z_2):=A(z_1\lambda,z_2\lambda)=\lambda A(z_1,z_2)$ and this
action is effective.


Obviously $G$ leaves $\IndFol[\star]$ invariant as the $G$-action commutes
with the $\U{1}$-action. On the other hand, it is clear that the only elements
of $G$ leaving each leaf of $\IndFol[\star]$ invariant are of the form $(\id,\lambda)$.
So $G/_{(\set{\id}\times\U{1})}\cong\SU{2}/_{\set{\pm1}}$ acts effectively on
the foliation and hence it is contained in $\Isom{\IndFol[\star]}$.
But
$$
    \SO{3}  \cong   G/_{(\set{\id}\times\U{1})}    \subset
    \IsomNull{\IndFol[\star]}   \subset\IsomNull{\basedir}  =\SO{3}
$$
and thus equality holds at every step.

The quaternionic case is rather similar. Here we consider the action of $\Sp{1}$
on $\S{7}\subset\Qt^2$ by quaternionic multiplication from the right:
$h.(q_1,q_2):=(q_1h^{-1},q_2h^{-1})$.
The orbits form the foliation $\IndFol[\star]$.
The remainder is analogous to the complex case:

Let $H:=(\Sp{2}\times\Sp{1})/_\sim$ with $(A,h)\sim(-A,-h)$ and $H$ acts effectively on
$\S{7}\subset\Qt^2$ via $(A,h).(q_1,q_2):=A(q_1h^{-1},q_2h^{-1})$.

Again it is clear that $H$ leaves $\IndFol[\star]$ invariant and the only elements
of $H$ fixing each leaf of $\IndFol[\star]$ are of the form $(\id,h)$.
The latter can easily be seen by letting $(A,h)$ act on $(a,b)$ with $a,b\in\set{0,1,i,j,k}$.
As before $H/_{(\set{\id}\times\Sp{1})}\cong \Sp{2}/_{\set{\pm1}}$ acts
effectively on $\IndFol[\star]$ and
$$
    \SO{5} \cong H/_{(\set{\id}\times\Sp{1})}
    \subset \IsomNull{\IndFol[\star]} \subset\IsomNull{\basedir}  =\SO{5}
$$
implies that $\IsomNull{\IndFol[\star]}$ acts effectively and transitively on
$\basedir=\S{4}$.
\end{proof}

\subsection*{Open Questions}

Some problems that were addressed in this thesis still remain open. In particular
it has been essential for our homogeneity results to assume the induced
foliation~$\IndFol$ to be equidistant. Based on the findings of Chapter~\ref{Chap:Horizontal}
it is my conjecture that indeed equidistance of $\Fol$ implies that of $\IndFol$.
I even conjecture that equidistance of $\Fol$ together with homogeneity
of $\IndFol[\star]$ already implies $\Fol$ to be homogeneous.

At the very least this should be true for $\IndFol$ equidistant and $\IndFol[\star]$
homogeneous and irreducible. To see this one would have to show that the orbits
of $\IsomNull{\IndFol[\star]}$ can only be $\S{1}$, $\S{2}$, $\S{3}$ or one of
the corresponding projective spaces. One could then try to modify the proof of
Theorem~\ref{Thm:Homogeneity} or indeed the approach used in \cite{GW:2}
to work in the projective case as well.

The first conjecture is obviously necessary for the second but is also interesting
in itself. For example it implies that there are no further examples of noncompact
inhomogeneous equidistant foliations of $\R^n$ than those given in Section~\ref{Sec:New from Old};
in particular the \cite{FKM}-examples cannot appear as induced foliation of
an irreducible~$\Fol$.

On the other hand, proving this conjecture wrong would be most interesting as well
since it would provide a whole new class of inhomogeneous equidistant foliations.


\backmatter

\bibliographystyle{alpha}
\bibliography{Main}

\end{document}